\documentclass[12pt, twoside, reqno]{amsart}
\calclayout
\usepackage[utf8]{inputenc}
\usepackage{amsmath,amssymb,amscd,amsxtra,latexsym,bbm,graphicx,epsfig,epic,
eepic,oldgerm,bm,psfrag,amsthm,mathrsfs,bm,bbm, verbatim}
\usepackage{caption}
\usepackage{subcaption}
\usepackage{booktabs}
\usepackage{tikz-cd}
\usepackage{enumitem}
\usepackage[pdftex]{hyperref}
\usepackage[noabbrev,capitalize]{cleveref}
\usetikzlibrary{shapes}
\usetikzlibrary{decorations.markings}

\usepackage{xcolor}
\hypersetup{
	colorlinks,
	linkcolor={red!50!black},
	citecolor={green!50!black},
	urlcolor={blue!80!black}
}
\usepackage{soul}

\tikzcdset{arrow style=math font}
\tikzset{cross/.style={
    cross out, draw, solid, thin, 
    minimum size=2*(#1-\pgflinewidth), 
    inner sep=0pt, outer sep=0pt
  },
  cross/.default={3}
}

\input xy
\xyoption{all}
\CompileMatrices

\usepackage{listings} 
\usepackage{xspace}  

\newcommand{\nocontentsline}[3]{}
\newcommand{\tocless}[2]{\bgroup\let\addcontentsline=\nocontentsline#1{#2}\egroup}

\renewcommand{\contentsname}{Content and organization of the paper}

\newcommand{\commentintoc}[1]{
\addtocontents{toc}{%
  \protect\hspace{6mm}
  \begin{minipage}[t]{.85\linewidth}{#1}
    \end{minipage}
    \protect\vspace{2mm}
}}

\lstdefinelanguage{Sage}[]{Python}
{morekeywords={True,False,sage,singular},
sensitive=true}
\definecolor{dblackcolor}{rgb}{0.0,0.0,0.0}
\definecolor{dbluecolor}{rgb}{.01,.02,0.7}
\definecolor{dredcolor}{rgb}{0.8,0,0}
\definecolor{dgraycolor}{rgb}{0.30,0.3,0.30}

\theoremstyle{plain}
\newtheorem{theorem}{Theorem}[section]
\newtheorem*{theorem*}{Theorem}
\newtheorem{lemma}[theorem]{Lemma}
\newtheorem{proposition}[theorem]{Proposition}
\newtheorem{corollary}[theorem]{Corollary}
\newtheorem{definition}[theorem]{Definition}
\newtheorem*{definition*}{Definition}
\newtheorem{observation}[theorem]{Observation}

\newtheorem{question}[theorem]{Question}

\theoremstyle{definition}
\newtheorem{claim}[theorem]{Claim}

\newtheorem{rmk}[theorem]{Remark}
\newenvironment{remark}
{\pushQED{\qed}\rmk} %
  {\popQED\endrmk}
\newtheorem{rmks}[theorem]{Remarks}
\newenvironment{remarks}
  {\pushQED{\qed}\rmk} %
  {\popQED\endrmks}

\newtheorem*{remark*}{Remark}
\newtheorem*{remarks*}{Remarks}

\newtheorem{exple}[theorem]{Example}
\newenvironment{example}
  {\pushQED{\qed}\exple} %
  {\popQED\endexple}

\newtheorem*{example*}{Example}
\newtheorem*{examples*}{Examples}

\theoremstyle{plain}

\def\Ker{\operatorname{ker}}
\def\im{\operatorname {im}}
\def\Symp{\operatorname{Symp}}
\def\Sympc0{\operatorname{Symp^c_0}}

\def\Ham{\operatorname{Ham}}

\def\supp{\operatorname{supp}}

\def\Flux{\operatorname{Flux}}
\def\GL{\operatorname{GL}}
\def\SL{\operatorname{SL}}
\def\PSL{\operatorname{PSL}}

\def\Aut{\operatorname{Aut}}

\def\FM{\operatorname{FM}}

\def\pp{\partial}

\def\id{\operatorname{id}}

\def\sth{\,\vert\,}
\def\del{\partial}
\def\MCG{\operatorname{MCG}}
\def\tcl{T_{\rm Cl}}
\def\tch{T_{\rm Ch}}
\def\fl{\mathrm F}
\def\fml{\widehat{\cm}}
\def\fmh{\widehat{\cm}_{\rm Ham}}
\def\Sh{\operatorname{Sh}}

\usepackage{scalerel}[2016/12/29]
\newcommand\Colon{\scaleobj{.8}{\colon}}
\def\defi{\raisebox{.3mm}{$\Colon$}\hspace{-2.3mm}=}   

\def\LSymp{\m L{\mathrm{Symp}}}  \def\LHam{\m L{\mathrm{Ham}}}   
 \def\m#1{\mathcal{#1}}  
\def\wh#1{\widehat{#1}}  \def\wt#1{\widetilde{#1}}

\def\wtsymp{\mathrm{S}\wt{\mathrm{ymp}}}

\def\cl{{\mathcal L}}
\def\pcl{\widetilde{{\mathcal L}}}
\def\cm{{\mathcal M}}

\def\cp{{\mathcal P}}

\def\C{\mathbb{C}}

\def\N{\mathbb{N}}

\def\Q{\mathbb{Q}}
\def\R{\mathbb{R}}
\def\T{\mathbb{T}}
\def\Z{\mathbb{Z}}
\def\1{\mathbbm{1}}

\def\RP{\R \mathrm P}
\def\CP{\C \mathrm P}

\newcommand{\mb}[1]{\mathbb{#1}}\def\symp{\mathrm{Symp}}
\let\oldsharp\sharp
\renewcommand*{\sharp}{\,\oldsharp\,}

\definecolor{brickred}{rgb}{0.8, 0.25, 0.33}

\begin{document}

\title{The Lagrangian flux-monodromy morphism}

\author{Jo\'e Brendel}  

\address{
D-MATH,
ETH Zürich, 
Rämistrasse 101,
8092 Zürich,
Switzerland }
\email{joe.brendel@math.ethz.ch}

\author{Jean-Philippe Chass\'e}

\address{
CRM,
Université de Montréal, 
2900 boul. Édouard-Montpetit,
H3T 1J4,
Canada }
\email{jean-philippe.chasse@umontreal.ca}

\author{R\'emi Leclercq}

\address{
Laboratoire de Mathématiques Jean Leray,
2 rue de la Houssinière,
BP 92208,
F-44322 Nantes Cedex 3,
France }
\email{remi.leclercq@univ-nantes.fr}

\date{\today}

\maketitle

\begin{center}
	\textit{Happy birthday, Octav!}
\end{center}

\begin{abstract}  
We define a new morphism on the fundamental group of the space of Lagrangian submanifolds, which takes into account the Lagrangian flux and monodromy. We study the discreteness of its image and relate this discreteness to the ($C^\infty$) topology of the Hamiltonian orbit of the Lagrangian in question. We completely describe this new morphism for Lagrangian tori in $\R^4$ and $\CP^2$ and give explicit constructions. Similar constructions allow us to study the shape invariant of the Clifford torus in $\CP^n$. Finally, we upgrade our results on Lagrangian tori in $\R^4$ from the $C^\infty$ topology to the Hausdorff one.
\end{abstract}

\addtocontents{toc}{\protect\setcounter{tocdepth}{1}}

\tocless\section{Introduction}

The \textit{flux morphism} of a symplectic manifold $(X,\omega)$ can be interpreted as measuring how much a symplectic isotopy fails to be Hamiltonian.
It is defined as follows. Given a symplectic isotopy $\Psi$ starting at the identity, one can associate to any loop $\gamma$ in $X$ the symplectic area of the cylinder swept out by $\gamma$ under $\Psi$.
Because $\omega$ is closed, this quantity only depends on the homotopy class of $\gamma$ and the homotopy class of $\Psi$ with fixed endpoints. Thus, this induces a map from the universal cover of the identity component of the symplectomorphism group of $(X,\omega)$ to the first de Rham cohomology group of $M$:
$$\Flux : \wtsymp_{0}(X,\omega) \to H^{1}(X;\mb R).$$

This map is a morphism, since the two ends of a cylinder are homologous cycles in $X$.
The following facts make the flux morphism particularly meaningful:
\begin{enumerate}[label=(F\arabic*)]
	\item Banyaga \cite{Ban97} proved that its kernel is the universal cover of the Hamiltonian group of $(X,\omega)$. Equivalently, a symplectic isotopy has vanishing flux if and only if it is homotopic relative to its endpoints to a Hamiltonian isotopy. 
	\item Ono \cite{Ono06} proved the deep fact that \textit{the flux group} $\Gamma_{\!X} \defi \Flux\big(\pi_{1}(\symp_{0}(X,\omega))\big)$ is discrete for any symplectic manifold. 
\end{enumerate}
The latter is known as the \textit{$C^{1}$ flux conjecture} because of its important topological reformulation. Indeed, it is well known that $\Gamma_{X}$ being discrete is equivalent to $\Ham(X,\omega)$ being closed in $\symp_{0}(X,\omega)$ in the $C^{1}$-topology. It is also equivalent to $\Ham(X,\omega)$ being locally path connected in that topology. 

There is a natural variant of the flux for Lagrangian submanifolds: In $\R^{2n}$, it was already considered by Gromov \cite{Gro85}, Viterbo \cite{Vit90}, and Sikorav \cite{Sik91}, among other people. The Lagrangian version of the $C^1$ flux conjecture was proved in special cases by Ono \cite{Ono07}. The Lagrangian flux has garnered some attention~---~see, for example, \cite{Sol13, EntGanMem18, SheTonVia19,ChaLec24} and \S\ref{ssec:review} below for more details~---~but many of its fundamental properties are still somewhat ill-understood, especially when compared to how much is known about the symplectic flux. One of the striking differences is that \emph{the Lagrangian flux is not a morphism}. The starting point of the present work is a natural upgrade of the Lagrangian flux map to an actual morphism and a study of the resulting morphism from various points of view. This study also illustrates that, as is often the case, the Lagrangian setting can be more subtle than the absolute case.

\subsection{The flux-monodromy morphism}
\label{sec:definitions}

The Lagrangian variant of the flux is similar to the symplectic flux morphism. Fix a closed connected Lagrangian $L$ in $(X,\omega)$, and let $\Pi = \{L_t\}_{t \in [0,1]}$ be a Lagrangian isotopy starting at $L_0=L$. 
To such a path, one can associate an element $\Flux \Pi \in H^1(L;\R)$, called the \emph{Lagrangian flux} of $\Pi$, by associating to any 1-cycle $\gamma$ of $L$ the symplectic area swept out by $\gamma$ under the isotopy. 

The Lagrangian flux is invariant under endpoints-preserving homotopies of isotopies. Restricting our attention to loops, i.e.\ to the case $L_0 = L_1$, we obtain the \textit{Lagrangian flux map}
\begin{equation}
	\label{eq:flux_intro}
	\Flux \colon \pi_1(\cl, L) \rightarrow H^1(L;\R), 
\end{equation}
where $\cl$ is the space of Lagrangians in $X$ that are \emph{Lagrangian isotopic} to $L$. This space is equipped with the $C^1$-topology~---~see \S\ref{ssec:basic} for details. As for the symplectic flux, the Lagrangian flux of an isotopy can be interpreted as measuring how much it fails to be Hamiltonian: $\Pi$ is induced by an ambient Hamiltonian isotopy of $X$ if and only if the Lagrangian flux $\Flux \{L_t\}_{t \in [0,\tau]}$ vanishes for all $\tau \in [0,1]$. 

\begin{remark}
	In contrast with the case of symplectic flux (see Fact (F1) above), it is not true that a Lagrangian isotopy with vanishing \emph{total} flux can be deformed into a Hamiltonian isotopy. Indeed, there is a Lagrangian isotopy from the Clifford torus in $(\R^4,\omega_0)$ to the Chekanov torus with vanishing total flux. However, these tori are not Hamiltonian isotopic~\cite{Che96}. On the other hand, we prove the analogue of Fact (F1) for Lagrangian \emph{immersions} in Appendix~\ref{app:Lag_homotopies}.
\end{remark}

At the outset of this paper, let us highlight another substantial difference between the Lagrangian and symplectic versions of the flux:

\begin{observation}
	The Lagrangian flux map \eqref{eq:flux_intro} is not a morphism. 
\end{observation}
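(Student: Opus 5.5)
The failure comes from monodromy. For two loops $\Pi$ and $\Pi'$ based at $L$, the flux of the concatenation $\Pi\cdot\Pi'$ is not $\Flux\Pi + \Flux\Pi'$. Instead, the flux of the second loop must be measured on cycles that have been transported along the first loop. I will first derive the twisted cocycle formula and then give an explicit loop with nontrivial monodromy.

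\textbf{Step 1: the cocycle formula.} A loop $\Pi=\{L_t\}$ comes with a parametrization by diffeomorphisms $\phi_t\colon L\to L_t$ with $\phi_0=\id$, obtained by lifting the loop to parametrized Lagrangian embeddings. The endpoint $\phi_1$ is a diffeomorphism of $L$, and its isotopy class, in particular the induced map $\phi_1^*$ on $H^1(L;\R)$, depends only on the homotopy class of the loop. This is the monodromy. The flux is $\Flux\Pi(\gamma)=\int_{[0,1]\times\gamma}\Phi^*\omega$ with $\Phi(t,x)=\phi_t(x)$. To compute the flux of $\Pi\cdot\Pi'$ on a cycle $\gamma$, I follow $\gamma$ through $\Pi$, which gives a cylinder of area $\Flux\Pi(\gamma)$ ending at $\phi_1(\gamma)$. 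I then follow $\phi_1(\gamma)$ through $\Pi'$. This gives
\[
\Flux(\Pi\cdot\Pi')=\Flux\Pi+\phi_1^*\,\Flux\Pi',
\]
up to the convention on the direction of pullback. So $\Flux$ is a crossed homomorphism for the monodromy action, and it is a morphism only when this action is trivial on the flux values.

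\textbf{Step 2: the counterexample.} I need loops where $\phi_1^*\Flux\Pi'\neq\Flux\Pi'$. The simplest attempt is $L$ a product torus $T^2=S^1(a)\times S^1(a)\subset\R^4$ with equal radii.
\begin{itemize}
\item Let $\Pi$ be the loop given by the unitary rotation mixing the two $\C$ factors, rotating by angle $\pi/2$ in the real sense, composed with a path closing it up. It maps $L$ to itself while swapping the two circle factors, so its monodromy swaps the generators $e_1,e_2$ of $H_1$. Being Hamiltonian, it has $\Flux\Pi=0$.
\item Let $\Pi'$ be a loop with nonzero flux that treats the two factors asymmetrically. For example, translate $L$ in the first $\C$ factor around a big circle, $t\mapsto (e^{2\pi i t}R + S^1(a))\times S^1(a)$. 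This sweeps area $\pm\pi R^2$-type quantities on $e_1$ only (area computation via the cylinder swept by the first circle) and zero on $e_2$.
\end{itemize}
The formula then gives $\Flux(\Pi\cdot\Pi')=\phi_1^*\Flux\Pi'$, which is nonzero on $e_2$. Meanwhile $\Flux\Pi+\Flux\Pi'$ is nonzero on $e_1$ and zero on $e_2$. These differ, so $\Flux$ is not a morphism.

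\textbf{Main obstacle.} The hard part is the flux computation for $\Pi'$. If translation sweeps zero area, I will instead take the loop $t\mapsto S^1(r(t))\times S^1(a)$, with $r$ increasing from $a$ to $b$ and then returning by the reverse path. That loop is homotopically trivial, though, so a loop with genuinely nonzero flux has to be found elsewhere. Since $\omega$ is exact on $\R^4$, the flux of a loop equals the difference of Liouville classes $[\lambda|_{L_1}]-[\lambda|_{L_0}]$ pulled back by the monodromy, and this vanishes when $\phi_1^*$ is trivial.

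This suggests a more robust approach: combine two swap-type loops. Let $\Pi$ be the Hamiltonian swap, with flux $0$. Let $\Pi''$ be the non-Hamiltonian path from $L$ to $S^1(a)\times S^1(b)$ through product tori, followed by a swap of factors and the return path. Its flux is then a multiple of $(a^2-b^2)(e_1^*-e_2^*)$, which is nonzero, and its monodromy is the swap. Composing and using Step 1, I get $\Flux(\Pi''\cdot\Pi'')=F+\sigma^*F=0$ with $F=\Flux\Pi''\ne 0$, whereas $2F\ne 0$. Checking this sign computation carefully is the key step.
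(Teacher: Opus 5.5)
Your Step 1 is correct; it is exactly the concatenation formula \eqref{eq:concatenation}. The mechanism you aim for at the end is also the right one, and it is the one behind the paper's \cref{exple:Flux_not_morphism}: find a loop $\Lambda$ with $F=\Flux\Lambda\neq 0$ whose monodromy $\sigma$ satisfies $\sigma^*F=-F$, so that $\Flux(\Lambda*\Lambda)=F+\sigma^*F=0\neq 2F$. However, the concrete loop $\Pi''$ you propose does not work, and this is not a sign issue. It is based at $L=S^1(a)\times S^1(a)$, which is the monotone Clifford torus, and every loop based there has zero flux.

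You can see this from the formula you wrote down yourself. In exact $\C^2$ one has $\Flux\Lambda=(\phi^\Lambda)^*[\lambda]_L-[\lambda]_L$, and here $[\lambda]_L=(\pi a^2,\pi a^2)$ is proportional to the Maslov class. It is therefore fixed by every monodromy; this is \cref{cor:flux_monotone}, and it is the case $x=0$ of \cref{thm:B}. Directly: along $\Pi''$, the class $e_1$ sweeps no area, while $e_2$ sweeps $\pi(b^2-a^2)$ on the first leg. After the swap, $e_2$ is the first circle of $S^1(b)\times S^1(a)$ and sweeps $\pi(a^2-b^2)$ on the return leg. So $\Flux\Pi''=0$. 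On $\pi_1(\cl,\tcl)$ the flux map is identically zero, hence trivially a morphism, so no choice of loops at this base point can produce a counterexample.

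The fix is to move the base point off the diagonal. Take $L=S^1(a)\times S^1(b)$ with $a\neq b$. Let $\Lambda$ be the Hamiltonian swap $L\to S^1(b)\times S^1(a)$ followed by the path through product tori back to $L$. Its monodromy is the swap $\sigma$, and
\[
F=\Flux\Lambda=\sigma^*[\lambda]_L-[\lambda]_L=\pi(b^2-a^2)(e_1^*-e_2^*)\neq 0 .
\]
Since $\sigma^*(e_1^*-e_2^*)=-(e_1^*-e_2^*)$, your Step 1 gives $\Flux(\Lambda*\Lambda)=0\neq 2F$. With this correction, your argument is the $\C^2$ analogue of the paper's example. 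There, the circle $S^1_a\subset S^2$ is pushed to $S^1_{-a}$ and rotated back, sweeping area $4\pi a$ with monodromy $-1$ on $H_1$. The sphere example is slightly simpler because the reversal of $H_1$ is immediate and no torus computation is needed.
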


This comes from the fact that, \textit{seen as 1-cycles in $L$}, the end loops of a cylinder in $M$ need not be homologous, as illustrated by the following example.

\begin{example}
  \label{exple:Flux_not_morphism}
Let $X$ be the unit 2-sphere equipped with the standard area form. Denote by $S^1_h = \{z = h\}$ the circle of constant height $h \in (-1,1)$, and fix a Lagrangian $L = S^1_a$ for some $a \in (0,1)$. Let $\Lambda \in \pi_1(\cl)$ be the loop obtained by concatenation of the Lagrangian deformation $t \mapsto S^1_{(1-2t)a}$ for $t \in [0,1]$, which deforms $S^1_a$ to $S^1_{-a}$, with a rotation of the sphere mapping $S^1_{-a}$ back to $S^1_a$. For $\xi$ a generator of $H_1(L)$, we obtain $\Flux \Lambda \cdot \xi = 4\pi a$, the area of the cylinder bounded by $S_a^1$ and $S_{-a}^1$. This is because the rotation is realized by a Hamiltonian isotopy and thus has vanishing flux. However, $\Lambda$ maps $\xi$ to $-\xi$, and hence 
\begin{equation*}
	\Flux (\Lambda * \Lambda) \cdot \xi
	= \Flux \Lambda \cdot \xi + \Flux \Lambda \cdot (-\xi)
	= 4\pi a - 4\pi a =0. \hfill\qedhere
\end{equation*}

\begin{figure}[ht]
	\centering
	\begin{tikzpicture}[scale=2]
		\shade[ball color=white,opacity=0.25](0,0)circle (1);
		\draw(0,0)circle(1);
		\draw[thick,purple](0.866,0.5)arc(0:180:0.866cm and -0.25cm);
		\draw[thick,dashed,purple](0.866,0.48)arc(0:180:0.866cm and 0.25cm) node[anchor=east] {$S^1_a$};
	\end{tikzpicture}
	\qquad
	\begin{tikzpicture}[scale=2]
		\shade[ball color=white,opacity=0.25](0,0)circle (1);
		\draw(0,0)circle(1);
		
		\fill[magenta,opacity=0.4] (0.866,0.5)arc(0:180:0.866cm and -0.25cm) -- (-0.866,0.5)arc(150:210:1) -- (-0.866,-0.48)arc(180:0:0.866cm and -0.25cm) -- (0.866,-0.5)arc(330:390:1);
		\fill[magenta,opacity=0.1] (0.866,0.48)arc(0:180:0.866cm and -0.248cm) -- (0.866,0.48)arc(0:180:0.866cm and 0.25cm);
		
		\draw[thick,purple!40](0.866,0.5)arc(0:180:0.866cm and -0.25cm);
		\draw[thick,purple!40,dashed](0.866,0.48)arc(0:180:0.866cm and 0.25cm);
		\draw[thick,purple](0.866,-0.48)arc(0:180:0.866cm and -0.25cm);
		\draw[thick,purple,dashed](0.866,-0.5)arc(0:180:0.866cm and 0.25cm) node[anchor=east] {$S^1_{-a}$};
	\end{tikzpicture}
	\qquad
	\begin{tikzpicture}[scale=2]
		\shade[ball color=white,opacity=0.25](0,0)circle (1);
		\draw(0,0)circle(1);
		\draw[thick,purple](0.866,0.5)arc(0:180:0.866cm and -0.25cm);
		\draw[thick,purple,dashed](0.866,0.48)arc(0:180:0.866cm and 0.25cm) node[anchor=east] {$S^1_{a}$};
		\draw[thick,purple!40](0.866,-0.48)arc(0:180:0.866cm and -0.25cm);
		\draw[thick,purple!40,dashed](0.866,-0.5)arc(0:180:0.866cm and 0.25cm);
		\draw[->,thick] (1,0) -- (1.3,0);
		\draw[thick,gray] (-1,0) -- (1,0);
		\draw[thick] (-1.3,0) -- (-1,0);
		\draw[->,very thick] (1.2,-0.1) arc (330:30:0.07cm and 0.2cm);
	\end{tikzpicture}
\caption{The Lagrangian loop $\Lambda$ from \cref{exple:Flux_not_morphism}.}
\label{fig:sphere-ex}
\end{figure}
\end{example}

This example suggests that the defect of the flux map \eqref{eq:flux_intro} can be fixed by incorporating information about the action of $\Lambda$ on $L$. To that end, we define the \emph{monodromy} $\cm(\Lambda) \in \MCG(L)=\pi_0(\operatorname{Diff}(L))$ of $\Lambda \in \pi_1(\cl)$ as the class of the self-map of $L$ induced by $\Lambda$.

\begin{definition}
\label{def:flux_monodromy}
Let $L \subset X$ be a closed connected Lagrangian and $\cl$ its connected component in the space of Lagrangians. We define the \emph{Lagrangian flux-monodromy morphism} by 
\begin{align*}
	\FM = \FM_L \colon \pi_1(\cl) & \rightarrow H^1(L;\R) \rtimes \MCG(L)\\
	\Lambda & \mapsto (\Flux \Lambda, \cm(\Lambda)),
\end{align*}
where $\MCG(L)$ acts on $H^1(L;\R)$ by pullback. 
\end{definition}

\subsection{Discreteness of the flux-monodromy group}
\label{sec:discr-flux-group}

In contrast to the symplectic case, not only is the image of the Lagrangian flux map \eqref{eq:flux_intro}
	\begin{equation*}
		\fl(L) = \Flux\big(\pi_1(\cl, L)\big) \subset H^1(L;\R)
	\end{equation*} 
not a subgroup of $H^1(L;\R)$, but it also need not be discrete in general. Indeed, $\fl(L)$ is known to be non-discrete for many product tori in $\R^{2n}$ when $n \geqslant 3$ by \cite{Che96}.
Based on Chekanov's work, \cite[Theorem 1]{ChaLec24} provides Lagrangian tori whose associated flux map accumulates at $0 \in H^1(L;\mb R)$ in any symplectic manifold of dimension $2n \geqslant 6$. Examples in certain 4-manifolds can be constructed using, for example, \cite[Theorem 1.5]{CheSch16} or symmetric probes, as in \cite{Bre23,BreKim23}. 

\begin{definition}
	The \emph{flux-monodromy group} of $L$ is defined as
	\begin{equation}
		\Gamma(L) = \FM_L(\pi_1(\cl)) \subset H^1(L;\R) \rtimes \MCG(L).
	\end{equation} 
\end{definition}

As follows from \cref{thm:A} below, this group turns out to be discrete in all examples of the above paragraph. It is therefore natural to ask whether this is always true.

\begin{question}
	Is $\Gamma(L) \subset H^1(L;\R) \rtimes \MCG(L)$ discrete?
\end{question}

The answer to this question depends on the group of periods $\omega(H_2(X;\mb Z))$ and, more particularly, on two of its subgroups:
\begin{align}
	\label{eq:periodsrestricted}
	\begin{split}
		\mathcal{P}(X) &= \left\{ \langle \omega , S \rangle \in \R \sth S \in \ker (c_1 \colon H_2(X;\mb Z) \rightarrow \Z) \right\}, \mbox{ and}\\
		\mathcal{P}_S(X) &= \left\{ \langle \omega , S \rangle \in \R \sth S \in \ker (c_1\circ h_2 \colon \pi_2(X) \rightarrow \Z) \right\}
	\end{split}
\end{align}
with $h_2 \colon \pi_2(X) \to H_2(X;\mb Z)$ being the second Hurewicz map.
As usual, $c_1 \in H^2(X;\mb Z)$ denotes the first Chern class of $(TX,J)$ for any $\omega$-compatible almost complex structure $J$.

\begin{theorem}
	\label{thm:A}
	If the flux-monodromy $\Gamma(L)$ is discrete for all Lagrangians $L \subset X$, then the group $\mathcal{P}_S(X)$ is discrete. Conversely, if $\mathcal{P}(X)$ is discrete, then $\Gamma(L)$ is discrete for all Lagrangians $L \subset X$.
\end{theorem}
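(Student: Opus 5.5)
The plan is to treat the two implications separately. The converse implication is essentially a Maslov-class computation. The direct implication needs an explicit construction of Lagrangian loops.

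\emph{Step 1: reduction to trivial monodromy.} Since $\MCG(L)$ is discrete, a neighbourhood of the identity in $H^1(L;\R)\rtimes\MCG(L)$ can be chosen inside $H^1(L;\R)\times\{\id\}$. So $\Gamma(L)$ is discrete if and only if the subgroup $\Gamma_0(L)=\Gamma(L)\cap (H^1(L;\R)\times\{\id\})$ is discrete. This subgroup consists of the fluxes of loops $\Lambda$ with $\cm(\Lambda)=[\id]$.

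\emph{Step 2: fluxes of such loops are periods.} Take $\Lambda=\{L_t\}$ with trivial monodromy, and parametrize it as $L_t=\phi_t(L)$ with $\phi_t$ embeddings. The endpoint $\phi_1$ is isotopic to the identity through diffeomorphisms of $L$. Concatenating with that isotopy inside $L_1=L$ does not change the flux and makes $\phi_1=\id$. For each loop $\gamma$ in $L$, the map $T_\gamma\colon S^1\times S^1\to X$, $(t,s)\mapsto \phi_t(\gamma(s))$, is then a torus, and $\Flux\Lambda\cdot[\gamma]=\omega(T_\gamma)$. Along $T_\gamma$, the bundle $T_\gamma^*TX$ contains the Lagrangian subbundle $E$ with fibre $T_{\phi_t(\gamma(s))}L_t$, which is closed up because $\phi_1=\id$. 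Hence $T_\gamma^*TX\cong E\otimes\C$, and $2c_1(T_\gamma^*TX)=0$, so $\langle c_1,T_\gamma\rangle=0$. Therefore $\omega(T_\gamma)\in\mathcal P(X)$, and $\Gamma_0(L)\subset \operatorname{Hom}(H_1(L;\Z),\mathcal P(X))$. If $\mathcal P(X)$ is discrete, it is cyclic. Since $H_1(L;\Z)$ is finitely generated, this Hom-group is a lattice-type discrete subset of $H^1(L;\R)$. This proves the second assertion of the theorem.

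\emph{Step 3: the direct implication, by contraposition.} Assume $\mathcal P_S(X)$ is not discrete. Then there are spheres $S_k\in\pi_2(X)$ with $c_1(S_k)=0$ and $0\neq\omega(S_k)\to 0$. Fix a small product torus $L=S^1(r)\times S^1(r)\times\dots$ (times a small Lagrangian factor in dimension $2n>4$) in a Darboux ball around a point $p$. For each $k$, the aim is a Lagrangian loop $\Lambda_k$ based at $L$ with trivial monodromy. In it, the first circle factor $\gamma$ of $L$ should sweep a torus homotopic to $S_k$ composed with the pinch map, while the other generators sweep null-homologous tori. Then $\FM(\Lambda_k)=(\omega(S_k)\,e^*,\id)$ with $e^*$ dual to $[\gamma]$, which gives accumulation at $0$.

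\emph{Step 4: realizing the sphere (the main obstacle).} To build $\Lambda_k$, I would sweep $S_k$ by a one-parameter family of small isotropic circles: shrink $\gamma$ to near a point, move the point around a representative of $S_k$ viewed as a loop of based loops, and re-expand. All along, $\gamma$ carries a thin Weinstein neighbourhood in which the rest of $L$ is transported. This thickening is where I would use a parametric Weinstein neighbourhood / isotropic h-principle argument.

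The obstruction to closing the family up into a loop of Lagrangian framings is the Maslov index around the swept sphere, which is $2c_1(S_k)=0$. This is exactly where the hypothesis $c_1(S_k)=0$ enters, and why only spherical classes are used. The remaining bookkeeping consists of two checks:
\begin{itemize}
\item the area swept by $\gamma$ differs from $\omega(S_k)$ only by the areas of the shrinking and expanding cylinders, which cancel because the loop returns to the same torus;
\item the other factors sweep tori that bound, so they contribute zero flux.
\end{itemize}
I expect the careful construction of this closed-up loop of Lagrangian tori, with control of its monodromy, to be the hardest part.
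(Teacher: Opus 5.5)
Your converse direction (Steps 1--2) is essentially the paper's argument: reduce to $\Gamma_0(L)$ as in \cref{rk:discreteness_equiv}, then show that fluxes of monodromy-free loops lie in $H^1(L;\cp(X))$. The only variation is how you get $c_1=0$ on the swept torus. You correct $\phi_1$ by an isotopy in $\operatorname{Diff}_0(L)$, close up the tangent Lagrangian bundle and complexify. The paper instead uses that the swept cylinder has Maslov index $0$ (\cref{prop:Maslov-cyl}); this only needs $\cm(\Lambda)$ to fix $\xi$ in homology, so it also yields \cref{prop:discreteness2} for $\Gamma^{\Z}(L)$.

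For the forward direction your route is genuinely different. The paper adapts \cite[Theorem 1.5]{CheSch16}. It takes the Chekanov--Schlenk Hamiltonian isotopy $\chi$, which equals $\varphi\circ\psi_{m,s}\circ\varphi^{-1}$ near an isotropic circle, with $m=c_1(S)=0$. This $\chi$ moves $\varphi(T(\dots,a+d_j))$ to $\varphi(T(\dots,a+e_j))$ with trivial monodromy, and the loop is closed by the obvious isotopy. You instead build a Lagrangian loop by hand, dragging a circle of $L$ with a thin Weinstein tube around the sphere.

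Your approach is soft and self-contained: a parametric isotropic neighbourhood theorem plus a Chern class computation. It also shows transparently where $\ker c_1$ enters. The obstruction to closing up is $c_1$ of the symplectic normal bundle of $\gamma_\tau$ over the swept torus. This equals $\langle c_1,S_k\rangle$, because $\dot\gamma_\tau$ trivializes the complex line it spans. A nonzero value $m$ would appear precisely as the shear in \eqref{eq:chi_mon}. The paper's route gives more than the theorem needs: its loops are a Hamiltonian isotopy followed by a short obvious isotopy, so it also produces product tori Hamiltonian isotopic to $L$ and accumulating at it. The cost is importing Chekanov--Schlenk's construction and its smallness assumptions.

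Two points in your Step 4 need tightening.

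\emph{The circles cannot be small throughout.} A family of circles of small diameter sweeps a null-homologous torus. The circles must follow the based loops $\ell_\tau$ representing $S_k$, being tiny only near $\tau=0,1$. They must stay embedded, which holds by general position: a one-parameter family of circles in dimension $\geqslant 4$ generically has no double points. What is small is the thickness of the tube.

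\emph{All $\Lambda_k$ must be based at one torus, with a $k$-independent identification of $H_1$.} Prescribe the normal trivialization at $\tau=0$ to be the product one; this is possible because on the torus you can correct a trivialization by a map depending only on $s$. Undo any $k$-dependent thinning of the tube by obvious isotopies through product tori, which act trivially on $H_1$. Otherwise, conjugating back to $L$ could multiply the small fluxes $\omega(S_k)e^*$ by unbounded integral matrices.
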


We suspect that the discreteness of $\Gamma(L)$ is actually equivalent to that of $\cp(X)$, but we were unable to prove it. Note that the first claim in \cref{thm:A} relies on work by Chekanov--Schlenk \cite{CheSch16}.

Combining information on both the flux set $\fl(L)$ and the flux-monodromy group $\Gamma(L)$ of a given Lagrangian $L$ yields interesting observations. Assume, for example, that 0 is not isolated in $\fl(L)$ but that $\Gamma(L)$ is discrete. This means that the smaller the flux of a loop in $\pi_1(\cl,L)$ is, the more complicated is their monodromy. This happens, for example, in the case of product tori in $\R^{2n}$ for $n \geqslant 3$, as proved by Chekanov~\cite{Che96}. Here is a 4-dimensional example where this occurs.

\begin{example}
	\label{ex:S2S2}
	Let $X_a$ be $S^2 \times S^2$ equipped with $(a\,\omega_{S^2}) \oplus \omega_{S^2}$, with $a \in [1,+\infty)$. Note that if $a \notin \Q$, the group of periods $\omega(H_2(X_a;\mb Z))$ of $X_a$ is not discrete while  $\m P(X_a)$ is, since $\ker c_1 \subset H_2(X_a)$ has rank one. Therefore, \cref{thm:A} ensures discreteness of $\Gamma(L)$ for any Lagrangian $L$ in $X_a$.
	When $a \neq 1$, it was shown in \cite{BreKim23} that there are product tori $L \subset X_a$ for which there is a sequence of loops $\{\Lambda_i\}\subset \pi_1(\cl,L)$ with $\Flux \Lambda_i \rightarrow 0$. Since $\Gamma(L)$ is discrete, the corresponding sequence of monodromies $\{\cm(\Lambda_i)\}$ is unbounded. 
\end{example}

\subsection{Topological consequences}
\label{sec:topol-cons}

The relevant analogue of the Hamiltonian group in the Lagrangian setting is the orbit $\LHam(L)$ of a Lagrangian $L$ under the action of $\Ham(X)$ on the space of Lagrangians $\cl$, that is
$$\LHam(L) \defi \{ \phi(L) \;|\; \phi\in \Ham(X) \}.$$
We also consider the orbit $\LHam(\iota)$ of a Lagrangian \emph{embedding} $\iota:L\hookrightarrow X$ in the space $\pcl$ of Lagrangian embeddings $L\hookrightarrow X$ that are homotopic through Lagrangian embeddings to $\iota$.

On the one hand, the flux set of $L$ is connected to topological properties of $\LHam(L)$ in $\cl$. On the other hand, the discreteness of the flux-monodromy group gives information on the parametrized objects, as shown by the following two propositions.

\begin{proposition} \label{prop:FM-top}
  If the flux-monodromy group $\Gamma(L)$ of $L$ is discrete, then the Hamiltonian orbit $\LHam(\iota)$ of $\iota$ is $C^1$-locally path connected and $C^1$-closed in $\pcl$.

  Conversely, suppose that the map $\iota_*:H_1(L;\R)\rightarrow H_1(X;\R)$ induced by $\iota$ is injective. If $\LHam(\iota)$ is either $C^1$-locally path connected or $C^1$-closed in $\pcl$, then $\Gamma(L)$ is discrete.
\end{proposition}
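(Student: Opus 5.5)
We work with the standard local model. By Weinstein's neighbourhood theorem, $C^1$-small deformations of $\iota$ in $\pcl$ correspond, up to reparametrization, to graphs of closed $1$-forms $\alpha$ on $L$. Such a graph lies in $\LHam(\iota)$ (as an unparametrized Lagrangian) exactly when $[\alpha]$ lies in the flux set $\fl(L)$. For the parametrized orbit we also have to keep track of the parametrization. The key intermediate claim is the following: an embedding $\iota'$ near $\iota$, whose image is the graph of $\alpha$ with the parametrization close to the graph map, lies in $\LHam(\iota)$ if and only if $([\alpha],\1)\in\Gamma(L)$. The ``close to identity'' parametrization gives trivial monodromy, and we may isotope the reparametrization diffeomorphism to the identity, which is Hamiltonian-invisible.

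Proof of the claim. Suppose $\phi\circ\iota=\iota'$ with $\phi\in\Ham(X)$. Concatenating a Hamiltonian path from $\iota$ to $\iota'$ with the short graph path back to $\iota$ gives a loop $\Lambda$. Its flux is $-[\alpha]$, since the Hamiltonian part has no flux. Its monodromy is trivial, because as parametrized embeddings the loop closes up with the identity reparametrization. Hence $(-[\alpha],\1)\in\Gamma(L)$, and since $\Gamma(L)$ is a group, also $([\alpha],\1)\in\Gamma(L)$. Conversely, given a loop $\Lambda$ with $\FM(\Lambda)=([\alpha],\1)$, lift it to a path of embeddings. Trivial monodromy lets us close the endpoint up to an isotopy of $L$. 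Then insert the graph path of $-\alpha$. The resulting path of embeddings from $\iota$ to the graph of $\alpha$ has vanishing total flux. A path with zero total flux can be deformed, rel endpoints, to one with zero flux at all times: correct the flux along the path by small graph perturbations, which is possible since $H^1$ is a vector space and the correction can be chosen to vanish at the endpoints. Such a path is Hamiltonian, so the graph of $\alpha$ with the right parametrization lies in $\LHam(\iota)$.

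First direction. Assume $\Gamma(L)$ is discrete. Then $\Gamma(L)\cap (H^1(L;\R)\times\{\1\})$ is discrete, so there is a neighbourhood $U$ of $0$ in $H^1(L;\R)$ meeting it only in $0$. By the claim, near $\iota$ the orbit $\LHam(\iota)$ is exactly the set of parametrized exact graphs. This set is path connected and closed in a neighbourhood. By homogeneity under $\Ham(X)$, which acts by homeomorphisms on $\pcl$, the same holds near every point of the orbit, giving local path connectedness. For closedness, take a limit point $\iota''$ of $\LHam(\iota)$ and work in a Weinstein chart around $\iota''$. Orbit points near $\iota''$ correspond to classes in the translate of $\Gamma(L)\cap(H^1\times\{\1\})$, transported by a fixed loop, and discreteness forces these classes to be eventually constant. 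So $\iota''$ lies in the orbit.

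Converse. Suppose $\Gamma(L)$ is not discrete. The first task is to extract elements $([\alpha_k],\1)$ with $[\alpha_k]\to 0$ and $[\alpha_k]\neq 0$. A sequence $(a_k,m_k)\to(0,\1)$ in the semidirect product has $m_k=\1$ eventually, since $\MCG(L)$ is discrete. The claim then produces orbit points $\iota_k$ near $\iota$ that are non-exact graphs, converging to $\iota$. To contradict local path connectedness, we need that these $\iota_k$ cannot be joined to $\iota$ inside a small neighbourhood. Within the neighbourhood the orbit is a subset of graphs whose classes lie in a countable set. A continuous path would then have constant class unless $\Gamma(L)\cap(H^1\times\{\1\})$ contains continua, which it cannot, being countable as the image of the countable group $\pi_1(\cl)$. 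Hence every path stays at class $0$, a contradiction.

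For closedness we use the hypothesis that $\iota_*$ is injective. Choose $a$ in the closure of $\Gamma(L)\cap(H^1\times\{\1\})$ but not in it; such $a$ exists because that set is a countable non-discrete subgroup. The injectivity of $\iota_*$ means $H^1(X)\to H^1(L)$ is surjective. So $a$ is realized by the restriction of a closed form $\beta$ on $X$, and the symplectic flow of the dual vector field of $\beta$ gives a symplectic isotopy $\psi_t$ with $\Flux=a$ along $L$. Then $\psi_1\circ\iota$ is a limit of orbit points $\psi_1$-modified by Hamiltonian corrections, but it is not itself in the orbit. \textbf{Main obstacle:} making this converse precise. We must show $\psi_1\circ\iota$ is a $C^1$-limit of points of $\LHam(\iota)$ and that the near-identity parametrizations are respected. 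I would first attempt it by composing $\psi_1$ with Hamiltonian realizations of the nearby elements $a_k\to a$ and using that symplectic isotopies with the same flux differ by Hamiltonian ones.
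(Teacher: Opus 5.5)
Your forward direction is sound and close to the paper's. Orbit points $C^1$-close to $\iota$ produce parametrized loops with trivial monodromy and small flux, so discreteness of $\Gamma_0(L)=\Gamma(L)\cap(H^1(L;\R)\times\{\id\})$ forces them to be exact graphs (\cref{lem:FM-discrete_to_path-connected}). Your closedness argument is the sequence argument of \cref{prop:discrete_closed_unparam} run with parametrizations, which is a fine substitute for the regular-value argument of \cref{prop:discrete_closed_param}.

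\textbf{The gap is in the converse.} The ``if'' half of your key claim goes from $([\alpha],\id)\in\Gamma(L)$ to the parametrized graph of $\alpha$ lying in $\LHam(\iota)$. It rests on the assertion that a Lagrangian isotopy with zero total flux can be deformed rel endpoints to one whose flux vanishes at all times. That is false. There is a Lagrangian isotopy from $\tcl$ to $\tch$ in $\C^2$ with vanishing total flux (\cref{rem:flux-0_to_Ham}). If it could be so deformed, \cref{prop:path-lag} would make these tori Hamiltonian isotopic. Your graph-perturbation correction breaks down because the running flux $\Flux(\{L_\tau\}_{0\leqslant\tau\leqslant t})$ need not be small in the middle of the path (e.g.\ along the loop $\Lambda$ itself), so it cannot be absorbed in a Weinstein neighbourhood of $L_t$. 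Tellingly, your proof of the local-path-connectedness converse never uses the injectivity of $\iota_*$.

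That hypothesis is exactly the missing idea. If $\iota_*$ is injective, then $\partial\colon H_2(X,L;\R)\to H_1(L;\R)$ vanishes. By the area identity \eqref{eq:areaformula}, every Lagrangian isotopy then preserves the periods and is generated by an ambient symplectic isotopy $\{\psi_t\}$ (\cref{prop:path-lag}, \cref{rem:surj_boundary-map}). Its symplectic flux restricts on $L$ to the Lagrangian flux. Classes in $\ker\iota^*$ are represented by closed forms vanishing near $L$ (and near the nearby endpoint), so a zero-flux Lagrangian isotopy can be re-generated by a symplectic isotopy of zero symplectic flux. Banyaga's theorem then deforms it to a Hamiltonian one (\cref{prop:flux_C1-conjA_injection}). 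Together with \cref{lem:nondiscrete-flux_flux-C1-conjA}, this puts your $\iota_k$ in $\LHam(\iota)$ and gives the local-path-connectedness converse.

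The same ingredient completes the closedness step you flagged. Pick closed forms $\beta_k\to\beta$ on $X$ with $\iota^*[\beta_k]=a_k\in\Gamma_0(L)$, and let $\psi^{(k)}_1$ be the time-one maps of their flows. The loop realizing $a_k$ is symplectically generated, so Banyaga's theorem and normality of $\Ham(X)$ in $\Symp_0(X)$ give $\psi^{(k)}_1\circ\iota\in\LHam(\iota)$. These converge in $C^1$ to $\psi_1\circ\iota\notin\LHam(\iota)$.
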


\begin{proposition} \label{prop:Flux-top}
	If 0 is isolated in the flux set $\fl(L)$, then $\LHam(L)$ is $C^1$-locally path connected and $C^1$-closed in $\cl$. 
	
	Conversely, suppose either that $\iota_*:H_1(L;\R)\rightarrow H_1(X;\R)$ is injective or that $L$ is a product torus in $\C^n$. If $\LHam(L)$ is either $C^1$-locally path connected or $C^1$-closed in $\cl$, then $\fl(L)$ is discrete. 
\end{proposition}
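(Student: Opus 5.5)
The plan is to reduce everything to the local structure of $\cl$ near $L$, via Weinstein's neighbourhood theorem. A $C^1$-small Lagrangian near $L$ is the graph of a closed $1$-form $\sigma$ on $L$, identified with a section of $T^*L$ inside a Weinstein neighbourhood. Such a graph lies in $\LHam(L)$ precisely when some Lagrangian path from $L$ to it, composed with the local straight path back, has Hamiltonian character.

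The key local lemma is the following. Write $\Lambda$ for the loop obtained by concatenating a Hamiltonian path from $L$ to $\phi(L)=\Gamma_\sigma$ with the straight path $t\mapsto \Gamma_{(1-t)\sigma}$ back to $L$. Then $\Flux \Lambda = -[\sigma]$, since the Hamiltonian part contributes nothing and the straight path sweeps flux $[\sigma]$. Conversely, any loop $\Lambda$ based at $L$ with flux $[\sigma]$ small produces the graph of a closed form in $\LHam(L)$. To see this, deform $L$ along $\Lambda$ and correct by the local Lagrangian isotopy of flux $-\Flux\Lambda$; the result has zero flux at all times after reparametrisation. Hence it is Hamiltonian, by the criterion quoted in the introduction (vanishing flux for all $\tau$).

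First part (0 isolated in $\fl(L)$). Assume $\Gamma_\sigma\in \LHam(L)$ with $\sigma$ $C^1$-small. By the lemma, $[\sigma]\in -\fl(L)$ is small, so $[\sigma]=0$ and $\sigma=df$. The straight path is then exact, hence Hamiltonian. This shows that a $C^1$-neighbourhood of $L$ in $\LHam(L)$ is exactly the set of graphs of small exact forms. That set is path connected, and it is closed in the neighbourhood since exactness is a closed condition. Homogeneity then transfers this to every point: $\Ham$ acts transitively on $\LHam(L)$, and $\fl(\phi(L))=\phi^*$-image of $\fl(L)$, so 0 is also isolated at each point. This gives local path-connectedness. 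For closedness, take $L'\in\overline{\LHam(L)}$. Points of $\LHam(L)$ near $L'$ are graphs over $L'$ of closed forms, and their cohomology classes lie in a translate of $\fl(L')$ in a fixed small ball. Isolation at $L'$ is needed here, and it is not immediate, since $L'$ is not known to be in the orbit. The way around this is to use two orbit points $\phi(L),\psi(L)$ near $L'$: the class difference $[\sigma_1]-[\sigma_2]$ is the flux of a loop at $\phi(L)$, hence lies in $\fl(\phi(L))$. So all these classes are equal to a single class $c$, and $\Gamma_c$-type graphs with exact differences accumulate. Hence $L'$ itself is the graph over $\phi(L)$ of a form cohomologous to an exact one, so $L'\in\LHam(L)$.

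Converse. Suppose $\fl(L)\ni a_k\to 0$ with $a_k\ne 0$. In the product-torus case, use the explicit toric structure: $a_k$ is realised by loops whose endpoints are graphs of the constant forms $a_k$. In the injective case, use the fact that flux of any loop whose monodromy acts trivially on $H_1(L;\R)$ survives. Injectivity of $\iota_*$ forces the monodromy to act trivially on $H^1$, because a loop preserves the images in $H_1(X)$. This makes $\Flux$ a genuine morphism, and lets us produce from $a_k$ a sequence of orbit points $\Gamma_{\sigma_k}$, $[\sigma_k]=a_k\ne0$, converging to $L$. This violates local path-connectedness, since a path in the orbit starting at $L$ would have continuously varying flux class, which is forced to be locally constant with value 0 by discreteness considerations near small $a_k$. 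For closedness, one instead builds a convergent sequence of the form $\Gamma_{\sum_{j\le k} a_{n_j}}$ with $\sum a_{n_j}$ converging to a class $c\notin$ the flux group. This uses the subgroup structure in the injective case, and the explicit lattice-like sets from Chekanov's description in the toric case. The limit is then not in the orbit.

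The main obstacle is this converse, especially the closedness direction. Producing limit points outside the orbit requires knowing that $\fl(L)$ is closed under addition, or at least contains enough sums. In the product-torus case this has to be checked by hand from \cite{Che96}.
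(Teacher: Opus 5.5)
Your forward direction follows the paper. The flux computation for ``Hamiltonian path to $\Gamma_\sigma$, then straight back'' is \cref{lem:FM-discrete_to_path-connected}, and your closedness argument, which builds loops through orbit points near the limit, is \cref{prop:discrete_closed_unparam}. One step there deserves a justification, though the paper skips it too. Your ``so all these classes are equal'' needs the isolation radius of $0$ in $\fl(\phi(L))\subset H^1(L')$ to stay bounded below as $\phi(L)\to L'$. Equivalently, you need Hamiltonian paths $L\to\phi(L)$ whose transport on $H^1$ stays bounded. The paper's claim that $\Flux\Lambda_i\to 0$ tacitly assumes the same thing, so this is not a divergence from the paper.

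The genuine gap is the converse half of your key local lemma. From a loop $\Lambda$ at $L$ with small flux $a$, you conclude that the graph of a form of class $\pm a$ lies in $\LHam(L)$, because $\Lambda$ followed by the straight path ``has zero flux at all times after reparametrisation''. This is false. Only the \emph{total} flux of the concatenation vanishes; the flux of the partial paths $\{\Lambda_s\}_{s\leqslant\tau}$ is uncontrolled, and no reparametrisation changes it. Zero total flux does not give a Hamiltonian isotopy, as the Clifford and Chekanov tori in $\C^2$ show. If your lemma held, the converse would need no hypothesis at all. That contradicts \cref{rem:flux-set_loc-connected_CP^2}, which gives toric fibres of $\CP^2$ with dense $\fl(L)$ but $C^1$-locally path-connected orbit.

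This step is exactly where the two hypotheses must enter, and you never use them for it. Injectivity of $\iota_*$ only makes $\Flux$ a homomorphism, which gives ``$0$ isolated $\Leftrightarrow$ discrete'' but produces no orbit points. For product tori you defer to unspecified input from \cite{Che96}.

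The paper proceeds as follows. \cref{lem:nondiscrete-flux_flux-C1-conjA} gives graphs $L_k\to L$ of non-exact classes $a_k$, each reachable from $L$ by a Lagrangian isotopy of zero total flux. These isotopies are then upgraded to Hamiltonian ones.
\begin{itemize}
\item For product tori, $L_k$ is again a product torus with the same area and Maslov data by \eqref{eq:areaformula}, so Chekanov's classification applies (\cref{prop:flux_C1-conjA_elem-tori}).
\item For injective $\iota_*$, the map $\partial\colon H_2(X,L;\R)\to H_1(L;\R)$ vanishes, so areas are preserved and the isotopy is generated by a symplectic isotopy (\cref{prop:path-lag}). Its symplectic flux lies in $\ker\iota^*$ and can be cancelled by a closed $1$-form vanishing near $L$. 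Banyaga's theorem then gives a Hamiltonian isotopy (\cref{prop:flux_C1-conjA_injection}).
\end{itemize}

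With this upgrade, local path-connectedness fails by \cref{prop:path-lag}, (b)$\Rightarrow$(a): a path inside $\LHam(L)$ has vanishing flux at all times, so nearby graphs along it keep class $0$. This replaces your vague ``discreteness considerations''. Closedness also fails more simply than with your partial sums. Pick a small class $[\sigma]\notin\fl(L)$ that is a limit of classes in $\fl(L)$; the graphs of those approximating classes are orbit points by the upgrade, while the graph of $\sigma$ is not.
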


\begin{remark} \label{rem:0-isolated_no-monodromy}
  \begin{enumerate}
    \item The condition that $H_1(L;\R)\to H_1(X;\R)$ be injective ensures that there cannot be any monodromy at the homological level.
      Therefore, in that case, the flux map is actually a morphism. In particular, 0 is isolated in $\fl(L)$ if and only if $\fl(L)$ is discrete.
    \item There is no hope to prove the converse statement of~\cref{prop:Flux-top} without hypotheses on $\iota_*$: some Lagrangians have dense flux set $\fl(L)$, but $C^1$-locally path-connected Hamiltonian orbit; see \cref{rem:flux-set_loc-connected_CP^2}.\qedhere
  \end{enumerate}
\end{remark}

We note that we can actually prove discreteness of the flux set in a few important cases, under the same condition on $\iota$ as above:
\begin{proposition} \label{prop:few-cases_discreteness}
	Let $L$ be a Lagrangian such that the inclusion induces an injection $H_1(L;\R)\hookrightarrow H_1(X;\R)$. Moreover, suppose that one of the following holds:
	\begin{enumerate}[label*=(\alph*)]
		\item $L$ is weakly exact;
		\item $L$ is a non-displaceable torus;
		\item $X$ is $H$-monotone, i.e.\ monotone over $H_2(X;\Z)$.
	\end{enumerate}
	Then, $\fl(L)$ is discrete, and $\LHam(L)$ is both $C^1$-locally path connected and $C^1$-closed in $\cl$. Moreover, in the last case, $\fl(L)=0$.
\end{proposition}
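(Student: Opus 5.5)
By \cref{prop:Flux-top}, it suffices to prove that $\fl(L)$ is discrete; the topological conclusions then follow, since $0$ is isolated in a discrete set. By \cref{rem:0-isolated_no-monodromy}(1), the injectivity hypothesis $H_1(L;\R)\hookrightarrow H_1(X;\R)$ implies that every loop acts trivially on $H^1(L;\R)$. Hence $\Flux$ is a group morphism on $\pi_1(\cl,L)$ and $\fl(L)$ is a subgroup of $H^1(L;\R)$, so I only need to show that $0$ is isolated in $\fl(L)$.

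The key computation describes $\Flux\Lambda\cdot\gamma$ for a loop $\Lambda=\{L_t\}$ and a 1-cycle $\gamma$ in $L$. The swept cylinder $C_\gamma$ has boundary $\gamma-\cm(\Lambda)_*\gamma$. Since $\iota_*$ is injective on real homology and $\Lambda$ induces the identity on $H_1(X)$, the class $\cm(\Lambda)_*\gamma-\gamma$ is torsion in $H_1(L;\Z)$. Replacing $\gamma$ by $k\gamma$ for a suitable $k$ (uniform, since $H_1(L;\Z)$ is finitely generated), I get $k\gamma-\cm(\Lambda)_*k\gamma=\partial D$ for a 2-chain $D$ in $L$. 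Then $C_{k\gamma}-D$ is a closed 2-cycle $S$ in $X$. Because $\omega|_L=0$, this gives $k\,\Flux\Lambda\cdot\gamma=\omega(S)$, with $S\in H_2(X;\Z)$. It follows that $k\,\fl(L)$ pairs with integral classes into the period group $\omega(H_2(X;\Z))$, and finer information about $S$ is what the three cases use.

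\textbf{Case (c).} $X$ is $H$-monotone, so $\omega(S)=\lambda c_1(S)$. I compute $c_1(S)$ through the Maslov class: $c_1(S)$ equals the Maslov index of the cylinder $C_{k\gamma}$ (computed with respect to the loop of Lagrangian tangent planes), up to a term from $D$ inside $L$, which contributes zero because it uses the same Lagrangian frame. The Maslov index of the cylinder swept by a Lagrangian isotopy equals $\mu_L(k\gamma)-\mu_L(\cm(\Lambda)_*k\gamma)$. The Maslov class of $L_t$ is transported continuously along the isotopy, and it is integral, hence invariant; so this difference vanishes. Therefore $\omega(S)=0$ and $\fl(L)=0$. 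The claim that the Maslov index of $S$ reduces to boundary Maslov terms is the step I expect to need the most care, particularly the identification of $2c_1(S)$ with the Maslov index of $S$ built from $C$ and $D$.

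\textbf{Case (a).} $L$ is weakly exact. For small loops $\Lambda$ I want $\omega(S)=0$, which gives isolation of $0$. The strategy is a Chekanov-type energy argument: a Lagrangian isotopy with small flux stays close to $L$, and $\omega(S)$ is an $\omega$-area class of a cylinder with both ends on Lagrangians $C^1$-close to $L$. A cleaner route is Weinstein neighbourhood rigidity. For $L_1=L$ with $\Flux$ small, I deform the loop within the Weinstein neighbourhood to a loop of graphs of closed forms, where the flux equals the class of the form. Weak exactness of $L$ (no holomorphic discs) then gives, via the Lagrangian Floer / Chekanov displacement energy bound, that the flux of a loop from $L$ back to $L$ is either $0$ or bounded below by a constant depending on $L$.

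\textbf{Case (b).} $L$ is a non-displaceable torus. I argue analogously: a loop with small nonzero flux would allow me to produce, by composing with a small symplectic shift, a Hamiltonian isotopy displacing $L$ from itself in a Weinstein neighbourhood. That contradicts non-displaceability, using the injectivity hypothesis to extend the shift to an ambient symplectic isotopy.

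I expect cases (a) and (b), which require the Floer-theoretic input, to be the main obstacle.
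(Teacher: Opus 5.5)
Your case (c) is correct and is essentially the paper's argument (\cref{prop:flux-monodromy_monotone} together with \cref{prop:Maslov-cyl}). You use multiples $k\gamma$ to kill the torsion, where the paper passes to a power $\Lambda^N$ with trivial homological monodromy; the two are equivalent. Phrase the Maslov step as $m_L([C_{k\gamma}])=0$ by \cref{prop:Maslov-cyl}. The expression $\mu_L(k\gamma)$ is not defined on $H_1(L)$ for general $X$. What you need is that $[C_{k\gamma}]=j(S)$ in $H_2(X,L)$, since $D$ lies in $L$, so $2c_1(S)=m_L(j(S))=0$.

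The genuine gap is in (a), and to a lesser extent (b).

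\textbf{The deformation step fails.} You cannot ``deform the loop within the Weinstein neighbourhood to a loop of graphs of closed forms.'' Closed forms whose graphs lie in the neighbourhood form a convex set, so every loop of such graphs is contractible and has zero flux. A loop with nonzero flux therefore never deforms into the neighbourhood, and small flux does not force the loop to stay near $L$.

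\textbf{The missing step} is converting a small-flux loop into a \emph{Hamiltonian} isotopy ending at a nearby graph; this is where the injectivity hypothesis really enters. Take $L'=\Psi(\mathrm{graph}\,\sigma)$ with $[\sigma]=-\Flux\Lambda$. Then $\Lambda$ followed by $t\mapsto\Psi(\mathrm{graph}\,t\sigma)$ is a Lagrangian isotopy from $L$ to $L'$ with vanishing total flux. By \cref{prop:flux_C1-conjA_injection} it is homotopic to a Hamiltonian one; that argument uses \cref{prop:path-lag}, a correction by a closed form on $X$ vanishing near $L$, and Banyaga's theorem. Only after this can Floer theory be applied to $L$ and $L'$.

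\textbf{The rigidity input for (a) is insufficient.} Weak exactness gives at best non-displaceability. On a general $L$ (e.g.\ with $\chi(L)\neq 0$), the graph of a nonexact form always meets $L$, so there is no contradiction and no lower bound on the flux. The paper instead combines two facts:
\begin{itemize}
\item Farber--Sch\"utz: every nonzero class in $H^1(L;\R)$ has a representative with at most one zero.
\item The degenerate Arnol'd conjecture for weakly exact Lagrangians (Floer, Hofer): $L$ meets any Hamiltonian image in at least $\mathrm{cuplength}(L)+1\geqslant 2$ points.
\end{itemize}

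\textbf{Case (b)} has the right shape and uses injectivity in the right place. You should state the torus-specific fact: the graph of a nonexact closed form is Hamiltonian isotopic, inside the Weinstein neighbourhood, to the graph of a nonzero constant form, which is disjoint from $L$.

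The paper uses the same ingredients in the other order. It proves $C^1$-local path connectedness directly and then obtains discreteness from the converse half of \cref{prop:Flux-top}.
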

In particular, this means that, when $X$ is $H$-monotone, its symplectic flux group $\Gamma_{X}$ is equal to the image of the Lagrangian flux group of the diagonal $\Delta\subseteq X\times X$ under the isomorphism $\iota^*:H^1(\Delta;\R)\xrightarrow{\sim} H^1(X;\R)$.

From Proposition~\ref{prop:Flux-top}, discreteness of $\fl(L)$ ensures that $\LHam(L)$ is locally path connected and closed in $\cl$ in the $C^1$ topology, for any Lagrangian torus $L$ in $\C^2$. Using techniques inspired by~\cite{ChaLec24}, we can prove the much stronger following statement.
\begin{theorem} \label{thm:B_Hausdorff}
	Let $L$ be a Lagrangian torus in $\C^2$. Its Hamiltonian orbit $\LHam(L)$ is locally path connected and closed in $\cl$ in the Hausdorff topology.
\end{theorem}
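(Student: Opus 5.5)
The plan is to reduce the Hausdorff statement to the $C^1$ one, via a local rigidity result for Lagrangian tori in $\C^2$ that controls flux under Hausdorff-small perturbations, in the spirit of \cite{ChaLec24}. For tori in $\C^2$ the inclusion map on $H_1$ is not injective, so we cannot use \cref{prop:few-cases_discreteness}. Instead we use that Lagrangian tori in $\C^2$ have discrete flux set: their monodromy is constrained (Maslov class preserved, area class up to the known symmetries), and the flux set is contained in a finite union of translates of lattices determined by the symplectic area and Maslov classes of $L$. This gives the $C^1$ version through \cref{prop:Flux-top}. The remaining task is to show that Hausdorff closeness of Lagrangians in $\cl$ already forces the flux to be small.

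\emph{Step 1 (Hausdorff control of areas).} Let $L' \in \cl$ be Hausdorff-close to $L$, so $L'$ lies in a small Weinstein neighbourhood $U \cong D^*_\varepsilon T^2$ of $L$. I would show that $L'$ is then isotopic inside $U$ to a graph-like Lagrangian whose class $[\lambda|_{L'}]$ is $\varepsilon$-close to a class of the form $\phi^*$ of the area class, with $\phi$ a homological monodromy. The tool is the \cite{ChaLec24} technique: enlarge $L$ to a nearby torus in a nice neighbourhood and use Floer-theoretic or holomorphic-disk area estimates (for example Chekanov's displacement-energy bounds) to bound the symplectic areas of $L'$ by those of $L$ up to $O(\varepsilon)$. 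The ingredient specific to dimension four is that a Lagrangian torus in $U\subset T^*T^2$ which is Hausdorff-close to the zero section is homologically essential (by Dimitroglou Rizell--Goodman--Ivrii, any Lagrangian torus in $T^*T^2$ is Hamiltonian isotopic to a section). So $L'$ is Hamiltonian isotopic inside $U$ to the graph of a closed 1-form whose class is small by the area estimate.

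\emph{Step 2 (closedness and local path connectedness).} Closedness: take $\phi_k(L)\to L'$ in the Hausdorff topology with $L'\in\cl$. Swap roles and apply Step 1 around $L'$. Then $\phi_k(L)$ is Hamiltonian isotopic to a graph over $L'$ of small flux class, and that class is forced to vanish by discreteness of the flux set. Hence $\phi_k(L)$ is Hamiltonian isotopic to $L'$, so $L'\in\LHam(L)$. Local path connectedness: a Hausdorff-small neighbourhood of $L$ intersected with $\LHam(L)$ consists, by Step 1, of tori Hamiltonian isotopic within $U$ to exact graphs. These connect to $L$ through paths staying inside $U$, and so staying Hausdorff-close to $L$.

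The main obstacle is Step 1, specifically the uniformity. We need that the Hamiltonian isotopy inside $U$ from $L'$ to a section keeps the path Hausdorff-close. We also need the area estimate to hold with constants independent of $L'$, even though $L'$ may be wildly non-graphical. I would first try the nearby Lagrangian theorem for $T^*T^2$ (Dimitroglou Rizell--Goodman--Ivrii), which yields isotopies supported in $U$ automatically. The flux smallness would then come from comparing displacement energies or shape-type invariants of $L'$ and of the section, both trapped between two fibrewise-scaled neighbourhoods.
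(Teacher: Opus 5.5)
Your reduction to the $C^1$ statement is fine, and so is your argument for discreteness of $\fl(L)$. Since $H_2(\C^2,L)\cong H_1(L)$, the area identity gives $\Flux\Lambda=\cm(\Lambda)^*a-a$, where $a\in H^1(L;\R)$ is the area class and $\cm(\Lambda)$ fixes the Maslov class; this already forces discreteness.

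The gap is in Step~1. The fact you attribute to Dimitroglou Rizell--Goodman--Ivrii is false. A small product torus in a Darboux ball inside $T^*\T^2$ has $(\pi_{L'})_*=0$ on $H_1$, whereas a section has $(\pi_{L'})_*$ an isomorphism, and this is invariant under any isotopy. What is true is that tori \emph{homologous to the zero section} are Hamiltonian isotopic, inside the disk bundle, to graphs of closed $1$-forms \cite[Theorem~B(1)]{Riz19}.

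Hausdorff-closeness to the zero section also does not give homological essentiality. Take an embedded $\delta$-dense curve $c\subset\T^2$ and a thin torus $c\times S^1(\eta)$ in a standard neighbourhood of the isotropic circle $c$. It lies in $D^*_\varepsilon\T^2$ and is Hausdorff-close to $\T^2$, yet its small circle factor is contractible in $T^*\T^2$. This is exactly the phenomenon in the remark closing \S\ref{sec:hausdorff-local-path}. So Step~1, stated for arbitrary $L'\in\cl$, is false. It can only hold for $L'$ in the Hamiltonian orbit, and your argument never uses that hypothesis where it matters. The uniformity issues you flag are not the real obstacle.

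The missing idea is a Hamiltonian invariant that is bounded below on $\LHam(L)$ but necessarily small for homologically inessential tori in thin neighbourhoods. The paper uses $A^2_{\min}$, the minimal positive area of Maslov-$2$ disks.
\begin{itemize}
\item $A^2_{\min}(\C^2,L)>0$. Indeed, \cite{CieMoh18} provides a Maslov-$2$ class of positive area, and Maslov-$0$ classes form a rank-one subgroup of $\pi_2(\C^2,L)$, so the areas of Maslov-$2$ classes form a discrete set. This number is the same for every $L'\in\LHam(L)$.
\item \cref{thm:nearby-2-tori} shows that a torus in $D^*_r\T^2$ bounding a positive-area Maslov-$2$ disk there bounds one of area $O(r)$. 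This uses \cite{CieMoh18} after embedding in a ball when $(\pi_{L'})_*=0$, and Chekanov's displacement-energy bound \cite{Che98} when $(\pi_{L'})_*$ has rank one.
\item Disks in the Weinstein neighbourhood are disks in $\C^2$ with the same area and Maslov index. Hence $L'\in\LHam(L)$ in a thin enough neighbourhood bounds no positive-area Maslov-$2$ disk there.
\item So $L'$ is homologous to the zero section by \cite[Theorem~4.1.1]{AudLalPol94}, and only then does \cite{Riz19} apply.
\end{itemize}
After that, the area comparison you propose is unnecessary: the graph lies in $D^*_r\T^2$, so its class is $O(r)$, and discreteness of $\fl(L)$ forces it to vanish. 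Closedness goes the same way, with $A^2_{\min}(\C^2,L_i)=A^2_{\min}(\C^2,L)$ fixed while the Weinstein neighbourhoods of the limit $K$ shrink.
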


In fact, we prove a stronger form of local connectivity: \textit{The path ensuring local path-connectedness can be realized by a Hamiltonian isotopy}; see \cref{cor:local-connected_2-tori} for a precise statement.
	This extends \cite[Theorem~2]{ChaLec24} to not-necessarily $H$-rational tori in $\C^{2}$.

\subsection{From monodromy to flux-monodromy}
\label{ssec:mtofm}
To determine the image $\Gamma(L)$ of the flux-monodromy morphism for a given Lagrangian $L \subset X$, it is often useful to understand the group of all possible monodromies first. For this discussion, it is convenient to consider the \emph{homological Lagrangian monodromy} group
\begin{equation*}
	{\cm}^\Z(L) 
	=
	\{ (\phi^\Lambda)_{*} \in \Aut(H_1(L;\Z)) \sth \Lambda \in \pi_1(\cl,L) \},
\end{equation*}
and the \emph{extended homological Lagrangian monodromy} group
\begin{equation*}
	\widehat{\cm}^\Z(L) 
	=
	\{ \Phi^\Lambda \in \Aut(H_2(X,L;\Z)) \sth \Lambda \in \pi_1(\cl,L) \},
\end{equation*}
where the automorphisms $(\phi^{\Lambda})_{*}$ and $\Phi^\Lambda$ are the ones induced by the Lagrangian loop $\Lambda$~---~see \S\ref{ssec:basic} for details.

\begin{question}
	\label{q:realizability}
	Can any $\Psi \in \Aut (H_2(X,L;\Z))$ that
		\begin{enumerate}
			\item preserves the Maslov class, $\Psi^*m_L = m_L$, and
			\item acts by the identity on the subspace $\im (j \colon H_2(X) \rightarrow H_2(X,L))$ of ambient elements
		\end{enumerate}
	be realized as the extended monodromy $\Phi^{\Lambda} = \Psi$ of some $\Lambda \in \pi_1(\cl,L)$?
\end{question}

Those two conditions are obviously necessary. Similarly to \cite[Proposition 5.1 (a)]{Che96}, which answers positively for product tori in $\R^{2n}$, we prove the following.

\begin{theorem}
	\label{thm:realizability_intro}
	The answer to \cref{q:realizability} is positive for the Clifford torus in $\CP^n$, and thus, for every Lagrangian torus that is Lagrangian isotopic to it.
\end{theorem}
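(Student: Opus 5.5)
The plan is to mimic Chekanov's argument for product tori in $\R^{2n}$ \cite[Proposition 5.1]{Che96}, using the toric structure of $\CP^n$. For the Clifford torus $T=T_{\rm Cl}\subset\CP^n$ we have $H_2(\CP^n,T;\Z)\cong\Z^{n+1}$. A basis is given by the classes $D_0,\dots,D_n$ of Maslov index $2$ disks, one for each toric divisor. The boundary map $\partial\colon H_2(\CP^n,T)\to H_1(T)\cong\Z^n$ sends $D_i\mapsto e_i$ for $1\le i\le n$ and $D_0\mapsto -(e_1+\dots+e_n)$. Its kernel is spanned by the line class $D_0+\dots+D_n=j[\CP^1]$. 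In this basis the Maslov class is $m_T=2(1,\dots,1)$.

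Let $\Psi$ be an automorphism satisfying conditions (1) and (2). It fixes $\sum D_i$ and preserves the sum of coefficients. Its induced action on $H_1(T)=H_2(\CP^n,T)/\langle\sum D_i\rangle$ therefore determines it only up to adding a multiple of $\sum D_i$ times a linear functional. That functional must vanish on everything, since it has to be compatible with the Maslov class and the sum of coefficients. I would check this small linear-algebra lemma first: \emph{the restriction map from such $\Psi$ to $\Aut(H_1(T;\Z))$ is injective}. The question then reduces to describing its image $G$ and realizing every element of $G$ by a Lagrangian loop.

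Next I would identify $G$ and pick generators. Write $\Psi$ as an $(n+1)\times(n+1)$ integer matrix in the basis $D_i$. It must fix the vector $(1,\dots,1)$ and preserve the functional "sum of coordinates". I expect this group to be generated by two kinds of elements. The first kind is the permutations of $D_0,\dots,D_n$ (the symmetric group $S_{n+1}$). The second kind is "Chekanov-type" transvections, for example
\[
D_1\mapsto D_1+D_2-D_3,\qquad D_2\mapsto D_2,\ \dots,
\]
suitably adjusted so that both conditions hold. One reduces to generators by a Euclidean-algorithm argument on rows, exactly as one shows that the analogous group for $\R^{2n}$ (matrices preserving the Maslov functional) is generated by permutations and elementary moves. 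This is the step I expect to be the main obstacle. The group in question is an affine-type subgroup of $\GL_{n+1}(\Z)$, and one must make sure that the elementary moves one can realize geometrically actually generate it, rather than a finite-index subgroup.

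Realization of generators. Permutations of the $D_i$ come from unitary permutation matrices of homogeneous coordinates acting on $\CP^n$. These preserve $T$, so they give Hamiltonian loops of $T$ with monodromy acting by that permutation. For the transvections, I would work in a toric chart $\C^n\subset\CP^n$, where $T$ becomes a product torus with all factors of equal area. Chekanov's loops \cite{Che96} realize the needed elementary moves there: one moves one circle factor around another via a Lagrangian isotopy, changing areas and returning. The areas must stay below the threshold of the chart, which is arranged by shrinking. Alternatively, one can use the moment-map picture and symmetric probes as in \cite{Bre23}. Checking that the induced map on $H_2(\CP^n,T)$ is the desired one is routine from the boundary data plus the Maslov constraint, given the injectivity lemma. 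Composing loops realizes all of $G$.

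Finally, the "thus" clause. If $L'$ is Lagrangian isotopic to $T$, then its component $\cl$ is the same space, and a path from $T$ to $L'$ conjugates $\pi_1(\cl,T)$ to $\pi_1(\cl,L')$. This transports extended monodromies, the Maslov class and the ambient classes compatibly, so the answer is positive for $L'$ as well.
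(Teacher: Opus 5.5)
\textbf{Gap: the generation step is missing, and for $n=2$ your generators do not exist.}

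Your overall architecture matches the paper's. Your injectivity lemma is exactly the identification of the admissible $\Psi$ with $H'=\{B\in\GL(n;\Z)\sth \1 B\equiv \1 \bmod n+1\}$: the $[\CP^1]$-component of $\Psi(e_j)$ is forced to equal $(1-\sum_i b_{ij})/(n+1)$. Permutations of homogeneous coordinates give Hamiltonian loops. The Maslov-preserving monodromies $H=\{\1B=\1\}$ of a small product torus are imported from $\C^n$ through a ball chart. The final clause is a change of basepoint.

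The gap is the step you flag yourself: that the realizable moves generate all of $H'$. This is the real content of the theorem. ``I expect'' together with ``a Euclidean algorithm on rows'' does not settle it, so as written you only know that the realized monodromies form \emph{some} subgroup of $H'$, possibly of finite index. For $n\geqslant 3$ the paper supplies this step with a modified Gau{\ss} algorithm (\cref{lem:gen}). There, $H'$ is generated by the permutations of $e_1,\dots,e_n$, by $S_1=I+E_{21}-E_{31}$, and by $T=I+(n+1)E_{21}$. In your basis, $S_1$ is your adjusted transvection $D_1\mapsto D_1+D_2-D_3$, $D_0\mapsto D_0-D_2+D_3$. The element $T$ is realized because $QT\in H$, where $Q$ is the Hamiltonian permutation swapping $D_0$ and $D_1$. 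The row reduction uses moves $I+E_{il}-E_{jl}$ with three distinct indices in $\{1,\dots,n\}$, so it genuinely needs $n\geqslant 3$.

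For $n=2$ your plan fails as stated. With only $D_0,D_1,D_2$ there is no move of the form $D_1\mapsto D_1+D_2-D_3$. A transvection along $D_1-D_2$ that fixes $D_0+D_1+D_2$ and the Maslov class must have functional a multiple of $-2D_0^*+D_1^*+D_2^*$. It is therefore $M^{\pm1}$, where $M=\cm(\Xi)$ comes from the nodal-trade loop. In this case $H'$ is an index-$8$ congruence subgroup of $\GL(2;\Z)$. The paper proves $\langle A,R,M\rangle=H'$ by an index computation, not a Euclidean algorithm (\cref{prop:monodromy_CP2}). It passes to $\PSL(2;\Z)\to\PSL(2;\Z/3\Z)$ and writes explicit generators of the level-$3$ principal congruence subgroup as words in $R$ and $M$; alternatively one can compute a fundamental domain in the hyperbolic plane. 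You need some argument of this kind.

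\textbf{Realization of the transvections.} Relying on Chekanov's Proposition~5.1(a) is fine, but your geometric heuristics would not produce these elements:
\begin{itemize}
\item a loop of product tori (``changing areas and returning'') has trivial monodromy;
\item symmetric probes give Hamiltonian loops, whose monodromy on $\tcl$ only permutes the $D_i$.
\end{itemize}
The transvections must come from non-Hamiltonian loops through non-toric tori. For $n\geqslant 3$ the paper conjugates the Hamiltonian monodromy of the fully twisted Chekanov torus by a Lagrangian isotopy to it. For $n=2$ it uses $\Xi$.
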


\begin{remarks}
  \begin{enumerate}
  \item If $\pp \colon H_2(X,L) \rightarrow H_1(L)$ is surjective, then $\widehat{\cm}^\Z(L)$ determines the group of homological monodromies $\cm^\Z(L)$. 
  \item Furthermore, if $X$ is (homologically) monotone, then it turns out that $\cm^\Z(L)$ determines the flux-monodromy group $\Gamma(L)$; see \cref{prop:flux-monodromy_monotone}. 
  \item Every Lagrangian torus in $\C^2$ and $\CP^{2}$ is Lagrangian isotopic to the monotone product torus. This deep fact from \cite{GooIvrRiz16} allows us to get the general description from the monotone case through a basepoint-change formula; see \cref{prop:fm_basic}. \qedhere
  \end{enumerate}
\end{remarks}

When $n=2$, we get a completely explicit description of the flux-monodromy group $\Gamma(L)$ of any Lagrangian torus $L$. In order to state these results, we fix an appropriate basis of $H_1(L)$ and its dual basis in cohomology.

\begin{theorem}
	\label{thm:B}
	For every Lagrangian torus $L$ in $\C^2$, there exists $x \in \R_{\geqslant 0}$ such that the flux-monodromy group associated to $L$ is given by
	\begin{align}
		\label{eq:imFM_C2}
		\Gamma(L) = \left\{\left.\left( 
		x \begin{pmatrix}
			(-1)^{\delta}-1 \\
			k
		\end{pmatrix},
		\begin{pmatrix}
			(-1)^{\delta} & k \\
			0 & 1
		\end{pmatrix}
		\right)
		\;
		\right\vert
		\;
		k \in \Z, \delta \in \{0,1\}	
		\right\} .
	\end{align}
	In particular, both the associated flux set $\fl(L)$ and flux-monodromy group $\Gamma(L)$ are discrete.
	
	Moreover, there is an isomorphism $\pi_1(\cl)\cong (\Z\rtimes\Z_{2})\ltimes N$ for some normal subgroup $N \triangleleft \,\ker\FM$.
\end{theorem}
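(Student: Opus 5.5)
We reduce to the monotone Clifford torus and then transport the result. By \cite{GooIvrRiz16}, every Lagrangian torus $L\subset\C^2$ is Lagrangian isotopic to a product torus $T(a,b)=S^1(a)\times S^1(b)$, and further, through product tori, to the monotone one $T=T(1,1)$. Fix such an isotopy $\{L_t\}$ from $T$ to $L$. Conjugating loops by $\{L_t\}$ identifies $\pi_1(\cl,T)\cong\pi_1(\cl,L)$. The basepoint-change formula of \cref{prop:fm_basic} then expresses $\FM_L$ as the conjugate of $\FM_T$ by the element $(\Flux\{L_t\},\cm\{L_t\})$ of $H^1\rtimes\MCG$. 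Concretely, for a loop with flux $f$ and monodromy $A$, the flux at the new basepoint becomes $f+(A^*-\id)c$, where $c=\Flux\{L_t\}$, after identifying $H^1(T)$ and $H^1(L)$ via the isotopy.

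\textbf{The monotone case.} Since $\C^2$ is (trivially) monotone, \cref{prop:flux-monodromy_monotone} says that $\Gamma(T)$ is determined by $\cm^\Z(T)$. For monotone $T$ the flux of a loop with homological monodromy $A$ is forced by the area-class relation $\omega=\lambda\, m_T$ on $H_2(\C^2,T)=H_1(T)$. The area class is carried to itself up to an exact change, which gives a flux of the form $\lambda(m_T\circ A^{-1}-m_T)$ or similar, a coboundary-type term.

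\textbf{Computing $\cm^\Z(T)$.} Here $H_2(\C^2,T)\cong H_1(T)$ and there are no ambient classes, so the necessary condition of \cref{q:realizability} is only that $A$ preserve the Maslov class $m_T=(2,2)$.

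We change to a basis $e_1=(1,-1)$, $e_2=(0,1)$ of $H_1(T)$, so that $m_T$ becomes $(0,2)$ in the dual basis. In these coordinates the Maslov-preserving automorphisms are exactly
\[
\begin{pmatrix}(-1)^\delta & k\\ 0&1\end{pmatrix}
\]
acting on the appropriate side.

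Realizability of all of these follows from the product-torus statement \cite[Proposition 5.1(a)]{Che96}, which is the $n=2$ analogue of \cref{thm:realizability_intro}. Explicitly, the swap of coordinates gives $\delta=1$, and a loop of "Chekanov-type" rotations gives the shear by $k$.

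\textbf{Transporting back to $L$.} For $T$ itself the flux vanishes on every loop, since in the basis above $m_T$, and hence $\omega$, is fixed. For general $L$, write $c=(x_1,x)$ in the dual basis. Then
\[
(A^*-\id)c=\bigl(((-1)^\delta-1)x_1,\;kx_1\bigr)
\]
up to conventions on transposes. We must arrange that the constant appears as in \eqref{eq:imFM_C2} with a single $x\ge0$. This is done by choosing the basis and isotopy suitably: the isotopy can be taken through product tori $T(a,b)$ with $a\le b$ after the swap, and then $c$ in the $e_1$-direction is $x=a-b$ up to sign and a factor of $\pi$. The sign is then normalized using the $\delta=1$ element or a reorientation of the basis.

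Discreteness of $\Gamma(L)$ and of $\fl(L)$ is then immediate from the explicit formula.

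\textbf{The group-theoretic statement.} We set $N=\ker\FM$. The image $\Gamma(L)\cong\Z\rtimes\Z_2$ is realized by explicit loops, the swap and the shear, which satisfy the relations of the image inside $\pi_1(\cl)$. This gives a splitting of $1\to N\to\pi_1(\cl)\to\Z\rtimes\Z_2\to1$.

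The main obstacle is exactly this splitting: one must check that the realizing loops themselves satisfy the relations $\sigma^2=1$ and $\sigma\tau\sigma=\tau^{-1}$ in $\pi_1(\cl)$, not merely modulo $N$. I would try to build them in a common symmetric model, using the $U(2)$-action together with a fixed Chekanov-type loop that commutes appropriately with the swap, and verify the relations by explicit homotopies.

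A secondary difficulty is bookkeeping: tracking conventions, including the pullback versus pushforward action and transposes, so that the final matrix is upper triangular with the flux vector in the stated form.
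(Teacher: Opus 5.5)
Your computation of $\Gamma(L)$ is correct and follows the paper's route. You reduce to $\tcl$ via \cite{GooIvrRiz16}, observe that loops at $\tcl$ have zero flux by monotonicity, and note that preserving the Maslov class forces the monodromy $\bigl(\begin{smallmatrix}\pm1&k\\0&1\end{smallmatrix}\bigr)$ in the basis $(1,-1),(0,1)$. You then transport everything with the basepoint-change formula, so that $x$ is the absolute value of the first coordinate of $\Flux\Pi$; the sign is fixed by negating the first basis vector. The one difference is realizability: you cite Chekanov, whereas the paper constructs the generators explicitly. It takes $\Sigma$ from a path $A(t)$ in $\mathrm{U}(2)$ ending at the coordinate swap, and $\Xi$ as the lift of a loop around the node of the almost toric fibration obtained by a nodal trade. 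Your remark that the isotopy to $L$ can be taken through product tori is false for general $L$. It is harmless, though, since only the first coordinate of $\Flux\Pi$ enters the formula.

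The genuine gap is the ``Moreover'' part. You assert that the swap and shear loops satisfy the relations of $\Z\rtimes\Z_2$ inside $\pi_1(\cl)$, and then concede that this is unverified. That verification is the entire content of the splitting, and loops obtained abstractly from Chekanov's realizability result give you no handle on it. Two ideas are missing.

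For $\Sigma^2=1$: $\Sigma^2$ is represented by $t\mapsto A(t)^2\cdot\tcl$. Since $\det$ induces $\pi_1(\mathrm{U}(2))\cong\Z$, the loop $A(t)^2$ is homotopic to $\mathrm{diag}(e^{2\pi ikt},1)$. That loop preserves $\tcl$ setwise, so $\Sigma^2$ is trivial in the unparametrized space $\cl$. Working in $\cl$ is essential here, because the same loop is nontrivial among parametrized immersions.

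For $\Sigma\Xi\Sigma^{-1}=\Xi^{-1}$: use the Auroux system $F(z_1,z_2)=(\pi|z_2|^2-\pi|z_1|^2,\,|z_1z_2-c|^2)$, which satisfies $F\circ A(1)=I\circ F$ with $I(x,y)=(-x,y)$. Realize $\Xi$ as the lift $\phi^t$ of the flow of a \emph{time-independent}, rotation-invariant, cut-off vector field that turns once around the node, with the base torus a fibre over the axis $\{x=0\}$. Equivariance then gives $A(1)\circ\phi^t=\phi^{-t}\circ A(1)=\phi^{1-t}\circ(\phi^1)^{-1}\circ A(1)$. Combine this with the square homotopy $\{\iota_t\}\sharp\{\psi_t\circ\iota_1\}\simeq\{\psi_t\circ\iota_0\}\sharp\{\psi_1\circ\iota_t\}$ for symplectic isotopies $\psi_t$. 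This reduces $\Xi\Sigma\Xi\Sigma^{-1}$ to $\Xi$ followed by its own reverse, which is contractible.

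Without these two arguments, or an equivalent construction, the semidirect-product decomposition remains unproven.
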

By rescaling in $\C^2$, we can ensure that a Lagrangian loop is homotopic to one that is supported in any ball that contains the base Lagrangian $L$. Therefore, if $L$ is any Lagrangian torus in a symplectic 4-manifold that is contained in a Darboux ball, then $\pi_1(\cl,L)$ contains a subgroup isomorphic to $\Z \rtimes \Z_2$.

\begin{remarks} 
Equivalent descriptions of the monodromy group of Lagrangian tori in $\mb C^{2}$ already appeared in the works of Chekanov \cite[Proposition 5.1 (a)]{Che96} and of Yau~\cite{Yau12}. We give a more explicit description in terms of almost toric fibrations. This allows us to construct a section $s : \Gamma(L) \to \pi_{1}(\cl)$ to the flux-monodromy morphism from which the splitting of $\pi_1(\cl)$ follows, which is entirely new; see~\cref{thm:section_FM}.
\end{remarks}

In the case of \emph{product tori}, that is,
	\begin{equation*}
		T(a,b) 
		= S^1(a) \times S^1(b)
		= \{\pi\vert z_1 \vert^2 = a, \pi \vert z_2 \vert^2 = b\},
	\end{equation*}
	for $a,b \in \R_{>0}$, the number $x \in \R$ appearing in~\eqref{eq:imFM_C2} is simply $x = \vert a - b \vert$. Thus, loops based at the monotone product torus $L=T(a,a)$ have vanishing flux.\footnote{In \cref{cor:flux_monotone} below, we prove that this is a general phenomenon for monotone Lagrangians when $H_1(X;\R)=0$.} Moreover, the copy of $\Z_2$ in $\Gamma(L)$ is then generated by an ambient \emph{Hamiltonian} isotopy whose time one map coincides with the coordinate swap $(z_1,z_2) \mapsto (z_2,z_1)$ on a compact subset. On the other hand, we show that no element of the copy of $\Z$ can ever be realized by an ambient Hamiltonian isotopy. 

        \medskip
In order to state our result for $\CP^2$, we denote by $\rho : \GL(2;\Z) \to \GL(2;\Z/3\Z)$ the reduction mod 3 and by $\1$ the vector $(1,1)$.
\begin{theorem}
	\label{thm:Cliff_CP2}
	The monodromy and flux-monodromy groups associated with a Lagrangian torus $L$ in $\CP^2$ are isomorphic to the preimage of the stabilizer of $\1$ under $\rho$:
\begin{equation}
  \label{eq:stabilizer_maslov_class}
	H' \defi \rho^{-1}(\mathrm{Stab}(\1)) = \{h \in \GL(2;\Z) \sth \1\, h = \1 \mod 3\} .
      \end{equation}
It is a congruence subgroup of index 8 in $\GL(2;\Z)$. Moreover, it is generated by
        \begin{equation*}
		\begin{pmatrix}
			0 & 1 \\
			1 & 0
		\end{pmatrix},\quad
		\begin{pmatrix}
			2 & 1 \\
			-1 & 0
		\end{pmatrix},\quad \mbox{and }
		\begin{pmatrix}
			0 & -1 \\
			1 & -1
		\end{pmatrix}.
\end{equation*}
  \end{theorem}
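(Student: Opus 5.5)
Every Lagrangian torus in $\CP^2$ is Lagrangian isotopic to the Clifford torus $L_0=\{[1:z_1:z_2] \sth |z_1|=|z_2|=1\}$ by \cite{GooIvrRiz16}. Moreover, the flux-monodromy group changes only by conjugation under a change of basepoint (\cref{prop:fm_basic}). It therefore suffices to treat $L=L_0$. Since $\CP^2$ is monotone and $H_1(\CP^2;\R)=0$, \cref{prop:flux-monodromy_monotone} says that $\Gamma(L_0)$ is determined by $\cm^\Z(L_0)$, and \cref{cor:flux_monotone} says that all fluxes vanish. Hence $\Gamma(L_0)\cong\cm^\Z(L_0)\subset\GL(2;\Z)$, and the task reduces to showing $\cm^\Z(L_0)=H'$, together with the index and generator statements.

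For the computation I will use the basis $e_1,e_2$ of $H_1(L_0;\Z)$ given by the two circle factors. Since $H_2(\CP^2)=\Z$ and $\partial\colon H_2(\CP^2,L_0)\to H_1(L_0)$ is surjective, $H_2(\CP^2,L_0)\cong\Z^3$. Choose the basis given by the three Maslov-$2$ disks $D_0,D_1,D_2$ (the facets of the moment triangle). Their boundaries are $e_1$, $e_2$, and $-e_1-e_2$, and $D_0+D_1+D_2=[\CP^1]$. An automorphism $\Psi$ that fixes $[\CP^1]$ and preserves the Maslov class is determined by its induced map $h$ on $H_1(L_0)=H_2(\CP^2,L_0)/\langle[\CP^1]\rangle$, together with a lift. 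The Maslov condition forces $\mu(\Psi D_i)=2$ for each $i$.

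Next I translate the conditions on $\Psi$ into a condition on $h$. Write $\Psi D_1=\tilde h e_1$, a lift of $h e_1$. Lifts differ by multiples of $[\CP^1]$, which has Maslov index $6$. The Maslov class on any lift of $v\in H_1$ equals $2\,\1\cdot v$ modulo $6$, computed against the $D_i$ basis, so it is well defined in $\Z/6$. Requiring Maslov index $2$ for the lifts of $he_1$ and $he_2$ is therefore possible exactly when $\1\,h\equiv\1 \pmod 3$. Once this holds, the lift is uniquely determined. This gives an isomorphism between $H'$ and the group of admissible $\Psi$. \cref{thm:realizability_intro} then says every such $\Psi$ is realized, so $\widehat\cm^\Z(L_0)$ is this group and $\cm^\Z(L_0)=H'$.

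The remaining claims are elementary. The group $\GL(2;\Z)$ surjects onto $\GL(2;\Z/3)$, which has order $48$ and acts transitively on the $8$ nonzero vectors of $(\Z/3)^2$. The stabilizer of $\1$ therefore has index $8$, and so does $H'$, which is a congruence subgroup by construction.

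For the generators, first check directly that the three given matrices satisfy $\1 h\equiv\1 \pmod 3$. To show they generate $H'$, I would first check that they map onto $\mathrm{Stab}(\1)$, which has order $6$; the first and third matrices already give an $S_3$ there. Then I would show their span contains the level-$3$ principal congruence subgroup $\Gamma(3)\subset\GL(2;\Z)$, using a known generating set of $\Gamma(3)$ or a Reidemeister--Schreier computation. Verifying that this finite generating set is complete is the main bookkeeping obstacle. In the paper's framework, the three matrices are presumably the monodromies of explicit loops: the coordinate swap, and nodal-slide/mutation loops in almost toric fibrations. Exhibiting these loops would also give an independent realizability argument for the generators.
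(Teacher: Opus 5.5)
Your reduction to $\tcl$, the vanishing of the flux, the Maslov-class computation giving $\cm(\tcl)\subset H'$, and the index count are all fine. (For the index you use orbit--stabilizer for $\GL(2;\Z/3\Z)$ acting on the $8$ nonzero vectors, together with surjectivity of $\rho$.) The gap is the reverse inclusion $H'\subset\cm(\tcl)$, which you obtain by citing \cref{thm:realizability_intro}. For $n=2$ that theorem is not available independently. The paper's standalone realizability argument, \cref{thm:realizability}, requires $n\geqslant 3$, because \cref{lem:gen} reduces matrices using the shears $S_{l,i,j}$, which need three distinct indices. The $n=2$ case of \cref{thm:realizability_intro} is instead \emph{deduced from} \cref{thm:Cliff_CP2}. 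So your argument is circular at exactly the point that carries the content of the theorem: that every element of $H'$ is the monodromy of some Lagrangian loop.

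The missing idea is the one you only mention at the end, and it requires merging your two halves rather than treating them separately. First realize the three matrices $A,M,R$ of \eqref{eq:monodromy_gen_torus_in_CP2} by explicit loops at $\tcl$:
\begin{itemize}
\item $A$ by the unitary coordinate swap $\Sigma$;
\item $M$ by the nodal loop $\Xi$ of \cref{prop:C2}, which is $\cm(\Xi)$ rewritten in the basis $e_1,e_2$ (both $\Sigma$ and $\Xi$ are placed in a Darboux ball around $\tcl$);
\item $R$ by the Hamiltonian cyclic permutation of homogeneous coordinates, i.e.\ a rotation of the moment triangle, not a nodal or mutation loop.
\end{itemize}
This gives $\langle A,M,R\rangle\subset\cm(\tcl)\subset H'$. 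The generation statement $\langle A,M,R\rangle=H'$ is then precisely the lower bound, not mere bookkeeping.

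Your plan for that statement works: show surjection onto $\mathrm{Stab}(\1)$ (indeed $\langle A,R\rangle\cong S_3$ maps isomorphically onto it), then show the group contains $\ker\rho=\Gamma(3)$. The needed words are explicit. One has $MR=\bigl(\begin{smallmatrix}1&-3\\0&1\end{smallmatrix}\bigr)$, and Kulkarni's generators $X,Y,Z$ of $\Gamma(3)$ satisfy $X=(MR)^{-1}$, $Y=(RM^2R)^{-1}$ and $Z=R^2MR^2$, exactly in $\SL(2;\Z)$. The paper reaches the same conclusion by sandwiching $G_0=\langle A,R,M\rangle\subset\cm(\tcl)\subset G_2=H'$. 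It then computes $[\GL(2;\Z):G_0]=8$ by passing to $\PSL(2;\Z)\to\PSL(2;\Z/3\Z)$, where the image of $\langle R,M\rangle$ is cyclic of order $3$ in a group of order $12$.
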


Notice that the modulo $3$ condition comes from the fact that the Maslov class takes values in $6\Z$ when evaluated on $H_2(\CP^2;\Z)$. Therefore, it yields a well-defined class $2\cdot\1$ in the mod-6 reduction of $H^{1}(L)$.

As for the three generating matrices, the first two correspond to the obvious generators of $\Z_2\ltimes\Z$, the flux-monodromy group of a Lagrangian torus in $\C^2$. The last matrix comes from a rotation of the moment polytope of $\CP^2$.

\subsection{From flux-monodromy to flux} \label{ssec:f-m_to_flux} Knowing the homological flux-monodromy group $\Gamma^\Z(L) \subset H^1(L;\R) \rtimes \Aut H_1(L;\Z)$, we can understand the image $\fl(L) \subset H^1(L;\R)$ of the flux map on $\pi_1(\cl,L)$ by projecting to the $H^1$-component. By the discussion in \S\ref{ssec:mtofm}, we can thus understand $\fl(L)$ for any $L$ which is Lagrangian isotopic to a product torus in $\C^n$ or to the Clifford torus in $\CP^n$~---~though, again, these sets are difficult to write down explicitly. In $\CP^2$, we prove that the flux set $\fl(L)$ is dense in $H^1(L; \R^2) \cong \R^2$ whenever there is a Lagrangian isotopy from the Clifford torus $T_{\rm Cl}$ to $L$ which has flux $(x,y) \in \R^2$ with $\frac{x}{y} \notin \Q$; see \cref{prop:flux-set_CP^2}. This contrasts with the discrete flux set we got in \cref{thm:B} for all tori in $\C^2$. 


This begs the question of what happens for \emph{Lagrangian paths}, meaning Lagrangian isotopies which do not necessarily close up. The \emph{shape} of $L$ is defined as
\begin{equation*}
	\Sh (L) = \{\Flux \Pi \in H^1(L;\R) \sth \Pi_0 = L \}.
\end{equation*}
See \S\ref{ssec:review} for a review of known facts about this invariant.

Our understanding of Lagrangian monodromy in $\CP^n$ allows us an almost complete description of the set $\Sh(T_{\rm Cl})$ for the Clifford torus. We concentrate here on the case $n = 2$; see \S\ref{ssec:fluxCPn} for more details on the case $n\geqslant 3$.

\begin{figure}[ht]
	\centering
	\begin{tikzpicture}
		\fill[opacity=0.1] (-3,-3) rectangle (3,3);
		\fill (0,0) circle[radius=2pt] node[anchor=south east] {$\tcl$};
		\draw[very thick] (-1,-1) -- (2,-1) -- (-1,2) -- (-1,-1);
		
		\fill[violet] (1,1) circle[radius=2pt];
		\draw[very thick,violet] (0.5,0.5) -- (3,3);
		\fill[violet] (-2,1) circle[radius=2pt];
		\draw[very thick,violet] (-1,0.5) -- (-3,1.5);
		\fill[violet] (1,-2) circle[radius=2pt];
		\draw[very thick,violet] (0.5,-1) -- (1.5,-3);
		\draw[very thick,violet] (0.5,2) -- (0.75,3);
		\draw[very thick,violet] (2,0.5) -- (3,0.75);
		\draw[very thick,violet] (2,-2.5) -- (2.4,-3);
		\draw[very thick,violet] (0.5,-2.5) -- (0.6,-3);
		\draw[very thick,violet] (-1,-2.5) -- (-1.2,-3);
		\draw[very thick,violet] (-2.5,-1) -- (-3,-1.2);
		\draw[very thick,violet] (-2.5,0.5) -- (-3,0.6);
		\draw[very thick,violet] (-2.5,2) -- (-3,2.4);
		
		\fill[pink] (-1,-1) circle[radius=2pt];
		\draw[very thick,pink] (-1,-1) -- (-3,-3);
		\fill[pink] (2,-1) circle[radius=2pt];
		\draw[very thick,pink] (2,-1) -- (3,-1.5);
		\fill[pink] (-1,2) circle[radius=2pt];
		\draw[very thick,pink] (-1,2) -- (-1.5,3);
		
		\draw[very thick,white] (1,0) -- (3,0);
		\draw[very thick,white] (0,1) -- (0,3);
		\draw[very thick,white] (-1,0) -- (-3,0);
		\draw[very thick,white] (0,-1) -- (0,-3);
		\draw[very thick,white] (1,-1) -- (3,-3);
		\draw[very thick,white] (-1,1) -- (-3,3);
		\draw[very thick,white] (1,2) -- (1.5,3);
		\draw[very thick,white] (2,1) -- (3,1.5);
		\draw[very thick,white] (-1,-2) -- (-1.5,-3);
		\draw[very thick,white] (-2,-1) -- (-3,-1.5);

		\fill[teal] (1,0) circle[radius=2pt];
		\draw[very thick,teal,dashed] (1,0) -- (3,0);
		\fill[teal] (0,1) circle[radius=2pt];
		\draw[very thick,teal,dashed] (0,1) -- (0,3);
		\fill[teal] (-1,0) circle[radius=2pt];
		\draw[very thick,teal,dashed] (-1,0) -- (-3,0);
		\fill[teal] (0,-1) circle[radius=2pt];
		\draw[very thick,teal,dashed] (0,-1) -- (0,-3);
		\fill[teal] (1,-1) circle[radius=2pt];
		\draw[very thick,teal,dashed] (1,-1) -- (3,-3);
		\fill[teal] (-1,1) circle[radius=2pt];
		\draw[very thick,teal,dashed] (-1,1) -- (-3,3);
		\fill[teal] (1,2) circle[radius=2pt];
		\draw[very thick,teal,dashed] (1,2) -- (1.5,3);
		\fill[teal] (2,1) circle[radius=2pt];
		\draw[very thick,teal,dashed] (2,1) -- (3,1.5);
		\fill[teal] (-1,-2) circle[radius=2pt];
		\draw[very thick,teal,dashed] (-1,-2) -- (-1.5,-3);
		\fill[teal] (-2,-1) circle[radius=2pt];
		\draw[very thick,teal,dashed] (-2,-1) -- (-3,-1.5);
		\fill[teal] (3,1) circle[radius=2pt];
		\fill[teal] (3,2) circle[radius=2pt];
		\fill[teal] (2,3) circle[radius=2pt];
		\fill[teal] (1,3) circle[radius=2pt];
		\fill[teal] (-1,3) circle[radius=2pt];
		\fill[teal] (-2,3) circle[radius=2pt];
		\fill[teal] (-3,2) circle[radius=2pt];
		\fill[teal] (-3,1) circle[radius=2pt];
		\fill[teal] (-3,-1) circle[radius=2pt];
		\fill[teal] (-3,-2) circle[radius=2pt];
		\fill[teal] (-2,-3) circle[radius=2pt];
		\fill[teal] (-1,-3) circle[radius=2pt];
		\fill[teal] (1,-3) circle[radius=2pt];
		\fill[teal] (2,-3) circle[radius=2pt];
		\fill[teal] (3,-2) circle[radius=2pt];
		\fill[teal] (3,-1) circle[radius=2pt];
	\end{tikzpicture}
	\caption{Illustration of the statement of \cref{thm:possible-fluxes_CP2_intro}. The simplex is the moment polytope $\Delta_{\CP^2} \subset \R^2 = H^1(T_{\rm Cl};\R)$. The colours of the solid dots indicate their orbit type in the decomposition \eqref{eq:hprime_intro}. The purple and pink dots and half-lines emanating from them are obstructed. The dashed green lines are the cases that remain unknown. The area shaded in grey can be realized as the flux of a Lagrangian isotopy.}
	\label{fig:possible-fluxes_CP2}
\end{figure}

We show that the action of the subgroup $H'$, defined by \cref{eq:stabilizer_maslov_class} above, on the set $\cp \subset H_1(T_{\rm Cl};\Z) = \Z^2$ of primitive elements has three orbits:
\begin{equation}
	\label{eq:hprime_intro}
	\cp = ((1,1) \cdot H') \sqcup ((-1,-1) \cdot H') \sqcup ((0,1) \cdot H').
\end{equation}

Our result is the following. 

\begin{theorem} \label{thm:possible-fluxes_CP2_intro}
	Let $\eta \in H^1(T_{\rm Cl}; \R) = \R^2$. If $\eta \notin \R \cdot \Z^2$, then $\eta \in \Sh(T_{\rm Cl})$. If $\eta \in \R \cdot \Z^2$, then write $\eta = r \lambda$ for $r \geqslant 0$ and $\lambda \in \cp$. We have the following three cases:
	\begin{enumerate}
		\item if $\lambda \in (1,1) \cdot H'$, then $\eta = r\lambda \in \Sh(T_{\rm Cl})$ if and only if $r < \frac{1}{2}$;
		\item if $\lambda \in (-1,-1) \cdot H'$, then $\eta = r\lambda \in \Sh(T_{\rm Cl})$ if and only if $r < 1$;
		\item if $\lambda \in (0,1) \cdot H'$, then $\eta = r\lambda \in \Sh(T_{\rm Cl})$ if $r < 1$.
	\end{enumerate} 
\end{theorem}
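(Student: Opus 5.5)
The plan has two halves: realize the claimed fluxes by explicit constructions, and obstruct the remaining ones with a holomorphic disc argument.

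\textbf{Setup.} Normalize $\CP^2$ so that, as in \cref{fig:possible-fluxes_CP2}, its moment polytope is the simplex $\Delta_{\CP^2}$ with vertices $(-1,-1),(2,-1),(-1,2)$ in $H^1(\tcl;\R)=\R^2$, with $\tcl$ the fibre over the origin.

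\textbf{Realization.} Moving $\tcl$ along a path of toric fibres to the fibre over $p\in\operatorname{int}\Delta_{\CP^2}$ has flux $p$, so $\operatorname{int}\Delta_{\CP^2}\subset\Sh(\tcl)$. Next I use the monodromy. Since $\tcl$ is monotone and $H_1(\CP^2;\R)=0$, every loop in $\pi_1(\cl,\tcl)$ has vanishing flux (\cref{cor:flux_monotone}). Concatenating such a loop $\Lambda$ with a path $\Pi$ gives a path whose flux is $\cm(\Lambda)^*\Flux\Pi$. By \cref{thm:Cliff_CP2} the monodromies realize all of $H'$, hence
\[
\Sh(\tcl)\supseteq \operatorname{int}\Delta_{\CP^2}\cdot H'.
\]

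\textbf{Irrational directions.} If $\eta\notin\R\cdot\Z^2$, I claim the $H'$-orbit of $\eta$ meets any neighbourhood of $0$, and in particular meets $\operatorname{int}\Delta_{\CP^2}$. This uses the classical fact that a lattice in $\SL(2;\R)$, here the finite index subgroup $H'\cap\SL(2;\Z)$, has dense orbits on vectors not lying on rational lines. Concretely, one can run continued fraction approximations of the slope of $\eta$ and correct by coset representatives of $H'$, which form a finite set.

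\textbf{Rational directions.} Write $\eta=r\lambda$. Using the orbit decomposition \eqref{eq:hprime_intro} and the $H'$-invariance just proved, I may assume $\lambda\in\{(1,1),(-1,-1),(0,1)\}$. The ray $r\lambda$ leaves $\Delta_{\CP^2}$ at:
\begin{itemize}
\item $r=\tfrac12$ for $\lambda=(1,1)$, where it hits the edge $x+y=1$;
\item $r=1$ for $\lambda=(-1,-1)$, where it hits the vertex;
\item $r=1$ for $\lambda=(0,1)$, where it hits the edge.
\end{itemize}
This gives all the stated sufficiency directions.

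\textbf{Obstruction.} It remains to show $r\lambda\notin\Sh(\tcl)$ for $r\ge\tfrac12$ in case~(1) and for $r\ge1$ in case~(2). The Maslov-$2$ classes of $\tcl$ are $\beta_1,\beta_2,\beta_3$ with boundaries $e_1$, $e_2$ and $-e_1-e_2$, each of area $1$. Transporting these classes along an isotopy of flux $\eta$ gives classes on $L_1$ with areas $1-\langle\eta,\partial\beta_i\rangle$ (up to a global sign convention fixed by the normalization above). For $\eta=r(1,1)$ this is $1-r,\ 1-r,\ 1+2r$ for the sign giving the vertex, and the mirror choice gives $1+r,\ 1+r,\ 1-2r$.

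The plan is to prove that any Lagrangian torus $L_1$ Lagrangian isotopic to $\tcl$ must bound, for a generic compatible $J$, a $J$-holomorphic Maslov-$2$ disc in one of the transported classes that is relevant for the orbit in question. Positivity of area then forces $1-2r>0$ in case~(1) and $1-r>0$ in case~(2).

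The difficulty is that disc counts are not invariant under Lagrangian (non-Hamiltonian) isotopy, and the monodromy scrambles the classes by $H'$. I would first try neck-stretching, in the style of Cieliebak--Mohnke, around $L_1$ together with a degeneration to a line. The goal is to produce a Maslov-$2$ disc whose boundary class lies in a prescribed $H'$-orbit; the mod-$3$ Maslov constraint defining $H'$ should be exactly what pins down this orbit. This step is the main obstacle, and it is also where the $(0,1)$-orbit in case~(3) escapes the method, consistent with that case being left as only a sufficient condition.
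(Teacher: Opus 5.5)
Your realization half is correct and follows the paper. Toric paths give $\operatorname{int}\Delta_{\CP^2}\subset\Sh(\tcl)$. Concatenating with flux-zero loops whose monodromies exhaust $H'$ gives $\operatorname{int}\Delta_{\CP^2}\cdot H'$. The three thresholds are where the representative rays leave the simplex, and the formula $\Flux(\Lambda*\Pi)=\cm(\Lambda)^*\Flux\Pi$ for arbitrary $\Pi$ makes $\Sh(\tcl)$ genuinely $H'$-invariant. For irrational $\eta$, the paper uses Hedlund's theorem and the $8$-sheeted cover $H_\R/H'\to H_\R/H$. Your coset argument is more elementary and suffices, since you only need the orbit to enter $\operatorname{int}\Delta_{\CP^2}$: if $\eta g$ is small with $g\in\SL(2;\Z)$, write $g=hc$ with $h\in H'$ and $c$ among finitely many coset representatives; then $\eta h=\eta g c^{-1}$ is small.

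The obstruction half has a genuine gap. You leave open the key step, producing a holomorphic Maslov-$2$ disc on $L_1$ in a prescribed transported class, and that step is the wrong target. The disc classes on $L_1$ are not controlled by $\Pi$. If $\Pi=\Lambda*\Pi'$ with $\Pi'$ toric, the standard discs on the endpoint lie in $\Phi^{\Pi}\bigl((\Phi^{\Lambda})^{-1}\beta_i\bigr)$. These are transports of Maslov-$2$ classes other than your $\beta_1,\beta_2,\beta_3$, which are not all the Maslov-$2$ classes; for instance $[\CP^1]-2e_1$ also has Maslov index $2$. Once the class is arbitrary, positivity of area alone obstructs nothing: for $\eta=r(1,1)$, the transport of $e_1$ has area $1+r>0$ for every $r$.

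The missing idea is to use the full two-sided bound of \cite[Theorem~1.1]{CieMoh18}, as the paper does in \cref{prop:open_flux_obstructions} and \cref{ex:flux_obstructions}. The endpoint $L_1$ is a Lagrangian torus in $\CP^2$, so it bounds a disc $\beta$ of Maslov index $2$ with area in $(0,1]$. Set $\beta_0=(\Phi^\Pi)^{-1}\beta$ and $\xi=\pp\beta_0$.

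\begin{itemize}
\item Since Lagrangian isotopies preserve the Maslov index, $\beta_0$ has Maslov index $2$. Hence it has area $1$ on the monotone $\tcl$, and $\1\cdot\xi\equiv 1 \pmod 3$, whichever class it is.
\item The area identity \eqref{eq:areaformula} gives $1+r\,\lambda\cdot\xi\in(0,1]$, so $\lambda\cdot\xi\in(-\frac1r,0]\cap\Z$.
\item For $\lambda=(1,1)$ and $r\geqslant\frac12$, this forces $\1\cdot\xi\in\{-1,0\}$. For $\lambda=(-1,-1)$ and $r\geqslant 1$, it forces $\1\cdot\xi=0$. Both contradict $\1\cdot\xi\equiv1\pmod 3$.
\end{itemize}

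Both bounds are essential here. The upper bound $\leqslant 1$ is exactly what your positivity-only plan lacks; it is what excludes $\1\cdot\xi=1$ in case (1). $H'$-invariance then extends the obstruction to the orbits $\pm(1,1)\cdot H'$. For $\lambda=(0,1)$, $\xi=(1,0)$ satisfies every constraint, which is why case (3) remains only a sufficient condition. No neck-stretching and no control over the disc class is needed.
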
 

The only case remaining open is thus $\eta = r \lambda$ with $\lambda \in (0,1) \cdot H'$ and $r \geqslant 1$; see \cref{q:cp2flux}. The construction part of the argument, showing that $\R^n \setminus (\R \cdot \Z^n) \subset \Sh(T_{\rm Cl})$, is true for all $n$. The obstruction part is based on \cite[Theorem 1.1]{CieMoh18}, meaning that it works for general $n$, too. However, it is much trickier to understand the set of directions which is obstructed, mainly because we do not understand the orbit structure of the analogue of $H'$ in higher dimensions.

\section*{Acknowledgements}

This project was started during a visit of RL at the Forschungsinstitut für Mathematik at ETH Zürich and continued during visits of JB and JPC at the Laboratoire de Mathématiques d'Orsay. We thank both these institutions for the fantastic research environment they provide. We have benefited from discussions with Yves Benoist and Manfred Einsiedler about Theorems \ref{thm:Cliff_CP2} and \ref{thm:possible-fluxes_CP2_intro}. We are especially grateful to Simon Machado for his patience in explaining the methods used in \S\ref{ssec:mon_CPn} to us. Likewise, we thank Dominique Rathel-Fournier for sharing his notes on Lagrangian homotopies and their flux, which led to the newest version of Appendix~\ref{app:Lag_homotopies}. Furthermore, we thank Paul Biran, Emmy Murphy, and Felix Schlenk for various enlightening discussions about the content of this paper.  

JB acknowledges the support of the Swiss National Science Foundation Ambizione Grant PZ00P2-223460. JPC acknowledges the partial support from the Swiss National Science Foundation (grant 200021\_204107) and the Fondation Courtois.

\tableofcontents

\section{Background} \label{sec:background}
\commentintoc{We review related literature in \S\,\ref{ssec:review} and present basic facts about the flux-monodromy morphism in \S\,\ref{ssec:basic}.}

\subsection{Review of literature and relation to previous work}
\label{ssec:review}
\phantom{a}

\emph{Monodromy of Lagrangian submanifolds.} The first result about monodromy of Lagrangian submanifolds that we are aware of appears in Chekanov's work \cite{Che96}. More precisely, he answers \cref{q:realizability} positively for product tori in $\R^{2n}$. See also the work by Yau \cite{Yau09, Yau12} on the subject. We are not aware of any results about the possible monodromies of Lagrangian loops beyond these. 

However, when we replace loops of Lagrangian isotopies by \emph{Hamiltonian diffeomorphisms} mapping the Lagrangian to itself, there has been much more work. Chekanov \cite[Theorem 4.5]{Che96} gave a full description for product tori in $\R^{2n}$. Hu--Lalonde--Leclercq \cite{HuLalLec11} showed that Hamiltonian loops of closed weakly exact Lagrangians have trivial homological monodromy; this was recently extended to generalized homology theories by Porcelli \cite{Por24, Por25}. Another recent work by Augustynowicz--Smith--Wornbard \cite{AugSmiWor22} gave a full description of the (homological) Hamiltonian monodromy of many monotone Lagrangian tori. In \cite{Bre23} and \cite{BreKim23}, the use of symmetric probes and new obstructions allowed for a full picture of this group for many (not-necessarily monotone) toric fibres in the 4-dimensional setting. For a full literature review, we recommend the excellent introduction of \cite{AugSmiWor22}. 

\smallskip

\emph{Values of the Lagrangian flux.} The question discussed in \S\ref{ssec:f-m_to_flux} of which possible values in $H^1(L;\R)$ can be realized as the flux of a Lagrangian isotopy has a long history. For a Lagrangian isotopy $\iota_t$ in $(\R^{2n},\omega_0 = d\lambda)$, the flux is equal to $[\iota_1^* \lambda] - [\iota_0^*\lambda] \in H^1(L;\R)$. By Gromov \cite{Gro85}, there are no closed exact Lagrangians in $\R^{2n}$, meaning that $[\iota^* \lambda] \neq 0$ in that case. In fact, for Lagrangian product tori in $\R^{2n}$, it follows from work of Viterbo \cite[Theorem A]{Vit90} that $[\iota^*\lambda]$ is not a non-positive multiple of the Maslov class. Sikorav \cite[\S 1.5]{Sik91} conjectured that, in the standard basis, the class $[\iota^*\lambda]$ should be constrained to the positive orthant of $H^1(T^n;\R) = \R^n$. Chekanov proved that this is false; in fact, combining \cite[Proposition 5.1 (b)]{Che96} and Viterbo's result completely determines the \emph{shape} $\Sh(L)$ for any product torus $L = \iota(T^n)$, we get that
\begin{equation*}
	\Sh(L) - [\iota^*\lambda] = H^1(T^n;\R) \setminus \R_{\leqslant 0} \cdot \mu_L.
\end{equation*} 
Remarkably, it was found in more recent work by Entov--Ganor--Membrez~\cite{EntGanMem18} and Shelukhin--Tonkonog--Vianna~\cite{SheTonVia19} that the so-called \emph{star shape} is much smaller. The star shape is the subset of $H^1(L;\R)$ that can be reached by Lagrangian isotopies moving along a straight line in $H^1(L;\R)$. This salvages a version of Sikorav's above-mentioned conjecture: In \cite[Theorem 6.8]{SheTonVia19}, it was proved that the star shape of a product torus in $\R^{2n}$ is indeed just the positive orthant.\smallskip

\emph{Topology of the space of Lagrangians.} In general, very few things are known about the topology of $\cl(L)$, i.e.\ of the space of Lagrangians that are Lagrangian isotopic to a given Lagrangian $L$. The case of curves $L$ in (finite type) surfaces $\Sigma$ is, however, one notable exception to this. Indeed, one can compute the homotopy type of $\cl(L)$ and $\pcl(L)$ using a classical result of Gramain~\cite[Theorem 4]{Gramain1973}. Alternatively, one can also use the fact that length shortening flow converges to either a point or a geodesic~\cite{Grayson1989} to construct a deformation retraction of these spaces. The table below gives all possible homotopy type, together with a description of the flux-monodromy group in these cases. In particular, $\fl(L)$ is always discrete in these cases.

\begin{table}[ht]
	\label{tab:curves-in-surfaces}
\begin{tabular}{ccccc} 
	\toprule
	{$\Sigma$} & {$L$ contractible?} & {$\pcl(L)$} & {$\cl(L)$} & $\Gamma(L)$  \\ \midrule
	$\Sigma=S^2$ & Yes &  $\RP^3$ & $\RP^2$ & $\Z_2$ \\
	$\Sigma\neq S^2$ & Yes  & $S\Sigma$  & $\Sigma$ & 0 \\
	$\Sigma=\T^2$ & No  & $\T^2$  & $S^1$ & $\Z$ \\ 
	$\Sigma\neq \T^2$ & No  & $S^1$  & $\{*\}$ & 0 \\ 
	\bottomrule
\end{tabular}
\caption{Homotopy types of spaces of curves in finite type surfaces. Here, $S\Sigma$ is the  unit tangent bundle of the surface $\Sigma$.}
\end{table}

In higher dimensions, the picture is much less clear. Nonetheless, the homotopy type of $\pcl$ was computed by Coffey~\cite{Coffey2005} in two important 4-dimensional cases: the Lagrangian $\RP^2$ in $\CP^2$ and Lagrangian spheres $S$ in $S^2 \times S^2$. In both cases, it is given by the rigid transformations of the ambient spaces: $\pcl(\RP^2)$ is homotopy equivalent to $\mathrm{PU}(3)$ and $\pcl(S)$ is homotopy equivalent to $\mathrm{SO}(3)\times\mathrm{SO}(3)$. This yields that $\cl(\RP^2)$ is homotopy equivalent to $\mathrm{PU}(3)/\mathrm{PO}(3)=(\mathrm{SU}(3)/\Z_3)/\mathrm{SO}(3)$ and $\cl(S)$ is homotopy equivalent to $\mathrm{SO}(3)$. In particular, $\cl(\RP^2)$ and $\cl(S)$ have respective fundamental groups $\Z_3$ and $\Z_2$. However, since all noncontractible loops in $\cl(L)$ come from loops in $\Ham(X)$, these are not detected by $\FM$. \par

We end with one last well-studied case: when $L$ is the 0-section of $T^*L$. In this case, Arnol'd's nearby Lagrangian conjecture~\cite{Arn86} states that the Hamiltonian orbit of $L$ should be the set of all exact Lagrangians, i.e.\ those Lagrangians $L'$ such that restriction to $L'$ of the tautological 1-form $\lambda_0$ is exact. This is only known to hold in $S^1$ [folklore], $S^2$~\cite{Hin04}, $\RP^2$~\cite{HinPinWu16}, and $\T^2$~\cite{GooIvrRiz16}. Moreover, in the first three cases, the authors show that the \emph{strong} nearby Lagrangian conjecture holds: the space of exact Lagrangians is (weakly) contractible. This also determines the (weak) homotopy type of $\cl(L)$.

\begin{lemma} \label{lem:T^*L_from-gen-to-exact}
	Let $L$ be the 0-section in $T^*L$. The space $\cl(L)$ deformation retracts onto $\LHam(L)$. In particular, whenever the strong nearby Lagrangian conjecture holds, $\cl(L)$ is (weakly) contractible.
\end{lemma}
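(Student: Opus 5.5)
The idea is to undo the flux of a Lagrangian in $\cl(L)$ by a fibrewise translation in $T^*L$, done continuously and canonically. Let $\lambda_0$ be the tautological 1-form on $T^*L$, so $\omega=d\lambda_0$. Fix once and for all a linear right inverse of the de Rham map,
\[
\sigma\colon H^1(L;\R)\to \Omega^1_{\mathrm{closed}}(L),\qquad [\sigma(a)]=a,
\]
for instance by choosing closed representatives of a basis (harmonic forms for some metric). For a closed 1-form $\beta$ on $L$, fibrewise translation $\tau_\beta(q,p)=(q,p+\beta_q)$ is a symplectomorphism of $T^*L$ with $\tau_\beta^*\lambda_0=\lambda_0+\pi^*\beta$.

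For $L'\in\cl(L)$, the bundle projection restricts to a map $L'\to L$. Choosing a Lagrangian isotopy from $L$ to $L'$ and transporting the parametrization gives a map $L\to L'$ whose composite with the projection is homotopic to the identity. The composite of the inclusion with the projection, $L'\hookrightarrow T^*L\to L$, is therefore a homotopy equivalence, and it gives a canonical identification $H^1(L';\R)\cong H^1(L;\R)$ that does not depend on the chosen isotopy.

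Define $a(L')=[\lambda_0|_{L'}]\in H^1(L;\R)$ via this identification. This class is exactly the flux of any Lagrangian path from the zero section to $L'$. Since $\lambda_0|_{L'}$ varies continuously with $L'$ in the $C^1$ topology, $a$ is a continuous map $\cl(L)\to H^1(L;\R)$. Moreover $L'$ is exact precisely when $a(L')=0$.

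Define the homotopy
\[
H_s(L')=\tau_{-s\,\sigma(a(L'))}(L'),\qquad s\in[0,1].
\]
Each $H_s(L')$ is Lagrangian and Lagrangian isotopic to $L$, via the path $s\mapsto H_s(L')$ following any isotopy from $L$ to $L'$. By construction $a(H_s(L'))=(1-s)\,a(L')$, so $H_1(L')$ is exact. The map $H$ is continuous in $(s,L')$ because $\sigma$ is linear and $a$ is continuous. If $L'$ is already exact, then $a(L')=0$ and $H_s(L')=L'$ for all $s$. Hence $H$ is a deformation retraction of $\cl(L)$ onto the subspace $\cl_{\mathrm{ex}}(L)$ of exact Lagrangians in $\cl(L)$.

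The remaining and main step is to identify this target with $\LHam(L)$. One inclusion is clear: Hamiltonian images of the zero section are exact and lie in $\cl(L)$. For the converse, take an exact $L'\in\cl(L)$ and choose a Lagrangian path $L_t$ from $L$ to $L'$. Apply the correction $\tau_{-\sigma(a(L_t))}$ at each time $t$. This yields a path of exact Lagrangians from $L$ to $L'$, since $a(L_0)=a(L_1)=0$ means the endpoints are unchanged. A path of exact Lagrangians has identically vanishing flux along every initial segment, and by the criterion recalled in the introduction it is therefore generated by an ambient Hamiltonian isotopy. To make that criterion applicable on the noncompact manifold $T^*L$, the generating Hamiltonians must be cut off outside a compact neighbourhood of the path, which is possible because all $L_t$ are compact. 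This shows $\cl_{\mathrm{ex}}(L)=\LHam(L)$, so $\cl(L)$ deformation retracts onto $\LHam(L)$.

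For the last claim, the strong nearby Lagrangian conjecture says the space of exact Lagrangians is weakly contractible. Every exact Lagrangian lies in $\cl(L)$ by the Hamiltonian-isotopy statement of the nearby Lagrangian conjecture, which the strong version includes. So this space is $\LHam(L)$, and $\cl(L)$, being homotopy equivalent to it, is weakly contractible.
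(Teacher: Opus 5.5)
Your proposal is correct and is essentially the paper's proof. The paper also retracts onto the exact Lagrangians by fibrewise translation: it flows each $L'\in\cl(L)$ along $\pi^*\sigma_{L'}$, where $\sigma_{L'}$ is the harmonic representative of $[\lambda_0|_{L'}]$ transported via $\pi|_{L'}$. It then identifies the exact locus with $\LHam(L)$ by path-connectedness, which is your ``apply the correction along a path'', combined with the fact that an isotopy through exact Lagrangians is Hamiltonian. Your fixed linear right inverse of the de Rham map makes continuity immediate where the paper appeals to elliptic regularity, but that is a cosmetic difference.
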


\begin{proof}
  Fix a Riemannian metric on $L$. Note that, for any $L'\in\cl(L)$, the canonical projection $\pi:T^*L\to L$ restricts to a homotopy equivalence  $\pi|_{L'}:L'\to L$. Therefore, there is a unique harmonic 1-form $\sigma_{L'}$ on $L$ such that $\pi|_{L'}^*[\sigma_{L'}]=[\lambda_0|_{L'}]$ in $H^1(L';\R)$. Let $\{\psi_{L'}^t\}$ be the flow on $T^*L$ generated by $\pi^*\sigma_{L'}$. A direct computation shows that $\psi_{L'}^1(L')$ is exact. \par
  
  Note that, by elliptic regularity, a $C^\infty$-continuous perturbation of $L'$ gives a $C^\infty$-continuous perturbation of $\sigma_{L'}$, and thus of $\psi^t_{{L'}}$. Moreover, $\sigma_{L'}=0$~---~and thus $\psi^t_{{L'}}=id$ for all $t$~---~whenever $L'$ is exact. Therefore, $[0,1]\times\cl(L)\to \cl(L);$ $(t,L')\mapsto \psi_{L'}^t(L')$ gives a deformation retraction onto the subspace $\cl^{ex}(L)$ of $\cl(L)$ formed by exact Lagrangians.
  
  But since $\cl(L)$ is path connected, so is $\cl^{ex}(L)$. However, a Lagrangian isotopy through exact Lagrangians is a Hamiltonian isotopy; this is well-known, but see \cref{lem:local-to-global_exact} for a self-contained proof. Therefore, $\cl^{ex}(L)=\LHam(L)$.
\end{proof}

\subsection{Basic facts on the flux-monodromy morphism}

\label{ssec:basic}

Let $L \subset X$ be a closed connected Lagrangian submanifold of a symplectic manifold $(X,\omega)$. By $\cl = \cl(L)$ we will denote the space of (unparametrized) Lagrangians in $X$ that are Lagrangian isotopic to $L$, which we see as a subspace of $C^\infty(L,X)/\operatorname{Diff}(L)$. More precisely, we mean by this that $L'$ is in $\cl$ if there exists a smooth map $I \colon [0,1] \times L \rightarrow X$ such that each $\iota_t = I(t,\cdot)$ is a Lagrangian embedding, and such that $\im \iota_0 = L$, and $\im \iota_1 = L'$. Such a map $I$ is called a \emph{Lagrangian isotopy}. Every Lagrangian isotopy can be extended to an ambient smooth isotopy $\{f_t\}_{t \in [0,1]}$ of $X$, in the sense that $f_t \circ \iota_0 = \iota_t$. We call $\{f_t\}$ an \emph{ambient Lagrangian isotopy}. For our purposes, it is often more convenient to forget about the parametrization and instead work with the corresponding path $\Pi = \{L_t\}_{t \in [0,1]}$ in $\cl$ of embedded Lagrangian submanifolds defined as the image $\im \iota_t = L_t$. By abuse of terminology, we call both $I$ and $\Pi$ a Lagrangian isotopy. Sometimes, we refer to $I$ as a \emph{parametrization} of $\Pi$.

Given $\Pi = \{L_t\}_{t \in [0,1]}$ with $L_0 = L$ and $L_1 = L'$, we can choose an ambient Lagrangian isotopy $\{f_t\}_{t \in [0,1]}$, i.e.\ one that satisfies $f_t(L) = L_t$ for all $t \in [0,1]$. This induces two maps, which we use throughout the paper:
\begin{align*}
	\Phi^{\Pi} = (f_1)_* \colon H_2(X,L) \rightarrow H_2(X,L'), \quad\mbox{and}\quad
	\phi^{\Pi} = f_1\vert_L \in \operatorname{Diff}(L,L').
\end{align*}
One can check that the first map is well-defined and that the second one is well defined up to post-composing with an element in $\operatorname{Diff}_0(L')$. In particular, the induced map $\phi^{\Pi}_* \colon H_1(L) \rightarrow H_1(L')$ is well-defined.

We equip the space $\cl$ with its natural $C^1$-topology unless otherwise stated; see \cite{Ono07} or \cite{ChaLec24} for more details. Let $\pi_1(\cl) = \pi_1(\cl, L)$ be the fundamental group of $\cl$ with basepoint $L$ (which will often be suppressed in the notation). Before turning to the \emph{flux-monodromy morphism}, we define and discuss the \emph{monodromy morphism} and the \emph{flux map}. Let $\MCG(L) = \operatorname{Diff}(L)/\operatorname{Diff}_0(L)=\pi_0(\operatorname{Diff}(L))$ be the smooth mapping class group.

\begin{definition}
\label{def:monodromy_morphism}
The \emph{monodromy morphism} is defined by 
	\begin{equation}
		\cm \colon \pi_1(\cl) \rightarrow \MCG(L), \quad
		\cm(\Lambda) = [\phi^{\Lambda}].
	\end{equation} 
\end{definition}

This is well-defined. Indeed, 
\begin{enumerate}
	\item the map $\phi^{\Lambda}$ is well-defined up to $\operatorname{Diff}_0(L)$;
	\item since the mapping class group is discrete, the element $[\phi^{\Lambda}]$ does not depend on the choice of loop representing the element $\Lambda \in \pi_1(\cl)$. 
\end{enumerate}
It is easy to check that $\cm$ is indeed a morphism with respect to the product $(\phi,\psi)\mapsto \psi\phi$ on $\MCG(L)$.

Let $\Pi = \{L_t\}_{t \in [0,1]}$ be a smooth path (which is not necessarily a loop) of Lagrangians starting at $L_0 = L$. Pick an ambient isotopy $\{f_t\}$ realizing $\Pi$. For any $\xi \in H_1(L)$, let $c:S^1\to L$ be a loop satisfying $[c] = \xi$. Let $C^{\Pi}_{\xi} \in H_2(X,L_0 \cup L_1)$ be the relative class defined by the map $(s,t)\mapsto f_t(c(s))$. Geometrically, this is the cylinder swept out by the loop $c$ under the Lagrangian isotopy $\Pi$. The map $H_1(L) \mapsto H_2(X,L_0 \cup L_1)$ defined by $\xi \mapsto C_{\xi}^{\Pi}$ is linear. Furthermore, it satisfies $\pp C_{\xi}^{\Pi} = \phi_*^{\Pi} \xi - \xi$. The equation 
\begin{equation}
	\label{eq:fluxmap}
	\Flux \Pi \cdot \xi = \langle [\omega] ,  C^{\Pi}_{\xi} \rangle
\end{equation}
defines the \emph{Lagrangian flux} $\Flux \Pi \in H^1(L;\R)$ of $\Pi$. This definition is independent of all choices we have made and is furthermore invariant under endpoint-preserving homotopies in the space of Lagrangians. Therefore, if we take $\Lambda = \{L_t\}_{t \in [0,1]}$ to be a loop in $\cl$, then the Flux map descends to a map on $\pi_1(\cl)$.

\begin{definition}
Let $L \subset X$ be a closed, connected Lagrangian submanifold. The \emph{flux map} $\Flux \colon \pi_1(\cl,L) \rightarrow H^1(L;\R)$ is defined by \eqref{eq:fluxmap}.
\end{definition}

This setup allows us to define the \emph{flux-monodromy morphism} as in \cref{def:flux_monodromy}.

Before moving on, we consider how the flux-monodromy morphism behaves under concatenation. By $\Pi * \Pi'$, we denote the concatenation of the Lagrangian path $\Pi$ followed by $\Pi'$. By $\Pi^-$, we denote the inverse path of $\Pi$, defined by $\Pi^-_t = \Pi_{1-t}$.

\begin{proposition} \label{prop:fm_basic} The flux-monodromy map $\FM_L \colon \pi_1(\cl) \rightarrow H^1(L;\R) \rtimes \MCG(L)$ has the following properties:
	\begin{enumerate}
		\item It is a morphism: For any $\Lambda, \Lambda' \in \pi_1(\cl)$ it satisfies
			\begin{equation*}
				\FM_L(\Lambda * \Lambda') = (\cm(\Lambda)^*\Flux(\Lambda') + \Flux(\Lambda), \cm(\Lambda') \cm(\Lambda));
			\end{equation*}
		\item It has a natural change-of-basepoint formula: Let $\Lambda \in \pi_1(\cl,L)$ and $\Pi$ be a path in $\cl$ with endpoints $\Pi(0) = L$ and $\Pi(1) = L'$. Then $\Pi^- * \Lambda * \Pi \in \pi_1(\cl,L')$ satisfies
			\begin{equation}
				\label{eq:fm_base_point_change}
				\FM_{L'}\left( \Pi^- * \Lambda * \Pi \right) 
				= 
				\left( (\phi^{\Pi})^{-*}\left( \cm(\Lambda)^* \Flux \Pi + \Flux \Lambda - \Flux \Pi \right), [\phi^{\Pi} \circ \phi^{\Lambda} \circ (\phi^{\Pi})^{-1}] \right).
			\end{equation}
	The element $[\phi^{\Pi} \circ \phi^{\Lambda} \circ (\phi^{\Pi})^{-1}] \in \MCG(L')$ is well-defined because $\phi^{\Pi}$ is well-defined up to post-composition with an element in $\operatorname{Diff}_0(L')$.
	\end{enumerate}
\end{proposition}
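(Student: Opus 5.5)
Both parts follow from additivity of the swept cylinder classes under concatenation, together with how the ambient isotopy transports cycles. Fix ambient isotopies $\{f_t\}$ realizing $\Lambda$ and $\{g_t\}$ realizing $\Lambda'$, both starting at the identity and with $f_1(L)=g_1(L)=L$.

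\emph{Morphism property.} Realize $\Lambda*\Lambda'$ by the ambient isotopy that runs $\{f_t\}$ first and then $\{g_t\circ f_1\}$. This is a legitimate ambient isotopy for the concatenated loop, since $g_t\circ f_1(L)=g_t(L)=\Lambda'_t$. The endpoint map is $g_1\circ f_1|_L = \phi^{\Lambda'}\circ\phi^{\Lambda}$, which gives the monodromy component $\cm(\Lambda')\cm(\Lambda)$.

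For the flux, take $\xi\in H_1(L)$ represented by a loop $c$. The cylinder swept out by $c$ splits as
\begin{equation*}
C^{\Lambda*\Lambda'}_\xi = C^{\Lambda}_\xi + C^{\Lambda'}_{\phi^\Lambda_*\xi}.
\end{equation*}
Indeed, the first half is swept by $c$ under $\{f_t\}$. The second half is swept by the loop $f_1\circ c$ under $\{g_t\}$, and $f_1\circ c$ represents $\phi^\Lambda_*\xi$. The two pieces glue along $f_1\circ c\subset L$, so their relative classes add in $H_2(X,L)$. Pairing with $[\omega]$ gives
\begin{equation*}
\Flux(\Lambda*\Lambda')\cdot\xi=\Flux\Lambda\cdot\xi+\Flux\Lambda'\cdot\phi^\Lambda_*\xi,
\end{equation*}
that is, $\Flux(\Lambda*\Lambda') = \Flux\Lambda+\cm(\Lambda)^*\Flux\Lambda'$.

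This matches multiplication in $H^1(L;\R)\rtimes\MCG(L)$ once the semidirect product is given the convention $(a,\phi)(b,\psi)=(a+\phi^*b,\psi\phi)$, which is consistent with the reversed product on $\MCG(L)$ used for $\cm$. Well-definedness on homotopy classes is already known from the previous paragraphs, so part (1) is done.

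\emph{Change of basepoint.} Let $\{h_t\}$ realize $\Pi$, with $\phi^\Pi=h_1|_L$. The inverse path $\Pi^-$ is realized by $\{h_{1-t}\circ h_1^{-1}\}$, which has endpoint map $(\phi^\Pi)^{-1}$. Applying the splitting above twice, with $\Pi^-$, $\Lambda$, $\Pi$ in that order and starting from $\eta\in H_1(L')$, gives
\begin{equation*}
\Flux(\Pi^-*\Lambda*\Pi)\cdot\eta=\Flux\Pi^-\cdot\eta+\Flux\Lambda\cdot\bigl((\phi^\Pi)^{-1}_*\eta\bigr)+\Flux\Pi\cdot\bigl(\phi^\Lambda_*(\phi^\Pi)^{-1}_*\eta\bigr).
\end{equation*}
This uses the path version of the additivity, which is proved by the same gluing argument with the classes now in $H_2(X,L_0\cup L_1\cup\dots)$.

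It remains to identify the first term. The cylinder of $\Pi^-$ on $\eta$ is the cylinder of $\Pi$ on $(\phi^\Pi)^{-1}_*\eta$ traversed backwards, so
\begin{equation*}
\Flux\Pi^-\cdot\eta=-\Flux\Pi\cdot(\phi^\Pi)^{-1}_*\eta, \qquad\text{i.e.}\qquad \Flux\Pi^-=-(\phi^\Pi)^{-*}\Flux\Pi.
\end{equation*}
Substituting $\eta=\phi^\Pi_*\xi$, the three terms combine to exactly $(\phi^{\Pi})^{-*}\bigl(\cm(\Lambda)^*\Flux\Pi+\Flux\Lambda-\Flux\Pi\bigr)$. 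For the monodromy, the composed ambient isotopy has endpoint map $\phi^\Pi\circ\phi^\Lambda\circ(\phi^\Pi)^{-1}$. Its class in $\MCG(L')$ is independent of choices, because $\phi^\Pi$ is determined up to $\operatorname{Diff}_0(L')$ and $\operatorname{Diff}_0(L')$ is normal in $\operatorname{Diff}(L')$.

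The main obstacle is bookkeeping rather than any single hard step: keeping the conventions (left versus right action, pullback versus pushforward, and the orientation of the reversed cylinder) consistent, so that the displayed formulas come out exactly as stated.
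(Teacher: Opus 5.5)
Your proof is correct and takes the same approach as the paper. The key identity is $\Flux(\Pi*\Pi')=(\phi^{\Pi})^*\Flux\Pi'+\Flux\Pi$, obtained by splitting the swept cylinder, and both parts then follow by direct computation. You also spell out the steps the paper calls ``straightforward'': the formula $\Flux\Pi^-=-(\phi^{\Pi})^{-*}\Flux\Pi$ and the check that the semidirect-product convention is consistent.
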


\begin{proof}
  Let $\Pi$, $\Pi'$ be paths in $\cl$ such that $\Pi(1) = \Pi'(0)$, meaning that the concatenation $\Pi * \Pi'$ makes sense. We claim that
  \begin{equation}
    \label{eq:concatenation}
    \Flux(\Pi * \Pi') = (\phi^{\Pi})^* \Flux \Pi' + \Flux \Pi.
  \end{equation}
  Both properties listed in the proposition follow from a straightforward computation using \eqref{eq:concatenation}. To prove the identity \eqref{eq:concatenation}, recall that, from \eqref{eq:fluxmap}, $\Flux(\Pi * \Pi') \cdot \xi$ is given by the symplectic area of the cylinder over $\xi$ swept out by $\Pi * \Pi'$. Decomposing the cylinder as $C^{\Pi * \Pi'}_{\xi} = C^{\Pi'}_{\phi_*^{\Pi}\xi} \cup C^{\Pi}_{\xi}$ and applying $[\omega]$ yields the claim.
\end{proof}

By $[\omega]_L \in H^2(X,L;\R)$ we denote the class in relative cohomology which assigns to any class in $H_2(X,L)$ its symplectic area. Note that any element $b \in H_2(X,L)$ and any path $\Pi = \{L_t\}_{t \in [0,1]}$ starting at $L = L_0$ induces a natural element $\Phi^{\Pi} b \in H_2(X,L_1)$ with boundary $\pp \Phi^{\Pi} b = \phi^{\Pi}_* \pp b$ when transported using $\Pi$. Furthermore, $\Phi^{\Pi}b$ is homologous to $b + C^{\Pi}_{\pp b}$. Hence, applying $[\omega]$ yields the \emph{area identity}:
\begin{equation}
	\label{eq:areaformula}
	(\Phi^{\Pi})^* [\omega]_{L_1} - [\omega]_{L_0} = \pp^* \Flux \Pi.
\end{equation}

We denote the Maslov class (on homology) by $m_L \in H^2(X,L;\Z)$ and call its evaluation on a surface with boundary on $L$ the Maslov index of the surface. Since the Maslov class is a discrete invariant, it is preserved under Lagrangian isotopies. This implies an important property.

\begin{proposition} \label{prop:Maslov-cyl}
	Let $\Lambda=\{L_t\}$ be a Lagrangian loop, i.e. $L_1=L_0=L$. For any loop $c$ on $L$, the associated cylinder $C^\Lambda_{\xi}$ has Maslov index 0.
\end{proposition}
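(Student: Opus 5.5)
The plan is to view the Maslov index of the cylinder as the Maslov index of a loop of Lagrangian subspaces along its boundary, and to use the fact that the Maslov class is locally constant under Lagrangian isotopy to compare the two boundary circles.

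First, fix an ambient isotopy $\{f_t\}$ realizing $\Lambda$, a loop $c$ representing $\xi$, and the cylinder $u(s,t)=f_t(c(s))$. Its boundary consists of $c$ at $t=0$ and $\phi^\Lambda\circ c$ at $t=1$, both lying in $L$. Pull $TX$ back along $u$ and trivialize it symplectically over the cylinder. The Maslov index of $u$ is then the winding of the loop of Lagrangian subspaces $T_{\phi^\Lambda(c(s))}L$ minus that of $T_{c(s)}L$, with orientations chosen so that the sum over the boundary is the relative Maslov index.

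Second, the key observation is that the family $\{T_{f_t(c(s))}L_t\}_{(s,t)}$ gives a Lagrangian subbundle of $u^*TX$ over the whole cylinder. Its restriction to $t=0$ is $c^*TL$ and its restriction to $t=1$ is $(\phi^\Lambda\circ c)^*TL$. In the fixed trivialization, each $t$-slice $s\mapsto T_{f_t(c(s))}L_t$ is a loop in the Lagrangian Grassmannian $\Lambda(n)$. These loops vary continuously in $t$, so they are freely homotopic, and their Maslov indices (the degree of $\det^2$) coincide. The difference of the boundary contributions is therefore zero, which gives $\mu(C^\Lambda_\xi)=0$.

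Equivalently, and in the language the paper uses, one can phrase this through the area-type identity for the Maslov class. Since $\Phi^\Lambda b$ is homologous to $b+C^\Lambda_{\pp b}$ and $m_L$ is preserved under isotopy (so $(\Phi^\Lambda)^*m_L=m_L$), we get $m_L(C^\Lambda_{\pp b})=0$ for every $b$. This argument only works when $\pp b=\xi$ is a boundary, i.e.\ $\xi$ lies in the image of $\pp$. The direct trivialization argument above avoids that restriction, so I would present it as the main proof.

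The step I expect to need the most care is the bookkeeping of conventions. The Maslov index of a cylinder with both ends on the same $L$ must be the signed sum of the two boundary loop indices in a trivialization that extends over the cylinder, and that sum must be independent of the trivialization. Once this is set up, the homotopy invariance of the Maslov index of loops in $\Lambda(n)$ finishes the proof.
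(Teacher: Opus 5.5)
Your proposal is correct and follows essentially the same route as the paper. You trivialize the pullback of $TX$ over the cylinder, which is possible because the cylinder retracts onto a circle. You then write the Maslov index as the difference of the Maslov indices of the two boundary loops of Lagrangian planes, and use $(s,t)\mapsto T_{f_t(c(s))}L_t$ as a homotopy between those loops.
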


This generalizes the fact that tori defined by loops of symplectic diffeomorphisms have vanishing first Chern number, following classical work of McDuff~\cite{McD84}. 

\begin{proof}
  Fix a parametrization $\{\iota_t:L\hookrightarrow X\}$ of $\Lambda$, and let $c:S^1\to L$ with $[c] = \xi$, and consider $f:S^1\times [0,1]\to X; f(s,t)=\iota_t(c(s))$. Note that, since complex bundles over surfaces are classified by the first Chern class and $H^2(S^1\times [0,1];\Z)=0$, $f^*TX$ is trivial. Moreover, given a trivialization of $f^*TX$, $m_L(C^{\Lambda}_{\xi})$ is obtained as the difference of the Maslov indices of $s\mapsto T_{\iota_1(c(s))}L_1$ and $s\mapsto T_{\iota_0(c(s))}L_0$, seen, via that trivialization, as loops into the Lagrangian Grassmannian of $\R^{\dim X}$. But $(s,t)\mapsto T_{\iota_t(c(s))}L_t$ gives a homotopy between these loops, so that these Maslov indices are the same.
\end{proof}

This result is particularly useful when studying \emph{H-monotone} Lagrangians, i.e. those $L$   such that $[\omega]_L = \frac{C}{2} m_L$ for some $C\in\R$. Indeed, we directly get the following.

\begin{corollary} \label{cor:flux_monotone}
Suppose that $L$ is an $H$-monotone Lagrangian in $X$. Then, $\Flux_L\Lambda=0$ for every loop $\Lambda\in\pi_1(\cl,L)$.
\end{corollary}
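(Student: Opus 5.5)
The plan is to evaluate the flux on an arbitrary class and combine the area identity \eqref{eq:areaformula} with \cref{prop:Maslov-cyl}. Fix $\Lambda\in\pi_1(\cl,L)$ and $\xi\in H_1(L)$. By definition \eqref{eq:fluxmap}, $\Flux\Lambda\cdot\xi=\langle[\omega],C^\Lambda_\xi\rangle$, where $C^\Lambda_\xi\in H_2(X,L)$ is the cylinder swept out by a representative loop $c$ of $\xi$. For a loop, both ends of this cylinder lie on $L_0=L_1=L$, so $C^\Lambda_\xi$ is a genuine relative class in $H_2(X,L)$; its boundary is $\phi^\Lambda_*\xi-\xi$, which need not vanish.

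\emph{Step 1.} By $H$-monotonicity we have $[\omega]_L=\frac C2 m_L$ as classes on $H_2(X,L)$. Evaluating on $C^\Lambda_\xi$ gives
\begin{equation*}
\Flux\Lambda\cdot\xi=\langle[\omega]_L,C^\Lambda_\xi\rangle=\tfrac C2\, m_L(C^\Lambda_\xi).
\end{equation*}

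\emph{Step 2.} By \cref{prop:Maslov-cyl}, $m_L(C^\Lambda_\xi)=0$. Hence $\Flux\Lambda\cdot\xi=0$ for every $\xi$, and therefore $\Flux\Lambda=0$.

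The only point requiring care is Step 1. One must check that the class defined by the map $(s,t)\mapsto f_t(c(s))$ is exactly the class on which both $[\omega]_L$ and $m_L$ are evaluated, and that the monotonicity relation applies to such relative classes and not only to disks. The first part holds because $L_0=L_1=L$, so the cylinder defines an element of $H_2(X,L)$ directly. The second part holds because $H$-monotonicity is, by definition, an identity of classes in $H^2(X,L;\R)$, so it applies to every element of $H_2(X,L)$, cylinders included. This is precisely where the homological version of monotonicity is needed: monotonicity over $\pi_2(X,L)$ alone would not suffice.
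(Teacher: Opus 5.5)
Your proof is correct and is essentially the paper's argument: the paper deduces the corollary directly from \cref{prop:Maslov-cyl}, by evaluating $[\omega]_L=\frac C2 m_L$ on the Maslov-zero cylinder $C^\Lambda_\xi\in H_2(X,L)$. Your side remark is also right: a non-contractible circle in $\T^2$ is trivially $\pi_2$-monotone, yet it has loops with nonzero flux (see the table in \S\ref{ssec:review}), so the homological version of monotonicity is genuinely needed.
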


\section{Discreteness and topological properties}
\label{sec:topol-prop}
\commentintoc{We first prove \cref{thm:A}, which gives necessary and sufficient conditions to ensure discreteness of the flux-monodromy group in \S\,\ref{ssec:thmA}. We then turn to the topological consequences of said discreteness, namely by proving Propositions \ref{prop:FM-top} and \ref{prop:Flux-top}. These are divided into two parts: local path connectedness in \S\,\ref{sec:local-path-connectedness} and closedness in \S\,\ref{sec:closedness}. \cref{prop:few-cases_discreteness}, which ensures discreteness of the flux set in various settings, is proved at the end of \S\,\ref{sec:local-path-connectedness}.}

We first prove \cref{thm:A}, which gives necessary and sufficient conditions to ensure discreteness of the flux-monodromy group in \S\,\ref{ssec:thmA}. We then turn to the topological consequences of said discreteness, namely by proving Propositions \ref{prop:FM-top} and \ref{prop:Flux-top}. These are divided into two parts: local path connectedness in \S\,\ref{sec:local-path-connectedness} and closedness in \S\,\ref{sec:closedness}. \cref{prop:few-cases_discreteness}, which ensures discreteness of the flux set in various settings, is proved at the end of \S\,\ref{sec:local-path-connectedness}.

\subsection{Proof of \cref{thm:A} } \label{ssec:thmA}

We start with the proof of the second statement in~\cref{thm:A}, i.e.\ that discreteness of $\cp(X) = \omega(\ker c_{1})$ implies discreteness of $\Gamma(L)$ for all $L\subseteq X$. In fact, we prove a slightly stronger statement that will be useful for applications. \par

Consider the natural projection $p^{\Z}:H^1(L;\R)\rtimes\MCG(L)\to H^1(L;\R)\rtimes\Aut(H_1(L;\Z))$. One can easily check that this is a morphism. We call $\FM^{\Z}:=p^{\Z}\circ \FM$ the \emph{homological flux-monodromy morphism} and denote its image by $\Gamma^{\Z}(L)$.

\begin{proposition}
	\label{prop:discreteness2}
	Assume that $\cp(X)$ is discrete. Then the homological flux-monodromy group $\Gamma^{\Z}(L)$ is discrete for all Lagrangians $L \subset X$.
\end{proposition}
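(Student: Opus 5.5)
The plan is to show that the identity of $H^1(L;\R)\rtimes\Aut(H_1(L;\Z))$ is isolated in $\Gamma^{\Z}(L)$; since $\Gamma^\Z(L)$ is a subgroup of a topological group, this gives discreteness. Because $\Aut(H_1(L;\Z))$ is discrete, any sequence $\FM^\Z(\Lambda_i)\to(0,\id)$ eventually has homological monodromy $(\phi^{\Lambda_i})_*=\id$. So it suffices to show that the set
\[
F_0=\{\Flux\Lambda \sth \Lambda\in\pi_1(\cl,L),\ (\phi^\Lambda)_*=\id \text{ on } H_1(L;\Z)\}
\]
has $0$ isolated. By \cref{prop:fm_basic}, restricted to loops with trivial homological monodromy, $\Flux$ is additive, so $F_0$ is a subgroup of $H^1(L;\R)$. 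It is therefore enough to embed $F_0$ into something visibly discrete.

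For a loop $\Lambda$ with $(\phi^\Lambda)_*=\id$ and $\xi\in H_1(L;\Z)$, the cylinder $C^\Lambda_\xi$ has both boundary components representing $\xi$ in $L$. I would glue on a $2$-chain in $L$ realizing the homology between $\phi^\Lambda_*\xi$ and $\xi$ (e.g.\ a chain $D$ in $L$ with $\partial D=\phi^\Lambda\circ c - c$). This produces an absolute class $T_\xi\in H_2(X;\Z)$ with $\langle[\omega],T_\xi\rangle=\Flux\Lambda\cdot\xi$, since $\omega|_L=0$ makes the added piece contribute no area. For the Chern class, I would use \cref{prop:Maslov-cyl}: $m_L(C^\Lambda_\xi)=0$, and adding a chain lying in $L$ does not change the Maslov index. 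For a closed class one has $m_L(j(T))=2c_1(T)$, so $c_1(T_\xi)=0$. Thus $\Flux\Lambda\cdot\xi\in\cp(X)$ for all $\xi$.

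Choosing a basis $\xi_1,\dots,\xi_r$ of $H_1(L;\Z)$ modulo torsion (torsion pairs trivially with real classes), the map $F_0\to\cp(X)^r$, $\eta\mapsto(\eta\cdot\xi_k)_k$, is injective. Since $\cp(X)$ is a discrete subgroup of $\R$, $\cp(X)^r$ is discrete in $\R^r$, so $F_0$ is discrete and $0$ is isolated in it. Combined with the first step, this gives a neighbourhood of $(0,\id)$ in $H^1(L;\R)\rtimes\Aut(H_1(L;\Z))$ meeting $\Gamma^\Z(L)$ only in $(0,\id)$.

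The step I expect to be the main obstacle is making the Maslov/Chern bookkeeping rigorous when the closed-up class is formed using a chain in $L$ rather than a genuine relative cylinder. Concretely, one has to check that the relative class $C^\Lambda_\xi\in H_2(X,L)$ (with $L_0=L_1=L$) differs from $j(T_\xi)$ only by the class coming from $D\subset L$, which is zero in $H_2(X,L)$. This would give $m_L(j(T_\xi))=m_L(C^\Lambda_\xi)=0$, and hence $c_1(T_\xi)=0$ via the standard identity $m_L\circ j=2c_1$. I would make this check using the long exact sequence of the pair $(X,L)$.
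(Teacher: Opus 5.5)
Your proposal is correct and follows essentially the same route as the paper. Both arguments close up the cylinder $C^\Lambda_\xi$ over a monodromy-fixed class to an absolute class in $H_2(X)$: the paper does this via $\partial C^\Lambda_\xi=0$ and the long exact sequence, you via an explicit chain in $L$. Both then use \cref{prop:Maslov-cyl} to get vanishing Chern number, so fluxes of loops with trivial homological monodromy lie in $H^1(L;\cp(X))$, and reduce to such loops by discreteness of $\Aut(H_1(L;\Z))$. The only difference is cosmetic: you show the identity is isolated in the subgroup $\Gamma^{\Z}(L)$, whereas the paper takes a convergent sequence, waits for the monodromy to stabilize, and considers the difference loops $\Lambda_n * \Lambda_{n_0}^-$.
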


Considering the projection $\Gamma(L)\to\Gamma^{\Z}(L)$ and the discreteness of $\MCG(L)$, this obviously implies the second part of \cref{thm:A}.

\begin{proof}
  Let $\Lambda \in \pi_1(\cl,L)$ be a loop in the space of Lagrangians of some Lagrangian submanifold $L \subset X$. If its monodromy preserves some $\xi \in H_1(L)$, i.e.\ if $\cm(\Lambda)\xi = \xi$, then the cylinder $C^{\Lambda}_\xi$ swept out by $\xi$ under $\Lambda$ satisfies $\pp C^{\Lambda}_\xi = 0$ and thus defines\footnote{This class in unique up to a choice of element in $H_{2}(L)$, but the symplectic area \eqref{eq:fluxsigma} does not depend on this choice, since $L$ is Lagrangian.} a class $\Sigma_{\xi} = [C^{\Lambda}_{\xi}] \in H_2(X)$.
  This correspondance satisfies
  \begin{equation}
    \label{eq:fluxsigma}
    \Flux(\Lambda) \xi = \omega(\Sigma_{\xi}) \quad \emph{ for all } \xi \in H_1(L).
  \end{equation}
  Moreover, the first Chern number $c_1(\Sigma_{\xi})\in\Z$ of $\Sigma_{\xi}$ is half the Maslov index $m_L(C^{\Lambda}_{\xi})$ of $C^{\Lambda}_{\xi}$. By \cref{prop:Maslov-cyl}, we have that $c_1(\Sigma_{\xi})=0$, and $\Flux(\Lambda)\xi$ is in $\cp(X) \subset \R$ whenever $\Lambda$ has trivial homological monodromy. 

  Now let $\{(f_n,M_n)\}_{n \in \N} \in \Gamma^{\Z}(L) \subset H^1(L;\R) \rtimes \Aut(H_1(L;\Z))$ be a sequence converging to $(f,M)$, and let $\Lambda_n \in \pi_1(\cl,L)$ be a sequence of loops with $\FM^\Z(\Lambda_n) = (f_n,M_n)$. Note that $\Aut(H_1(L;\Z))$ is itself a discrete group. Therefore, the sequence $M_n$ is constant and equal to $M$ for all $n$ greater than or equal to some $n_0$. The sequence $\{\Lambda_n * \Lambda_{n_0}^- \}_{n \in \N}$ has flux $\Flux(\Lambda_n * \Lambda_{n_0}^-) = f_n -M^*f_{n_0}$ and trivial monodromy for all $n \geqslant n_0$ by Proposition~\ref{prop:fm_basic}. By the above paragraph, it follows that $f_n -M^*f_{n_0} \in H^1(L;\cp(X))$. Since the latter group is discrete, the sequence $f_n$ must stabilize, too.
\end{proof}

\begin{remarks}\label{rk:discreteness_equiv}
	A few things need to be noted about this proof.
	\begin{enumerate}
		\item An approach analogous to the latter part of the proof of Proposition~\ref{prop:discreteness2} also shows that $\Gamma(L)$ is discrete if and only if its subgroup
		\begin{equation*}
			\Gamma_0(L) = \FM
			\left(
			\ker \left( \cm \colon \pi_1(\cl,L) \rightarrow \MCG(L) \right)
			\right)
		\end{equation*}
		is. This will be used later on.
		
		\item The proof of \cref{prop:discreteness2} only uses classes in $H_2(X)$ realized by tori. Therefore, we only need the subset of $\cp(X)$ generated by Chern-zero tori to be discrete. For example, products of surfaces of genus at least 2 have this property, although the full $\cp(X)$ might not be discrete.
		
		\item When only $\cp_S(X)$, the subset of $\cp(X)$  formed by spherical classes, is known to be discrete, essentially the same proof as above implies that $\Gamma^{\Z}(L)$ discrete for all $L$ \emph{bounding enough disks}, i.e. such that the natural map $\pi_2(X,L)\to\pi_1(L)\to H_1(L)$ has finite cokernel.\qedhere
	\end{enumerate}
\end{remarks}

Before proving the other part of \cref{thm:A}, we note that the proof of \cref{prop:discreteness2} actually allows us to compute $\Gamma(L)$ and $\fl(L)$ in some important cases.

\begin{proposition} \label{prop:flux-monodromy_monotone}
	Let $X$ be $H$-monotone, meaning monotone on $H_2(X)$, and let $L\subseteq X$ be a Lagrangian. The natural projections $\Gamma(L)\to\cm(L)$ and $\Gamma^{\Z}(L)\to\cm^\Z(L)$ are isomorphisms. If moreover the inclusion $L\hookrightarrow X$ induces an injection $H_1(L;\R)\hookrightarrow H_1(X;\R)$, then $\fl(L)=0$.
\end{proposition}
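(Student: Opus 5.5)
The plan is to show that the kernel of each projection is trivial; surjectivity holds by definition of $\cm(L)$ and $\cm^\Z(L)$ as images of the monodromy morphisms. We use the argument from the first paragraph of the proof of \cref{prop:discreteness2}, now combined with $H$-monotonicity of $X$. Since the projections are morphisms, injectivity amounts to the following claim: if $\Lambda\in\pi_1(\cl,L)$ has trivial monodromy (in $\MCG(L)$, resp.\ trivial homological monodromy in $\Aut(H_1(L;\Z))$), then $\Flux\Lambda=0$. The homological version implies the other, since trivial monodromy in $\MCG(L)$ implies trivial homological monodromy. So it suffices to treat $\Gamma^\Z(L)\to\cm^\Z(L)$.

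Suppose $\phi^\Lambda_*=\id$ on $H_1(L;\Z)$, and fix $\xi\in H_1(L)$. Then $\partial C^\Lambda_\xi=\phi^\Lambda_*\xi-\xi=0$, so the cylinder closes up to a class $\Sigma_\xi\in H_2(X)$. It is well defined up to $H_2(L)$, which has zero area because $L$ is Lagrangian. As in \eqref{eq:fluxsigma}, $\Flux(\Lambda)\xi=\omega(\Sigma_\xi)$. By \cref{prop:Maslov-cyl} the Maslov index of $C^\Lambda_\xi$ vanishes, and since this index equals $2c_1(\Sigma_\xi)$, we get $c_1(\Sigma_\xi)=0$. $H$-monotonicity of $X$ means $[\omega]=\kappa\, c_1$ on $H_2(X)$ for some $\kappa$. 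Hence $\omega(\Sigma_\xi)=\kappa\,c_1(\Sigma_\xi)=0$, so $\Flux(\Lambda)\xi=0$ for every $\xi$, i.e.\ $\Flux\Lambda=0$. This shows both projections are isomorphisms.

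For the last claim, assume $H_1(L;\R)\to H_1(X;\R)$ is injective. For any loop $\Lambda$ and $\xi\in H_1(L;\Z)$, the cylinder $C^\Lambda_\xi$ gives a homology in $X$ between $\iota_*\xi$ and $\iota_*\phi^\Lambda_*\xi$. Injectivity then gives $\phi^\Lambda_*\xi=\xi$ modulo torsion. This is not quite trivial integral monodromy, so rather than the closing-up argument above I will use a rational version. Choose $N\ge1$ such that $N(\phi^\Lambda_*\xi-\xi)=0$ in $H_1(L;\Z)$. Then $N\,C^\Lambda_\xi$ has null-homologous boundary in $L$, so it caps off by a $2$-chain in $L$ to a class $\Sigma\in H_2(X;\Z)$. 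Its area is $N\,\Flux(\Lambda)\xi$, since the cap in $L$ has zero area. Its Chern number is half of $m_L(N C^\Lambda_\xi)$ plus the Maslov contribution of the cap, and a chain in $L$ contributes no Maslov index. By \cref{prop:Maslov-cyl} this gives $c_1(\Sigma)=0$, and monotonicity then gives $\Flux(\Lambda)\xi=0$.

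The main point needing care is this rational capping: checking that a $2$-chain lying in $L$ contributes zero to both area and Maslov index, which holds because $m_L$ and $[\omega]_L$ vanish on the image of $H_2(L)$ in $H_2(X,L)$.
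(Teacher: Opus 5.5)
Your proof is correct. The first part matches the paper's argument exactly: trivial homological monodromy makes the cylinders close up to Chern-zero classes, and these have zero area since $\cp(X)=0$ by $H$-monotonicity.

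For the last claim you take a genuinely different route. The paper argues on the monodromy itself. Injectivity of $H_1(L;\R)\to H_1(X;\R)$ forces $\cm^\Z(\Lambda)$ to act trivially on the free part of $H_1(L;\Z)$, so it has finite order $N$: automorphisms inducing the identity on $H_1(L;\Z)/\mathrm{tors}$ form a finite group. Then $\Lambda^N$ has trivial homological monodromy, and the first part gives $0=\Flux(\Lambda^N)=N\Flux(\Lambda)$. The last equality uses that flux is additive on loops whose monodromy acts trivially on $H^1(L;\R)$.

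You instead work one class at a time. Since $\phi^\Lambda_*\xi-\xi$ is torsion, $N C^\Lambda_\xi$ caps off to an integral Chern-zero class. This avoids both the finiteness of that automorphism group and the additivity of flux. It also proves something slightly stronger: $\Flux(\Lambda)\xi=0$ whenever $\phi^\Lambda_*\xi-\xi$ is torsion, for an arbitrary loop $\Lambda$. It contains your first part as the case $N=1$; equivalently, it is the first part run with real coefficients. The paper's route has the advantage of reusing the first part verbatim.

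One small correction to your final sentence. The image of $H_2(L)\to H_2(X,L)$ is zero by exactness, so that is not the relevant reason. The actual reason is that the cap $D\subset L$ vanishes in the relative chain group $C_2(X,L)$. Hence $j(\Sigma)=N[C^\Lambda_\xi]$ in $H_2(X,L)$, and $2c_1(\Sigma)=m_L(j\Sigma)=N\,m_L(C^\Lambda_\xi)=0$. Separately, $\omega(D)=0$ simply because $L$ is Lagrangian.
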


As the notation suggests, $\cm^\Z(L)$ is the image of the monodromy group $\cm(L)$ of $L$ in $\Aut(H_1(L;\Z))$.

\begin{proof}
  The kernel $\Gamma_0(L)$ of $\Gamma(L)\to \cm(L)$ consists of the set of fluxes realized by loops with trivial monodromy. By the proof of \cref{prop:discreteness2}, these fluxes are all in $H^1(L;\cp(X))$. But by $H$-monotonicity, $\cp(X)=0$, so that $\Gamma_0(L)$ is trivial. The proof for $\Gamma^{\Z}\to\cm^\Z(L)$ is completely analogous. \par
  
  Suppose now that $H_1(L;\R)\hookrightarrow H_1(X;\R)$. Then, the real homological monodromy $\cm^{\R}(\Lambda)$ of any Lagrangian loop $\Lambda$ is trivial. By the universal coefficient theorem, this means that $\cm^{\Z}(\Lambda)$ acts trivially on the free part of $H_1(L)$. Since the torsion part of $H_1(L)$ is finitely generated, there can be no homomorphism from it to the free part, and its automorphism group is finite. Therefore, there is some $N$ such that $\cm^{\Z}(\Lambda)^N=\cm^{\Z}(\Lambda^N)$ is the identity. By the first paragraph, this means that $\Flux(\Lambda^N)=N\Flux(\Lambda)=0$, where the first equality follows from the fact that $\Flux$ is a homomorphism in this setting. Therefore, $\fl(L)=0$ as stated.
\end{proof}

Note that this proves Case~(c) of \cref{prop:few-cases_discreteness}. The rest will be proven in \cref{sec:local-path-connectedness}.

For the other direction in \cref{thm:A}, we prove its contrapositive, i.e.\ under the assumption that $\mathcal{P}_S(X)$ is not discrete, we will find a Lagrangian $L$ such that the image of $\FM_L$ is not discrete. The Lagrangian $L$ is obtained by embedding a small product torus
\begin{equation}
	T(a) = T(a_1,\ldots,a_n) = \{\pi\vert z_i \vert^2 = a_i\} \subset \C^{n}
\end{equation}
for $a = (a_1,\ldots,a_n) \in \R^{n}_{>0}$ by a symplectic embedding $\varphi \colon B^{2n}(b) \hookrightarrow X$. Note that any two such tori $\varphi(T(a))$ and $\varphi(T(a'))$ are Lagrangian isotopic by a deformation through the images of product tori. We call this the \emph{obvious} Lagrangian isotopy. Such tori where studied in great detail by Chekanov--Schlenk in \cite{CheSch16}; in fact, our argument is a straightforward adaptation of \cite[Theorem 1.5]{CheSch16} that takes into account the monodromy of the Hamiltonian diffeomorphisms appearing in it.

\begin{theorem}	
	\label{thm:CS_monodromy}
	Let $X$ be a symplectic manifold, and consider $\varphi \colon B^{2n}(b) \hookrightarrow X$ a symplectic ball embedding. For every real number $c>0$, there exists $A >0$ such that for all $a \in (0,A]$, the following holds. If $d_1,\ldots,d_k$ and $e_1,\ldots,e_k$ satisfy $d_j,e_j \geqslant c$ for all $j\in \{1,\ldots,k\}$ and 
	\begin{equation}
		\label{eq:difference_Gamma}
		d_j - e_j \in \mathcal{P}_S(X),
	\end{equation}
then the tori (provided the corresponding product tori are contained in the domain of $\varphi$),
	\begin{equation}
		L = \varphi(T(a,\ldots,a,a+d_1,\ldots,a+d_k)), \quad
		L' = \varphi(T(a,\ldots,a,a+e_1,\ldots,a+e_k))
	\end{equation}
are Hamiltonian isotopic through a Hamiltonian isotopy with trivial monodromy. Here, $\operatorname{Diff}(L,L')$ and $\operatorname{Diff}(L)$ are identified by the obvious Lagrangian isotopy.
\end{theorem}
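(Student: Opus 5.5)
The plan is to follow the proof of \cite[Theorem 1.5]{CheSch16} step by step, keeping track at each stage of the parametrizations of the tori, so that the monodromy of the resulting Hamiltonian isotopy can be read off directly. The point is that every Hamiltonian diffeomorphism in the Chekanov--Schlenk construction comes from an explicit, product-like geometric move. I therefore expect to obtain the monodromy as the class of a diffeomorphism that is visibly isotopic to the identity, rather than having to compute it abstractly. This matters because for $n\geqslant 5$ the map $\MCG(T^n)\to \GL(n;\Z)$ is not injective, so trivial \emph{homological} monodromy would not be enough.

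\textbf{Step 1 (reduction to $k=1$).} First I reduce to changing one coordinate at a time. Pass from $(d_1,\ldots,d_k)$ to $(e_1,d_2,\ldots,d_k)$, then to $(e_1,e_2,d_3,\ldots,d_k)$, and so on. Every intermediate torus has its large coordinates in $\{d_j\}\cup\{e_j\}$, hence all at least $c$, and each single step changes one coordinate by an element of $\mathcal P_S(X)$. A concatenation of Hamiltonian isotopies with trivial monodromy again has trivial monodromy, by the composition rule of \cref{prop:fm_basic}, once we use the identifications given by the obvious isotopies. Those identifications are compatible with composition because the obvious isotopies through product tori commute up to homotopy. It thus suffices to treat $L=\varphi(T(a,\ldots,a,a+d))$ and $L'=\varphi(T(a,\ldots,a,a+e))$ with $d-e=\omega(S)$, where $S\in\pi_2(X)$ satisfies $c_1(S)=0$.

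\textbf{Step 2 (the Chekanov--Schlenk move).} Next I recall the construction of \cite{CheSch16}. Choose $A$ small in terms of $c$ and of a fixed smooth representative of $S$, by a sphere passing through the ball. Viewing $L=\Sigma\times S^1(a+d)$, where $\Sigma$ is the small torus $T(a,\ldots,a)$, one constructs a Hamiltonian isotopy in two parts. In the first part, the small factor $\Sigma$ is carried along a family following the sphere $S$, using a Weinstein-type neighbourhood of a suitable isotropic or coisotropic tube around $S$. The condition $c_1(S)=0$ guarantees that the normal framing along $S$ is trivial, so the small torus $\Sigma$ can be dragged around $S$ and closed up. In the second part, a compensating deformation in the last $\C$-factor changes the area $a+d$ to $a+e$; this works because the total flux of the whole move vanishes. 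Each stage has the form $\Sigma_t\times\gamma_t$ or $\psi_t(\Sigma)\times S^1(\cdot)$, where $\psi_t$ is a family of symplectic embeddings of a neighbourhood of $\Sigma$, or the stage is a translation of circle factors.

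\textbf{Step 3 (monodromy).} I then compare the time-one map restricted to $L$ with the obvious isotopy from $L$ to $L'$. On the $S^1$-factor, the map is a diffeomorphism of a circle moved along an embedded annulus in a $2$-plane, so it is isotopic to the standard identification. On the $\Sigma$-factor, it is the restriction of an embedding of a neighbourhood of $\Sigma$ that is isotopic, through embeddings preserving the product structure, to the identity. The resulting self-map of $T^n$ is then a fibrewise product of maps isotopic to identities, parametrized smoothly, hence lies in $\operatorname{Diff}_0(T^n)$. As a consistency check I will verify that the induced map on $H_2(X,L)$ fixes the Maslov class and all disc classes of $L$, as \cref{prop:Maslov-cyl} requires.

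\textbf{Main obstacle.} The main obstacle is Step 3 for the part of the loop that drags $\Sigma$ around $S$. A priori the framing obtained by transporting a trivialization of the normal bundle of $\Sigma$ around the sphere could twist the torus, producing a nontrivial element of $\GL(n-1;\Z)$ or even an exotic mapping class. My first attempt will be to choose the transport inside a symplectic trivialization of $TX$ along the disc swept by $S$. Such a trivialization exists globally over the sphere precisely because $c_1(S)=0$, and then the transported embedding of the neighbourhood of $\Sigma$ is, in those coordinates, a $U(n)$-valued loop that I will homotope to the constant loop. If this fails, I will compose with a Hamiltonian loop in the ball that undoes the residual twist; such a loop exists since $\pi_1(U(n-1))$ realizes the available framings.
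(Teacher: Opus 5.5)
\textbf{There is a genuine gap in your Step~3.} This is exactly the step where the paper's proof does its work.

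The paper does not transport framings. It uses that \cite[Proposition~6.1]{CheSch16} provides, for $S\in\pi_2(X)$, a Hamiltonian diffeomorphism $\chi$ which on $\varphi(U)$ coincides with $\varphi\circ\psi_{m,s}\circ\varphi^{-1}$, times the identity in the other factors. Here $U$ is a neighbourhood of the isotropic circle $\{0\}\times S^1(y_0)$, i.e.\ of the \emph{large} factor, not of the small torus $\Sigma$. Also $s=\omega(S)$, $m=c_1(S)$ and $\psi_{m,s}(\rho_1,\theta_1,\rho_2,\theta_2)=(\rho_1,\theta_1+m\theta_2,\rho_2+s-m\rho_1,\theta_2)$. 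When $c_1(S)=0$ this map is a translation in one action variable that fixes all angles. Hence $\chi|_L$ followed by the obvious isotopy is the identity of $T^n$ in angle coordinates, so it is trivial in $\MCG(T^n)$. This also settles your legitimate concern about $n\geqslant 5$.

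Your substitute argument does not reach this conclusion. What $c_1(S)=0$ gives you is that the framing loop is null-homotopic, and for $n\geqslant 3$ that is not enough. Take the loop $\theta_n\mapsto\mathrm{diag}(e^{2\pi i\theta_n},e^{-2\pi i\theta_n},1,\ldots,1)$ acting on the normal directions of the large circle. It is null-homotopic in $U(n-1)$, since its determinant is constant. Yet, with the symplectic correction $\rho_n\mapsto\rho_n-\rho_1+\rho_2$ as in $\psi_{m,s}$, it preserves $T(a,\ldots,a,a+d)$ and induces the shear $e_n\mapsto e_n+e_1-e_2$. This is a nontrivial monodromy that preserves both the area class and the Maslov class.

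A null-homotopy of the framing deforms the diffeomorphism and moves $L$ off $L'$ along the way. So it says nothing about the mapping class you actually obtain. What you need is a construction whose transport is \emph{literally} angle-preserving, and that is what the explicit local model supplies.

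Your fallback fails as well. Composing with a loop of Hamiltonian diffeomorphisms does not change the time-one map. Unitary motions of the ball that return the product torus to itself act on it only by rotations and permutations of the small factors, so they cannot cancel such a shear.

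\textbf{Two further issues.}

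First, the quantifiers on $A$. The statement requires $A$ to depend only on $c$ (with $X$ and $\varphi$ fixed). You choose it ``in terms of a fixed smooth representative of $S$'', but $S$ is determined by $d_j-e_j$, which ranges over the possibly dense group $\mathcal{P}_S(X)$. Reducing to one sphere per coordinate in Step~1 does nothing for this uniformity. In the paper, uniformity is part of what is imported from the rest of the Chekanov--Schlenk argument, run with $\ker c_1$ in place of $\pi_2(X)$. In Step~1 you would also need to perform the decreasing coordinate changes before the increasing ones, so that intermediate tori stay in $B^{2n}(b)$.

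Second, your Step~2 is not the Chekanov--Schlenk move. There, a neighbourhood of the large circle is carried along a path which, closed up in the ball, sweeps out $S$. The shift of the large coordinate by $s$ comes from this sweep, not from a separate compensating deformation. In any case, ``it works because the total flux vanishes'' is not a valid justification: a Lagrangian isotopy with vanishing total flux need not be Hamiltonian, as the Clifford versus Chekanov torus shows.
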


The case where $X$ is \emph{special} (in the terminology of Chekanov--Schlenk) is absent from the statement because if $X$ is special, then, in particular, $\mathcal{P}_S(X)$ is discrete. The key difference with respect to \cite[Theorem 1.5]{CheSch16} is that we take the $d_j - e_j$ to lie in $\mathcal{P}_S(X)$ instead of the larger group $G_a$ generated by the quantities $\omega(S) - ac_1(S)$ for $S \in \pi_2(X)$. Note that $\mathcal{P}_S(X)$ is obtained from $G_a$ by restricting to elements $S \in \ker c_1$. As will become apparent in the proof, considering spheres with trivial first Chern number is precisely the restriction needed to obtain trivial monodromy of the Hamiltonian diffeomorphisms constructed by Chekanov--Schlenk. \smallskip

\begin{proof}[Proof of \cref{thm:CS_monodromy}]
  Following the notation of the proof of \cite[Theorem 1.5]{CheSch16}, we define, on a suitable domain in $(\C^{\times})^2$ and for some $m \in \Z$ and $s > 0$, the symplectomorphism 
  \begin{equation}
    \label{eq:psi_ms}
    \psi_{m,s}(\rho_1,\theta_1,\rho_2,\theta_2)
    =
    (\rho_1,\theta_1 + m\theta_2, \rho_2 + s -m\rho_1, \theta_2),
  \end{equation}
  which we have expressed in symplectic polar coordinates $(\rho_i,\theta_i) \in \R_{\geqslant 0} \times S^1$ defined by $z_i = \sqrt{\frac{\rho_i}{\pi}} e^{2\pi i \theta_i}$ for all $z_i \neq 0$. Now let $S \in \pi_2(X)$ and set $s = \omega(S)$ and $m = c_1(S)$. In the case $n=2$, \cite[Proposition 6.1]{CheSch16} shows that there is a neighbourhood $U$ of the isotropic circle $\{0\} \times S^1(y_0) \subset \C^2$ and a Hamiltonian isotopy $\chi$ of $X$ which restricts to $\varphi \circ \psi_{m,s} \circ \varphi^{-1}\vert_{\varphi(U)}$ on $\varphi(U)$. Thus using \eqref{eq:psi_ms}, we find that for every $T(x,y) \subset U$, the Hamiltonian diffeomorphism $\chi$ maps $\varphi(T(x,y))$ to $\varphi(T(x,y + s - mx))$ with monodromy, expressed in the standard basis of product tori, given by 
  \begin{equation}
    \label{eq:chi_mon}
    (\chi\vert_{T(x,y)})_*
    =
    \begin{pmatrix} 
      1 & m \\ 
      0 & 1
    \end{pmatrix}.
  \end{equation}
  Note that the fact that these tori are Hamiltonian isotopic relies non-trivially on the existence of the sphere $S$, since in $\C^2$ itself, the product tori $T(x,y)$ and $T(x',y')$ are Hamiltonian isotopic if and only if $(x,y)$ and $(x',y')$ agree up to permutation. The presence of $S$ allows one to shift the tori by an amount depending on the area and the Maslov class of $S$. The key idea of our proof is restricting our attention to elements satisfying $c_1(S)=0$, since these have trivial monodromy by \eqref{eq:chi_mon}. For general $n$, the corresponding Hamiltonian isotopy is obtained as a product of $\psi_{m,s}$ with the identity in the remaining components, and we obtain the following as a straightforward consequence of \cite[Proposition 6.1]{CheSch16}.

  \begin{proposition}
    Let $S \in \cp_S(X)=\ker (c_1 \colon \pi_2(X) \rightarrow \Z)$, set $s = \omega(S)$, and let $d_1, \ldots, d_k$ for some $k \geqslant 1$. If furthermore $d_1 + \ldots + d_k + s < b$, then there is a neighbourhood $U$ of $(0,\ldots,0,d_1,\ldots,d_k) \in \R_{\geqslant 0}^n$ and a Hamiltonian isotopy $\chi$ such that 
    \begin{equation}
      \chi(\varphi(T(a_1,\ldots,a_{n-1},a_n))) = \varphi(T(a_1,\ldots,a_{n-1},a_n + s))
    \end{equation}
    for all $a = (a_1,\ldots,a_n) \in U$. Furthermore, $\chi$ induces trivial monodromy under the identification of the mapping class groups under the obvious Lagrangian isotopy.
  \end{proposition}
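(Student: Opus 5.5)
The plan is to reduce to the four-dimensional statement of \cite[Proposition 6.1]{CheSch16}, applied in the two complex coordinates $(z_j,z_n)$, where $j$ indexes one of the first $n-k$ slots (radius close to $0$). We then take a product with the identity in the remaining coordinates. Concretely, fix $S\in\pi_2(X)$ with $c_1(S)=0$ and set $s=\omega(S)$. Consider the symplectic splitting $\C^n=\C_{z_1}\times\C^{n-2}\times\C_{z_n}$, and restrict $\varphi$ to a product neighbourhood of the isotropic torus $\{0\}\times T^{n-2}\times S^1(d_k)$ (with the appropriate $d$'s in the middle factors), which lies in $B^{2n}(b)$ thanks to the hypothesis $d_1+\dots+d_k+s<b$. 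Chekanov--Schlenk's construction starts from a representative of $S$. They tube $S$ to a neighbourhood of the circle $\{0\}\times S^1$ in a $4$-dimensional slice and produce a Hamiltonian isotopy $\chi$ of $X$ which, near that circle, equals $\varphi\circ\psi_{m,s}\circ\varphi^{-1}$ with $m=c_1(S)=0$. Since $\psi_{0,s}(\rho_1,\theta_1,\rho_2,\theta_2)=(\rho_1,\theta_1,\rho_2+s,\theta_2)$, its product with the identity on the middle coordinates maps $T(a_1,\dots,a_n)$ to $T(a_1,\dots,a_n+s)$ for all $a$ in a neighbourhood $U$ of $(0,\dots,0,d_1,\dots,d_k)$.

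The first step is to check that the Chekanov--Schlenk construction is compatible with taking products. Their Hamiltonian is built from a local model supported near a $4$-dimensional region (a neighbourhood of the circle union a thickening of the sphere). In dimension $2n$, we take the sphere representative transverse to, and disjoint from, the relevant region. We then build the Hamiltonian with a cutoff in the middle radial coordinates $\rho_2,\dots,\rho_{n-1}$, equal to $1$ on a neighbourhood of the relevant values. The flow then acts as $\psi_{0,s}\times\id$ on $\varphi(U')$, where $U'$ is the preimage of $U$ under the moment map. This is what ``straightforward consequence'' should mean, and I would follow their Section~6 construction verbatim with this cutoff.

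The second step is the monodromy claim. On $\varphi(T(a))$, the map $\chi$ restricts to $\psi_{0,s}\times\id$, which in the angle coordinates $(\theta_1,\dots,\theta_n)$ is the identity map $T^n\to T^n$. The obvious Lagrangian isotopy from $\varphi(T(a))$ to $\varphi(T(a+se_n))$ likewise identifies the two tori through the standard angle coordinates. Hence $\chi|_{\varphi(T(a))}$, composed with the inverse of this identification, is isotopic to, indeed equal to, the identity in these coordinates, so its class in $\MCG$ is trivial. Compare this with \eqref{eq:chi_mon} at $m=0$, where the off-diagonal entry vanishes; this gives triviality even at the level of the mapping class group, not just on homology. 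Note that the statement as written lists the domain as $\ker c_1$; in any case only $s=\omega(S)$ with $c_1(S)=0$ enters.

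The main obstacle is the first step: ensuring that the neighbourhood on which $\chi$ has the exact product form contains the whole torus $\varphi(T(a))$ for $a\in U$, and not just a neighbourhood of an isotropic circle. I would handle this by choosing $U$ small enough, and by choosing the cutoffs in the $\rho_2,\dots,\rho_{n-1}$ directions so that $\chi$ is $\psi_{0,s}$ in the $(\rho_1,\theta_1,\rho_n,\theta_n)$ variables uniformly over a full torus fibre. This is possible because $\psi_{0,s}$ is generated by a Hamiltonian depending only on $\rho_1$, and such a Hamiltonian is $T^n$-invariant.
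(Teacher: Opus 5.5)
Your outline matches the paper's. You take \cite[Proposition 6.1]{CheSch16} with $m=c_1(S)=0$, pass to general $n$ by a product with the identity, and read off the monodromy from the fact that $\psi_{0,s}$ is the identity in the angle coordinates. Your monodromy paragraph is fine, and it even gives triviality in $\MCG$, not just the matrix in \eqref{eq:chi_mon}.

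The problem is how you justify the product step, which you yourself identify as the main obstacle. The claim that $\psi_{0,s}$ is generated by a Hamiltonian depending only on $\rho_1$ is false. Any $T^n$-invariant Hamiltonian $H(\rho_1,\dots,\rho_n)$ generates $\dot\rho=0$ and $\dot\theta=\pm\partial H/\partial\rho$. It therefore preserves every product torus and cannot carry $T(a)$ to $T(a+s e_n)$. The path $t\mapsto\psi_{0,ts}$ is instead generated by the closed, non-exact $1$-form $s\,d\theta_n$, so it has flux $s$ on the $\theta_n$-circle. In fact, no Hamiltonian isotopy supported in $\varphi(B^{2n}(b))$ can have time-one map equal to $\varphi\circ(\psi_{0,s}\times\id)\circ\varphi^{-1}$ near these tori. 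Compactly supported Hamiltonian diffeomorphisms of the ball preserve the action $\int_\gamma\lambda$ of loops, whereas $\psi_{0,s}$ changes the action of the $\theta_n$-circle from $a_n$ to $a_n+s$.

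This is exactly why the sphere $S$ is needed. The Chekanov--Schlenk isotopy must leave the ball and travel along $S$, where the coordinates $\rho_2,\dots,\rho_{n-1}$ are not defined. So ``cutting off in the middle radial coordinates'' is not a well-defined operation on their isotopy. Nothing in your argument guarantees that the flow comes back as $\psi_{0,s}\times\id$ uniformly over whole torus fibres. This matters especially for $k\geqslant 2$: the isotropic core you work near is $\{0\}^{n-k}\times T^k(d_1,\dots,d_k)$, not $\{0\}\times T^{n-2}\times S^1(d_k)$, and it has circles of non-small radius in the middle slots.

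What you actually need is the $2n$-dimensional form of the Chekanov--Schlenk construction, i.e.\ their sphere construction carried out for that whole family of tori. This is the building block of their Theorem~1.5, and the paper does not reprove it but takes it from \cite{CheSch16}. You should either cite it in that form or redo their construction in dimension $2n$, and drop the cutoff argument. Once $\chi=\varphi\circ(\psi_{0,s}\times\id)\circ\varphi^{-1}$ on a neighbourhood of the tori is granted, the rest of your proof, including the monodromy claim, goes through as written.
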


  This is the main building block of the proof of \cite[Theorem 1.5]{CheSch16}. The remainder of their strategy can be implemented by replacing $\pi_2(X)$ by $\ker c_1 \subset \pi_2(X)$ and adding the conclusion about trivial monodromy everywhere.
\end{proof}

\begin{corollary}
	If $\mathcal{P}_S(X)$ is not discrete, then there is a Lagrangian torus $L \subset X$ such that $\FM_L$ has non-discrete image.  
\end{corollary}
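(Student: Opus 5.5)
We deduce the corollary from \cref{thm:CS_monodromy}, building loops with trivial monodromy and arbitrarily small nonzero flux. Since $\cp_S(X)\subset\R$ is a non-discrete subgroup, it is dense. Fix a symplectic ball embedding $\varphi\colon B^{2n}(b)\hookrightarrow X$. Take $c>0$ small, e.g.\ $c<b/(4k)$ with $k=1$, and let $A$ be the constant given by \cref{thm:CS_monodromy}. Choose $a\in(0,A]$ small, and $d\geqslant c$ such that the product tori $T(a,\dots,a,a+d')$ lie in $B^{2n}(b)$ for all $d'$ in a small interval $[d,d+\epsilon)$. Set $L=\varphi(T(a,\dots,a,a+d))$.

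For each $\delta\in\cp_S(X)\cap(0,\epsilon)$, and there are infinitely many accumulating at $0$ by density, put $e=d+\delta\geqslant c$, so that $d-e=-\delta\in\cp_S(X)$. Let $L'_\delta=\varphi(T(a,\dots,a,a+e))$. \cref{thm:CS_monodromy} gives a Hamiltonian isotopy $\chi^\delta$ from $L$ to $L'_\delta$ whose monodromy is trivial when $L'_\delta$ is identified with $L$ via the obvious isotopy $\Pi_\delta$ (through product tori, increasing the last radius). Define the loop
\[
\Lambda_\delta=\{\chi^\delta_t(L)\}_{t\in[0,1]}*\Pi_\delta^-\in\pi_1(\cl,L).
\]

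\emph{Monodromy.} By construction, the composite $(\phi^{\Pi_\delta})^{-1}\circ\chi^\delta_1|_L$ is isotopic to the identity. This is exactly the statement of trivial monodromy in the theorem, so $\cm(\Lambda_\delta)=1$.

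\emph{Flux.} Use \eqref{eq:concatenation}. The Hamiltonian part contributes zero flux. Hence $\Flux\Lambda_\delta=(\phi^{\chi})^*\Flux\Pi_\delta^-$, and this is $\pm$ the flux of $\Pi_\delta$ pulled back by a map acting trivially on homology. The flux of the obvious isotopy between product tori in a Darboux ball is computed directly from $\lambda_0$: it changes the class $[\iota^*\lambda]$ by $(0,\dots,0,e-(a+d))$, suitably expressed. Concretely, $\Flux\Pi_\delta$ evaluated on the last generator is $e-d=\delta$ and vanishes on the others. Thus $\FM(\Lambda_\delta)=(\pm\delta\, \xi_n^*,\mathrm{id})$.

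These elements are nonzero and converge to $(0,\mathrm{id})$ as $\delta\to0$, so $\Gamma(L)$ is not discrete.

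The main obstacle is bookkeeping rather than ideas. One must check that a single fixed $a$ and base torus work for all small $\delta$. This is ensured because $A$ depends only on $c$, and $d$ is held fixed while $e$ varies. One must also check that the monodromy identification in \cref{thm:CS_monodromy} is the same one induced by $\Pi_\delta$, so that the concatenated loop really has trivial mapping class and not merely trivial homological monodromy. If only the homological statement were available, one could pass to $\Gamma^\Z(L)$ and use that discreteness of $\Gamma(L)$ implies discreteness of its image under $p^\Z$. That image would again contain the accumulating elements $(\pm\delta\,\xi_n^*,\mathrm{id})$, giving the same contradiction.
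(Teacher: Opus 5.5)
Your proposal is correct and is essentially the paper's own proof with $k=1$. You take $L=\varphi(T(a,\dots,a,a+d))$, use \cref{thm:CS_monodromy} to get Hamiltonian isotopies with trivial monodromy from $L$ to $\varphi(T(a,\dots,a,a+d+\delta))$ for small $\delta\in\cp_S(X)$, close up with the obvious isotopy, and read off $\FM_L(\Lambda_\delta)=(-\delta\,\xi_n^*,\mathrm{id})\to(0,\mathrm{id})$. Your closing fallback is unnecessary, since the theorem already gives trivial monodromy in $\MCG(L)$. Moreover, the implication it invokes (discreteness of $\Gamma(L)$ forces discreteness of $\Gamma^{\Z}(L)$) is the converse of the easy one used in the paper, and it is not automatic without some control of the kernel of $\MCG(L)\to\Aut(H_1(L;\Z))$.
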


\begin{proof}
  Let $\varphi \colon B^{2n}(b) \hookrightarrow X$ be a symplectic ball embedding. Choose $a>0$, $k \geqslant 1$, and a sequence $d_i = (d_{i,1},\ldots,d_{i,k})$ of vectors converging componentwise to $d = (d_1,\ldots,d_k)$ such that $d_j - d_{i,j} \in \mathcal{P}_S(X)$ for all $j \in \{1, \ldots, k\}$. Since $\mathcal{P}_S(X) \subset \R$ is dense, we can assume that all $d_i$ are pairwise distinct. Let $L = \varphi(T(a,\ldots,a,a+d_1,\ldots,a+d_k))$ and $\cl$ the space of its Lagrangian deformations. Define $\Lambda_i \in \pi_1(\cl)$ to be the loop obtained by applying first the Hamiltonian isotopy from $L$ to $\varphi(T(a,\ldots,a,a+d_{i,1},\ldots,a+d_{i,k}))$ obtained from \cref{thm:CS_monodromy}, followed by the obvious Lagrangian isotopy. The monodromy of all $\Lambda_i$ is trivial, again by \cref{thm:CS_monodromy}. Its flux is given by $\Flux \Lambda_i = (0,\ldots,0,d_1-d_{i,1},\ldots,d_k-d_{i,k})$ in the pushforward of the standard basis of the homology of the product torus. By our choice of $d_i$, this proves the claim. 
\end{proof}

\subsection{$C^1$-local path connectedness} \label{sec:local-path-connectedness}
We now move to the proofs of the part of Propositions~\ref{prop:FM-top} and~\ref{prop:Flux-top} having to do with local path connectedness. As a byproduct, we also give a proof of \cref{prop:few-cases_discreteness}. We start with the most straightforward direction.

\begin{lemma} \label{lem:FM-discrete_to_path-connected}
	If 0 is isolated in $\fl(L)$ (resp.\ $\Gamma(L)$), then $\LHam(L)$ (resp.\ $\LHam(\iota)$) is locally path connected in the $C^1$ topology.
\end{lemma}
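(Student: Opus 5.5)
The plan is to use the local structure of $\cl$ near a point of $\LHam(L)$: Weinstein neighbourhoods identify Lagrangians $C^1$-close to a given $L'$ with graphs of closed $1$-forms. Concretely, fix $L'=\phi(L)\in\LHam(L)$ with $\phi\in\Ham(X)$, and choose a Weinstein neighbourhood $\Psi\colon U\subset T^*L'\to X$ of $L'$. A $C^1$-neighbourhood $\m V$ of $L'$ in $\cl$ then corresponds to graphs $\Gamma_\sigma$ of small closed $1$-forms $\sigma$ on $L'$. We may take $\m V$ to be \emph{convex}: if $\Gamma_\sigma\in\m V$, then $\Gamma_{t\sigma}\in\m V$ for all $t\in[0,1]$. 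The straight-line path $t\mapsto\Gamma_{t\sigma}$ from $L'$ to $\Gamma_\sigma$ has flux $t[\sigma]\in H^1(L';\R)$, since the cylinder swept out by a cycle has area equal to the integral of $\sigma$ along it, up to sign conventions. By Banyaga-type arguments for Lagrangians, namely the criterion recalled in the introduction that a path is Hamiltonian iff all its partial fluxes vanish, this path is Hamiltonian whenever $[\sigma]=0$.

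The key step is to show that, after shrinking $\m V$, every $\Gamma_\sigma\in\m V\cap\LHam(L)$ has $[\sigma]=0$. Take such a $\Gamma_\sigma=\psi(L)$ with $\psi\in\Ham(X)$. Form the loop $\Lambda$ based at $L$: first a Hamiltonian path from $L$ to $L'$ (via $\phi$), then the straight path to $\Gamma_\sigma$, then a Hamiltonian path back to $L$ (via $\psi$). By the concatenation formula \eqref{eq:concatenation}, Hamiltonian segments contribute no flux. Hence $\Flux\Lambda=(\phi^{*})[\sigma]$, transported to $H^1(L)$ by the isomorphism induced by $\phi|_L$, and $\|\Flux\Lambda\|$ is controlled by $\|[\sigma]\|$. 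Since $0$ is isolated in $\fl(L)$, there is $\varepsilon>0$ such that any nonzero flux has norm at least $\varepsilon$. Shrinking $\m V$ so that $\|\phi^*[\sigma]\|<\varepsilon$ forces $[\sigma]=0$. Then $\m V\cap\LHam(L)$ is exactly the set of exact graphs in $\m V$, which is path connected through the Hamiltonian straight-line paths. This gives local path connectedness of $\LHam(L)$.

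For the parametrized statement about $\LHam(\iota)$, I run the same argument in $\pcl$. Near $\phi\circ\iota$, embeddings $C^1$-close to it have the form $\Psi\circ\Gamma_\sigma\circ g$, with $\sigma$ a small closed form and $g\in\operatorname{Diff}(L)$ $C^1$-close to the identity, hence in $\operatorname{Diff}_0(L)$ and isotopic to the identity through a small path. The neighbourhood is then a product of a convex set of forms and a path-connected neighbourhood of the identity in $\operatorname{Diff}_0(L)$. For $\psi\circ\iota$ in this neighbourhood, the analogous loop $\Lambda$ now carries trivial monodromy by construction: the parametrizations close up up to $g\in\operatorname{Diff}_0$. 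So $\FM(\Lambda)=(\phi^*[\sigma],\mathrm{id})$. Since $0$ is isolated in $\Gamma(L)$, for small $\m V$ this again forces $[\sigma]=0$. The connecting path is then the exact straight-line graph path, which is Hamiltonian, composed with the small reparametrization isotopy $g_t$, which stays in $\LHam(\iota)$ because reparametrizing within $\operatorname{Diff}_0$ is realized by ambient Hamiltonian isotopies supported near the image.

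The main obstacle is the last point. One must ensure that a reparametrization $\iota\circ g_t$ with $g_t\in\operatorname{Diff}_0(L)$ lies in the Hamiltonian orbit of $\iota$ and stays $C^1$-close. I would handle it by lifting $g_t$ to the cotangent lift in $T^*L$, which is exact Hamiltonian and preserves the zero section. After transplanting through the Weinstein chart and cutting off, this gives a compactly supported Hamiltonian isotopy; $C^1$-smallness follows from the smallness of $g_t$. A secondary technical point is that the uniform estimate $\|\phi^*[\sigma]\|\lesssim\|\sigma\|_{C^0}$ holds, which follows from integrating over fixed representative cycles.
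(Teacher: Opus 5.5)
Your proof is correct and is essentially the paper's argument. In both, a straight-line path of graphs in a Weinstein chart is closed up by Hamiltonian paths into a loop whose flux is the class of the graph form, so isolation of $0$ forces that class to vanish. You run it directly at an arbitrary point $\phi(L)$ of the orbit, whereas the paper argues by contradiction at $L$ and implicitly uses homogeneity under $\Ham(X)$. In the parametrized case, your observation that the loop closes up in $\pcl$, and hence has trivial monodromy, is exactly the paper's remark. Your cotangent-lift treatment of the small reparametrization $g_t$ fills in a detail the paper leaves implicit.
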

Again, 0 being isolated in $\Gamma(L)$ is equivalent to it being discrete, since it is a group. \par

\begin{proof}
  Suppose that $\LHam(L)$ is not $C^1$-locally path connected. Then, there are a Weinstein neighbourhood $\Psi:D^*_rL\to X$ of $L$ and a sequence of Lagrangians $\{L_k\}$ in $U=\Psi(D^*_rL)$, which are Hamiltonian isotopic to $L$ in $X$, but not in $U$, such that $L_k\to L$ in the $C^1$ topology. Indeed, if $L_k$ were Hamiltonian isotopic to $L$ in $U$ and $C^1$-close to it, then it would need to be the graph of a (closed) 1-form $\sigma_k$ that vanishes on all loops, i.e.\ $\sigma_k=df_k$ for some $f_k:L\to\R$. But then, $t\mapsto\Psi(\mathrm{graph}(tdf_k))$ would be a $C^1$-small path in $\LHam(L)$ from $L$ to $L_k$ in $U$, contradicting the failure of path-connectedness. \par
  
  By $C^1$ convergence, we have that $L_k=\Psi(\mathrm{graph}\ \sigma_k)$ for $\sigma_k\to 0$ in $C^1$ topology. As noted above, $\sigma_k$ is not exact. In fact, the flux of the path $t\mapsto \Psi(\mathrm{graph}(t\sigma_k))$ is precisely $[\sigma_k]\neq 0$. But we know that there is an isotopy $t\mapsto \phi^t_H(L)$ from $L$ to $L_k$ which is Hamiltonian and thus must have vanishing flux. Therefore, the concatenation of these isotopies forms a Lagrangian loop with nonvanishing flux $[\sigma_k]\to 0$. Therefore, 0 is not isolated in $\fl(L)$. \par
  
  For the case of $\LHam(\iota)$, the argument is essentially the same. However, $\iota$ and the members $\phi_k\circ\iota$ of the sequence may now be chosen to be $C^1$-close, so that the ensuing loop has no monodromy. Therefore, we instead get that 0 is not isolated in $\Gamma_0(L)$~---~and thus neither in $\Gamma(L)$. Recall that $\Gamma_0(L) = \FM(\ker \cm)$ is the subgroup of $\Gamma(L)$ formed by the image under the flux-monodromy map of loops with trivial monodromy.
\end{proof}

We now show a partial converse statement to the above lemma. 

\begin{lemma} \label{lem:nondiscrete-flux_flux-C1-conjA}
	Suppose that 0 is not isolated in $\fl(L)$. There is a sequence $\{L_k\}$ such that
	\begin{enumerate}[label=(\roman*)]
		\item $L_k\to L$ in $C^1$ topology;
		\item there exists a Lagrangian isotopy from $L$ to $L_k$ with flux zero;
		\item $L_k$ is not Hamiltonian isotopic to $L$ in any Weinstein neighbourhood of $L$.
	\end{enumerate}
	Moreover, if $\Gamma(L)$ is not discrete, then we may take a parametrization $I_k=\{\iota_{k,t}\}$ of the Lagrangian isotopy from $L$ to $L_k$ in \textit{(ii)} so that $\iota_{k,1}\to\iota$ in $C^1$ topology.
\end{lemma}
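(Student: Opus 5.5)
The plan is to construct $L_k$ as the image of $L$ under a Lagrangian loop of small flux, made zero-flux by an explicit graph deformation inside a Weinstein neighbourhood. Fix a Weinstein neighbourhood $\Psi\colon D^*_rL\to X$ of $L$. Since $0$ is not isolated in $\fl(L)$, there are loops $\Lambda_k\in\pi_1(\cl,L)$ with $\Flux\Lambda_k=:a_k\neq 0$ and $a_k\to 0$. For each $k$, choose a closed $1$-form $\sigma_k$ on $L$ with $[\sigma_k]=a_k$ and $\sigma_k\to 0$ in $C^\infty$. For instance, fix harmonic representatives of a basis of $H^1(L;\R)$ and take the corresponding linear combination. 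Set
\[
L_k=\Psi(\mathrm{graph}(-\sigma_k)).
\]
Then (i) holds immediately, since $\sigma_k\to 0$ in $C^1$.

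For (ii), concatenate $\Lambda_k$ with the graph path $\Pi_k\colon t\mapsto\Psi(\mathrm{graph}(-t\sigma_k))$. The Weinstein identification shows $\Flux\Pi_k=-[\sigma_k]=-a_k$, with the sign matching the computation in the proof of \cref{lem:FM-discrete_to_path-connected}; I will adjust the sign convention there if needed. By \eqref{eq:concatenation},
\[
\Flux(\Lambda_k*\Pi_k)=a_k+(\phi^{\Lambda_k})^*(-a_k).
\]
This is not zero in general, because of the monodromy. This is the main point to handle.

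To fix it, I will choose $\sigma_k$ so that $(\phi^{\Lambda_k})^*[\sigma_k]=a_k$. Equivalently, I will use the reversed order: first run $\Pi_k'\colon t\mapsto \Psi(\mathrm{graph}(-t\sigma_k'))$ with $[\sigma_k']=a_k$ from $L$ to $L_k$, then note that $L_k$ is joined to $L$ by $\Pi_k'^-*\Lambda_k*\Pi_k'$. The cleanest route is to take the isotopy $\Lambda_k^-$ followed by something, but simpler still is to apply \eqref{eq:concatenation} with the path $\Lambda_k*\Pi_k$ where $\sigma_k$ represents $(\phi^{\Lambda_k})^{-*}a_k$. By \eqref{eq:concatenation}, the flux is then $a_k-a_k=0$. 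The remaining issue is that $(\phi^{\Lambda_k})^{-*}a_k$ must still tend to $0$, and monodromies might be unbounded.

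To avoid that, I will instead define the path as $\Pi_k$ followed by $\Lambda_k$ transported. Concretely, I take the isotopy $L\to L_k$ to be $\Lambda_k$ followed by $\Pi_k$ as above, but choose $L_k$ via the form $\sigma_k$ defined by $[\sigma_k]=(\phi^{\Lambda_k})^{-*}\Flux\Lambda_k$. Then I need $\Flux(\Lambda_k^-)=-(\phi^{\Lambda_k})^{-*}\Flux\Lambda_k$, which follows from \eqref{eq:concatenation} applied to $\Lambda_k*\Lambda_k^-$. So $[\sigma_k]=-\Flux(\Lambda_k^-)$, and $\Lambda_k^-$ is itself a loop whose flux lies in $\fl(L)$. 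Hence, replacing $\Lambda_k$ by $\Lambda_k^-$ from the outset, I can select the sequence so that the relevant class tends to $0$: apply the hypothesis to $\Lambda_k^{-}$, a sequence whose fluxes tend to $0$. Then $[\sigma_k]\to 0$ and the total flux vanishes. This bookkeeping is the step I expect to need care with.

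For (iii), suppose $L_k$ were Hamiltonian isotopic to $L$ inside some Weinstein neighbourhood. For $k$ large it is a $C^1$-small graph there, and the argument in the proof of \cref{lem:FM-discrete_to_path-connected} forces the form to be exact, i.e.\ $[\sigma_k]=0$, which is a contradiction. I will take care that "any Weinstein neighbourhood" is handled by shrinking: flux is computed intrinsically, independently of the chosen Weinstein chart.

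For the "moreover" part, assume $\Gamma(L)$ is not discrete. By \cref{rk:discreteness_equiv}(1), $0$ is then not isolated in $\Gamma_0(L)$, so the $\Lambda_k$ can be chosen with trivial monodromy. In that case the sign bookkeeping becomes trivial, since $\phi^{\Lambda_k}$ is isotopic to the identity. I will parametrize $\Lambda_k$ by an ambient isotopy whose time-one map restricted to $L$ is isotopic to $\id_L$. I will then modify the parametrization by reparametrizing through that isotopy in $\operatorname{Diff}(L)$, which does not change the unparametrized path, so that $\iota_{k}$ at the end of $\Lambda_k$ equals $\iota$. After that I follow with the parametrized graph path $t\mapsto\Psi\circ(-t\sigma_k)$, where the form is viewed as a section of $T^*L$. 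This gives $\iota_{k,1}=\Psi\circ(-\sigma_k)\to\iota$ in $C^1$.
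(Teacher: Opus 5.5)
Your argument is correct and is essentially the paper's. The path you finally use (the inverse of a small-flux loop, followed by the graph path to a form in the class of that loop's flux) is just the reverse of the paper's path. The paper runs the graph path from $L_k$ back to $L$ first and then the loop, so the loop's monodromy never acts and the flux is $-a_k+a_k=0$ directly. In the last part, your reparametrization $\iota_{k,t}\circ g_t^{-1}$ through an isotopy $g_t$ in $\operatorname{Diff}(L)$ is a valid, slightly cleaner substitute for the paper's concatenation with a Hamiltonian isotopy fixing $L$ built from the Weinstein neighbourhood.
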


\begin{proof}
  Let $\Lambda_k$ be a sequence of Lagrangian loops with $\Flux \Lambda_k = a_k \neq 0$ such that $a_k \to 0$. Let $\Psi$ be a Weinstein neighbourhood of $L$. For large enough $k$, there are closed $1$-forms $\sigma_k$ with $[\sigma_k] = a_k$, such that the graph of $\sigma_k$ is contained in the domain of $\Psi$. We set $L_k = \Psi(\mathrm{graph}(\sigma_k))$ and show that this has the desired properties. Since $a_k\to 0$, we have that $L_k\to L$ in the $C^1$-topology, proving \emph{(i)}. For \emph{(ii)}, consider the Lagrangian isotopy obtained by concatenating $t \mapsto \Psi(\mathrm{graph}((1-t)\sigma_k))$ with the loop $\Lambda_k$. This has flux $-a_k + a_k = 0$ as desired. Since $a_k\neq 0$, $L_k$ is not Hamiltonian isotopic to $L$ in any Weinstein neighbourhood, proving \emph{(iii)}. \par
  
  Finally, when $\Gamma(L)$ is not discrete, then $\Gamma_0(L)$ is not either; see \cref{rk:discreteness_equiv}. Therefore, in that case, $\Lambda_k$ may be chosen to have trivial monodromy. In other words, for a parametrization $I_k$ of the loop, the diffeomorphism $f_k=\iota_{k,1}^{-1}\circ\iota$ is in the identity component of $\MCG(L)$. But then, any isotopy from the identity to $f_k^{-1}$ in $\mathrm{Diff}(L)$ extends via the Weinstein neighbourhood to a Hamiltonian isotopy of $X$ fixing $L$. Therefore, by concatenating with such an isotopy, we can suppose that $f_k$ is the identity without affecting the flux. The rest of the proof then follows as before.
\end{proof}

We can upgrade the Lagrangian isotopy from the above lemma to a Hamiltonian one in two important cases. \par

\begin{proposition}[\cite{Che96}] \label{prop:flux_C1-conjA_elem-tori}
	Let $L$ be a product torus in $\C^n$. Then, the Lagrangian isotopy of Lemma~\ref{lem:nondiscrete-flux_flux-C1-conjA} may be chosen to be Hamiltonian.
\end{proposition}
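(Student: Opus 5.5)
The plan is to choose the sequence $L_k$ in \cref{lem:nondiscrete-flux_flux-C1-conjA} to be product tori themselves, and then use Chekanov's classification of product tori up to Hamiltonian isotopy in $\C^n$.

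\emph{Step 1: the $L_k$ can be taken to be product tori.} Write $L=T(a)$ with $a\in\R^n_{>0}$. In the standard basis of $H_1(T(a))$ and its dual basis, the obvious isotopy $t\mapsto T(a+t\,v)$ has flux $v$, for any $v$ with $a+v\in\R^n_{>0}$. Take the loops $\Lambda_k$ with $\Flux\Lambda_k=v_k\neq 0$ and $v_k\to 0$, which exist because $0$ is not isolated in $\fl(L)$. Define $L_k=T(a+v_k)$ instead of the graph of a closed $1$-form.
\begin{itemize}
\item Property (i) holds: in a Weinstein neighbourhood given by action-angle coordinates, $T(a+v_k)$ is the graph of a constant (harmonic) $1$-form of class $v_k$.
\item Property (ii) holds as before: concatenate the reversed obvious isotopy with $\Lambda_k$ to get a Lagrangian isotopy from $L$ to $L_k$ with flux $0$.
\item Property (iii) holds because $v_k\neq 0$.
\end{itemize}

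\emph{Step 2: invoke Chekanov's realization and classification.} Both $L$ and $L_k$ are product tori in $\C^n$. By \cite{Che96}, two product tori $T(a)$ and $T(a')$ are Hamiltonian isotopic if and only if they are related in the following way. Write $a=(a_{\min}+\alpha)$, where $\alpha$ has its $\Z$-span structure recorded (Chekanov's invariants: $a_{\min}$, the multiplicity of the minimum, and the $\Z$-module generated by $a_i-a_{\min}$). Equivalently, $T(a')$ is Hamiltonian isotopic to $T(a)$ if and only if $a'=a\cdot M$ for some $M$ in Chekanov's monodromy group, which preserves the Maslov class $(2,\dots,2)$ and the relevant orthant data.

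The key input is Chekanov's Proposition 5.1(a), which realizes every admissible monodromy by a Lagrangian loop. The flux of such a loop based at $T(a)$ is computed from the area identity \eqref{eq:areaformula}. Concretely, a loop with homological monodromy $M$ has flux $aM-a$, since disk classes span $H_2(\C^n,L)\cong H_1(L)$ and carry areas $a$. So $\Flux\Lambda_k=v_k$ forces $a+v_k=aM_k$ for some $M_k$ preserving the Maslov class. Chekanov's theorem (the Hamiltonian classification, his Theorem 4.5/Prop.\ 5.1) then shows that $T(a)$ and $T(aM_k)$ are Hamiltonian isotopic. Hence $L_k$ is Hamiltonian isotopic to $L$, which is the upgrade required.

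\emph{Main obstacle.} The step I expect to be delicate is the middle identification. Since $H_2(\C^n,L)\to H_1(L)$ is an isomorphism, the area identity gives $[\omega]_{L}\circ\Phi^{\Lambda}=[\omega]_L+\pp^*\Flux\Lambda$. Together with Maslov preservation (\cref{prop:Maslov-cyl}), this shows that the area vector of $L_k$ is the image of $a$ under an integral Maslov-preserving matrix. One then has to check that this is exactly Chekanov's criterion for Hamiltonian isotopy. My first attempt would be to quote Chekanov's statement that Lagrangian-isotopic product tori with "equivalent" area classes, meaning those related by a Maslov-preserving element of $\GL(n;\Z)$ sending the symplectic area class to one another, are Hamiltonian isotopic. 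The theorem in \cite{Che96} is phrased in exactly these terms, as the symplectic area and Maslov classes being related by an isomorphism of $H_2$ induced by a diffeomorphism. If that phrasing needs a geometric realization of $M_k$ by a diffeomorphism, I would supply it via the Lagrangian loop $\Lambda_k$ itself, whose ambient isotopy induces $M_k$.
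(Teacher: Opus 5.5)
Your route is the paper's route. A zero-flux isotopy from $L=T(a)$ to the product torus $L_k=T(a+v_k)$ gives, via the area identity and Maslov invariance, some $M_k\in\GL(n;\Z)$ with $\1 M_k=\1$ and $aM_k=a+v_k$, and one then appeals to Chekanov. However, that final appeal is false as you state it.

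Chekanov's classification is not ``area and Maslov classes related by an isomorphism''. His complete invariants are $\underline a=\min_i a_i$, the multiplicity $m(a)$ of the minimum, and $\Gamma(a)=\Z\langle a_i-\underline a\rangle$. Equivalently, in the form of \cite[Theorem 4.5]{Che96}, the isomorphism must also respect the distinguished classes. An isomorphism preserving only area and Maslov class sees just $\Gamma(a)$ and $\underline a$ modulo $\Gamma(a)$. You list the right invariants in Step 2, but then you only check Maslov preservation.

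This breaks the argument exactly in the regime of the proposition. Take $u\in\Z^n$ with $\1\cdot u=0$ and set $M=I+u\1$. Then $\det M=1$, $\1 M=\1$, and $aM=a+(a\cdot u)\1$. By \cite[Proposition 5.1 (a)]{Che96}, $M$ is the monodromy of a Lagrangian loop at $T(a)$, whose flux is therefore $(a\cdot u)\1$. As $u$ varies, $a\cdot u$ runs through all of $\Gamma(a)$, which is dense in the case at hand. So $v_k=\delta_k\1$ with $0\neq\delta_k\in\Gamma(a)$ and $\delta_k\to 0$ is an admissible sequence. Yet $T(a+\delta_k\1)$ has minimum $\underline a+\delta_k\neq\underline a$, so it is not Hamiltonian isotopic to $L$.

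Already in $\C^2$, \cref{thm:B} yields a loop at $T(2,1)$ with flux $(1,1)$. This gives a zero-flux Lagrangian isotopy to $T(3,2)$, which is not Hamiltonian isotopic to $T(2,1)$. Realizing $M_k$ by the ambient isotopy of $\Lambda_k$, as you suggest, does not help, since that isotopy is not Hamiltonian.

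The missing idea is to choose the sequence rather than accept arbitrary small fluxes.

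1. Every loop flux $aM-a$ has entries in $\Gamma(a)$. So non-isolation of $0$ forces $\operatorname{rank}\Gamma(a)\geqslant 2$, and some gaps $d_i=a_i-\underline a$, $d_j=a_j-\underline a$ are rationally independent.
2. $\SL(2;\Z)$-orbits of vectors of irrational slope are dense in $\R^2$, as in the paper's treatment of flux sets in $\CP^2$. So there are $N_k\in\SL(2;\Z)$ with $(d_i,d_j)N_k\to(d_i,d_j)$ and $(d_i,d_j)N_k\neq(d_i,d_j)$.
3. Replace $(a_i,a_j)$ by $\underline a\,(1,1)+(d_i,d_j)N_k$. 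The resulting $a^{(k)}\to a$, $a^{(k)}\neq a$, have the same $\underline a$, $m$ and $\Gamma$.
4. Hence $T(a^{(k)})$ is Hamiltonian isotopic to $T(a)$ by Chekanov's theorem, and it satisfies (i)--(iii).

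The paper's own proof passes through the same area/Maslov reduction and is equally terse at this step. Choosing the sequence this way is what makes the appeal to Chekanov legitimate.
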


Note that the same results of Chekanov imply that if a product torus has non-discrete flux set, then the flux set is dense in $H^1(L;\R)$. In particular, 0 is not isolated.

\begin{proof}
  Suppose that 0 is not isolated in $\fl(L)$. Note that the existence~---~from Lemma~\ref{lem:nondiscrete-flux_flux-C1-conjA}~---~of a Lagrangian isotopy from $L$ to $L_k$ with zero flux together with the area identity \eqref{eq:areaformula} implies the existence of isomorphisms $\Phi_k:\pi_2(\C^n,L)\to \pi_2(\C^n,L_k)$ such that
  \begin{align*}
    \Phi^*_k[\omega]_{L_k}=[\omega]_{L} \qquad\text{and}\qquad \Phi^*_k\mu_{L_k}=\mu_{L}.
  \end{align*}
  Furthermore, for $k$ large enough, we can assume that $L_k$ is also an elementary torus. Therefore, the result follows from Chekanov's classification of elementary tori in $\C^n$~\cite{Che96}.
\end{proof}

\begin{proposition} \label{prop:flux_C1-conjA_injection}
  Let $L\subseteq X$ be a Lagrangian such that the map $\iota_*:H_1(L;\R)\rightarrow H_1(X;\R)$, induced by the inclusion, is injective. Then, any Lagrangian isotopy as in Lemma~\ref{lem:nondiscrete-flux_flux-C1-conjA} is homotopic to a Hamiltonian one.
\end{proposition}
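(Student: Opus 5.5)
The plan is to correct the given zero-flux Lagrangian isotopy $\Pi=\{L_t\}$ from $L$ to $L_k$ by composing it with an ambient \emph{symplectic} isotopy whose flux exactly cancels the running flux of $\Pi$. Then I show that the corrected path has vanishing flux on every initial segment, so it is Hamiltonian by the characterization recalled in the introduction. Injectivity of $\iota_*$ is what makes such an ambient correction possible.

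\textbf{Step 1 (choose the correction).} Fix an ambient isotopy $\{f_t\}$ realizing $\Pi$. Set
\[
F(t)=\Flux\{L_s\}_{s\in[0,t]}\in H^1(L;\R),
\]
which is smooth in $t$ and satisfies $F(0)=0$ and $F(1)=0$ by hypothesis. Since $\iota_*\colon H_1(L;\R)\to H_1(X;\R)$ is injective, the restriction $\iota^*\colon H^1(X;\R)\to H^1(L;\R)$ is surjective. I will fix a linear section $s$ of $\iota^*$ and a smooth family of closed $1$-forms $\beta_t$ on $X$ with $[\beta_t]=s(F(t))$ and $\beta_0=\beta_1=0$. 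Let $\{\varphi_t\}$ be the symplectic isotopy generated by the vector field $V_t$ defined by $\iota_{V_t}\omega=-\tfrac{d}{dt}\beta_t$. Its symplectic flux over $[0,t]$ is $-[\beta_t]$; in particular it is $0$ at $t=1$.

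\textbf{Step 2 (flux of the corrected path).} Set $L'_t=\varphi_t(L_t)$. The cylinder swept by a cycle $\xi$ under $L'_{[0,t]}$ decomposes, up to homology in $H_2(X,L\cup L'_t)$, as the cylinder swept under $\Pi$ plus the cylinder swept by the closed curve $f_t(\xi)$ under $\varphi_{[0,t]}$. The area of the latter depends only on the class of $f_t(\xi)$ in $H_1(X)$, which equals $\iota_*\xi$ because $f_t$ is isotopic to the identity. Hence
\[
\Flux L'_{[0,t]} = F(t)-\iota^*[\beta_t]=0
\]
for every $t$, transported to $L$ via $f_t$. This computation mirrors the proof of \eqref{eq:concatenation}.

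\textbf{Step 3 (fixing the endpoint and the homotopy).} Since $\varphi_1$ has vanishing flux, Banyaga's theorem (F1) makes $\{\varphi_t\}$ homotopic relative to endpoints, inside $\Symp_0$, to a Hamiltonian isotopy $\{h_t\}$; call this homotopy $\varphi^{u}_t$, with $\varphi^0=\varphi$ and $\varphi^1=h$. The candidate Hamiltonian path is the concatenation $\{L'_t\}*\{h_{1-t}(L_k)\}$, running from $L$ to $\varphi_1(L_k)$ and back to $L_k$.
\begin{itemize}
\item The second piece is Hamiltonian.
\item The first piece has identically vanishing initial-segment flux, so it is induced by an ambient Hamiltonian isotopy, and its total flux is $0$.
\item The concatenation is therefore Hamiltonian as well, since the flux of initial segments of the second piece also vanishes.
\end{itemize}
To see that it is homotopic relative to endpoints to $\Pi$, I would use the two-parameter family $(u,t)\mapsto\varphi_{ut}(L_t)$ followed by $\{\varphi_{u(1-t)}(L_k)\}$. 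This contracts the correction at $u=0$ to $\Pi*\text{const}$. Combining it with the homotopy $\varphi^u$ between $\varphi$ and $h$ on the return leg gives the required homotopy.

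\textbf{Main obstacle.} I expect the hardest part to be the bookkeeping in Steps 2–3. One must identify $H^1(L_t)$ with $H^1(L)$ consistently along the path, check that the decomposition of cylinders holds in relative homology, and write down the homotopy relative to endpoints without reintroducing flux. The underlying reason everything works is simply that $\iota^*$ is surjective, so any flux profile on $L$ can be generated ambiently.
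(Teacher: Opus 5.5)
Your proof is correct, but it takes a different route from the paper's.

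\textbf{The paper's route.} It uses injectivity in its homological form: the boundary map $H_2(X,L;\R)\to H_1(L;\R)$ vanishes, so the area spectrum is constant along the isotopy. It then invokes the last clause of \cref{prop:path-lag} to lift $\{L_t\}$ to a global symplectic isotopy $\{\psi_t\}$. Next it notes that $\iota^*\Flux(\{\psi_t\})=0$, so $\Flux(\{\psi_t\})$ is represented by a closed $1$-form $\sigma$ vanishing near $L$ and $L_k$; this is easy because $L_k$ lies in a Weinstein neighbourhood of $L$. It composes with the flow of $-\sigma$, which fixes both ends, and finally applies Banyaga's theorem to the resulting zero-flux lift.

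\textbf{Your route.} You use injectivity in its dual form, surjectivity of $\iota^*$, to extend the running flux $F(t)$ ambiently and cancel it at every time. Consequently you only need the implication (a)$\Rightarrow$(c) of \cref{prop:path-lag}, you never lift $\Pi$ itself, and Banyaga is applied to the correction $\{\varphi_t\}$ rather than to a lift of $\Pi$. This buys two things:
\begin{itemize}
\item you avoid the more delicate lifting clause of \cref{prop:path-lag};
\item you never use that $L_k$ is close to $L$, so you show that, under the injectivity hypothesis, every Lagrangian isotopy with vanishing total flux is homotopic relative to endpoints to a Hamiltonian one.
\end{itemize}
The paper's route is shorter once the appendix is available.

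\textbf{A simplification.} You can drop Step~3 entirely. Let $\theta^{(t)}_u$ be the time-$u$ flow of the autonomous vector field $Y_t$ with $\iota_{Y_t}\omega=-\beta_t$, and set $\varphi_t=\theta^{(t)}_1$. Then:
\begin{itemize}
\item $\varphi_0=\varphi_1=\id$, because $\beta_0=\beta_1=0$;
\item the square argument still gives flux $-[\beta_t]$ for $\{\varphi_s\}_{s\in[0,t]}$;
\item $(u,t)\mapsto\theta^{(t)}_u(L_t)$ is directly a homotopy relative to endpoints from $\Pi$ to the Hamiltonian path $\{\varphi_t(L_t)\}$.
\end{itemize}
This is just Banyaga's own argument inlined.

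As in the paper, existence of these flows is tacitly assumed, for instance for $X$ closed.
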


\begin{proof}
  Suppose that 0 is not isolated in $\fl(L)$. Since $\iota_*$ is injective, the boundary map $H_2(X,L;\R)\to H_1(L;\R)$ is zero, so that every Lagrangian isotopy of $L$ preserves the area spectrum by the area identity (\ref{eq:areaformula}). By Proposition~\ref{prop:path-lag}, there is thus a symplectic isotopy $\{\psi_t\}$ generating any Lagrangian isotopy $\{L_t\}$ based at $L$. In particular, for a Lagrangian isotopy $\{L_t\}$ as constructed in Lemma~\ref{lem:nondiscrete-flux_flux-C1-conjA}, we have that
  \begin{align*}
    \iota^*\Flux(\{\psi_t\})=\Flux(\{L_t\})=0.
  \end{align*}
  But note that $\Ker\iota^*$ is precisely the image of $H^1(X,L;\R)\to H^1(L;\R)$, so that there is a closed 1-form $\sigma$ on $M$ that vanishes on (a neighbourhood of) $L$. Note that such a form generates a symplectic isotopy that fixes $L$ and $L_1$. Therefore, by applying the symplectic isotopy generated by $-\sigma$, we get a homotopy from our original Lagrangian isotopy to one that is generated by $\{\psi_t\}$ such that 
  \begin{align*}
    \Flux(\{\psi_t\})=0.
  \end{align*}
  But then, up to a homotopy relative to endpoints, $\{\psi_t\}$ is in fact Hamiltonian (see, for example, \cite[Section 10.2]{McDSal17}).
\end{proof}

\begin{remark} \label{rem:flux-0_to_Ham}
	In general, one cannot conclude that a Lagrangian isotopy with vanishing flux is homotopic to a Hamiltonian isotopy. For example, the Clifford and Chekanov tori are not Hamiltonian isotopic in $\C^n$, but are Lagrangian isotopic~\cite{Che96}. If both tori have the same monotonicity constant, such a Lagrangian isotopy must have vanishing flux by the area formula~\eqref{eq:areaformula}, since the Maslov class must be preserved. \par
	
	If one looks at the classical proof that a symplectic isotopy with vanishing flux is homotopic to a Hamiltonian one~\cite{Ban97}, we see that two problems arise in the Lagrangian setting: 
	\begin{enumerate}
	\item if $\{f_t\}$ and $\{g_t\}$ are ambient Lagrangian isotopies starting at $L$, $\{f_tg_t\}$ is not necessarily one, and
	\item even when we can compose ambient Lagrangian isotopies, the homotopy of the Lagrangian isotopy will result in a family of Lagrangian \emph{immersions}.
	\end{enumerate}
See Appendix~\ref{app:Lag_homotopies} for a solution to the first problem through $h$-principles.
\end{remark}

To close this subsection, we finally prove \cref{prop:few-cases_discreteness}.

\begin{proof}[Proof of \cref{prop:few-cases_discreteness}]
  As noted before, \cref{prop:flux-monodromy_monotone} gives a proof of Case~(c).

  Suppose that we are in Case~(a), i.e. $L$ is a weakly exact Lagrangian. If $L'\in\LHam(L)$ were $C^1$-close to $L$ but not Hamiltonian isotopic to $L$ in a Weinstein neighbourhood, then it would be the graph of a nonexact closed 1-form $\sigma$ in that neighbourhood. However, every nonexact closed 1-form is cohomologous to a 1-form with at most one zero~\cite{FarberSchutz2006}, i.e.\ $L'$ is Hamiltonian isotopic in a Weinstein neighbourhood of $L$ to some Lagrangian with only one intersection point with $L$. By the degenerate Lagrangian Arnol'd conjecture for weakly exact Lagrangians~\cite{Flo89c,Hof88}, this is not possible. Therefore, $\LHam(L)$ must be $C^1$-locally path connected. By \cref{prop:flux_C1-conjA_injection}, this implies the discreteness of $\fl(L)$~---~closedness follows from \cref{prop:discrete_closed_unparam} below.

  For Case~(b), i.e.\ when $L$ is a non-displaceable torus, the proof is essentially the same: if $L'$ were the graph of a nonexact closed 1-form, it would be displaceable from $L$ in a Weinstein neighbourhood, contradicting the nondisplaceability of $L$. 
\end{proof}

\subsection{$C^1$-closedness} \label{sec:closedness}
In~\cite{Sol13}, it is claimed that discreteness of $\fl(L)$ implies that the Hamiltonian orbit of $L$ in $X$ is $C^1$-closed in the space of Lagrangians of $X$ diffeomorphic to $L$. WHowever, \cite{Sol13} does not take into account monodromy and erroneously claims that $\fl(L)$ is a subgroup of $H^1(L;\R)$. We now fix this gap.

\begin{proposition} \label{prop:discrete_closed_unparam}
	If 0 is isolated in $\fl(L)$, then the Hamiltonian orbit of $L$ is $C^1$-closed in $\cl(L)$.
\end{proposition}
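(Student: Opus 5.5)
We argue directly: take a sequence $L_k=\phi_k(L)\in\LHam(L)$, $\phi_k\in\Ham(X)$, converging in the $C^1$ topology to some $L_\infty\in\cl(L)$, and show that $L_\infty\in\LHam(L)$. The first step is to fix a Weinstein neighbourhood $\Psi\colon D^*_rL_\infty\to X$ of $L_\infty$. For $k$ large, $C^1$-convergence gives $L_k=\Psi(\mathrm{graph}\,\sigma_k)$ for closed $1$-forms $\sigma_k$ on $L_\infty$ with $\sigma_k\to 0$ in $C^1$. Let $\Pi_k$ be the straight-line path $t\mapsto\Psi(\mathrm{graph}((1-t)\sigma_k))$ from $L_k$ to $L_\infty$. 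Its flux, read in $H^1(L_\infty;\R)$, is $-[\sigma_k]$, so $\Flux\Pi_k\to 0$.

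The key step is to show that $[\sigma_k]=0$ for some large $k$. If so, $\Pi_k$ is an exact deformation, hence Hamiltonian (as in the proof of \cref{lem:FM-discrete_to_path-connected}, or \cref{lem:local-to-global_exact}), and $L_\infty$ is Hamiltonian isotopic to $L_k$, hence to $L$. To get there, compare two indices $k,\ell$. Let $H_k$ denote the Hamiltonian path from $L$ to $L_k$ induced by $\phi_k$. Form the loop based at $L$
\begin{equation*}
\Lambda_{k,\ell} = H_k * \Pi_k * \Pi_\ell^- * H_\ell^- .
\end{equation*}
Hamiltonian pieces contribute no flux, so the concatenation formula \eqref{eq:concatenation} gives
\begin{equation*}
\Flux\Lambda_{k,\ell}=(\phi^{H_k})^*\bigl(\Flux\Pi_k + (\phi^{\Pi_k})^*\Flux\Pi_\ell^-\bigr).
\end{equation*}
Here $\phi^{\Pi_k}$ is essentially the projection $L_k\to L_\infty$ along the Weinstein fibres, so this equals $(\phi^{H_k})^*(\text{pullback of }[\sigma_\ell]-[\sigma_k])$. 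Fixing the identification $\Psi$ with $L_\infty$, this is $(f_k)^*\bigl([\sigma_\ell]-[\sigma_k]\bigr)$, where $f_k\colon L\to L_\infty$ is the composition of $\phi_k|_L$ with the fibre projection.

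The main obstacle is that the diffeomorphisms $f_k$, and hence the linear maps $f_k^*$ on $H^1$, need not be bounded. This is exactly the monodromy issue that \cite{Sol13} overlooks. To handle it, I would use that $H^1(L_\infty;\R)\to H^1(L;\R)$ is induced by an element of $\Aut(H_1(L;\Z))$ modulo a fixed identification, so the $f_k^*$ range over a discrete set. I would choose $\ell=k$ only after a different pairing, namely compare $\Lambda_k = H_k*\Pi_k*\Pi_0^{-}$, where $\Pi_0$ is a fixed Lagrangian path from $L$ to $L_\infty$. Conjugating by $\Pi_0$ via \eqref{eq:fm_base_point_change} moves everything to basepoint $L_\infty$. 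There the loop $\Pi_0^-*H_k*\Pi_k$ has flux of the form $g_k^*c-c+(\text{small}) $ with $g_k=f_k\circ(\phi^{\Pi_0})^{-1}$, where $c=\Flux \Pi_0$ and the small term is $-[\sigma_k]$; this is not obviously small.

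So instead I would use the cleaner two-index loop $\Lambda_{k,\ell}$ and pass to a subsequence on which the homological class of $f_\ell\circ f_k^{-1}$ (equivalently, the homological monodromy of $\Pi_\ell^-*H_\ell^-*H_k*\Pi_k$ at $L_\infty$) is controlled. For each fixed $k$, the loop at $L_\infty$ given by $\Pi_k^-*H_k^-*H_\ell*\Pi_\ell$ has flux $[\sigma_k]-(\text{monodromy})^*[\sigma_\ell]$ in $H^1(L_\infty)$ after transport. Since $\fl(L_\infty)=\fl(L)$ up to the transport isomorphism and base change, 0 isolated in $\fl(L)$ should transfer to $L_\infty$. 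Taking $\ell\to\infty$ with $k$ fixed is problematic because the monodromy may blow up, so I would take $\ell=k$ in the form of the loop $\Pi_k^-*\Pi_k$, which is trivial. Honestly, the expected route is this: show that 0 isolated in $\fl(L)$ implies 0 isolated in $\fl(L_\infty)$, via \eqref{eq:fm_base_point_change} applied to loops whose transported flux is small. Then use the loop $\Pi_k^-*H_k^-*H_{k'}*\Pi_{k'}$ for $k,k'$ chosen so that the homological monodromy is trivial, which is possible if one of the $f_k^*$ classes repeats infinitely often. The unresolved case, where the classes are all distinct, is where I expect the real difficulty.
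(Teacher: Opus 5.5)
Your proposal is not a proof: it stops with the case where the classes $(f_k)_*$ are pairwise distinct left open, and that is the only case with real content.

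Up to that point it is sound.
\begin{itemize}
\item Your computation $\Flux\Lambda_{k,\ell}=f_k^*([\sigma_\ell]-[\sigma_k])$ is right.
\item If some class of $(f_k)_*$ occurs infinitely often, then along that subsequence $f_k^*$ is a fixed isomorphism. So $\Flux\Lambda_{k,\ell}\to 0$ in $H^1(L;\R)$, and $0$ isolated in $\fl(L)$ forces $[\sigma_k]$ to be eventually constant, hence $0$. No detour through $L_\infty$ is needed for this.
\end{itemize}
The detour you propose is in any case unavailable: \emph{$0$ isolated in $\fl(L)$} does not transfer to $\fl(L_\infty)$. The base-change formula \eqref{eq:fm_base_point_change} introduces terms $\cm(\Lambda)^*c-c$, and these can accumulate at $0$. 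For example, in $\CP^2$ one has $\fl(\tcl)=0$, whereas tori reached from $\tcl$ with irrational flux ratio have dense flux set (\cref{prop:flux-set_CP^2}).

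For comparison, the paper's proof uses exactly your loop with $\ell=k+1$. It asserts that its flux tends to $0$ because $L_i\to K$ in $C^1$. That is, it reads $[\sigma_{i+1}-\sigma_i]$ in $H^1(K;\R)$ and never controls the $i$-dependent identification $f_i^*$ with $H^1(L;\R)$. So the obstacle you isolate is genuine, and the paper does not supply a way around it.

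What does close the argument, provided $\Flux\Pi_0$ is not an accumulation point of $\fl(L)$, is a one-index loop with the fixed path placed first: $\Lambda_k=\Pi_0*\Pi_k^-*H_k^-$. The proviso holds if $\fl(L)$ has no accumulation points at all. This is the case when $\iota_*$ is injective, so that $\fl(L)$ is a discrete subgroup, and for tori in $\C^2$ by \cref{thm:B}.
\begin{itemize}
\item By \eqref{eq:concatenation}, $\Flux\Lambda_k=\Flux\Pi_0+(\phi^{\Pi_0})^*[\sigma_k]$. The Hamiltonian piece comes last and contributes nothing whatever $\phi_k$ is, so only the fixed isomorphism $(\phi^{\Pi_0})^*$ appears.
\item You considered the reverse loop $H_k*\Pi_k*\Pi_0^-$, whose flux is $-\cm(\Lambda_k)^{-*}\Flux\Lambda_k$ and is therefore uncontrolled. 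You dismissed it as not small, but smallness is not what is needed.
\item The fluxes $\Flux\Lambda_k\in\fl(L)$ converge to $\Flux\Pi_0$. Under the proviso they are eventually equal to it, so $[\sigma_k]=0$ and $L_\infty\in\LHam(L)$.
\end{itemize}

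Under the bare hypothesis of the statement, however, you must still exclude that $\Flux\Pi_0$ is an accumulation point of $\fl(L)$. The only loops with small flux produced this way are $\Lambda_k^-*\Lambda_\ell\simeq\Lambda_{k,\ell}$, with flux $\cm(\Lambda_k)^{-*}(\phi^{\Pi_0})^*([\sigma_\ell]-[\sigma_k])$. Since $\fl(L)$ is not a group, nothing prevents $\cm(\Lambda_k)^{-*}$ from being unbounded. That is precisely your unresolved case, and the proposal offers no idea for it.
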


\begin{proof}
  Suppose that $\{L_i\}\subseteq\LHam(L)$ $C^1$-converges to some Lagrangian $K$. Then, for $i$ large, $L_i$ is the graph of a closed 1-form in a Weinstein neighbourhood of $K$. For each $i$, make a Lagrangian loop $\Lambda_i$ based at $L$ by concatenating
  \begin{enumerate}[label=(\arabic*)]
  \item a Hamiltonian isotopy from $L$ to $L_i$;
  \item the obvious Lagrangian isotopy from $L_i$ to $K$ in the Weinstein neighbourhood;
  \item the obvious Lagrangian isotopy from $K$ to $L_{i+1}$;
  \item a Hamiltonian isotopy from $L_{i+1}$ to $L$.
  \end{enumerate} 
  On the one hand, the flux of $\Lambda_i$ goes to 0 as $i\to\infty$ since $L_i\to K$ in the $C^1$ topology. On the other hand, 0 is isolated in $\fl(L)$. Therefore, $\Flux(\Lambda_i)=0$ for large $i$. But the flux of $\Lambda_i$ is simply the flux of the two obvious isotopies from $L_i$ to $L_{i+1}$, which we can compute to be $[\sigma_{i+1}-\sigma_i]$. Therefore, for $i$ large, $[\sigma_i]\in H^1(K;\R)$ is constant. Since $[\sigma_i]\to 0$ by $C^1$-convergence, we must have that $[\sigma_i]=0$, i.e. $L_i$ is the graph of an exact form for $i$ large. Therefore, $L_i$ is Hamiltonian isotopic to $K$ (in its Weinstein neighbourhood).
\end{proof}

By a slight adaptation of Solomon's arguments, we can prove the parametrized form of his statement. \par

\begin{proposition} \label{prop:discrete_closed_param}
	If $\Gamma(L)$ is discrete, then the Hamiltonian orbit of $\iota:L\hookrightarrow X$ is $C^1$-closed in $\pcl(L)$, the space of Lagrangian embeddings isotopic to $\iota:L\hookrightarrow X$.
\end{proposition}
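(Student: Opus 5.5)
We follow the proof of \cref{prop:discrete_closed_unparam}, now keeping track of parametrizations so that the resulting Lagrangian loops have trivial monodromy. Using \cref{rk:discreteness_equiv}, the discreteness of $\Gamma(L)$ then forces the fluxes of these loops to vanish.

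Let $\{\phi_i\circ\iota\}\subseteq\LHam(\iota)$, with $\phi_i\in\Ham(X)$, converge in the $C^1$ topology to a Lagrangian embedding $\kappa\in\pcl(L)$. Fix a Weinstein neighbourhood $\Psi\colon D^*_rK\to X$ of $K=\kappa(L)$. For $i$ large, $C^1$-convergence of the embeddings gives closed 1-forms $\sigma_i$ on $K$ and diffeomorphisms $g_i\colon L\to L$ with
\[
\phi_i\circ\iota=\Psi\circ\sigma_i\circ\kappa\circ g_i ,
\]
where we view $\sigma_i$ as a section of $T^*K$. Moreover $\sigma_i\to 0$ and $g_i\to\id$ in $C^1$. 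Hence $g_i\in\operatorname{Diff}_0(L)$ for $i$ large, and after modifying $\phi_i$ we may absorb $g_i$. Concretely, an isotopy from $g_i$ to $\id$ is transported via the Weinstein neighbourhood of $\mathrm{graph}(\sigma_i)$ to a Hamiltonian isotopy preserving $\phi_i(L)$, as in the proof of \cref{lem:nondiscrete-flux_flux-C1-conjA}. This step needs some care, because the modification must preserve the convergence to $\kappa$ and must act near the graph rather than near $K$. We therefore assume $g_i=\id$.

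Now, for each $i$, form the loop $\Lambda_i$ based at $L$ exactly as in \cref{prop:discrete_closed_unparam}, concatenating the following pieces:
\begin{enumerate}
\item the Hamiltonian isotopy $\phi^t_i$ from $L$ to $L_i$;
\item the parametrized isotopy $t\mapsto\Psi\circ((1-t)\sigma_i)\circ\kappa$ from $L_i$ to $K$;
\item the parametrized isotopy $t\mapsto\Psi\circ(t\sigma_{i+1})\circ\kappa$ from $K$ to $L_{i+1}$;
\item the inverse Hamiltonian isotopy from $L_{i+1}$ to $L$.
\end{enumerate}
Follow the parametrization along the loop: it starts at $\iota$, reaches $\phi_i\circ\iota=\Psi\circ\sigma_i\circ\kappa$ after step (1), reaches $\kappa$ after step (2), reaches $\Psi\circ\sigma_{i+1}\circ\kappa=\phi_{i+1}\circ\iota$ after step (3), and returns to $\iota$ after step (4). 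So the endpoint parametrization is exactly $\iota$, and $\cm(\Lambda_i)$ is trivial.

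By \cref{prop:fm_basic}, the Hamiltonian pieces contribute no flux. The flux of $\Lambda_i$ is therefore the pullback of $[\sigma_{i+1}-\sigma_i]\in H^1(K;\R)$ to $H^1(L;\R)$ under the transport map, which is an isomorphism. Since $\sigma_i\to 0$, we get $\FM(\Lambda_i)\to(0,\id)$. Discreteness of $\Gamma(L)$ gives $\FM(\Lambda_i)=(0,\id)$ for $i$ large, so $[\sigma_i]$ is eventually constant. As $[\sigma_i]\to 0$, this forces $[\sigma_i]=0$ for $i$ large, i.e.\ $\sigma_i=df_i$ is exact.

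Then $t\mapsto\Psi\circ(t\,df_i)\circ\kappa$ is realized by a Hamiltonian isotopy $\chi^t$ supported near $K$, with $\chi^1\circ\kappa=\phi_i\circ\iota$. This yields $\kappa=(\chi^1)^{-1}\circ\phi_i\circ\iota\in\LHam(\iota)$, which proves closedness.

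The main obstacle is the reparametrization step, where $g_i$ is absorbed into $\phi_i$. If it fails, we fall back on a weaker claim that still suffices: instead of insisting that $\cm(\Lambda_i)$ be trivial, we only need $\cm(\Lambda_i)$ to lie in the identity class, because the final isotopy then reproduces $\kappa$ up to an element of $\operatorname{Diff}_0(L)$. Such an element can be realized by a Hamiltonian isotopy preserving $K$, again via the Weinstein neighbourhood.
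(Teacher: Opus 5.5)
Your argument is correct, and it takes a different route from the paper's proof of this statement.

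\textbf{The paper's route.} The paper follows Solomon's global approach. Discreteness of $\Gamma(L)$ gives discreteness of $G=\Flux(\ker\cm)$. Two paths in $\pcl$ from $\iota$ to the same embedding differ by a loop with trivial monodromy, so the flux descends to a map $\overline{\Flux}\colon\pcl\to H^1(L;\R)/G$. The level set $\overline{\Flux}^{-1}(0)$ is then closed; the paper asserts that $0$ is a regular value, so it is a closed submanifold. Its component through $\iota$ is identified with $\LHam(\iota)$ via \cref{prop:path-lag}: along a path in the level set, the flux is continuous with values in the discrete group $G$, hence vanishes identically.

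\textbf{Your route.} You redo the sequential argument of \cref{prop:discrete_closed_unparam} while tracking parametrizations. This makes the loops $\Lambda_i$ have trivial monodromy, so discreteness of $\Gamma(L)$ directly forces $\Flux\Lambda_i=0$.

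\textbf{Comparison.} Your argument is more elementary and self-contained. It only uses three ingredients:
\begin{itemize}
\item a Weinstein neighbourhood of the limit $K$;
\item metrizability of the $C^1$ topology, so that sequential closedness suffices;
\item the fact that $t\mapsto\mathrm{graph}(t\,df)$ is generated by a Hamiltonian isotopy with matching parametrization.
\end{itemize}
It avoids the infinite-dimensional regular-value claim and the identification of the component with the orbit, which the paper only sketches. The paper's route gives more structure in exchange: the orbit is a whole component of a level set of a flux map, and closedness follows immediately from that.

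\textbf{On absorbing $g_i$.} This step is unnecessary. Without it, $\Lambda_i$ returns with parametrization $\iota\circ g_{i+1}^{-1}g_i$. This is already trivial in $\MCG(L)$ for large $i$ and acts trivially on $H^1(L;\R)$, so your fallback is the cleaner version. The diffeomorphism $g_i$ only needs to be removed at the very last step, by a Hamiltonian isotopy preserving $\iota(L)$, lifted from an isotopy of $L$ through the Weinstein neighbourhood.
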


\begin{proof}
  If $\Gamma(L)$ is discrete, then so is $G=\Flux(\ker \cm)\cong\Gamma_0(L)$. But then, $\Flux$ defines a map $\overline{\Flux}:\pcl\to H^1(L;\R)/G$, where $H^1(L;\R)/G$ is a manifold. It is easy to see that 0 is a regular value of $\overline{\Flux}$. Therefore, $\overline{\Flux}^{-1}(0)$ is a closed manifold. It follows essentially from Proposition~\ref{prop:path-lag} below or Corollary~6.4 of~\cite{Sol13} that the connected component of $\overline{\Flux}^{-1}(0)$ containing $\iota$ in $\pcl$~---~and thus in the space of all Lagrangian embeddings~---~is its Hamiltonian orbit, which concludes the proof.
\end{proof}

We also prove the converse, under certain topological conditions, by essentially following the proof in the symplectic case.

\begin{lemma} \label{lem:converse_discrete-to-closed}
	Suppose that $\iota_*:H_1(L;\R)\to H_1(X;\R)$ is injective or that $L$ is a product torus in $\C^n$. If $\LHam(L)$ (resp.\ $\LHam(\iota)$) is $C^1$-closed in $\cl(L)$ (resp.\ $\pcl(\iota)$), then $\fl(L)$ (resp.\ $\Gamma(L)$) is discrete.
\end{lemma}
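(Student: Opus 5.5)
We argue by contraposition, mirroring the symplectic proof that closedness of $\Ham$ forces discreteness of the flux group. Suppose $\fl(L)$ is not discrete. In the injective case, by \cref{rem:0-isolated_no-monodromy} the flux map is a morphism, so non-discreteness means $0$ is not isolated in $\fl(L)$. For product tori we use Chekanov's result quoted after \cref{prop:flux_C1-conjA_elem-tori}: a non-discrete flux set is dense, so again $0$ is not isolated. In the parametrized case, non-discreteness of the group $\Gamma(L)$ is equivalent to $0$ not being isolated, hence also to non-discreteness of $\Gamma_0(L)$ by \cref{rk:discreteness_equiv}. So in every case we may pick loops $\Lambda_k$ with $\Flux\Lambda_k = a_k\neq 0$ and $a_k\to 0$, with trivial monodromy in the parametrized case.

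Fix a Weinstein neighbourhood $\Psi$ of $L$ and a nonzero class $a = \lim_k a_k/|a_k|$ along a subsequence. The strategy is to produce a Lagrangian $K=\Psi(\mathrm{graph}\,\sigma)$ with $[\sigma]\neq 0$ small, together with a sequence of elements of $\LHam(L)$ converging to $K$. To do this, choose small closed forms $\sigma_k$ representing classes $b_k\in\fl(L)$ (or in $\Gamma_0(L)$) that converge to a fixed nonzero class $b$. We obtain these by taking suitable integer combinations of the $a_k$: using iterates $\Lambda_k^{N_k}$, so that $N_k a_k\to b$ for a suitable $b$ on the ray through $a$. This works when the flux is additive, namely in the injective case or for loops with trivial monodromy. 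For product tori, density of $\fl(L)$ gives the $b_k$ directly.

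By \cref{lem:nondiscrete-flux_flux-C1-conjA}, together with \cref{prop:flux_C1-conjA_injection} or \cref{prop:flux_C1-conjA_elem-tori}, each $L_k=\Psi(\mathrm{graph}\,\sigma_k)$ is connected to $L$ by a flux-zero isotopy that can be upgraded to a Hamiltonian one, so $L_k\in\LHam(L)$. In the parametrized case, the trivial-monodromy refinement of that lemma gives embeddings $\iota_{k,1}\to\Psi\circ\sigma_k$. Since $\sigma_k\to\sigma$ in $C^1$, with $[\sigma]=b\neq 0$, we get $L_k\to K$ in $C^1$, and closedness then forces $K\in\LHam(L)$.

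Finally, we derive a contradiction by producing two fluxes in the same neighbourhood. We can arrange that $b$ itself is not in $\fl(L)$: if every small class reachable as a limit were realized, then $\fl(L)$ would contain a neighbourhood of $0$ minus a countable set. This would contradict the countability of $\pi_1(\cl)$, since $\cl$ is a separable ANR. Choosing $b$ outside the countable set $\fl(L)$ (respectively outside $\Gamma_0(L)$), a Hamiltonian isotopy from $L$ to $K$, concatenated with the path $t\mapsto\Psi(\mathrm{graph}((1-t)\sigma))$, would be a loop with flux $-b$. In the injective and trivial-monodromy cases the flux is additive, so $b$ would be realized, which is a contradiction.

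The main obstacle is this last step in the product-torus case, where the flux is not additive and monodromy may appear. There I would first try to use Chekanov's classification, as in the proof of \cref{prop:flux_C1-conjA_elem-tori}. That classification makes $\fl(L)$ the image of an explicit countable group action, so I would choose $b$ outside this countable set and then check directly that $K$ cannot be Hamiltonian isotopic to $L$.
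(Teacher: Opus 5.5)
\textbf{Overall.} Your strategy is the paper's. You choose a small class $b$ that is a limit of realized fluxes $b_k$ but is not itself in $\fl(L)$ (resp.\ $\Flux(\ker\cm)$). You realize the $b_k$ by graphs $L_k=\Psi(\mathrm{graph}\,\sigma_k)$ that are Hamiltonian isotopic to $L$, using \cref{lem:nondiscrete-flux_flux-C1-conjA} with Propositions~\ref{prop:flux_C1-conjA_injection} and~\ref{prop:flux_C1-conjA_elem-tori}. You then show that $K=\Psi(\mathrm{graph}\,\sigma)$ lies in the closure of the orbit but not in the orbit.

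Your ray argument ($N_ka_k\to b$ for any $b\in\R_{>0}a$) is slightly more careful than the paper's claim that non-discreteness implies density. It only uses that the closure of a non-discrete subgroup contains a line. The countability sentence should be rephrased: what you need is that a segment of that ray, or a small neighbourhood in the dense product-torus case, is uncountable and so cannot lie in the countable set $\fl(L)$. The phrase ``a neighbourhood of $0$ minus a countable set'' does not say this.

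\textbf{The gap.} You leave the final step in the product-torus case unproved. The obstacle you describe comes only from the order of concatenation. Put the obvious path $P\colon t\mapsto\Psi(\mathrm{graph}(t\sigma))$ from $L$ to $K$ first, followed by a Hamiltonian isotopy $H$ from $K$ back to $L$. By \eqref{eq:concatenation},
\[
\Flux(P*H)=(\phi^{P})^*\Flux H+\Flux P=0+b=b,
\]
since the monodromy only acts on the flux of the second piece, and that flux vanishes. This loop is just the inverse of the one you wrote down.

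Hence $K\in\LHam(L)$ forces $b\in\fl(L)$ in every case, and you need neither additivity of the flux nor Chekanov's classification at this step. This is exactly the paper's argument: the graph of $\sigma$ is not Hamiltonian isotopic to $L$, because otherwise this loop would have flux $[\sigma]$. The same fix removes your reliance, in the injective case, on monodromy acting trivially on $H^1(L;\R)$. In the parametrized case, $\Psi\circ\sigma=\phi\circ\iota$ makes the same loop have trivial monodromy, so $b\in\Flux(\ker\cm)$, which is again a contradiction.
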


\begin{proof}
  First, note that non-discreteness implies density in our setting. For $\Gamma(L)$ or for $\fl(L)$ with $\iota_*$ injective, this follows from the fact that these are subgroups of some real vector space. For the case when $L$ is a product torus, this follows from the classification of product tori by Chekanov~\cite{Che96}. \par

  Suppose that $\fl(L)$ is dense, and take a $C^1$-small closed 1-form $\sigma$ on $L$ such that $[\sigma]\in H^1(L;\R)\setminus\fl(L)$. Note that its graph $L'$ is not Hamiltonian isotopic to $L$ in $X$, since otherwise the concatenation of the obvious isotopy from $L$ to $L'$ with a Hamiltonian isotopy from $L'$ to $L$ would be a Lagrangian loop of flux $[\sigma]$. By an analogous construction as in \cref{lem:nondiscrete-flux_flux-C1-conjA} (also using Propositions~\ref{prop:flux_C1-conjA_elem-tori} and~\ref{prop:flux_C1-conjA_injection}), we can take a sequence of $C^1$-small closed 1-forms ${\sigma_k}$ with graphs $L_k$ such that
  \begin{enumerate}[label=(\roman*)]
  \item $L_k\to L'$ in $C^1$ topology;
  \item $L$ and $L_k$ are Hamiltonian isotopic in $X$;
  \item $[\sigma_k]\in\fl(L)$.
  \end{enumerate}
  Since $L'\notin\LHam(L)$, the Hamiltonian orbit is not closed. \par
  
  When $\Gamma(L)$ is dense, we mirror the construction using the density of $\Gamma_0(L)$, just as in \cref{lem:nondiscrete-flux_flux-C1-conjA}.
\end{proof}

\section{In $\mathbb{C}^2$: A case study}
\label{sec:C2}
\commentintoc{We review preliminary material about Arnol'd--Liouville tori in \S\,\ref{sec:prep-isot-arnold} and almost toric fibrations in \S\,\ref{sec:almost-toric-fibr}. We compute the flux-monodromy group of Lagrangians tori in $\C^{2}$ in \S\,\ref{sec:flux-monodr-morph}, and prove the existence of a section to the flux-monodromy morphism in \S\,\ref{ssec:immersed}. \cref{thm:B} follows from these results. We also establish a complete description of the fundamental group of the space of \emph{immersed} Lagrangians in \S\,\ref{sec:relation-immersions}. In \S\,\ref{ssec:ham-action}, we study the Hamiltonian action on $\cl(L)$, when $L$ is a Lagrangian torus in several natural 4-manifolds. In \S\,\ref{sec:hausdorff-local-path}, we prove \cref{prop:few-cases_discreteness}, which states a strong form of local path connectedness of the Hamiltonian orbit of any Lagrangian torus in $\C^{2}$ in the Hausdorff metric.}

We review preliminary material about Arnol'd--Liouville tori in \S\,\ref{sec:prep-isot-arnold} and almost toric fibrations in \S\,\ref{sec:almost-toric-fibr}. We compute the flux-monodromy group of Lagrangians tori in $\C^{2}$ in \S\,\ref{sec:flux-monodr-morph}, and prove the existence of a section to the flux-monodromy morphism in \S\,\ref{ssec:immersed}. \cref{thm:B} follows from these results. We also establish a complete description of the fundamental group of the space of \emph{immersed} Lagrangians in \S\,\ref{sec:relation-immersions}. In \S\,\ref{ssec:ham-action}, we study the Hamiltonian action on $\cl(L)$, when $L$ is a Lagrangian torus in several natural 4-manifolds. In \S\,\ref{sec:hausdorff-local-path}, we prove \cref{prop:few-cases_discreteness}, which states a strong form of local path connectedness of the Hamiltonian orbit of any Lagrangian torus in $\C^{2}$ in the Hausdorff metric.

\subsection{Preparation: isotopies of Arnol'd--Liouville tori}
\label{sec:prep-isot-arnold}
Although we will only use the following in the special case of an almost toric fibration on $\C^2$, we briefly discuss properties of the flux-monodromy morphism on lifts of curves in the base of completely integrable Hamiltonian systems. This discussion is based on \cite{Dui80} and \cite{Eva23}. Let $F \colon (X,\omega) \rightarrow \R^n$ be a completely integrable Hamiltonian system on a symplectic $2n$-manifold. This means that $\{F_i,F_j\}=0$ and that $F$ has compact, connected fibres. Furthermore, we ask that the set of regular values $B_{\rm reg} \subset \R^n$ is open and dense in the image of $F$. The classical Arnol'd--Liouville theorem implies that $F^{-1}(b) \subset X$ is a Lagrangian torus for all $b \in B_{\rm reg}$. These are called \emph{Arnol'd--Liouville tori}. Lifting paths in $B_{\rm reg}$ yields Lagrangian isotopies.

\begin{definition}
	\label{def:liftisotopy}
	Let $\gamma \colon [0,1] \rightarrow B_{\rm reg}$ be a smooth curve in the regular values of an integrable system $F$. We call the Lagrangian isotopy $\Pi_{\gamma} = \{F^{-1}(\gamma(t))\}_{t \in [0,1]}$ the \emph{lift of $\gamma$}.
\end{definition}

There is a strong relationship between action coordinates of the system $F$ and the Lagrangian flux of the lift of a curve. Following Evans \cite[Definition 2.14]{Eva23}, we fix a regular fibre $L = F^{-1}(b_0)$ and define the \emph{flux map} $I \colon \tilde{B}_{\rm reg} \rightarrow H^1(L;\R)$ on the universal cover $\tilde{B}_{\rm reg} \rightarrow B_{\rm reg}$ as the Lagrangian flux of the lift of a path between $b_0$ and $b$. Note that we have passed to the universal cover, since the Lagrangian flux may depend on the choice of path. From that point of view, we obtain
\begin{equation}
	\label{eq:flux_lifts}
	\Flux \Pi_{\gamma} = I(\tilde{\gamma}(1)) - I(\tilde{\gamma}(0)),
\end{equation}
where $\tilde{\gamma} \colon [0,1] \rightarrow \tilde{B}_{\rm reg}$ is a lift of $\gamma$ to the universal cover $\tilde{B}_{\rm reg}$. As it turns out, the map $I$ computes action coordinates.

\begin{proposition}
\label{prop:action_I}
Identifying $H^1(T^n;\R) = \R^n$, the map $I$ computes action coordinates on small enough subsets $U \subset \tilde{B}_{\rm reg}$. Conversely, any set of action coordinates coincides with $I$ up to an integral affine transformation, i.e.\ up to post-composing with a map in $\R^n \rtimes \GL(n;\Z)$.
\end{proposition}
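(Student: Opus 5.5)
We will show that on a small simply connected set $U\subset\tilde B_{\rm reg}$ the map $I$ is a local diffeomorphism onto its image, and that in the resulting coordinates the Hamiltonian vector fields of the components $I_1,\dots,I_n$ (computed against a fixed integral basis $\xi_1,\dots,\xi_n$ of $H_1(L;\Z)$, transported along the fibres) have $1$-periodic flows. This periodicity is exactly the defining property of action coordinates. For the converse, we use that action coordinates are unique up to $\R^n\rtimes\GL(n;\Z)$.

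First we compare $I$ with the classical Arnol'd--Liouville actions. Over a small ball $V\subset B_{\rm reg}$ around $b$, the preimage $F^{-1}(V)$ is a neighbourhood of the torus $F^{-1}(b)$, hence retracts to it. So $\omega$ is exact on $F^{-1}(V)$, say $\omega=d\lambda$. The classical actions are then $a_j(b')=\int_{\xi_j(b')}\lambda$, where $\xi_j(b')$ is the transported cycle on $F^{-1}(b')$. For a path $\gamma$ in $V$, Stokes' theorem shows that the area of the cylinder $C^{\Pi_\gamma}_{\xi_j}$ equals $a_j(\gamma(1))-a_j(\gamma(0))$. Since all fibres over $V$ are identified with $L$ via a fixed lifted path, \eqref{eq:flux_lifts} gives $I=a+\mathrm{const}$ on the lift of $V$, with the identification $H^1(L;\R)\cong\R^n$ given by the dual basis of the transported $\xi_j$. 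This proves the first claim, provided we know that $a=(a_1,\dots,a_n)$ are action coordinates.

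We can either cite this from \cite{Dui80}, or verify it by recalling the Arnol'd--Liouville construction. Since $da_j(v)=\omega(\,\cdot\,$-cylinder$)$, differentiating shows that $da_j=\iota_{Y_j}\omega$ restricted to transversals, where $Y_j$ generates the $T^n$-action whose $j$-th circle traces $\xi_j$. Hence $X_{a_j}$ is $Y_j$ up to sign, which is $1$-periodic, and in particular $da_1,\dots,da_n$ are independent. This is the main technical step. Its key input is that the infinitesimal variation of the flux along $v\in T_bB$ is the class of $\iota_{\tilde v}\omega|_L$, where $\tilde v$ is a lift of $v$, and that this class pairs with $\xi_j$ to give $\int_{\xi_j}\iota_{\tilde v}\omega=\pm dF$-components via the periodic flows.

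For the converse, suppose $A$ is any set of action coordinates on $U$. Then the flows of $X_{A_j}$ and of $X_{I_j}$ both generate effective $T^n$-actions along the fibres, and these actions have the same orbits. Their period lattices in the fibre tangent space therefore coincide, so $dA=M\,dI$ for some $M\in\GL(n;\Z)$. The matrix $M$ is locally constant, and constant on the connected set $U$. Integrating gives $A=MI+c$, that is, $A$ is obtained from $I$ by post-composing with an element of $\R^n\rtimes\GL(n;\Z)$.
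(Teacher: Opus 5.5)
Your proposal is correct and takes essentially the same approach as the paper, which gives no argument of its own but cites \cite[Lemma~2.15]{Eva23} for the fact that the flux map $I$ gives action coordinates locally, and \cite[Section~2.5]{Eva23} for the uniqueness of action coordinates up to $\R^n\rtimes\GL(n;\Z)$. You spell out the standard arguments behind both citations: the Stokes comparison of $I$ with the classical actions $\int_{\xi_j}\lambda$, and the comparison of period lattices, which yields a locally constant integral change of basis.
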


This proposition is a combination of \cite[Lemma 2.15]{Eva23} and the uniqueness of action coordinates; see \cite[Section 2.5]{Eva23}. Note that if $F$ is already expressed in action coordinates, then $\Flux \Pi_{\gamma} = \gamma(1) - \gamma(0)$, where we identify $H^1(T^n;\R) \cong \R^n$ via the basis on $H_1(T^n)$ generated by the closed orbits generated by the components $F_i$ of $F$. An example of such an $F$ is any moment map generating an effective Hamiltonian $T^n$-action. This occurs, for example, when $F \colon X \rightarrow \R^n$ is a toric moment map.

\subsection{Almost toric fibrations}
\label{sec:almost-toric-fibr}

Recall that by $T(a,b) \subset \C^2$, we denote the product torus $S^1(a) \times S^1(b)$. For any $a> 0$, the torus of the form $T(a,a)$ is monotone and is called the \emph{Clifford torus}. We denote it by $\tcl(a)$. Another torus we will discuss is the so-called \emph{Chekanov torus} $\tch(a)$, for all $a > 0$. It was introduced in \cite{Che96}; see also \cite{EliPol97}. By now, there are several equivalent points of view on the Chekanov torus. In this paper, we will use Chekanov's original definition as well as the fact that the Chekanov torus appears as a monotone fibre in the almost toric fibration obtained from performing a so-called \emph{nodal trade} on the standard toric fibration on $\C^2$. Let us briefly outline the approach using almost toric fibrations (ATFs); for more details, we recommend \cite{Eva23}. The map $F \colon \C^2 \rightarrow \R^2_{\geqslant 0}$ defined by $F_i = \pi \vert z_i \vert^2$ for $i \in \{1,2\}$ is the standard moment map defining a toric Hamiltonian $T^2$-action. From a slightly different point of view, it can be viewed as a special type of completely integrable Hamiltonian system:
\begin{enumerate}
	\item Its singular values are on the boundary $\pp \R^2_{\geqslant 0}$. They consist of two families of singularities of \emph{elliptic-regular type} at points of the form $(z_1,0)$ for $z_1 \neq 0$ and $(0,z_2)$ for $z_2 \neq 0$, respectively. These singular points live over the interior of the edges of $\R^2_{\geqslant 0}$. These two families meet at the point $(0,0)$ which is of \emph{elliptic-elliptic type} and a fixed point of the $T^2$-action.
	\item Over the regular locus $\R^2_{>0}$, the map $F$ defines action coordinates whose corresponding angle coordinates are induced by the $T^2$-action. Thus, $F$ can be viewed as an Arnol'd--Liouville chart on the open dense set of regular points of $\C^2$. Its Arnol'd--Liouville tori are just the product tori $T(a,b)$.
\end{enumerate}

There is a smooth deformation $\{F_t \colon \C^2 \rightarrow \R^2 \}$ of the integrable system $F = F_0$, in which the elliptic-elliptic singularity disappears for $t > 0$ and a so-called \emph{focus-focus} singularity appears in the system. Under this deformation, the two elliptic-regular families are merged into one such family. In almost toric geometry, this deformation is called a \emph{nodal trade}, since the image of the focus-focus singularity under $F_t$ is called a \emph{node}. The node is isolated and has a neighbourhood in which every point except for itself is regular. In the spirit of Duistermaat \cite{Dui80}, one can try to recover action-coordinates on the complement of the focus-focus singularity. This fails, since the focus-focus singularity has non-trivial \emph{monodromy} and the latter is an obstruction to finding global action-coordinates; see \cite[Theorem 2.2]{Dui80}. Monodromy here means that the torus bundle over the punctured neighbourhood of a node is non-trivial. In fact, there is a basis in which it has monodromy given by the matrix 
\begin{equation}
	\label{eq:shearmatrix}
	\begin{pmatrix}
		1 & 1 \\
		0 & 1
	\end{pmatrix}.
\end{equation}
The best one can do is to remove a ray connecting the node to the boundary of $\im F_t \subset \R^2$ (or to infinity) and compute action coordinates on the complement of the ray. This yields the so-called \emph{almost toric base diagram}, depicted in the case of $\C^2$ in the middle and on the right-hand side of Figure~\ref{fig:nodal_trade}. In the middle picture, the Arnol'd--Liouville tori over the dashed line are Hamiltonian isotopic to Chekanov tori. 

\begin{figure}[ht]
  \centering
  \begin{tikzpicture}
    \begin{scope}[shift={(-2.5,-1.5)}]
      \fill[opacity=0.1] (0,3.5) rectangle (3.5,0);
      \draw[thick] (0,3.5) -- (0,0) -- (3.5,0);
      \fill (1,1) circle[radius=1.5pt] node[anchor=west] {$\tcl(1)$};
      \fill (2,2) circle[radius=1.5pt] node[anchor=west] {$\tcl(2)$};
    \end{scope}
    \draw[->] (1.5,0) -- node[anchor=south] {nodal} node[anchor=north] {trade} (2.5,0);
    \begin{scope}[shift={(3,-1.5)}]
      \fill[opacity=0.1] (0,3.5) rectangle (3.5,0);
      \draw[thick] (0,3.5) -- (0,0) -- (3.5,0);
      \draw[dashed] (0,0) -- (1.5,1.5) node[cross] {};
      \fill (1,1) circle[radius=1.5pt] node[anchor=north west] {$\tch(1)$};
      \fill (2,2) circle[radius=1.5pt] node[anchor=west] {$\tcl(2)$};
    \end{scope}
    \draw[->] (7.5,0) -- node[anchor=south] {change in} node[anchor=north] {branch cut} (8.5,0);
    \begin{scope}[shift={(9.5,-1.5)}]
      \fill[opacity=0.1] (0,3.5) rectangle (4,0);
      \draw[thick]  (0,0) -- (4,0);
      \draw[dashed] (2,3.5) -- (2,1.5) node[cross] {};
      \fill (2,1) circle[radius=1.5pt] node[anchor=north west] {$\tch(1)$};
      \fill (2,2) circle[radius=1.5pt] node[anchor=west] {$\tcl(2)$};
    \end{scope}
  \end{tikzpicture}
  \caption{Almost toric base diagrams corresponding to the nodal trade in~$\C^2$. The image of $F$ is depicted on the left-hand side. The Clifford torus $\tcl(a)$ is the fibre over $(a,a)$. In the middle is the almost toric base diagram after a nodal trade, whose node sits between the points $(1,1)$ and $(2,2)$. The torus over $(1,1)$ is the Chekanov torus $\tch(1)$ and the one over $(2,2)$ is the Clifford torus $\tcl(2)$. On the right hand side is another ATF-base diagram of the same fibration obtained from the middle one by applying a change in branch cut and a shear transformation in $\GL(2;\Z)$.}
  \label{fig:nodal_trade}
\end{figure}

We use the almost toric base diagram in the right hand side of Figure~\ref{fig:nodal_trade} to define a non-trivial Lagrangian loop $\Xi \in \pi_1(\cl,L)$ for $L = \tcl$ or $L = \tch$.

\begin{proposition}
\label{prop:C2}
Let $L$ be any Clifford or Chekanov torus in $\C^2$. Then, there is a loop $\Xi \in \pi_1(\cl,L)$ with
\begin{equation}
    \label{eq:sigma}
	\FM(\Xi) = \left(
	0,
	\begin{pmatrix}
		1 & 1 \\
		0 & 1
	\end{pmatrix}
	\right)
\end{equation}
in a suitable basis of $H_1(T^2)$, which yields an identification of $H^1(T^2;\R^2) \rtimes \MCG(T^2) \cong \R^2 \rtimes \GL(2;\Z)$. 
\end{proposition}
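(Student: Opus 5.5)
We take $\Xi$ to be the lift, in the sense of \cref{def:liftisotopy}, of a loop in the regular locus of the almost toric fibration $F_t$ obtained from the nodal trade, encircling the node once. The plan has three steps: realize $L$ as a regular fibre, read off the monodromy from the local model at the node, and show the flux vanishes.

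\textbf{Realizing $L$ as a fibre.} In the right-hand base diagram of Figure~\ref{fig:nodal_trade}, both the Chekanov torus $\tch(1)$ and the Clifford torus $\tcl(2)$ are regular fibres. They lie on the vertical line through the node, one below and one above. For a general $L=\tcl(a)$ or $\tch(a)$, we first rescale $\C^2$ by a conformal symplectic dilation. This conjugates loops in $\cl$ and preserves the form of $\FM$ up to the scalar on the flux, which will be zero anyway. We may therefore assume $L$ is exactly one of the fibres above.

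\textbf{Monodromy.} Let $\gamma$ be a small loop in $B_{\rm reg}$ based at $b_0=F(L)$ that winds once around the node. Take $\Xi=\Pi_\gamma$. Its monodromy is the monodromy of the torus bundle over $\gamma$. By the local normal form of a focus-focus singularity (Duistermaat~\cite{Dui80}, \cite{Eva23}), this is the shear \eqref{eq:shearmatrix} in a basis where the first vector is the vanishing cycle. Going the other way around gives the inverse, so we choose the orientation that yields $+1$ in the corner.

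If $L$ is not close to the node, we replace $\gamma$ by a path $\beta$ from $b_0$ to a point near the node, followed by the small loop, followed by $\beta^{-1}$. Conjugating by $\beta$ only transports the basis along $\Pi_\beta$. By \cref{prop:fm_basic}(2), this preserves the form of the monodromy, and we then define the ``suitable basis'' by this transport. Alternatively, we can take a large loop around the node, which stays in the regular locus once we avoid the boundary and the node.

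\textbf{Flux vanishes.} We use \eqref{eq:flux_lifts} together with \cref{prop:action_I}. Since $\gamma$ stays in the regular locus, we have $\Flux\Xi=I(\tilde\gamma(1))-I(\tilde\gamma(0))$, where $\tilde\gamma$ is a lift to the universal cover and $I$ is the developing map into $H^1(L;\R)$. Going once around the node changes $I$ by the affine monodromy, which fixes the node's image $n$ and acts linearly by the transpose shear $A$. Writing $p=I(b_0)$, the flux is therefore $A(p-n)-(p-n)$, which is a multiple of $(p-n)$'s component along the eigenline; with the vertical branch cut as drawn, this component is zero.

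More concretely: the vanishing-cycle class $e_1$ is fixed by the monodromy, so the flux evaluated on $e_1$ equals the $\omega$-area of a closed torus in $\C^2$, which is zero. On $e_2$, the flux is the area of the cylinder swept out, and this equals the horizontal offset of $b_0$ from the node. That offset vanishes because $L$ lies on the vertical line through the node, which is the eigenline of the shear.

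The main obstacle is this last computation: checking that the affine monodromy in the right-hand diagram is the shear fixing the vertical line through the node. The choice of $b_0$ on that line is exactly what forces zero flux, and it matches the fact that both tori are monotone for $\tch$ and $\tcl$. As a cross-check, monotonicity together with \cref{cor:flux_monotone} gives $\Flux\Xi=0$ directly for the monotone fibres, since $H_1(\C^2)=0$.
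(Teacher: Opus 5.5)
Your proof is correct and follows the paper's argument: you lift a loop encircling the node in the right-hand almost toric base diagram, read off the shear from the focus-focus monodromy, and get vanishing flux from the action-coordinate description of $I$. You rescale where the paper uses a nodal slide, and your justification of the flux step is more careful than the paper's appeal to continuity of $I$: the base point lies on the eigenline fixed by the affine monodromy, or alternatively the tori are monotone so \cref{cor:flux_monotone} applies. One wording slip: $A(p-n)-(p-n)$ depends only on the component of $p-n$ \emph{transverse} to the eigenline, not the component along it; your next paragraph states this correctly as the vanishing horizontal offset.
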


\begin{proof}
  Let us prove the claim for the Clifford torus $L = \tcl(a)$. The proof for the Chekanov torus is analogous. As illustrated in Figure~\ref{fig:nodal_trade}, a nodal trade followed by a change in branch cut and a $\GL(2;\Z)$-shear produce an almost toric base diagram of $\C^2$. It is obtained by decorating the upper half-plane $\R \times \R_{\geqslant 0}$ with a toric boundary on the edge $\R \times \{0\}$ and a node at $(0,c)$, with the branch cut line $\{0\} \times [c,+\infty)$. By nodal slides, we can choose $c > 0$ freely without changing the symplectomorphism type of the space. This almost toric base diagram encodes a completely integrable system on $\C^2$ with a fibre at $(0,c)$ containing a focus-focus singularity and whose Arnol'd--Liouville fibre over the point $(0,a)$ is the Clifford torus $\tcl(a)$. The set of regular values of the corresponding almost toric fibration is given by $B_{\rm reg} = \R \times \R_{>0} \setminus \{(0,c)\}$.
  
  \begin{figure}[ht]
  	\centering
  	\begin{tikzpicture}[scale=1.25]
  		\fill[opacity=0.1] (0,4) rectangle (4,0);
  		\draw[thick] (0,0) -- (4,0);
  		\draw[dashed] (2,4) -- (2,2) node[cross] {};
  		\fill (2,3) circle[radius=1.3pt] node[anchor=south east] {$\tcl$};
  		\fill (2.8,1) circle[radius=0pt] node[anchor=south west] {$\xi$};
  		\draw[
  		decoration={markings, mark=at position 0.9 with {\arrow{<}}},
  		postaction={decorate}
  		]
  		(2,2) circle (1);
  	\end{tikzpicture}
  	\caption{The loop $\xi$ which lifts to the Lagrangian loop $\Xi \in \pi_1(\cl,\tcl)$.}
  	\label{fig:loop_xi}
  \end{figure}

  Now let $\xi \colon S^1 \rightarrow \R \times \R_{\geqslant 0}$ be a loop with $\xi(0) = \xi(1) = (0,a)$ going once around the node at $(0,c)$ in the clockwise direction; see Figure~\ref{fig:loop_xi}. The precise shape of $\xi$ is irrelevant; only its homotopy class in $B_{\rm reg}$ preserving $\xi(0)=\xi(1)=(0,a)$ matters. The lift, as in Definition~\ref{def:liftisotopy}, of $\xi$ yields an element~$\Xi \in \pi_1(\cl,\tcl)$. Recall that the almost toric base diagram allows us to pick action coordinates $F=(F_1,F_2)$ on the complement $\R \times \R_{\geqslant 0} \setminus \left(\{0\} \times [c,+\infty)\right)$ of the branch cut. The branch cut tells us that these action coordinates are identified by the shear $(F_1,F_2) \mapsto (F_1,F_2+F_1)$ when traversing the branch cut from right to left; see \cite[Chapter 7]{Eva23}. Now let $f_1,f_2 \in H_1(\tcl)$ be the classes defined by the closed orbits generated by the action coordinates $F_1,F_2$. The coordinate $F_1$ is well defined, but since $\tcl$ is a fibre on the branch cut line, $F_2$ is defined only up to adding integer multiples of $F_1$. We pick one such choice for $F_2$ to define $f_2$. In that basis, the topological monodromy of the torus bundle over the image of $\xi$ (traversed in the clockwise direction) is precisely the matrix in \eqref{eq:shearmatrix}; see \cite[Lemma 7.2]{Eva23} (note that in Evans' book, the coordinates are rotated by $\frac{\pi}{2}$). This proves that the monodromy $\cm(\Xi)$ is the one claimed in \eqref{eq:sigma}. To compute the flux of $\Xi$, we use~\eqref{eq:flux_lifts}. We claim that $I(\tilde{\xi}(1)) = I(\tilde{\xi}(0))$ for any lift $\tilde{\xi}$ of $\xi$ to the universal cover $\tilde{B}_{\rm reg}$. Indeed, by Proposition~\ref{prop:action_I}, the flux map $I$ coincides with $F$ up to integral affine transformations. Since the loop $\xi$ (in the image of $F$) closes up, we find that $I(\tilde{\xi}(1)) = I(\tilde{\xi}(0))$ by continuity of $I$.
\end{proof}

\subsection{The flux-monodromy morphism for tori}
\label{sec:flux-monodr-morph}

Let us now prepare and prove Theorem~\ref{thm:B}. Besides the loop $\Xi \in \pi_1(\cl,\tcl)$ constructed in Proposition~\ref{prop:C2}, we define one other loop based at the Clifford torus $\tcl \subset \C^2$ needed to generate $\Gamma(\tcl)$. Let $\phi_t \in \Ham \C^2$ be a Hamiltonian isotopy satisfying $\phi_1(z_1,z_2) = (z_2,z_1)$ on a compact neighbourhood of $\tcl$. For example, one can choose $\phi_t \in U(2)$ and use a cut-off function to make it compactly supported. Now set 
\begin{equation}
	\label{eq:Sigmaloop}
	\Sigma = \{\phi_t(\tcl)\}_{t \in [0,1]}.
\end{equation}
Although $\phi_t$ itself is not a loop, its time one map $\phi_1$ maps $\tcl$ to itself, and thus we obtain an element $\Sigma \in \pi_1(\cl,\tcl)$. Its flux vanishes, since it is induced by a Hamiltonian flow. \par

In the basis defined by $e_1 = S^1 \times \{*\}$ and $e_2 = \{*\} \times S^1$, the monodromy of $\Sigma$ swaps coordinates, $\cm(\Sigma)e_1 = e_2$ and $\cm(\Sigma)e_2 = e_1$. As it turns out, this is the only non-trivial monodromy that can be realized by a Lagrangian loop induced by a Hamiltonian isotopy.
\begin{proposition}
	\label{prop:deform_to_ham}
    Let $\Lambda$ be a loop in $\cl$ based at $\tcl$. If $\Lambda$ can be deformed to a loop in $\LHam(\tcl)$, then, in the basis $e_1,e_2$, we have that
    \begin{equation}    
        \label{eq:swap}
        \cm(\Lambda)
        \in
        \left\{
        \begin{pmatrix} 
            1 & 0 \\ 
            0 & 1
        \end{pmatrix},
        \begin{pmatrix} 
            0 & 1 \\ 
            1 & 0
        \end{pmatrix}
        \right\}.
    \end{equation}
\end{proposition}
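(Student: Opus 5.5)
Since the monodromy of a loop in $\pi_1(\cl,\tcl)$ only depends on its homotopy class, we may assume that $\Lambda$ itself is a loop in $\LHam(\tcl)$. We will show it is then homotopic to a loop induced by a Hamiltonian isotopy $\{\phi_t\}$ with $\phi_1(\tcl)=\tcl$. The plan is to lift $\Lambda$ along the orbit map $\Ham(\C^2)\to\LHam(\tcl)$, $\phi\mapsto\phi(\tcl)$. Locally, paths in $\LHam(\tcl)$ are given by graphs of exact $1$-forms in a Weinstein neighbourhood, and such graphs are generated by Hamiltonians, so the orbit map has the path-lifting property (compare \cref{lem:local-to-global_exact} and \cref{prop:path-lag}). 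Lifting $\Lambda$ gives a Hamiltonian isotopy $\phi_t$ with $\phi_t(\tcl)=\Lambda_t$, hence $\phi_1\in\Ham(\C^2)$ preserves $\tcl$ and $\cm(\Lambda)=[\phi_1|_{\tcl}]$. It therefore suffices to determine the \emph{Hamiltonian monodromy group} of the Clifford torus $\tcl(a)\subset\C^2$, namely the set of $(\phi|_{\tcl})_*$ for $\phi\in\Ham(\C^2)$ with $\phi(\tcl)=\tcl$.

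For this, Chekanov's classification \cite[Theorem 4.5]{Che96} of the Hamiltonian monodromy of product tori in $\R^{2n}$ can be quoted directly. Alternatively, we argue with two constraints. First, the Hamiltonian diffeomorphism $\phi$ induces $\Phi=\phi_*$ on $H_2(\C^2,\tcl)\cong H_1(\tcl)$ (using $\partial$ is an isomorphism since $H_2(\C^2)=H_1(\C^2)=0$). This map must preserve both the Maslov class $\mu=(2,2)$ and the area class $\omega=(a,a)$; these are proportional, so the latter is automatic. Second, and crucially, $\phi$ must preserve the set of classes represented by holomorphic (or, more robustly, $J$-holomorphic for generic $J$) Maslov $2$ disks with nonzero count. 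For the Clifford torus in $\C^2$, the disk potential is $W=x_1+x_2$, supported exactly on $e_1,e_2$ (viewed as disk classes $D_1,D_2$ in $H_2(\C^2,\tcl)$). Since $\phi$ transports moduli spaces for $J$ to those for $\phi_*J$, and the counts are $J$-independent, $\Phi$ permutes $\{D_1,D_2\}$. Hence $\cm(\Lambda)=\partial\Phi\partial^{-1}$ permutes $\{e_1,e_2\}$, which gives exactly the two matrices in \eqref{eq:swap}. Both are realized: by the constant loop and by $\Sigma$, respectively.

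The main obstacle is the invariance of the disk counts under Hamiltonian diffeomorphisms, including the orientation and sign conventions. Orientations could a priori change if $\phi|_{\tcl}$ reversed the spin structure, so we must check that the nonzero count, rather than its sign, is what is invariant. We need only the support of the counts, and an unsigned mod $2$ count already suffices since each count is $\pm1$. A secondary technical point is the path-lifting step for a merely continuous loop in the $C^1$ topology. We will handle it by first homotoping $\Lambda$ within $\LHam(\tcl)$ to a smooth loop, using local contractibility of the Weinstein charts restricted to exact graphs.
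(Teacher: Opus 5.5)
Your proposal is correct and takes essentially the same route as the paper. Both arguments reduce to a loop in $\LHam(\tcl)$, realize it by a Hamiltonian isotopy via \cref{prop:path-lag}, and conclude from Chekanov's classification \cite[Theorem 4.5]{Che96} of Hamiltonian monodromies of product tori; your mod-$2$ count of Maslov-$2$ disks in the classes $D_1,D_2$ makes precise the paper's heuristic that $e_1,e_2$ are boundaries of Maslov-$2$ holomorphic disks and so must be permuted, and your smoothing step supplies a detail the paper leaves implicit.
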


\begin{proof}
  The monodromy is an invariant up to deformation of loops. Every loop in $\LHam(\tcl)$ is induced by a Hamiltonian isotopy; see \cref{prop:path-lag}. The conclusion then follows from Chekanov's classification of possible monodromies of symplectomorphisms mapping a product torus to a product torus \cite[Theorem 4.5]{Che96}; see also later work by Yau~\cite{Yau09}. Heuristically, the elements $e_1,e_2 \in H_1(L)$ appear as boundaries of Maslov two holomorphic disks, and hence the set $\{e_1,e_2\}$ must be preserved by any monodromy element realized by a Hamiltonian isotopy.
\end{proof}

\begin{corollary}
	The loop $\Xi$ from Proposition~\ref{prop:C2} cannot be deformed into $\LHam(\tcl)$. 
\end{corollary}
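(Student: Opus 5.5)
The plan is a direct contradiction argument built on \cref{prop:deform_to_ham}. Suppose the loop $\Xi \in \pi_1(\cl,\tcl)$ from \cref{prop:C2} could be deformed, as a based loop, to a loop in $\LHam(\tcl)$. The monodromy morphism $\cm$ is invariant under such deformations. So \cref{prop:deform_to_ham} would force $\cm(\Xi)$, written in the basis $e_1,e_2$ of product-torus circles, to be either the identity or the swap matrix.

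We then compare this with what \cref{prop:C2} gives. There $\cm(\Xi)$ is the shear $\begin{pmatrix}1&1\\0&1\end{pmatrix}$, but in the basis $f_1,f_2$ coming from the action coordinates of the almost toric fibration, not in $e_1,e_2$. The change of basis between $(f_1,f_2)$ and $(e_1,e_2)$ is some matrix $P\in\GL(2;\Z)$. Hence in the basis $e_1,e_2$ the monodromy is $P\begin{pmatrix}1&1\\0&1\end{pmatrix}P^{-1}$.

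To conclude, we only need conjugation-invariant features, so $P$ never has to be identified. This conjugate is a unipotent matrix different from the identity: its trace is $2$ and it satisfies $(M-\1)^2=0$ with $M\neq \1$. It therefore cannot equal the identity. It also cannot equal the swap matrix, since the swap has trace $0$ and determinant $-1$, while the shear has trace $2$ and determinant $1$. This contradiction proves the corollary.

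The only point needing care is the basis bookkeeping: we must make sure that the monodromy computed in \cref{prop:C2} and the one constrained by \cref{prop:deform_to_ham} are the same element of $\MCG(T^2)\cong\GL(2;\Z)$, differing only by a change of basis. Since both statements concern the same class $\cm(\Xi)$ acting on $H_1(\tcl;\Z)$, the two descriptions are conjugate, and the trace/determinant comparison above goes through.
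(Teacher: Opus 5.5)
Your argument is correct and is essentially the paper's: it combines \cref{prop:deform_to_ham} with the observation that $\cm(\Xi)$ cannot be conjugate to the identity or to the swap. The paper phrases this last step as ``$\cm(\Xi)$ has infinite order.'' That property is basis-free, so it makes your trace/determinant bookkeeping unnecessary. (Incidentally, in the paper $\1$ denotes the vector $(1,1)$, not the identity matrix.)
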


This follows immediately from the fact that $\cm(\Xi)$ has infinite order. Note that this is consistent with the fact that $\Xi$ must traverse a wall of Chekanov tori~---~see Figure~\ref{fig:loop_xi}~---~which are not Hamiltonian isotopic to the Clifford torus. 

Let us now prove that the loops $\Xi,\Sigma \in \pi_1(\cl,\tcl)$ generate the image of the flux-monodromy map. This follows from the elementary fact that Lagrangian isotopies preserve the Maslov class. First, note that we have used different bases to express the monodromy of $\Xi$ and that of $\Sigma$. In what follows, we will use the basis $f_1,f_2 \in H_1(\tcl)$ defined in the proof of Proposition \ref{prop:C2}, since it yields cleaner expressions for monodromy. The basis $e_1,e_2 \in H_1(\tcl)$ is related to it by the shear $f_1 = e_1 - e_2, f_2 = e_2$. Hence, in the basis $f_1,f_2$, we obtain 
\begin{equation}
	\cm(\Sigma) = \begin{pmatrix} -1 & 1 \\ 0 & 1 \end{pmatrix}.
\end{equation}
Similarly, the Maslov class is $m_{\tcl} = 2(e_1^* + e_2^*) = 2f_2^* \in H^1(\tcl)$ under the natural isomorphism $H^{2}(\C^{2}, \tcl)\cong H^{1}(\tcl)$.

\begin{proposition} \label{prop:image-FM_clifford-C2}
    Let $T_{\rm Cl}$ be a Clifford torus in $\C^2$. In the basis $f_1=e_1 - e_2, f_2 = e_2$ of $H_1(\tcl)$, we have that
    \begin{align}
    	\label{eq:CliffordGamma}
      \Gamma(\tcl) = \left\{\left(0,
      \begin{pmatrix}
            \pm 1 & k \\ 0 & 1
      \end{pmatrix}\right)
      \Big| \; k\in\Z
      \right\}\cong \Z\rtimes\Z_2.
    \end{align}
    In particular, the elements $\Xi,\Sigma$ generate $\Gamma(\tcl)$.
\end{proposition}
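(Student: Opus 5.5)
The argument has two inclusions. For the vanishing of the flux component, recall that $\tcl$ is monotone in $\C^2$, so $[\omega]_{\tcl} = \frac{C}{2} m_{\tcl}$. \cref{cor:flux_monotone} then gives $\Flux \Lambda = 0$ for every $\Lambda \in \pi_1(\cl,\tcl)$. Equivalently, \cref{prop:flux-monodromy_monotone} applies since $\C^2$ is $H$-monotone, and it shows that $\Gamma(\tcl) \to \cm(\tcl)$ is an isomorphism. So everything reduces to identifying the monodromy group. Because $\MCG(T^2) = \GL(2;\Z)$ acts faithfully on $H_1$, it suffices to work homologically.

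For the upper bound, let $\Lambda$ be any loop and write $M = \cm(\Lambda)$. Since $\pp \colon H_2(\C^2,\tcl) \to H_1(\tcl)$ is an isomorphism and Maslov indices are invariant under Lagrangian isotopy, the monodromy preserves the Maslov class. This follows from \cref{prop:Maslov-cyl}, or from the fact that $\Phi^\Lambda$ preserves $m_L$ and commutes with $\pp$. In terms of the dual basis, this means $M^* f_2^* = f_2^*$. In the basis $f_1,f_2$, this condition says the second row of $M$ is $(0,1)$. Hence $M = \begin{pmatrix} a & k \\ 0 & 1\end{pmatrix}$, and the requirement $\det M = \pm 1$ forces $a = \pm 1$. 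This step is where I must be careful with conventions, namely whether $M$ acts on rows or columns and whether one uses $M$ or its dual. I expect to fix these by checking against the known matrices for $\Xi$ and $\Sigma$, which have exactly this form.

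For the lower bound, \cref{prop:C2} provides $\cm(\Xi) = \begin{pmatrix} 1 & 1\\ 0& 1\end{pmatrix}$, and the computation above gives $\cm(\Sigma) = \begin{pmatrix} -1 & 1\\ 0 & 1\end{pmatrix}$. Together these matrices generate the whole group $\{\begin{pmatrix} \pm1 & k\\ 0&1\end{pmatrix}\}$. Indeed, powers of $\cm(\Xi)$ give all upper unipotent matrices, and multiplying by $\cm(\Sigma)$ yields the matrices with entry $-1$. Because $\cm$ is a morphism, all of these are realized by products of $\Xi$ and $\Sigma$, each with zero flux.

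Finally, the isomorphism with $\Z \rtimes \Z_2$ comes from the split exact sequence $0 \to \Z \to \Gamma \to \Z_2 \to 0$. Here $\Z_2$ records the sign $\pm 1$, and the order-two element $\cm(\Sigma)$ gives the splitting. Conjugating $\cm(\Xi)$ by $\cm(\Sigma)$ produces $\cm(\Xi)^{-1}$, which is the semidirect structure. The main obstacle is thus only the bookkeeping of the Maslov class in the $f$-basis, since the substantive input is already contained in the earlier results.
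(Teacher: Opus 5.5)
Your proposal is correct and follows the paper's proof. The flux vanishes by \cref{cor:flux_monotone}; $\supseteq$ comes from $\cm(\Xi)$ and $\cm(\Sigma)$ generating the group; and $\subseteq$ comes from preservation of $m_{\tcl}=2f_2^*$, which forces the second row to be $(0,1)$ and then $a=\pm 1$ via the determinant. (Only your ``equivalently'' before \cref{prop:flux-monodromy_monotone} is loose, since that result gives injectivity of $\Gamma(\tcl)\to\cm(\tcl)$ rather than vanishing flux, but your main route does not rely on it.)
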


\begin{proof}
  By \cref{cor:flux_monotone}, the flux is always zero, so we only need to compute the possible monodromies. The matrices $\cm(\Xi)$ and $\cm(\Sigma)$ generate the group on the right-hand side of \eqref{eq:CliffordGamma}, thus proving the inclusion $\supseteq$. To prove that $\Gamma(\tcl)$ does not contain any other elements, let
  \begin{align*}
    M=\begin{pmatrix}
        a & b \\ c & d
      \end{pmatrix}
    \in \GL(2;\Z),
  \end{align*}
  be a monodromy matrix expressed in the basis $f_1,f_2$. Since the Maslov index is preserved by Lagrangian isotopies, we find that $m_{T_{\rm Cl}}M = m_{T_{\rm Cl}}$ for $m_{T_{\rm Cl}}=(0, 2)$. It follows that $c=0$ and $d=1$. Since $M \in \GL(2;\Z)$, we find $ad-bc=\pm 1$, so that $a=\pm 1$. This proves the inclusion $\subseteq$.
\end{proof}

We can now prove the first part of \cref{thm:B}, i.e.\ everything but the existence of a section of $\FM$. \par

\begin{proof}[Proof of the 1$^{st}$ part of \cref{thm:B}]
  By~\cite{GooIvrRiz16}, $L$ is Lagrangian isotopic to $\tcl$. Let $\Pi$ be a path in $\mathcal{L}$ starting at $\tcl$ and ending at $L$. This fixes a preferred isomorphism $\pi_1(\cl,\tcl) \cong \pi_1(\cl,L)$, namely the one given by $\Lambda \mapsto \Pi^- * \Lambda * \Pi$. On $H_1(L)$, we fix the basis $g_1 =(\phi^{\Pi})_*f_1, g_2 = (\phi^{\Pi})_*f_2$, where $\phi^{\Pi}$ is defined as in \cref{ssec:basic}. Now let $\Lambda' = \Pi^- * \Lambda * \Pi \in \pi_1(\cl,L)$, and suppose that $\Flux \Pi = (x,y) \in H^1(\tcl;\R) \cong \R^2$ in the basis $f_1^*,f_2^*$. Then the change-of-basepoint formula from Proposition~\ref{prop:fm_basic} shows that there is $k \in \Z$ such that
    \begin{align*}
        \Flux(\Lambda')
        &= \cm(\Lambda)^* \Flux \Pi + \Flux \Lambda - \Flux \Pi \\
        &= \begin{pmatrix}
		\pm 1 & 0 \\
		k & 1
	   \end{pmatrix}\begin{pmatrix}
	    x \\ y
	   \end{pmatrix}-\begin{pmatrix}
	    x \\ y
	   \end{pmatrix} = x\begin{pmatrix}
	    \pm 1-1 \\ k
	   \end{pmatrix}
    \end{align*}
and 
	\begin{equation}
		\cm(\Lambda')
		=
		\begin{pmatrix}
            \pm 1 & k \\ 0 & 1
        \end{pmatrix}
	\end{equation}
        in the basis $g_1,g_2 \in H_1(L)$ and its dual basis. Since $\Lambda \in \pi_1(\cl,\tcl)$, we have used Proposition~\ref{prop:image-FM_clifford-C2} to determine $\cm(\Lambda)$ and to show that $\Flux(\Lambda) = 0$.
      \end{proof}

Interestingly, we find the following characterization of monotone tori in $\C^2$ in terms of their flux-monodromy group, as a consequence of \cref{thm:B}.

\begin{corollary} \label{cor:charac_monotone-tori_C2}
A Lagrangian torus $L \subset \C^2$ is monotone if and only if $\Gamma(L)$ contains a non-trivial element of the form $(0,M)$.
\end{corollary}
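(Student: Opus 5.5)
The plan is to read the corollary directly off the explicit description of $\Gamma(L)$ in \cref{thm:B}. That description depends on a single number $x \geqslant 0$, namely the $f_1^*$-component of $\Flux \Pi$ for a Lagrangian isotopy $\Pi$ from $\tcl$ to $L$. The first step is to show that $L$ is monotone if and only if $x = 0$. The second step is to show that $\Gamma(L)$ contains a non-trivial element $(0,M)$ if and only if $x = 0$.

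\emph{Second step.} By \eqref{eq:imFM_C2}, every element of $\Gamma(L)$ has the form $\bigl(x((-1)^\delta-1, k),\, M_{\delta,k}\bigr)$. Such an element is non-trivial exactly when $(\delta,k)\neq(0,0)$. If $x=0$, then $\Xi$ (or $\Sigma$), transported by the base-point change, gives a non-trivial element $(0,M)$. Conversely, suppose $x>0$. The flux vanishes only if $(-1)^\delta - 1 = 0$ and $k=0$, that is, $\delta=0$ and $k=0$, so $M=\mathrm{id}$. Hence no non-trivial element of the form $(0,M)$ exists.

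\emph{First step.} Recall $H_2(\C^2,L)\cong H_1(L)$ via $\pp$. Use the area identity \eqref{eq:areaformula} for $\Pi$, taking $\Phi^\Pi$ as the transport. It gives $(\Phi^\Pi)^*[\omega]_L = [\omega]_{\tcl} + \pp^*\Flux\Pi$. The Maslov class is preserved, so $(\Phi^\Pi)^*m_L = m_{\tcl} = 2f_2^*$. Since $\tcl(a)$ is monotone, $[\omega]_{\tcl} = a f_2^*$ in the basis $f_1,f_2$, because $\omega(e_i)=a$ gives $\omega(f_1)=0$ and $\omega(f_2)=a$. Therefore
\[
(\Phi^\Pi)^*[\omega]_L = x f_1^* + (a+y) f_2^*.
\]
This class is proportional to $(\Phi^\Pi)^*m_L = 2f_2^*$ exactly when $x=0$. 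Since $\Phi^\Pi$ is an isomorphism, $L$ is (H-)monotone iff $x=0$.

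One point needs care. Monotonicity requires a positive constant, whereas $a+y$ could a priori be $\leqslant 0$. In $\C^2$, however, every Lagrangian torus bounds a Maslov-two disk of positive area (results of Polterovich/Viterbo, or the Lagrangian isotopy class). So proportionality to the Maslov class automatically comes with a positive constant, and we may equally define monotone as "$[\omega]_L$ is a multiple of $m_L$." I expect this positivity and convention issue to be the only delicate point. Everything else is bookkeeping with the explicit formula of \cref{thm:B}, including that $x=|x|$ is exactly the $f_1^*$-component.
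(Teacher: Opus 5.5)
Your proof is correct. Your Step 1 is the paper's computation for the forward direction. You apply the area identity \eqref{eq:areaformula} along a path $\Pi$ from $\tcl$ to $L$ and note that $[\omega]_{\tcl}$ and $m_{\tcl}$ are both multiples of $f_2^*$. Monotonicity then forces the $f_1^*$-component of $\Flux\Pi$ to vanish, and \cref{thm:B} supplies an element $(0,M)$ with $M\neq\id$.

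The difference is in the converse. You run the same computation backwards (non-monotone $\Rightarrow$ $x\neq 0$) and read off from \eqref{eq:imFM_C2} that zero flux forces $(\delta,k)=(0,0)$. The paper does not return to the explicit description. Given $\FM(\Lambda)=(0,M)$, it applies the area identity to the loop $\Lambda$ itself to get $M^*[\omega]_L=[\omega]_L$. Combined with $M^*m_L=m_L$, this gives $M=\id$ as soon as the two classes are linearly independent, since they then span $H^1(L;\R)$.

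Your version sends both directions through the single invariant $x$. You also address the sign of the monotonicity constant explicitly, which the paper leaves implicit: its converse only yields linear dependence of the area and Maslov classes. The paper's converse is more robust, since it needs neither the classification in \cref{thm:B} nor the Lagrangian isotopy to $\tcl$ from \cite{GooIvrRiz16}. It uses only the area identity and Maslov-class preservation for the loop.

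One small imprecision: the $x\geqslant 0$ of \cref{thm:B} is the absolute value of the $f_1^*$-component of $\Flux\Pi$, after possibly replacing $g_1$ by $-g_1$. It is independent of the choice of $\Pi$ only up to sign. Since you only use whether it vanishes, this is harmless.
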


\begin{proof}
  Let $L \subset \C^2$ be monotone with $[\omega_{0}]_L = \frac{C}{2} m_L$ for $C > 0$. Let $\Pi$ be a path of Lagrangians starting at $\tcl = T(1,1)$ and ending at $L$. As in the proof of \cref{thm:B}, we define the basis $g_i = \phi^{\Pi}_*f_i \in H_1(L)$ for $i \in \{1,2\}$. As we have seen in the proof of Proposition~\ref{prop:image-FM_clifford-C2}, the Maslov class is given by $m_{\tcl} = 2f_2^*$ and thus $m_L = 2g_2^*$. Since $(\C^2, \omega_0 = d\lambda)$ is exact, we use monotonicity to find $[\lambda]_L = C g_2^*$ and $[\lambda]_{\tcl} = f_2^*$ in case of the Clifford torus. The area identity \eqref{eq:areaformula} for the path $\Pi$ yields 
  \begin{equation*}
    \Flux \Pi
    = (\phi^{\Pi})^* [\lambda]_{L} - [\lambda]_{\tcl} 
    = (\phi^{\Pi})^*Cg_2^*  - f_2^*
    = (C-1)f_2^*.
  \end{equation*}
  In other words, we have $\Flux \Pi = (0,C-1)$ in the basis $f_1^*,f_2^*$. Since $x=0$, using the notation of \cref{thm:B}, the flux-monodromy group $\Gamma(L)$ contains elements of the form $(0,M)$ with $M \neq \id$ by \cref{thm:B}. Conversely, suppose that there is a loop $\Lambda \in \pi_1(\cl,L)$ with $\FM(\Lambda) = (0,M)$ for $M \neq \id$. We again use the area identity to find $M^*[\lambda]_L - [\lambda]_L = 0$, meaning that $M^*$ fixes the area class. Since it fixes the Maslov class, too, this implies that $M = \id$ if the area and Maslov classes are linearly independent.
\end{proof}

\subsection{A section for the flux-monodromy morphism}
\label{ssec:immersed}
We now complete the proof of Theorem~\ref{thm:B}. More precisely, we prove the following stronger statement.

\begin{theorem} \label{thm:section_FM}
	Let $L$ be a Lagrangian torus in $\C^2$. The flux-monodromy morphism $\FM:\pi_1(\cl)\to\Gamma(L)$ admits a section, i.e.\ there is a morphism $s:\Gamma(L)\to\pi_1(\cl)$ such that $\FM\circ \,s=id$.
\end{theorem}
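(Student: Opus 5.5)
By the change-of-basepoint formula in \cref{prop:fm_basic}, conjugation by a path $\Pi$ from $\tcl$ to $L$ intertwines the flux-monodromy morphisms at the two basepoints. It therefore suffices to construct a section $s$ for $L=\tcl$ and then transport it by $\Lambda\mapsto \Pi^-*s(\cdot)*\Pi$, precomposed with the isomorphism $\Gamma(L)\cong\Gamma(\tcl)$ computed in the proof of the first part of \cref{thm:B}. By \cref{prop:image-FM_clifford-C2}, $\Gamma(\tcl)\cong\Z\rtimes\Z_2$ is generated by $\FM(\Xi)=(0,A)$ with $A=\begin{pmatrix}1&1\\0&1\end{pmatrix}$ and $\FM(\Sigma)=(0,B)$ with $B=\begin{pmatrix}-1&1\\0&1\end{pmatrix}$. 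A presentation of this group is $\langle a,b\mid b^2=1,\ bab^{-1}=a^{-1}\rangle$; concretely, $BAB^{-1}=A^{-1}$ and $B^2=\id$. The map $a\mapsto \Xi$, $b\mapsto\Sigma$ therefore defines a morphism $s$ with $\FM\circ s=\id$, provided the loops $\Xi,\Sigma$ (or suitable modifications of them with the same $\FM$-values) satisfy $\Sigma*\Sigma=1$ and $\Sigma*\Xi*\Sigma^-=\Xi^-$ in $\pi_1(\cl,\tcl)$. The proof thus reduces to verifying these two relations, with care about the order convention for products.

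For $\Sigma^2$, I would choose the Hamiltonian isotopy $\phi_t$ concretely inside $U(2)$, cut off outside a large ball, namely a path from $\id$ to the swap. The loop $\Sigma*\Sigma$ is then represented by $\phi_t$ followed by $\phi_1\phi_t$, i.e.\ by a path in $U(2)$ from $\id$ to $\phi_1^2=\id$ acting on $\tcl$. Any loop in $U(2)$ is homotopic to a loop in the diagonal torus $U(1)^2$ composed with elements of $SU(2)$, and the diagonal torus preserves $\tcl$ setwise, so loops through the torus action are constant in $\cl$. I would choose $\phi_t$ so that $\phi_t^2$ lands in this subgroup; for instance, take $\phi_t=\exp(t\pi J)$ with $J$ the generator rotating by $\pi/2$ in the $(z_1,z_2)$-plane composed with a diagonal phase. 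Alternatively, I would simply precompose $\Sigma$ with a diagonal loop to kill the class in $\pi_1(U(2))=\Z$. Since the diagonal loops act trivially on $\cl$ at $\tcl$, this modification shows $\Sigma^2$ is contractible.

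The main obstacle is the relation $\Sigma*\Xi*\Sigma^-=\Xi^-$. My plan is to realize it geometrically through the almost toric picture. I would choose the nodal trade and the ATF on $\C^2$ to be invariant under the swap $(z_1,z_2)\mapsto(z_2,z_1)$, which is possible since the standard Auroux-type fibration $(|z_1|^2-|z_2|^2,|z_1z_2-\epsilon|)$ is swap-symmetric. The swap then acts on the base by the reflection $(F_1,F_2)\mapsto(-F_1,F_2)$ of the right-hand diagram in Figure~\ref{fig:nodal_trade}, fixing the branch cut and the node. Conjugating the lift of the clockwise loop $\xi$ by the swap yields the lift of the reflected, counterclockwise loop, which is $\Xi^-$. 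To turn this into the relation in $\pi_1$, I would homotope $\Sigma$, via the $U(2)$-path, to a loop along which the fibres remain fibres of a family of swap-equivariant fibrations; the time-one map then carries the lift of $\xi$ to the lift of its mirror image. The point that needs checking is that $\phi_t$ can be connected through Hamiltonians preserving $\tcl$, so that the conjugation $\Sigma*\Xi*\Sigma^-$ is homotopic to $\{\phi_1(\Xi_t)\}$.

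If this geometric argument proves too delicate, a fallback is to define $s$ only on the free generator. Every element of $\Z\rtimes\Z_2$ can be written uniquely as $a^kb^\delta$, and one can set $s(a^kb^\delta)=\Xi^k\Sigma^\delta$. The homomorphism property still requires the same two relations, however, so the fallback does not remove the need to establish them.
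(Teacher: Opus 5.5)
Your plan is the same as the paper's. You reduce to $\tcl$ by the change-of-basepoint formula and present $\Gamma(\tcl)$ as the infinite dihedral group on $\FM(\Xi)$ and $\FM(\Sigma)$. You kill $\Sigma^2$ because every loop in $\mathrm{U}(2)$ is homotopic to a diagonal loop, which fixes $\tcl$ setwise; no special choice of the path in $\mathrm{U}(2)$ is needed. You get the conjugation relation from $F\circ A(1)=I\circ F$ for the Auroux system, where $I(x,y)=(-x,y)$ reverses the loop around the node.

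However, the step you single out as the point that needs checking is misdiagnosed. You say $\phi_t$ should be connected through Hamiltonians preserving $\tcl$. That cannot hold: such an isotopy restricts to an isotopy in $\mathrm{Diff}(\tcl)$, which would force $\phi_1|_{\tcl}$ to have trivial monodromy, whereas the swap exchanges $e_1$ and $e_2$. Homotoping $\Sigma$ through swap-equivariant fibrations is also unnecessary.

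What actually closes the argument is the square trick of \cref{claim:path_homotopy_pcl}. Take any ambient isotopy $\{\phi_s\}$ with $\phi_1(\tcl)=\tcl$ and any loop $\{\Xi_t\}$ at $\tcl$. The map $[0,1]^2\to\cl$, $(s,t)\mapsto\phi_s(\Xi_t)$, joins opposite corners by the two boundary paths $\Xi*\Sigma$ and $\Sigma*\phi_1(\Xi)$. Hence $\phi_1(\Xi)\simeq\Sigma^-*\Xi*\Sigma$ with no further hypothesis, and since $\Sigma^2=1$ the direction of conjugation is irrelevant.

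Because $A(1)$ maps $F^{-1}(b)$ onto $F^{-1}(I(b))$, the loop $\phi_1(\Xi)$ is the lift of $I\circ\xi$. Since the basepoint lies on the axis of $I$, $I\circ\xi$ is homotopic to $\xi^{-1}$ in $B_{\rm reg}$ relative to the basepoint. Lifting that homotopy gives $\phi_1(\Xi)\simeq\Xi^-$. The paper packages these facts in \cref{claim:isotopy_nice-rep}, using a symmetrically cut-off rotation field so that $A(1)\circ\phi^t=\phi^{-t}\circ A(1)$.

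One more detail: $\Xi$ must literally be a lift in the swap-equivariant Auroux fibration, based at the torus where $\Sigma$ is based. The fibre $F^{-1}(0,d^2)$ is only Hamiltonian isotopic to $\tcl$. The paper handles this by conjugating with a Hamiltonian isotopy. Alternatively, you can run the argument at the swap-invariant fibre itself. There the swap has monodromy of the form $\bigl(\begin{smallmatrix}-1&k\\0&1\end{smallmatrix}\bigr)$, so together with $\FM(\Xi)$ it still generates. You then transport the section with your basepoint change.
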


The proof follows from a detailed understanding of the generators $\Xi$, $\Sigma$ of $\Gamma(L)$ above and will take up the remainder of this subsection.

\medskip
We start with an explicit description of the $\Z_2$ factor. \par

\begin{lemma} \label{lem:switch^2}
	There is a Hamiltonian Lagrangian loop exchanging the two factors of $\tcl=S^1(1)\times S^1(1)$ such that the following holds. Let $\Sigma\in \pi_1(\cl,\tcl)$ be the class this loop represents. We may choose the Hamiltonian isotopy so that $\Sigma^2$ is trivial in $\LHam(\tcl)$~---~and thus \emph{a fortiori} in $\cl(\tcl)$.
\end{lemma}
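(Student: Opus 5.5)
The plan is to build the swap loop from an explicit unitary path and make it square to a loop of Clifford tori that contracts in $\LHam(\tcl)$. Take the path in $U(2)$ given by
\begin{equation*}
A_t = \exp\!\left(\tfrac{\pi t}{2} J\right), \qquad J = \begin{pmatrix} 0 & -1 \\ 1 & 0 \end{pmatrix},
\end{equation*}
so that $A_0 = \id$ and $A_1(z_1,z_2) = (-z_2, z_1)$.

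This $A_1$ maps $\tcl = S^1(1)\times S^1(1)$ to itself and exchanges the two factors, up to a reflection of one circle. The reflection is harmless for the class in $\pi_1(\cl,\tcl)$: we may postcompose with the rotation $z_1 \mapsto e^{i\pi s} z_1$, $s\in[0,1]$, which preserves $\tcl$, or simply accept $A_1$ as the swap up to orientation. To get a compactly supported Hamiltonian isotopy, we cut off the quadratic generating Hamiltonian $H(z) = \tfrac{\pi}{2}\langle Jz, z\rangle$-type function outside a large ball. Since the whole orbit $A_t(\tcl)$ lies in the sphere of radius $\sqrt{2/\pi}$, the cutoff does not change the Lagrangian loop. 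We define $\Sigma$ as the class of $\{A_t(\tcl)\}_{t\in[0,1]}$.

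Next we compute $\Sigma^2$. By definition it is the path $A_t(\tcl)$ followed by $A_t(A_1\tcl) = A_t(\tcl)$ again. Because $A_t A_1 = A_{t+1}$ for the one-parameter group, this concatenation is exactly the loop $\{A_t(\tcl)\}_{t\in[0,2]}$. Here $A_2 = -\id$, which preserves $\tcl$. So $\Sigma^2$ is the image of a path in $U(2)$ from $\id$ to $-\id$, acting on the fixed torus.

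The key step, and the main obstacle, is to show that this loop of tori is contractible inside $\LHam(\tcl)$. I would first observe that $-\id$ is also reached by the path $B_t = \operatorname{diag}(e^{i\pi t}, e^{i\pi t})$, $t\in[0,1]$. Each $B_t$ preserves $\tcl$, so the loop $B_t(\tcl)$ is constant at $\tcl$.

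It remains to compare the two paths $A_{[0,2]}$ and $B$ in $U(2)$. Both run from $\id$ to $-\id$, so $A_{[0,2]} * B^-$ is a loop in $U(2)$, and $\pi_1(U(2))\cong\Z$ is detected by $\det$. Along $A$ the determinant is $\det A_t = 1$ throughout, since $A_t$ lies in $SO(2)\subset SU(2)$. Along $B$ the determinant is $e^{2\pi i t}$, which winds once. The two paths are therefore not homotopic rel endpoints, and the naive choice fails.

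To fix this, I would modify the choice of the swap path. Replace $A_t$ by $A'_t = \operatorname{diag}(e^{-i\pi t/2}, 1)\,A_t$, adjusted so that $A'_1$ is still a swap preserving $\tcl$. The point is to choose the correction so that the square has determinant winding zero and ends at an element $D$ of the diagonal torus $T^2\subset U(2)$. Any path inside $T^2$ preserves $\tcl$ and therefore gives a constant loop of Lagrangians. Then the loop $A'_{[0,1]}*(A'_1 \cdot A'_{[0,1]})$ can be connected to such a path through a homotopy rel endpoints in $U(2)$. Each intermediate path maps $\tcl$ to tori in $\LHam(\tcl)$ with the same endpoints, so this yields a null-homotopy of $\Sigma^2$ in $\LHam(\tcl)$.

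If adjusting the determinant by hand becomes messy, the alternative is a cleaner counting argument. Any path from $\id$ to an element of $T^2$ can be corrected by a loop in $T^2$, and such loops realize every class of $\pi_1(U(2))$, because $\det$ restricted to $T^2$ surjects on $\pi_1$. This means every loop in $U(2)$ of the form above, based at $\tcl$ modulo $T^2$, is homotopic into $T^2$. Applying this to $\Sigma^2$ gives the contraction in the Hamiltonian orbit. The result then holds \emph{a fortiori} in $\cl$, since $\LHam(\tcl)\subset\cl$.
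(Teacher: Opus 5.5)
Your closing counting argument is correct and is essentially the paper's proof. The paper takes an arbitrary path $A(t)$ in $\mathrm{U}(2)$ from $\id$ to the exact swap and notes that $\Sigma^2$ is represented by the closed loop $t\mapsto A(t)^2\cdot\tcl$. It then uses $\det\colon\pi_1(\mathrm{U}(2))\cong\Z$ to homotope $A^2$ to $\mathrm{diag}(e^{2\pi i k t},1)$, which fixes $\tcl$ setwise.

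However, the middle of your proposal contains a false step. From the (correct) fact that $\{A_t\}_{t\in[0,2]}$ is not homotopic rel endpoints to the specific path $B_t=\mathrm{diag}(e^{i\pi t},e^{i\pi t})$, you conclude that ``the naive choice fails''. That does not follow, because you may compare with any path in the diagonal torus from $\id$ to $-\id$. For instance, take $B'_t=\mathrm{diag}(e^{i\pi t},e^{-i\pi t})$. Both $\{A_t\}_{t\in[0,2]}$ and $B'$ lie in the simply connected group $\mathrm{SU}(2)$, so they are homotopic rel endpoints, and $B'_t\cdot\tcl=\tcl$ for all $t$. Hence your original choice $A_t=\exp(\tfrac{\pi t}{2}J)$ already works. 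In fact, as the paper's argument shows, every choice of unitary path works, which is why the ``we may choose'' in the statement is automatic.

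Consequently, you should drop the detour through $A'_t$. It rests on a false premise, and as written it is also unverified. You never compute $\arg\det$ along the corrected square, and ``determinant winding zero'' has no meaning for a path ending at $D\neq\id$; for your formula, $(A'_1)^2=i\,\id$, with $\det=-1$. Your last paragraph is the right statement: any path from $\id$ into $T^2$ is homotopic rel endpoints to a path in $T^2$, because $\pi_1(T^2)\to\pi_1(\mathrm{U}(2))$ is onto. That statement directly contradicts the claimed failure, so apply it to $\{A_t\}_{t\in[0,2]}$ and you are done.

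Two small points:
\begin{itemize}
\item The map $(z_1,z_2)\mapsto(-z_2,z_1)$ differs from the swap by the rotation $z\mapsto -z$ of one circle, not by a reflection. That is exactly why it is harmless; a reflection would change the monodromy on $H_1$.
\item To get $\Sigma*\Sigma$ literally, the second half of the square should be $A'_tA'_1$ rather than $A'_1A'_t$, although the two are homotopic rel endpoints.
\end{itemize}
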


\begin{proof}
  Note that the standard action of $\mathrm{U}(2)$ on $\C^2$ is Hamiltonian. Therefore, if $A:[0,1]\to \mathrm{U}(2)$ is a path of matrices starting at the identity and with
  \begin{align*}
    A(1)=\begin{pmatrix}
           0 & 1 \\ 1 & 0
         \end{pmatrix},
  \end{align*}
  then we can realize $\Sigma$ as the Hamiltonian isotopy $t\mapsto A(t)\cdot \tcl$. \par

  Note that $\det A^2:[0,1]\to S^1$ is a loop and that $\Sigma^2$ is represented by $t\mapsto A(t)^2\cdot \tcl$. Since the determinant induces an isomorphism $\pi_1(\mathrm{U}(2))\xrightarrow{\sim} \pi_1(S^1)=\Z$, the loop $A^2$ in $\mathrm{U}(2)$ must be homotopic to the loop $t\mapsto \mathrm{diag}(e^{2\pi k it},1)$ for some $k\in\Z$. In particular, $t\mapsto A(t)^2\cdot \tcl$ is homotopic to the constant loop at $\tcl$ through Hamiltonian isotopies.
\end{proof}

\begin{remark} \label{rem:parametrized-Sigma}
	As we shall see below, the loop $t\mapsto \mathrm{diag}(e^{2\pi it},1)\cdot\tcl$ is not contractible as a loop of Lagrangian immersions~---~and thus \textit{a fortiori} in $\widetilde{\cl}$. In fact, it represents an element of infinite order in the fundamental group of that space. This shows an important difference between the topology of unparametrized Lagrangians and that of parametrized ones.
\end{remark}

The previous lemma gives a section from the $\Z_2$ factor of $\Gamma(\tcl)$. Moreover, the fact that $\cm (\Xi)$ has infinite order in the monodromy group of $\tcl$ ensures that $\Xi$ also has infinite order in $\pi_1(\cl,\tcl)$. Therefore, Theorem~\ref{thm:section_FM} follows directly from the following proposition.

\begin{proposition} \label{prop:XiSigmaXiSigma_contractible}
	The loop $\Xi\Sigma\Xi\Sigma^{-1}$ is contractible in $\cl$.
\end{proposition}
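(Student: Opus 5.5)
The plan is to identify the conjugate of $\Xi$ by $\Sigma$ with the image of $\Xi$ under the swap map. We then show geometrically that this image is homotopic to $\Xi^{-1}$, using an almost toric fibration that is symmetric under the swap. At the level of $\FM$ there is nothing to check: by Proposition~\ref{prop:image-FM_clifford-C2}, $\cm(\Sigma)\cm(\Xi)\cm(\Sigma)^{-1}=\cm(\Xi)^{-1}$ in $\Z\rtimes\Z_2$, and all fluxes vanish. The content of the statement is that this relation already holds in $\pi_1(\cl)$.

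\emph{Step 1 (conjugation by a Hamiltonian loop).} Let $\Sigma$ be realized by an ambient Hamiltonian isotopy $\{\phi_t\}$ with $\phi_1=\tau$ near $\tcl$, where $\tau(z_1,z_2)=(z_2,z_1)$. For any loop $\Lambda$ based at $\tcl$, consider the family of loops
\[
\Sigma|_{[0,s]} * \phi_s(\Lambda) * \Sigma|_{[0,s]}^{-},\qquad s\in[0,1].
\]
It is a homotopy from $\Lambda$ to $\Sigma*\tau(\Lambda)*\Sigma^{-1}$, so $\tau(\Lambda)$ is conjugate to $\Lambda$ by $\Sigma^{\pm1}$, with the sign depending on the concatenation convention. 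It therefore suffices to show that $\tau(\Xi)\simeq\Xi^{-1}$ in $\pi_1(\cl,\tcl)$. Indeed, $\Xi\,\Sigma\Xi\Sigma^{-1}$ is then homotopic to $\Xi*\Xi^{-1}$, which is trivial. If the convention yields $\Sigma^{-1}\Xi\Sigma$ instead, we use that $\Sigma^2$ is trivial by Lemma~\ref{lem:switch^2}.

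\emph{Step 2 (a $\tau$-equivariant ATF).} Realize the nodal trade of Proposition~\ref{prop:C2} by Auroux's explicit fibration
\[
G=\bigl(\pi(|z_1|^2-|z_2|^2),\,|z_1z_2-\varepsilon|\bigr).
\]
It satisfies $G\circ\tau=R\circ G$, where $R(a,b)=(-a,b)$, so $\tau$ acts on the base by the reflection $R$. The fibre over $(0,r)$ with $r>\varepsilon$ is $\tau$-invariant and Hamiltonian isotopic to a Clifford torus; its image under $G$ lies in the base diagram on the right of Figure~\ref{fig:nodal_trade}, above the node $(0,\varepsilon)$. We take this fibre as the basepoint. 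The loop $\Xi$ of Proposition~\ref{prop:C2} is the lift of a loop $\xi$ encircling the node once, based at $(0,r)$. Since $\tau$ preserves this fibre, we may choose the $\mathrm U(2)$-path defining $\Sigma$ to end at $\tau$ near the basepoint fibre. The change-of-basepoint formula, Proposition~\ref{prop:fm_basic}, then transports everything back to $\tcl$.

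\emph{Step 3 (reflection reverses winding).} Since $\tau$ maps $G$-fibres to $G$-fibres, $\tau(\Xi)$ is the lift of $R\circ\xi$. The reflection $R$ fixes the basepoint and the node, which both lie on the symmetry axis, and it reverses orientation. Hence $R\circ\xi$ is homotopic to $\xi^{-1}$ in $B_{\rm reg}$ relative to the basepoint, because $\pi_1(B_{\rm reg})\cong\Z$ is generated by $\xi$ and detected by winding number. Lifts of curves that are homotopic in $B_{\rm reg}$ rel endpoints are homotopic Lagrangian loops, so $\tau(\Xi)\simeq\Xi^{-1}$.

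The main obstacle is Step 2: making sure that the abstract ATF used to define $\Xi$ can be taken $\tau$-equivariant, and that the basepoint conventions for $\Sigma$ and $\Xi$ are compatible. If Auroux's model causes trouble, the fallback is to carry out the nodal trade on the standard toric fibration symmetrically about the diagonal. There the node lies on the diagonal, $\tau$ acts on the base by reflection in the diagonal, and the argument of Step 3 is unchanged.
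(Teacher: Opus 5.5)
Your proposal is correct and follows essentially the paper's route. The paper likewise uses the Auroux system $F=(\pi|z_2|^2-\pi|z_1|^2,\,|z_1z_2-c|^2)$, on which the swap $A(1)$ covers the reflection $(x,y)\mapsto(-x,y)$, and your Step~1 is the unparametrized version of its Claim~\ref{claim:path_homotopy_pcl}: conjugating by $\Sigma$ amounts to applying $A(1)$. The only difference is in Step~3. The paper constructs an explicit lifted rotation flow satisfying $A(1)\circ\phi^t=\phi^{-t}\circ A(1)$, whereas you observe directly that $\tau(\Xi)$ is the lift of $R\circ\xi$ and homotope $R\circ\xi\simeq\xi^{-1}$ in $B_{\rm reg}$. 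This is a slightly more economical way to reach the same identity $\tau(\Xi)\simeq\Xi^{-1}$.
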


\begin{proof}
  Let $t\mapsto A(t)\in\mathrm{U}(2)$ be the path of unitary matrices realizing the loop $\Sigma$, as in the proof of Lemma~\ref{lem:switch^2}.
  We fix an isotopy $\{\phi^t\} \subset \mathrm{Diff}(\C^2)$ realizing $\Xi$, i.e.\ $\Xi=\{\phi^t(\tcl)\}$. We define parametrizations $I=\{\iota_t\}$ and $I'=\{\iota_t'\}$ of $\Xi$ by setting respectively $\iota_t=\phi^t\circ\iota$ with $\iota_0=\iota$, the standard parametrization of the Clifford torus, and $\iota_t'=\phi^t\circ\iota_0'$ with $\iota_0'=A(1)\circ\iota_1$. Notice that $\iota_t'=\phi^t\circ A(1)\circ\phi^1\circ\iota$ and that $\Xi\Sigma\Xi\Sigma^{-1}$ is parametrized by the concatenation
  \begin{align*}
    \{\iota_t\}\sharp\{A(t)\circ\iota_1\}\sharp\{\iota_t'\}\sharp\{A(t)^{-1}\circ\iota_1'\}.
  \end{align*}
  
  \begin{claim}
  	\label{claim:path_homotopy_pcl}
  	This path is homotopic relative to endpoints to $\{\iota_t\}\sharp\{A(1)\circ\iota_t'\}$ in $\pcl$.
  \end{claim}

  \begin{claim}
  	\label{claim:isotopy_nice-rep}
  	The isotopy $\{\phi^t\}$ may be chosen so that $A(1)\circ\phi^t=\phi^{1-t}\circ(\phi^1)^{-1}\circ A(1)$.
  \end{claim}

  Assuming both claims for now, the proposition easily follows. Indeed,
  \begin{align*}
    A(1)\circ\iota_t'=\phi^{1-t}\circ(\phi^1)^{-1}\circ A(1)\circ A(1)\circ\phi^1\circ\iota=\phi^{1-t}\circ\iota=\iota_{1-t},
  \end{align*}
  so  that $\{A(1)\circ\iota_t'\}$ is simply the path $\{\iota_t\}$ backwards.
\end{proof}

\begin{proof}[Proof of Claim~\ref{claim:path_homotopy_pcl}]
  First, note that if $I=\{\iota_t\}$ is the parametrization of a Lagrangian isotopy and $\{\psi_t\}$ is a symplectic isotopy starting at the identity, then
  \begin{align*}
    \{\iota_t\}\sharp\{\psi_t\circ\iota_1\}\approx \{\psi_t\circ\iota_t\}\approx \{\psi_t\circ\iota_{0}\}\sharp\{\psi_1\circ\iota_t\}.
  \end{align*}
  To see this, we use an old trick. Define $F:[0,1]^2\to \pcl$ by $F(s,t)=\psi_t\iota_s$. Going from $(0,0)$ to $(1,1)$ in the square via the lower edges gives the first path, via the diagonal the second path, and via the upper edges the last path; see Figure~\ref{fig:square_homotopy}. Then, any homotopy in the square between these paths defines a homotopy in $\pcl$ between the above paths. \par

  \begin{figure}[ht]
    \centering
    \begin{tikzpicture}[scale=4]
      \draw[thick] (0,0) rectangle (1,1);
      \draw[thick] (0, 0) -- (1, 1);
      \node at (0.5,-0.1) {$\iota_t$};
      \node at (1.1,0.5) {$\psi_t\iota_1$};
      \node at (-0.1,0.5) {$\psi_t\iota$};
      \node at (0.5,1.1) {$\psi_1\iota_t$};
      \node at (0.57,0.43) {$\psi_t\iota_t$};
    \end{tikzpicture}
    \vspace*{8pt}
    \caption{The square realizing the homotopy.}
    \label{fig:square_homotopy}
  \end{figure}

  To conclude, we simply apply these homotopies to our setting:
  \begin{align*}
    \{\iota_t\}\sharp\{A(t)\circ\iota_1\}\sharp\{\iota_t'\}\sharp\{A(t)^{-1}\circ\iota_1'\}
    &\approx \{\iota_t\}\sharp\{A(t)\circ\iota_1\}\sharp\{A(t)^{-1}\circ\iota_0'\}\sharp\{A(1)\circ\iota_t'\} \\
    &= \{\iota_t\}\sharp\{A(t)\circ\iota_1\}\sharp\{A(t)^{-1}\circ A(1)\circ \iota_1\}\sharp\{A(1)\circ\iota_t'\} \\
    &\approx \{\iota_t\}\sharp\{A(t)^{-1}\circ A(t)\circ\iota_1\}\sharp\{A(1)\circ\iota_t'\} \\
    &= \{\iota_t\}\sharp\{\iota_1\}\sharp\{A(1)\circ\iota_t'\} \\
    &\approx \{\iota_t\}\sharp\{A(1)\circ\iota_t'\}
  \end{align*}
  Here, we have made use of the fact that $A(1)^{-1}=A(1)$ and $\iota_0'=A(1)\circ\iota_1$.
\end{proof}

\begin{proof}[Proof of Claim~\ref{claim:isotopy_nice-rep}]
  Here, we use the explicit \emph{Auroux (integrable) system} underlying the almost toric base diagram depicted in \cref{fig:loop_xi}. It is explicitly given by
  \begin{equation*}
    F \colon \C^2 \rightarrow \R^2, \quad
    (z_1,z_2) \mapsto (\pi \vert z_2 \vert^2 - \pi \vert z_1 \vert^2, \vert z_1z_2 - c \vert^2)
  \end{equation*}
  for some $c > 0$. It has image $\R \times \R_{\geqslant 0}$, and computing action coordinates on the complement of the ray $\{0\} \times [c^2,+\infty)$ yields the same diagram as in \cref{fig:loop_xi}; see \cite[\S 7.1]{Eva23} for more details. This integrable system has the following properties:
  \begin{enumerate}
  \item For all $d > c$, the fibre $F^{-1}(0,d^2)$ is Hamiltonian isotopic to the Clifford torus $T_{\rm Cl}(a(d))$ for some $a(d) > 0$.
  \item We have $F \circ A(1) = I \circ F$, where $I$ denotes the involution $I(x,y) = (-x,y)$ on $\R^2$. 
  \end{enumerate}
  The second property is immediate by the definition of $F$. The first property follows from the fact that, under the (singular) reduction by the $S^1$-action generated by $F_1 = \pi \vert z_2 \vert^2 - \pi \vert z_1 \vert^2$, both $T_{\rm Cl}(a)$ and the fibre $F^{-1}(0,d^2)$ map to a circle enclosing the puncture of the reduced space. Setting $a(d) > 0$ equal to the symplectic area of the reduced circle of $F^{-1}(0,d^2)$ yields the claim. 

  The element $\Xi$ can be viewed as the lift of a curve running through a circle centred at $(0,c^2)$ in the image of $F$ in $\R^2$. From now on, we fix $c > 0$ such that $a(c) < 1$. This means that there is some $d > c$ with $F^{-1}(0,d^2)$ Hamiltonian isotopic to $T_{\rm Cl}(1) \subset \C^2$. Therefore, up to conjugation by a Hamiltonian isotopy, we can assume that $F^{-1}(0,d^2) = T_{\rm Cl}(1)$. Take $X'$ to be the vector field on $\R^2$ whose flow $\{(\psi')^t\}_{t\in [0,1]}$ is a full rotation around the singular value $(0,c^2)$ of the Auroux system $F$. Let $A_1\Subset A_2$ be annuli centered at a point $(0,x)$ such that $(0,d^2)\in A_1$ and that $A_2$ is disjoint from $(0,c^2)$ and $\R\times\{0\}$.
  Take $\beta:\R^2\to [0,1]$ rotationally symmetric such that $\beta|_{A_1}\equiv 1$ and $\supp\beta\subseteq A_2$. We then set $X:=\beta X'$. \par

  Since the flow $\{\psi^t\}$ of $X$ is supported in the regular locus of $F$, we can lift it to a flow $\{\phi^t\}$ of $\C^2$. By construction, $\phi^t(0,d^2)$ is homotopic to the loop $\xi$ of the proof of Proposition~\ref{prop:C2}. In particular, $\phi^t(\tcl)$ is the loop $\Xi$. By construction, we have that $I\circ\psi^t=\psi^{-t}\circ I$. By point (2) above, $A(1)$ is a lift of the involution $I$ on the base, so that we have $A(1)\circ\phi^t=\phi^{-t}\circ A(1)$. But since $\phi^t$ is obtained from integration by a time-independent vector field, we have that
  \begin{equation*}
    (\phi^t)^{-1}=\phi^{-t}=\phi^{1-t}\circ(\phi^1)^{-1},
  \end{equation*}
  thus giving the desired relation. 
\end{proof}

\subsection{Relation to immersions}
\label{sec:relation-immersions}
We now relate our study of $\cl$ to what happens in the space
\begin{align*}
	\cl_{imm}:=\left\{f:\T^2\looparrowright \C^2\ \middle|\ f^*\omega=0\right\}\!/\,\mathrm{Diff}(\T^2)
\end{align*}
of unparametrized immersed Lagrangian tori in $\C^2$. 

Note that there is an obvious inclusion $\cl\subseteq\cl_{imm}$. Likewise, whenever $L$ is embedded, $\FM_L$ factors through $\pi_{1}(\cl_{imm},L)$. Therefore, \cref{thm:section_FM} also gives a splitting $\pi_1(\cl_{imm},\tcl)=N\rtimes(\Z\rtimes\Z_2)$. However, we can prove more.

\begin{proposition} \label{prop:pi_1-cl_imm}
	The fundamental group of $\cl_{imm}$ based at $\tcl$ is isomorphic to $\Z\oplus(\Z\rtimes\Z_2)$, i.e.\ $N=\Z$ sits in the centre of $\pi_1(\cl_{imm},\tcl)$.
\end{proposition}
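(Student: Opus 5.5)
The plan is to use the h-principle for Lagrangian immersions (Gromov--Lees) to replace $\cl_{imm}$ by a space of formal data. Parametrized Lagrangian immersions $\T^2\looparrowright\C^2$ are weakly homotopy equivalent to pairs $(f,\Phi)$, where $f\colon\T^2\to\C^2$ is a map and $\Phi$ is a bundle map $T\T^2\to f^*T\C^2$ with Lagrangian image, together with a closed $1$-form class recording $[f^*\lambda]$. Since $\C^2$ is contractible, the space $\widetilde{\cl}_{imm}$ of parametrized immersions is weakly equivalent to
\[
H^1(\T^2;\R)\times \mathrm{Map}(\T^2,\Lambda(2)),
\]
where $\Lambda(2)=\mathrm{U}(2)/\mathrm{O}(2)$ is the Lagrangian Grassmannian. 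The $H^1$-factor is precisely the flux coordinate: the flux of a path equals the change in $[f^*\lambda]$. The first step is to record this equivalence, invoking the h-principle for immersions with prescribed Liouville class (as in Appendix~\ref{app:Lag_homotopies}), and to restrict to the component of $\tcl$.

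The second step is to compute $\pi_1$ of the parametrized space at $\tcl$. The $H^1$-factor is contractible, so only $\mathrm{Map}(\T^2,\Lambda(2))$ matters. I would use $\Lambda(2)\simeq (S^1\times S^2)/\Z_2$ with $\pi_1\cong\Z$ via $\det^2$, and the fibration
\[
\Lambda(2)\to \mathrm{Map}(\T^2,\Lambda(2))\to \mathrm{Map}_*(\T^2,\Lambda(2)).
\]
A hands-on alternative is to split $\Lambda(2)\to S^1$ by $\det^2$. This gives
\[
\mathrm{Map}(\T^2,S^1)\simeq S^1\times\Z^2
\]
on the base, contributing $\Z$ to $\pi_1$, since the Maslov class is fixed on the component. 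The fibre is the space of maps into the $S^2$-type fibre. A short computation should show that on the Maslov component its $\pi_1$ is trivial, or at worst finite and killed later. I expect $\pi_1(\widetilde{\cl}_{imm},\iota)\cong\Z$, generated by the loop $\mathrm{diag}(e^{2\pi it},1)\cdot\tcl$ of \cref{rem:parametrized-Sigma}, which is detected by the winding of $\det^2$ at a basepoint.

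The third step passes to unparametrized immersions via the fibration
\[
\mathrm{Diff}(\T^2)\to\widetilde{\cl}_{imm}\to\cl_{imm},
\]
where $\mathrm{Diff}(\T^2)$ acts freely since the tori are embedded near the basepoint and generic elsewhere (one may need to restrict to a free locus). With $\mathrm{Diff}_0(\T^2)\simeq\T^2$, the long exact sequence reads
\[
\pi_1(\T^2)\to\pi_1(\widetilde{\cl}_{imm})\to\pi_1(\cl_{imm})\to\MCG(\T^2)\to\pi_0.
\]
The image of $\pi_1(\cl_{imm})$ in $\MCG(\T^2)$ is the monodromy group $\Z\rtimes\Z_2$ of \cref{prop:image-FM_clifford-C2}; the same Maslov argument bounds it, and it is realized by $\Xi$ and $\Sigma$. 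The map from $\pi_1(\T^2)$, which reparametrizes by translations, sends each generator to a loop whose Gauss map is homotopic to a constant, hence to $0$ in $\Z$. Therefore $N=\Z$ injects into $\pi_1(\cl_{imm})$. Combined with the section of \cref{thm:section_FM}, this gives $\pi_1(\cl_{imm},\tcl)\cong\Z\rtimes(\Z\rtimes\Z_2)$.

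The last step shows the action on $N$ is trivial, i.e.\ that $\Xi$ and $\Sigma$ commute with $\mathrm{diag}(e^{2\pi it},1)\cdot\tcl$ up to homotopy. Conjugating by a loop acts on the $\Z=\pi_1(\Lambda(2))$ winding at the basepoint through the action of the monodromy on the Maslov data. Because the monodromy preserves the Maslov class and the $\det^2$ winding is orientation-independent, the generator is fixed: for $\Sigma$ this holds since $A(1)$ conjugates the loop to $\mathrm{diag}(1,e^{2\pi it})$, which has the same $\det$, and for $\Xi$ it holds since the flow is supported away from the $\mathrm{U}(1)$-loop's relevant data.

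I expect the main obstacle to be the precise $\pi_1$ computation of the mapping space together with the treatment of non-free $\mathrm{Diff}$-actions. I would handle the latter by working with the homotopy quotient throughout.
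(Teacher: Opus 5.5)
Your architecture is the same as the paper's: Gromov--Lees, the long exact sequence of $\mathrm{Diff}(\T^2)\to\pcl_{imm}\to\cl_{imm}$, the section, then a centrality check. But the computation at its core is wrong, and it leads you to the wrong subgroup $N$.

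A parametrized Lagrangian immersion carries its differential, i.e.\ a Lagrangian \emph{frame}, not just a Lagrangian plane. Since $T\T^2$ is trivial, the formal model is $\mathrm{Map}(\T^2,\mathrm{U}(2))$, not $\mathrm{Map}(\T^2,\Lambda(2))$; the $H^1$-factor is contractible and harmless. Splitting $\mathrm{U}(2)\cong S^1\times S^3$ via $\det$ gives $\pi_1(\pcl_{imm},\iota)\cong\pi_1(\mathrm{U}(2))\oplus\pi_2(\mathrm{U}(2))^2\oplus\pi_3(\mathrm{U}(2))\cong\Z^2$. The part you expected to be trivial or finite contains $\pi_3(S^3)\cong\Z$.

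Worse, the translation loop $t\mapsto\iota(\theta+t,\phi)$ \emph{is} the loop $t\mapsto\mathrm{diag}(e^{2\pi it},1)\circ\iota$ that you propose as the generator. Its Gauss map is not null-homotopic: at a basepoint, $\det^2$ winds twice, the Maslov index of $e_1$. Both translation generators of $\pi_1(\mathrm{Diff}_0(\T^2))$ hit the generator of the $\pi_1(\mathrm{U}(2))$-summand. So your candidate lies in $\ker j_1$, as it must: $\mathrm{diag}(e^{2\pi it},1)$ preserves $\tcl$, so the unparametrized loop is constant. Had $\pi_1(\pcl_{imm})$ really been $\Z$ generated by this loop, you would have obtained $N=0$. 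The actual $N=\operatorname{im}j_1\cong\Z^2/\Z$ is the $\pi_3(S^3)$-summand. It is detected by a degree: divide a loop of frame maps by the frame map $u_\iota$ of $\iota$, split off $\det$, and take the degree of the resulting map $S^1\times\T^2\to S^3$.

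Consequently your last step argues about an element that is already trivial in $\pi_1(\cl_{imm})$. Its key premise, orientation-independence, fails for the real generator, whose invariant is a degree and so depends on the orientation of $\T^2$.

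If $G$ lifts $\Lambda$ from $\iota$ to $\iota\circ\phi$, then $\Lambda\,j_1(F)\,\Lambda^{-1}=j_1(G\sharp(F\circ\phi)\sharp\overline{G})$. The reparametrization by the monodromy $\phi$ cannot be dropped, and on the $\pi_3$-summand it multiplies the degree by $\deg\phi$. Concretely, for $\Sigma$ the square trick of Claim~\ref{claim:path_homotopy_pcl} gives $\Sigma\,j_1(F)\,\Sigma^{-1}=j_1(\sigma\circ F\circ s)$, where $\sigma=A(1)$ and $s$ is the swap of the factors of $\T^2$. In the frame model, writing a loop as $m_t\cdot u_\iota$ with $m_0=1$, this becomes $m_t\mapsto P\,(m_t\circ s)\,P$ with $P$ the swap matrix, which reverses the degree.

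So no argument of the kind you sketch can give centrality. As far as I can see, $\Sigma$ actually acts on $N$ by $-1$. Be aware that the paper's Lemma~\ref{lem:action-conj} only invokes canonical change of basepoint in the topological group $\mathrm{Map}(\T^2,\mathrm{U}(2))$ and also omits the factor $F\circ\phi$. The same reasoning would make $\pi_1(\T^2)$ central in the fundamental group of the Klein bottle, so this step needs to be re-examined rather than reproduced.
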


The proposition follows from \cref{prop:immersions_ses} and \cref{lem:action-conj} below. 

\begin{proposition} \label{prop:immersions_ses}
	The monodromy map $\cm$ on $\pi_1(\cl_{imm},\tcl)$ has the same image as in the embedded case, and its kernel is infinite cyclic. 
\end{proposition}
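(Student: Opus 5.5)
We need to show two things: that $\cm$ on $\pi_1(\cl_{imm},\tcl)$ has image $\{(\pm1\ k;\,0\ 1)\}$ in the basis $f_1,f_2$, and that its kernel is $\Z$.

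\emph{The image.} The image contains the embedded one, since $\cl\subseteq\cl_{imm}$ and $\Xi,\Sigma$ survive. For the reverse inclusion we rerun the argument of \cref{prop:image-FM_clifford-C2}. The Maslov class of a Lagrangian immersion is still a discrete invariant, and the proof of \cref{prop:Maslov-cyl} never used embeddedness, only that we have a family of Lagrangian tangent planes along the cylinder. Hence any monodromy matrix $M$ satisfies $m_{\tcl}M=m_{\tcl}=(0,2)$, which forces $c=0$, $d=1$ and $a=\pm1$. So the image is exactly $\Z\rtimes\Z_2$.

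\emph{The kernel.} We use the h-principle for Lagrangian immersions (Gromov--Lees), which is available through Appendix~\ref{app:Lag_homotopies}. The space of parametrized Lagrangian immersions $\T^2\looparrowright\C^2$ is weakly homotopy equivalent to the space of pairs $(f,\Phi)$, where $f\colon\T^2\to\C^2$ is a map and $\Phi\colon T\T^2\otimes\C\to f^*T\C^2$ is a complex bundle isomorphism. Since $\C^2$ is contractible and $T\T^2$ is trivial, this space is $\mathrm{Map}(\T^2,\mathrm{U}(2))$. Its components containing $\tcl$ have fundamental group $\pi_1(\mathrm{Map}(\T^2,\mathrm{U}(2)))$. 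Via evaluation and $\det$, this group is $\pi_1(\mathrm{U}(2))\oplus(\text{contributions from }\pi_2(\mathrm{U}(2))=0,\ \pi_3(\mathrm{U}(2))=\Z)$. More precisely, $\pi_1$ of a component of $\mathrm{Map}(\T^2,\mathrm{U}(2))$ is computed from the fibration $\mathrm{Map}_*\to\mathrm{Map}\to\mathrm{U}(2)$. This gives $\Z$ from $\pi_1(\mathrm{U}(2))$, plus $[\Sigma\T^2,\mathrm{U}(2)]$ terms, which involve $\pi_2(\mathrm{U}(2))=0$, $\pi_1(\mathrm{U}(2))^2$ and $\pi_3(\mathrm{U}(2))$.

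Next we pass to unparametrized immersions via the fibration $\mathrm{Diff}(\T^2)\to\widetilde{\cl}_{imm}\to\cl_{imm}$, using $\mathrm{Diff}(\T^2)\simeq\T^2\rtimes\GL(2;\Z)$. The long exact sequence reads
\[
\pi_1(\T^2)\to\pi_1(\widetilde{\cl}_{imm})\to\pi_1(\cl_{imm})\to\pi_0(\mathrm{Diff})=\GL(2;\Z),
\]
and the last map is exactly the monodromy $\cm$. So $\ker\cm$ is the cokernel of $\pi_1(\T^2)\to\pi_1(\widetilde{\cl}_{imm},\iota)$. The obvious candidate for a generator of this cokernel is the loop $t\mapsto\mathrm{diag}(e^{2\pi it},1)\cdot\tcl$ of \cref{rem:parametrized-Sigma}. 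It is detected by the Maslov-type winding $\det^2$ of the Lagrangian Gauss map, i.e.\ by the $\pi_1(\mathrm{U}(2))$ factor, and reparametrizations contribute nothing to that winding. What remains is to show that the rest of $\pi_1(\widetilde{\cl}_{imm})$ is exactly the image of the $\T^2$-reparametrization loops. Those loops act by translating the torus, which in the formal model corresponds to precomposing by rotations of $\T^2$, and this should account for the $\pi_1(\mathrm{U}(2))^2$ summands.

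\emph{Main obstacle.} The delicate step is this homotopy computation: pinning down $\pi_1$ of the relevant component of $\mathrm{Map}(\T^2,\mathrm{U}(2))$ (including a possible $\pi_3(\mathrm{U}(2))=\Z$ contribution coming from the top cell of $\T^2$), and then checking which classes are killed by reparametrizations and which are already killed on the nose by the boundary map. I would first compute using the cell structure $\T^2=(S^1\vee S^1)\cup e^2$ and $\mathrm{U}(2)\cong S^1\times S^3$, splitting off $\det$. The $S^3$ part gives $\pi_1(\mathrm{Map}(\T^2,S^3))\cong\pi_3(S^3)=\Z$ (since $S^3$ is 2-connected), and the $S^1$ part gives $\pi_1(\mathrm{Map}(\T^2,S^1))=\Z$. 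Then I would identify the $S^3$-class with a loop in the image of the $\T^2$-translations or show it dies. I expect this identification to require explicit care, and it is where I would focus.
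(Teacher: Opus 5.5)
Your argument for the image is fine. The paper gets it from exactness at $\pi_0(\mathrm{Diff}(\T^2))$, where $i_0$ records the Maslov class, which is the same Maslov-invariance idea. Your setup for the kernel is also the paper's: the fibration $\mathrm{Diff}(\T^2)\to\pcl_{imm}\to\cl_{imm}$, the Gromov--Lees model $C^\infty(\T^2,\mathrm{U}(2))$, and $\pi_1\cong\Z\oplus\Z$ coming from $\pi_1(\mathrm{U}(2))$ and $\pi_3(\mathrm{U}(2))$. (The ``$\pi_1(\mathrm{U}(2))^2$ summands'' you mention do not exist; the corresponding terms are $\pi_2(\mathrm{U}(2))^2=0$.)

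The gap is the decisive step: computing the image of $i_1\colon\pi_1(\mathrm{Diff}(\T^2))\cong\Z^2\to\pi_1(\pcl_{imm})$. You leave it open, and the heuristic you give for it is backwards.
\begin{itemize}
\item Your candidate generator $t\mapsto\mathrm{diag}(e^{2\pi i t},1)\cdot\tcl$ is the \emph{constant} loop in $\cl_{imm}$, because the diagonal matrix preserves $\tcl$ setwise. As a parametrized loop it equals $\iota\circ\tau_{(t,0)}$, with $\tau$ a translation of $\T^2$, so it is exactly $i_1$ of a generator. \cref{rem:parametrized-Sigma} only asserts non-triviality in the parametrized space.
\item It is also false that reparametrizations contribute nothing to the determinant winding. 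The embedding $\iota$ corresponds to $\Phi_\iota(\theta,\phi)=\mathrm{diag}(ie^{2\pi i\theta},ie^{2\pi i\phi})$. Evaluating $t\mapsto\Phi_\iota\circ\tau_{(t,0)}$ or $t\mapsto\Phi_\iota\circ\tau_{(0,t)}$ at a point gives a loop whose determinant winds once. This is just the nonvanishing of the Maslov class of $\tcl$ on $e_1$ and $e_2$.
\end{itemize}

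The correct picture is the reverse of yours. View both translation loops as maps $\T^3\to\mathrm{U}(2)\cong S^1\times S^3$. They take values in the diagonal torus, so their $S^3$-component has degree $0$, while their determinant winds once. Hence $i_1$ sends both generators to the same primitive generator of the $\pi_1(\mathrm{U}(2))$ summand. The cokernel, i.e.\ $\ker\cm$, is therefore the $\pi_3(\mathrm{U}(2))$ summand, detected by the $S^3$-degree. That is precisely the class you planned to show lies in the image of translations or dies. The plan cannot succeed, since translations have $S^3$-degree zero; the surviving class is the paper's $\Upsilon$. Knowing that the image of $i_1$ is a primitive rank-one summand is also what rules out torsion in the cokernel, so this computation is needed to get exactly $\Z$.
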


\begin{proof}
  Let $\pcl_{imm}$ be the space of Lagrangian immersions $\T^2 \looparrowright \C^2$. The fibration $\mathrm{Diff}(\T^2)\hookrightarrow \pcl_{imm}\twoheadrightarrow \cl_{imm}$ induces an exact sequence
  \begin{center}
    \begin{tikzcd}
      \pi_1(\mathrm{Diff}(\T^2)) \arrow[r,"i_1"] & \pi_1(\pcl_{imm}) \arrow[r,"j_1"] & \pi_1(\cl_{imm}) \arrow[r,"\del"] & \pi_0(\mathrm{Diff}(\T^2)) \arrow[r,"i_0"] & \pi_0(\pcl_{imm}).
    \end{tikzcd}
  \end{center}
  Since $\mathrm{Diff}(\T^2)$ deformation retracts onto $\T^2\times\mathrm{GL}(2;\Z)$~\cite{Gramain1973}, we have that $\pi_0(\mathrm{Diff}(\T^2))=\mathrm{GL}(2;\Z)$ and $\pi_1(\mathrm{Diff}(\T^2))=\Z^2$. \par

  Likewise, the Gromov--Lees $h$-principle~\cite{Gromov1970,Lees1976} implies that $\pcl_{imm}$ is homotopy equivalent to $C^\infty(\T^2,\mathrm{U}(2))$. Therefore, we have that 
  \begin{align*}
    \pi_0(\pcl_{imm})&=[\T^2,\mathrm{U}(2)]=[\T^2,S^1]=H^1(\T^2;\Z)=\Z^2 \\
    \intertext{and}
    \pi_1(\pcl_{imm})&=\pi_1(\Lambda^2\mathrm{U}(2))=\pi_1(\Lambda\mathrm{U}(2))\oplus\pi_2(\Lambda\mathrm{U}(2)) \\
                   &=\pi_1(\mathrm{U}(2))\oplus\pi_2(\mathrm{U}(2))^2\oplus\pi_3(\mathrm{U}(2)) \\
                   &=\Z^2, 
  \end{align*}
  where $[\cdot,\cdot]$ denotes homotopy of unbased maps and $\Lambda X$ the free loop space of the space $X$. In the $\pi_0$ computation, we have made use of the fact that $\mathrm{U}(2)=S^1\times S^3$ (topologically) and that $S^1$ is a $K(\Z,1)$-space. The equality $[\T^2,S^3]=0$ follows from cellular approximation, because the 2-skeleton of $S^3$ may be chosen to be a point. In the $\pi_1$ computation, we have also used the adjunction $C^\infty(\T^2,\mathrm{U}(2))=C^\infty(S^1,C^\infty(S^1,\mathrm{U}(2)))$ together with the fact that the fibration $\Omega X\to \Lambda X\to X$ admits a section via constant loops, so that the associated long exact sequence splits in each degree. Note that this only implies in general that $\pi_1(\Lambda X)=\pi_1(X)\ltimes \pi_1(\Omega X)= \pi_1(X)\ltimes \pi_2(X)$. However, since $X$ is a topological group in our case, its fundamental group must be abelian, so that the semidirect product is trivial, i.e.\ it is direct. Likewise, this ensures that all connected components of $\Lambda X$ are homeomorphic, so that the splitting holds whatever base point we choose for $\pi_1(\Lambda X)$. \par

  It is well known that the identification of $\pi_0(\pcl_{imm})$ with $H^1(\T^2;\Z)$ sends a Lagrangian immersion to (half of) its Maslov class~---~see e.g.~\cite{Vit90}. Through this identification, $i_0$ must thus send a matrix $M\in \mathrm{GL}(2;\Z)$ to half the Maslov class $\frac{1}{2} m^*M$ of the embedding $i_0\circ M$, where $m \in H^1(\T^2;\Z)$ is the Maslov class of the standard embedding of the Clifford torus. 

  Moreover, recall that the homotopy long exact sequence of a fibration $F\hookrightarrow E\to B$ comes from the long exact sequence of the pair $(E,F)$ by identifying $\pi_k(E,F)$ with $\pi_k(B)$. Under this identification, the map $\pi_k(B)\to \pi_{k-1}(F)$ corresponds to the usual boundary map $\pi_k(E,F)\to \pi_{k-1}(F)$. Therefore, we may think of a loop in $\cl_{imm}$ as a path in $\pcl_{imm}$ with endpoints in the fibre over the Clifford torus, with $\del$ sending this path to its endpoints. But under the identification of the fibre over the Clifford torus with $\mathrm{Diff}(\T^2)$, this means that $\del$ is precisely the monodromy $\cm$. \par

  To summarize the previous steps, the monodromy map has the same image in the immersed case as in the embedded one, and there is an exact sequence
  \begin{center}
    \begin{tikzcd}
      \Z^2 \arrow[r,"i_1"] & \Z^2 \arrow[r,"j_1"] & \pi_1(\cl_{imm}) \arrow[r,"\cm"] & \Z\rtimes\Z_2 \arrow[r] & 0.
    \end{tikzcd}
  \end{center}

  Following the identifications above, we see that the generators of $\Z^2=\pi_1(\mathrm{Diff}(\T^2))$ are sent by $i_1$ to the classes represented by $t\mapsto \left((\theta,\phi)\mapsto\mathrm{diag}(e^{2\pi i (t+\theta)},e^{2\pi i \phi})\right)$ and $t\mapsto \left((\theta,\phi)\mapsto\mathrm{diag}(e^{2\pi i \theta},e^{2\pi i (t+\phi)})\right)$ in the fundamental group $C^\infty(\T^2,\mathrm{U}(2))$ based at the usual embedding of $\tcl$. Just as noticed in the proof of Lemma~\ref{lem:switch^2}, both these loops are homotopic and generate the $\pi_1(\mathrm{U}(2))=\Z$ factor of $\pi_1(\pcl_{imm})=\Z^2$. Equivalently, as maps $\T^3\to\mathrm{U}(2)=S^1\times S^3$, both maps pull back $d\theta$ on $S^1$ to $dt$ and have zero degree on $S^3$~---~it is not surjective~---~so that they both generate the first $\Z$ factor of $[\T^3,\mathrm{U}(2)]/[\T^2,\mathrm{U}(2)]$. Therefore, $j_1$ has kernel and image isomorphic to $\Z$, which proves the statement.
\end{proof}

\begin{remark}
  The fact that the monodromy has the same image in both the immersed and embedded worlds already follows from the facts that the based point $\tcl$ is \emph{embedded} so that the monodromy factors through $\pi_{1}(\cl_{imm},L)$, and that $\cm(\pi_{1}(\cl,\tcl))$ is already as large as it could be by \cref{prop:image-FM_clifford-C2}.
\end{remark}

As stated in Remark~\ref{rem:parametrized-Sigma}, we see from this proof that, seen as a parametrized loop, $\Sigma$ has infinite order in $\pi_1(\pcl_{imm})$. However, that group also has another generator that survives in $\pi_1(\cl_{imm})$; we denote it by $\Upsilon$.

\begin{remark} \label{rem:immersions_pi-k}
	Using the same approach, we can actually compute all homotopy groups of $\cl_{imm}$~---~or, at least, reduce it to another hard problem:
	\begin{align*}
		\pi_k(\cl_{imm})\cong\begin{cases}
			\pi_k(S^3)\oplus\pi_{k+1}(S^3)^2\oplus\pi_{k+2}(S^3) &\text{if } k\geqslant 3; \\
			\Z^3\oplus\Z_2 &\text{if } k= 2.
		\end{cases}
	\end{align*}
	In particular, it is finite for $k\geqslant 4$.
	
	Indeed, we have $\pi_k(\cl_{imm})=\pi_k(\pcl_{imm})$ for $k\geqslant 3$. For $k=2$, we instead have a short exact sequence
	\begin{center}
		\begin{tikzcd}
			0 \arrow[r] & \pi_2(\pcl_{imm}) \arrow[r] & \pi_2(\cl_{imm}) \arrow[r] & \Z \arrow[r] & 0.
		\end{tikzcd}
	\end{center}
	This is because $\pi_k(\mathrm{Diff}(\T^2))=\pi_k(\T^2)=0$ for $k\geqslant 2$ and $\Ker i_1=\Z$. But the above sequence must split since $\Z$ is free abelian. The result then follows from the same computation as above, together with knowing the first four homotopy groups of $S^3$.
\end{remark}

\begin{lemma} \label{lem:action-conj}
	The kernel of $\cm$ in $\pi_1(\cl_{imm})$ is in the centre of the group.
\end{lemma}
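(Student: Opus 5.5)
The kernel of $\cm$ is the image $j_1(\pi_1(\pcl_{imm}))\cong\Z$, by the exact sequence in the proof of \cref{prop:immersions_ses}. It is generated by $\Upsilon=j_1(\upsilon)$, where $\upsilon$ generates the surviving factor of $\pi_1(\pcl_{imm})=\Z^2$. So I have to show that $\Lambda\Upsilon\Lambda^{-1}=\Upsilon$ for every $\Lambda\in\pi_1(\cl_{imm},\tcl)$. Since $\ker\cm$ is normal and infinite cyclic, $\Lambda\Upsilon\Lambda^{-1}=\Upsilon^{\pm1}$. The task is therefore to exclude the sign $-1$. It is enough to do this for a generating set of $\pi_1(\cl_{imm})$, namely $\Upsilon$, $\Xi$ and $\Sigma$: by \cref{prop:immersions_ses} and \cref{thm:section_FM}, these three generate.

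The conjugation action is computed on the parametrized level. Lift $\Lambda$ to a path $\tilde\Lambda=\{\iota_t\}$ in $\pcl_{imm}$ from $\iota$ to $\iota\circ\phi$, where $\phi\in\mathrm{Diff}(\T^2)$ represents $\cm(\Lambda)^{\pm1}$. Conjugation by $\Lambda$ on $j_1(\pi_1(\pcl_{imm},\iota))$ is then the composite of two maps:
\begin{itemize}
\item the change-of-basepoint isomorphism along $\tilde\Lambda$, which identifies $\pi_1(\pcl_{imm},\iota)$ with $\pi_1(\pcl_{imm},\iota\circ\phi)$;
\item the identification $\pi_1(\pcl_{imm},\iota\circ\phi)\cong\pi_1(\pcl_{imm},\iota)$ given by precomposing with $\phi^{-1}$.
\end{itemize}
Both are explicit under the Gromov--Lees identification $\pcl_{imm}\simeq C^\infty(\T^2,\mathrm{U}(2))$. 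Here $\pi_1$ splits as $\pi_1(\mathrm{U}(2))\oplus\pi_3(\mathrm{U}(2))$, because the $\pi_2$ terms vanish. The $\pi_1(\mathrm{U}(2))$ factor is killed by $j_1$, so $\upsilon$ is a generator of the $\pi_3(\mathrm{U}(2))=\pi_3(S^3)=\Z$ factor. The invariant detecting this factor is the degree of the composite $\T^3\to\mathrm{U}(2)\to S^3$, with the projection $\mathrm{U}(2)\to S^3$ given for example by $A\mapsto \mathrm{diag}(\det A^{-1},1)A$. This degree is computed on representatives based at a given immersion, after subtracting the contribution of the basepoint.

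Next I check the two effects on this degree. First, change of basepoint along a path inside a topological-group-valued mapping space acts trivially on $\pi_1$. Indeed, $C^\infty(\T^2,\mathrm{U}(2))$ is itself a topological group under pointwise multiplication, so every component is a simple space and translation gives canonical identifications. Concretely, the loop $s\mapsto \iota_t\cdot g_s$ varies continuously in $t$, where $g_s$ is the loop representing $\upsilon$ written multiplicatively. Second, precomposition by $\phi$ multiplies the degree of $\T^3=S^1_s\times\T^2\to S^3$ by $\deg(\id\times\phi)=\det\cm(\Lambda)=\pm1$. Taken alone this would give the sign $-1$ for $\Sigma$, whose monodromy has determinant $-1$. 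So the sign does not come from the domain side alone, and I have to be careful. Under Gromov--Lees, the map to $\mathrm{U}(2)$ records the tangent frame $d\iota$ composed with a trivialization of $T\T^2$. Precomposing by $\phi$ therefore also right-multiplies by the constant matrix $d\phi\in\GL(2;\Z)$, acting on the frame. For orientation-reversing $\phi$, this real matrix has $\det=-1$ and acts on $S^3$ by an orientation-reversing map. That map is the conjugation-type reflection: conjugating $\mathrm{SU}(2)$ by an element of $\mathrm{O}(2)\setminus\mathrm{SO}(2)$ reverses its orientation. The two signs then cancel, and the total degree is multiplied by $(\det)^2=1$.

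The main obstacle is this sign bookkeeping, in particular making precise how the Gromov--Lees equivalence intertwines reparametrization with right multiplication by $d\phi$. I would sidestep part of it by checking directly on $\Sigma$, which is realized by the path $A(t)\in\mathrm{U}(2)$. Conjugating $\upsilon$ by the ambient unitary path only left-multiplies by $A(t)$, which is homotopic to translation and hence preserves the degree. For $\Xi$ there is no orientation issue, since $\det\cm(\Xi)=1$, so conjugation by $\Xi$ preserves the degree by the two-step argument above. For $\Upsilon$ there is nothing to check. Hence all three generators commute with $\Upsilon$, and $\ker\cm$ is central.
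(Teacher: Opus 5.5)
\paragraph{The step that fails.} It is exactly the sign you flagged. Right multiplication by the orthogonal part of $d\phi$ does \emph{not} reverse orientation on $H_3$. Neither does conjugating $\mathrm{SU}(2)$ by an element of $\mathrm{O}(2)\setminus\mathrm{SO}(2)$. Since $\mathrm{U}(2)$ is connected, translations and conjugations by its elements are homotopic to the identity, so they act trivially on $H_3(\mathrm{U}(2);\Z)\cong\Z$. Concretely, write $g=\lambda h$ with $\lambda\in S^1$ central and $h\in\mathrm{SU}(2)$. So the target side contributes no sign, and the domain sign $\det\cm(\Lambda)$ survives.

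Your ``direct check'' on $\Sigma$ does not repair this, because it drops the second step of your own recipe. The lift $A(t)\circ\iota$ of $\Sigma$ ends at $A(1)\circ\iota=\iota\circ\sigma$, where $\sigma$ is the swap of $\T^2$. Hence $A(1)\circ\upsilon$ is a loop based at $\iota\circ\sigma$, not at $\iota$. It must be precomposed with $\sigma$ before $j_1$ at the basepoint $\iota$ can be applied to it.

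\paragraph{What correct bookkeeping gives.} Let $\delta\{f_s\}\in H_3(\mathrm{U}(2);\Z)$ be the image of $[S^1\times\T^2]$ under the map sending $(s,x)$ to the unitary polar factor of $d(f_s)_x$, taken in the flat frame of $\T^2$.
\begin{itemize}
\item $\delta$ is a homomorphism on $\pi_1(\pcl_{imm},\iota)$.
\item It vanishes on $\im i_1=\ker j_1$, since translation loops factor through $\T^2$ and $H_3(\T^2)=0$.
\item It therefore descends to $\ker\cm=\langle\Upsilon\rangle$. It is nonzero there, because it detects the $\pi_3(\mathrm{U}(2))$ factor.
\end{itemize}
Following your recipe (lift, transport, then reparametrize by $\sigma$), one gets $\Sigma^{-1}\Upsilon\Sigma=j_1(\{A(1)\circ\upsilon_s\circ\sigma\})$. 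Let $\beta$ be the frame map of $\upsilon$. Since the real matrix of $\sigma$ is $A(1)=A(1)^{-1}$, the frame map of this loop is $(s,x)\mapsto A(1)\,\beta(s,\sigma x)\,A(1)^{-1}$. Conjugation by $A(1)$ is trivial on $H_3$ and $\mathrm{id}\times\sigma$ has degree $-1$, so $\delta(\Sigma^{-1}\Upsilon\Sigma)=-\delta(\Upsilon)$. In general, $\delta(\Lambda^{-1}\Upsilon\Lambda)=\det\cm(\Lambda)\,\delta(\Upsilon)$.

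So your framework shows that $\Sigma$ \emph{inverts} $\Upsilon$. Only loops with orientation-preserving monodromy, such as $\Xi$, centralize $\ker\cm$. No argument along these lines can give centrality for $\Sigma$.

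\paragraph{Comparison with the paper.} The paper's proof is essentially your first bullet alone. It treats conjugation as a change of basepoint in $\pcl_{imm}$, canonical because $\pcl_{imm}$ is homotopy equivalent to a topological group. It then compares $j_1$ at $g_1=f\circ\phi$ with $j_1$ at $f$ through a path in $\pcl_{imm}$. Such a path projects to a loop in $\cl_{imm}$ with monodromy $\phi$, so this comparison is circular. The correct relation is $j_1'(\beta)=j_1(\beta\circ\phi^{-1})$, and this reparametrization is exactly where $\det\cm(\Lambda)$ enters. So the paper's argument passes over the same sign rather than resolving it. You were right to track the reparametrization; the sign just does not cancel.
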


\begin{proof}
  From the proof of Proposition~\ref{prop:immersions_ses}, we see that this kernel is precisely the image of the natural map $\pi_1(\pcl_{imm})\to \pi_1(\cl_{imm})$. Therefore, the action of $\pi_1(\cl_{imm})$ on the kernel by conjugation comes from changing basepoints: If $F=\{f_t\}$ is a loop in $\pcl_{imm}$ based at $f=f_0$, $\Lambda$ is a loop in $\cl_{imm}$ based at $L=f(\T^2)$, and $G=\{g_t\}$ is a parametrization of $\Lambda$ with $g_0=f$, then
  \begin{align*}
    \Lambda\cdot j_1(F)\cdot \Lambda^{-1}=j_1'(G\sharp F\sharp \overline{G}),
  \end{align*}
  where $\overline{G}=\{g_{1-t}\}$. But for this formula to make sense, we need to identify $\pi_1(\pcl_{imm},g_1)$ with $\pi_1(\pcl_{imm},f)$, hence the prime on the right-hand side above. However, since $\pcl_{imm}$ is homotopy equivalent to a topological group, there is a canonical way to do so: conjugation by any path from $g_1$ to $f$. Moreover, looking again at the proof of Proposition~\ref{prop:immersions_ses}, we see that this isomorphism commutes with the identification $\pi_1(\pcl_{imm})=\Z^2$. Therefore, by taking the path $\overline{G}$, we have that $j_1'(G\sharp F\sharp \overline{G})=j_1(F)$, i.e.\ the action is trivial.
\end{proof}

\begin{remark} \label{rem:cl_imm-to-cl_mon}
	The $h$-principle for monotone Lagrangian immersions in $\C^n$~\cite[Theorem~D]{EvaKed15} implies that the connected component of $\cl_{imm}$ containing the Clifford torus is weakly homotopically equivalent to the space of monotone immersed Lagrangians~---~see also \cref{thm:flux-0-to-exact} below. In particular, this means that all loops in $\pi_1(\cl_{imm},\tcl)$ can be realized by \emph{exact} Lagrangian homotopies, i.e.\ with flux vanishing identically along the isotopy.
\end{remark}

\subsection{Hamiltonian action on the fundamental group} \label{ssec:ham-action}
For any Lagrangian $L\subseteq X$, there is a natural action of $\Ham(X)$ on $\cl(L)$ whose orbits are precisely the Hamiltonian orbits. In particular, this induces a map $\pi_1(\Ham(X))\to \pi(\cl,L)$. Note that this map has image in $\ker\FM_L$,  so that they are not detected by the above methods. \par

Nonetheless, this map can be nontrivial: in the case $\RP^2\subseteq\CP^2$, nontrivial loops in $\mathrm{PU}(3)$~---~which is a deformation retract of $\Ham(\CP^2)$~\cite{Gro85}~---~induce nontrivial loops in $\cl$ (see \cref{ssec:review} above for a review of this fact). We now show that this is not the case for many Lagrangian tori in 4-manifolds. \par

To state our result, we use the notion of ball-embeddability: a Lagrangian torus $L$ is \emph{ball-embeddable} if there are a symplectic embedding $e:B\hookrightarrow X$ of a ball $B$ and a product torus $T\subseteq B$ such that $e(T)=L$. \par

\begin{proposition} \label{prop:ham-action}
	Let $X$ be diffeomorphic to either $S^2\times S^2$ or $\CP^2\sharp k\overline{\CP ^2}$ for $k\leqslant 8$. Suppose that $L$ is Lagrangian isotopic to a ball-embeddable Lagrangian. Then, the map $\pi_1(\Ham(X))\to \pi_1(\cl,L)$ is trivial. \par
	
	Let $X=\C^2$ with its standard structure, then the result holds with $\Ham(\C^2)$ replaced with $\Symp_{cyl}(\C^2)$, the space of symplectomorphisms $\psi$ of $\C^2$ such that $\psi^*\lambda_0=\lambda_0$ outside some compact set.
\end{proposition}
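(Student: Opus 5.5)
We reduce to the case where $L$ itself is ball-embeddable. The map $\pi_1(\Ham(X))\to\pi_1(\cl,L)$ sends a loop $\{\phi_t\}$ to $\{\phi_t(L)\}$. Suppose $\Pi$ is a Lagrangian path from $L$ to $L'$. Then the square $(s,t)\mapsto\phi_t(\Pi_s)$ shows that $\{\phi_t(L)\}$ is conjugate, via change of basepoint along $\Pi$, to $\{\phi_t(L')\}$. This is the same trick as in the proof of Claim~\ref{claim:path_homotopy_pcl}. So triviality of the map at one basepoint of $\cl$ implies triviality at every basepoint, and we may assume $L=e(T)$ for a symplectic ball embedding $e\colon B\hookrightarrow X$ and a product torus $T\subseteq B$.

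The key input will be the known structure of $\pi_1(\Ham(X))$ for these rational and ruled surfaces. By results of Gromov, Abreu--McDuff, Anjos, Lalonde--Pinsonnault and others, $\Ham(X)$ is generated up to homotopy by the Hamiltonian torus actions of the toric (or ``almost toric'') models of $X$. More precisely, $\pi_1(\Ham(X))$ is generated by circle actions coming from toric structures on $X$, possibly after deforming $\omega$; for $k\leqslant 8$ (and $S^2\times S^2$) one can moreover arrange any given small ball to be a standard toric ball near a vertex of the moment polytope. I plan to use that every class in $\pi_1(\Ham(X))$ is a product of loops $\{\phi_t\}$ of circle actions. Each such loop can be chosen, after a Hamiltonian conjugation (which does not change its class, since $\Ham(X)$ is connected), to preserve a ball $e'(B')$ on which it acts linearly by a unitary circle $t\mapsto \mathrm{diag}(e^{2\pi i p t},e^{2\pi i q t})$ in toric coordinates. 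By uniqueness of ball embeddings up to Hamiltonian isotopy in these manifolds (connectedness of the space of ball embeddings, McDuff), we can move $e$ to $e'$ and $T$ to a product torus inside $B'$, again using the first paragraph to transport basepoints.

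For such a linear circle acting on a product torus $T(a,b)\subseteq B'$, the loop $\{\mathrm{diag}(e^{2\pi ipt},e^{2\pi iqt})\cdot T(a,b)\}$ is constant, since diagonal unitary matrices preserve product tori. The loop is therefore trivial in $\pi_1(\cl,L)$, as an unparametrized loop, in contrast with Remark~\ref{rem:parametrized-Sigma}. Combined with the homomorphism property, this gives triviality of the whole image.

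For $\C^2$ and $\Symp_{cyl}(\C^2)$, I would use that this group is homotopy equivalent to $\mathrm{U}(2)$, since $\Symp_c(\C^2)$ is contractible (Gromov). The generator of $\pi_1(\mathrm U(2))=\Z$ is $\mathrm{diag}(e^{2\pi it},1)$, which fixes every product torus setwise. After Lagrangian isotopy to a product torus, the same argument concludes.

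The main obstacle is the second step. One must justify that $\pi_1(\Ham(X))$ is generated by circle actions that can be simultaneously made linear near the chosen torus, at least one generator at a time. The delicate cases are $\CP^2\sharp k\overline{\CP^2}$ for $5\leqslant k\leqslant 8$, where the literature on $\pi_1(\Ham)$ is less complete. There I would first try to use instead that the map factors through $\pi_1(\Ham(X))\to\pi_1(\Symp(X\setminus\text{divisor}))$, or that loops supported away from the ball act trivially on $\pi_1(\cl,L)$.
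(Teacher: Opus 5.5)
\textbf{Verdict: genuine gap in the closed case.} Your reduction to a ball-embeddable $L$ matches the paper: you transport $\{\phi_t(L)\}$ along a Lagrangian path, choosing the basepoint per loop and then using that the orbit map induces a homomorphism. Your treatment of $\C^2$ also matches: reduce to unitary loops, and note that $\mathrm{diag}(e^{2\pi i t},1)$ fixes product tori setwise. One small correction there: contractibility of $\Symp_c(\C^2)$ alone does not give $\Symp_{cyl}(\C^2)\simeq\mathrm{U}(2)$. You also need the boundary-restriction fibration onto $\mathrm{Cont}(S^3)$ and Eliashberg's result that $\mathrm{Cont}(S^3)\simeq\mathrm{U}(2)$.

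The gap is exactly the step you flag. You need $\pi_1(\Ham(X))$ to be generated by Hamiltonian circle actions, and this is not known for arbitrary symplectic forms on $\CP^2\sharp k\overline{\CP^2}$ with $k\leqslant 8$. For $k\geqslant 4$ there are forms, e.g.\ the monotone ones, that admit no toric structure at all. ``Possibly after deforming $\omega$'' is not allowed, since $\pi_1(\Ham(X,\omega))$ genuinely depends on $[\omega]$. Where such generation is known (e.g.\ $\CP^2$, $S^2\times S^2$, $\CP^2\sharp\overline{\CP^2}$, by Gromov and Abreu--McDuff), your argument does work, and it needs no toric model. Any Hamiltonian circle action has a fixed point, and by equivariant Darboux it is a diagonal unitary action near that point, which preserves small product tori. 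But the proposition claims all $k\leqslant 8$. Your fallback ideas (factoring through a divisor complement, loops supported away from a ball) are not arguments as they stand.

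The missing idea is to avoid describing $\pi_1(\Ham(X))$ altogether and look instead at the orbit of a ball embedding. Take $L=e(\tcl)$ with $e\colon B_r\hookrightarrow X$ and $r$ small. For exactly these $X$, Anjos--Li--Li--Pinsonnault show that for small $r$ the map $\mathrm{Emb}_\omega(B_r,X)\to\mathrm{SF}(X)$, $e\mapsto(e(0),de(0))$, is a weak homotopy equivalence. This is where the hypothesis on $X$ enters. The argument then runs as follows.
\begin{enumerate}
\item Given a Hamiltonian loop $\{\phi_t\}$, the loop $t\mapsto\phi_t(e(0))$ is contractible in $X$, since $X$ is simply connected.
\item Hence $\{\phi_t\circ e\}$ is homotopic in $\mathrm{Emb}_\omega(B_r,X)$ to a loop $\{e\circ A_t\}$ of linear symplectic reparametrizations.
\item Since $\mathrm{Sp}(4)$ retracts onto $\mathrm{U}(2)$, the loop $A_t$ can be taken in $\mathrm{U}(2)$.
\item Applying $e'\mapsto e'(\tcl)$ gives a homotopy in $\cl$ from $\{\phi_t(L)\}$ to $\{e(A_t\cdot\tcl)\}$.
\item This last loop is trivial, because $\pi_1(\mathrm{U}(2))$ is generated by $\mathrm{diag}(e^{2\pi i t},1)$, which fixes $\tcl$ setwise, as in the proof of Lemma~\ref{lem:switch^2}.
\end{enumerate}
This handles every Hamiltonian loop at once, with no information about generators of $\pi_1(\Ham(X))$.
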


From \cite{GooIvrRiz16}, every Lagrangian torus in $\C^2$, $\CP^2$ or the monotone $S^2\times S^2$ is Lagrangian isotopic to a ball-embeddable one. Here, $\lambda_0=\frac{1}{2}\sum_i q_idp_i+p_idq_i$ is the standard radial Liouville form. \par

\begin{proof}
  We start with the case where $X$ is closed. Note that, given a Hamiltonian loop $\{\phi^t\}$, a Lagrangian isotopy from $L$ to $L'$ defines a free homotopy from $\{\phi^t(L)\}$ to $\{\phi^t(L')\}$. Therefore, it suffices to prove the case where $L$ is ball-embeddable. In fact, by a Lagrangian isotopy within the ball, we may suppose that $L$ is the image of the Clifford torus under a symplectic embedding $e:B_r\hookrightarrow X$ for $r$ arbitrarily small. \par
  
  But for $r>0$ small enough and $X$ as above, Anjos, Li, Li, and Pinsonnault~\cite{AnjLiLiPin2024} showed that the space $\mathrm{Emb}_\omega(B_r,X)$ of symplectic embeddings $B_r\hookrightarrow X$ is weakly homotopy equivalent to the space $\mathrm{SF}(X)$ of symplectic frames of $X$~---~see \cite{LalPin2004,Pin08} for the original proofs for $X=S^2\times S^2$ and $X=\CP^2$, respectively. Moreover, the weak homotopy equivalence is given by the evaluation of the embedding at $0\in B_r$ and its differential at that point. But $\{\phi^t(e(0))\}$ is a trivial loop in $X$, since every loop in $X$ coming from a loop in $\Ham(X)$ is trivial (this is a direct consequence of Arnol'd conjecture). Therefore, $\{\phi^t(e)\}$ is homotopic in $\mathrm{Emb}_\omega(B_r,X)$ to a loop $\{e\circ A_t\}$ for some loop of linear symplectic transformations $\{A_t\}$. Note that this induces a homotopy in $\cl$ from $\{\phi^t(L)\}$ to $\{e(A_t\cdot \tcl)\}$. But the group of linear symplectic transformations deformation retracts onto $\mathrm{U}(2)$, and the map $\pi_1(\mathrm{U}(2))\to \pi_1(\cl,\tcl)$ is trivial: this is the content of the proof of \cref{lem:switch^2}. This thus proves the compact case. \par
  
  The case $X=\C^2$ is proven analogously, since $\Symp_{cyl}(\C^2)$ deformation retracts onto $\mathrm{U}(2)$. This is a folkloric fact, but we sketch the argument and thank Emmy Murphy for pointing it out to us. Essentially, the map $\Symp_{cyl}(\C^2)\to \mathrm{Cont}(S^3)$ sending a symplectomorphism that is cylindrical at infinity to the contactomorphism it induces on the ideal boundary $S^3$ of $\C^2$ is a Serre fibration with fibre $\Symp_{c}(\C^2)$. But $\Symp_{c}(\C^2)$ is contractible by a classical result of Gromov~\cite{Gro85}, so that the map is a homotopy equivalence. It follows from a multiparametric version of Eliashberg's classical result on the extendability of contactormorphisms of $S^3$~\cite{Eli92} that $\mathrm{Cont}(S^3)$ deformation retracts onto $\mathrm{U}(2)$~---~see \cite{FerMarPre2020} for another proof. 
\end{proof}

\begin{remark}
	In terms of the map $\mathrm{Emb}_\omega(B_r,X)\to \cl$ defined by $e\mapsto e(\tcl)$, the above proof can be summarized as saying that
	\begin{center}
	\begin{tikzcd}
		{\pi_1(\mathrm{Emb}_\omega(B_r,X))} \arrow[r] & {\pi_1·(\cl)} \\
		{\pi_1(\Ham(X))} \arrow[u] \arrow[ur]
	\end{tikzcd}
	\end{center}
	commutes for $r$ small enough and that the horizontal arrow is trivial.
\end{remark}

\subsection{Hausdorff-local path connectedness}\label{sec:hausdorff-local-path}
In order to prove \cref{thm:B_Hausdorff}, we first show this important local estimate. We denote by $\T^2$ the 2-torus, equipped with the flat Riemannian metric, and let $D_r^*\T^2$ be its cotangent disk bundle of capacity $r > 0$, equipped with the canonical symplectic form.

\begin{theorem} \label{thm:nearby-2-tori}
    Let $L\subseteq (D_r^*S^1)^2$ be a Lagrangian torus. Denote by $A^2_{\min}(L)$ the infimal area of a smooth Maslov-2 disk with boundary on $L$; we set $A^2_{\min}(L)=0$ if there are no such disks. We have that 
    \begin{align*}
    	A^2_{\min}(L)\leqslant \frac{4r}{3}.
    \end{align*}
\end{theorem}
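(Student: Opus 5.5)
The plan is to reduce the statement to the Cieliebak--Mohnke bound \cite[Theorem 1.1]{CieMoh18}, already mentioned in the introduction: every Lagrangian torus in $\CP^2$ bounds a Maslov-$2$ disk whose area is at most one third of the area of a line.

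\textbf{Step 1: embed into $\CP^2$.} I will show that $(D_r^*S^1)^2$ embeds symplectically into $\CP^2$ with lines of area $4r+\varepsilon$, for every $\varepsilon>0$.
\begin{itemize}
\item With the normalization of capacity used here, the annulus $D_r^*S^1$ has area $2r$. It therefore embeds area-preservingly into an open disk $D(2r+\varepsilon')$.
\item Hence $(D_r^*S^1)^2$ embeds into the polydisk $P(2r+\varepsilon',2r+\varepsilon')$.
\item That polydisk sits inside the ball $B^4(4r+2\varepsilon')$, because $\pi|z_1|^2+\pi|z_2|^2< 2(2r+\varepsilon')$ on it.
\item The ball is the complement of a line in $\CP^2$ with line area $4r+2\varepsilon'$.
\end{itemize}
If the annulus normalization turns out to be different, I would rescale; the constant $4r/3$ strongly suggests that this is the intended chain of embeddings.

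\textbf{Step 2: apply Cieliebak--Mohnke.} Let $L'$ be the image of $L$ in $\CP^2$. By \cite[Theorem 1.1]{CieMoh18}, $L'$ bounds a $J$-holomorphic Maslov-$2$ disk $u$ of area at most $(4r+2\varepsilon')/3$, for a generic $J$.

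\textbf{Step 3: pull the disk back to the original ambient space.} This is the main obstacle: the disk must live in the original ambient space, i.e. the ball, and its area and Maslov index must be computed there. Write the class of $u$ as $\beta=\beta'+k[\CP^1]$, with $\beta'\in H_2(B,L)$ and $k=u\cdot\CP^1_\infty$. Then
\[
\mu(\beta')=2-6k,\qquad \omega(\beta')=\omega(\beta)-k(4r+2\varepsilon').
\]
\begin{itemize}
\item I would choose $J$ standard near the line at infinity, so positivity of intersections gives $k\geqslant 0$.
\item To exclude $k\geqslant 1$, I would go back to the Cieliebak--Mohnke construction. There the disk arises as a component of a neck-stretched degenerate line through a point, after stretching around $L$. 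Since the whole line has degree one, at most one component meets $\CP^1_\infty$.
\item I would arrange for that component to be the punctured sphere on the outside of the neck, not the disk: place the base point near $\CP^1_\infty$, or use the freedom to pick among the disk components.
\item If this argument does not go through directly, the fallback is a homological area argument. A $k\geqslant1$ disk would give a class $\beta'$ with Maslov index $\leqslant -4$ and area $\leqslant (4r+2\varepsilon')/3-(4r+2\varepsilon')<0$. Adding suitable Maslov-$2$ classes of $L$ in $B$ should then yield a Maslov-$2$ class $\beta''$ with $\omega(\beta'')\leqslant \omega(\beta)$, which would suffice for an infimum bound.
\end{itemize}
Any smooth disk in $B$ representing a class in $H_2(B,L)$ can then be compared with disks in $(D_r^*S^1)^2$. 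I expect this comparison to cause little trouble, since $A^2_{\min}$ is an infimum over classes and their areas.

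\textbf{Step 4: let $\varepsilon'\to 0$.} Since $A^2_{\min}(L)\leqslant (4r+2\varepsilon')/3$ for every $\varepsilon'>0$, we obtain $A^2_{\min}(L)\leqslant 4r/3$. If $L$ bounds no Maslov-$2$ disk, the convention $A^2_{\min}(L)=0$ makes the inequality trivial, although Step 2 actually shows that this case cannot occur.
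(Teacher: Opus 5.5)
Step~3 is where the argument breaks. With the normalization you adopt, it cannot be repaired, because the bound $4r/3$ is then false. Your normalization is that the annulus $D_r^*S^1$ has area $2r$; this is also the one behind the paper's embedding of $(D_r^*S^1)^2$ into $B^4(4r)$ extending $\T^2\to T(r,r)$.

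Here is a counterexample. A disk of area $2r-\delta$ embeds symplectically into the slit annulus $(0,1)\times(-r,r)$. Hence the polydisk $P(2r-\delta,2r-\delta)$ embeds into $(D_r^*S^1)^2$ inside a contractible set, and with it $L=T(a,a)$ for any $a\in(4r/3,2r-\delta)$. Then $H_2((D_r^*S^1)^2,L)$ is spanned by the zero-section class (area $0$, Maslov $0$) and the two standard disks $e_1,e_2$ (area $a$, Maslov $2$). So every Maslov-$2$ class has area exactly $a>4r/3$.

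Now push $L$ into $\CP^2$, with line class $S$ of area $c=4r+2\varepsilon'$. The Maslov-$2$ classes are $kS+k_1e_1+k_2e_2$ with $k_1+k_2=1-3k$, of area $a+k(c-3a)$. Since $a>c/3$ for small $\varepsilon'$, every Maslov-$2$ disk of area at most $c/3$ has $k\geqslant 1$; this includes the Cieliebak--Mohnke disk, for instance $S-e_1-e_2$ of area $c-2a$. So you cannot move the intersection with $\CP^1_\infty$ onto a non-disk component. Your fallback fails too, since every Maslov-$2$ class in the ball has area $a>\omega(\beta)$.

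The projective bound $c/3$ simply does not descend to the ball. There, Cieliebak--Mohnke give $c/2$, and this example shows it is sharp, so this route yields at best $2r$. That is the strength in which the estimate is used later in the paper, as $2/n$ and $2r_i$. The paper's own first case ($\dim\ker(\pi_L)_*=2$) quotes $4r/3$ from \cite{CieMoh18} for $\Psi(L)\subset B^4(4r)$; it too only supports $2r$.

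Your last bullet in Step~3 hides a second, independent gap: disks in $B$ need not come from $(D_r^*S^1)^2$. Since $\pi_2((D_r^*S^1)^2)=0$, the map $\pi_2((D_r^*S^1)^2,L)\to\pi_2(B,L)\cong\pi_1(L)$ is the inclusion of $\ker(\pi_1(L)\to\pi_1(\T^2))$. So it is onto only when $H_1(L)\to H_1(\T^2)$ vanishes; otherwise the infimum over $B$ can be much smaller.

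For example, let $L=(S^1\times\{-r+\eta\})\times C$, with $C$ a contractible circle enclosing area $a$. The only Maslov-$2$ classes in $(D_r^*S^1)^2$ have area $a$. In $B$, with the annulus embedded as $\{\varepsilon'<\pi|z|^2<2r+\varepsilon'\}$, the first circle bounds a Maslov-$2$ disk of area $\eta+\varepsilon'$. For the same reason your closing remark is wrong: the zero section bounds no disk at all in $(D_r^*S^1)^2$.

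The paper handles this by splitting according to $\dim\ker(\pi_L)_*$:
\begin{itemize}
\item If the kernel is everything, the ball embedding is an area-preserving isomorphism on relative $\pi_2$, and \cite{CieMoh18} applies.
\item If it has rank one, only the appropriate factor is capped, via $D_r^*S^1\times B^2(2r)$, which still preserves relative $\pi_2$. Chekanov's displacement-energy bound then controls the unique Maslov-$2$ class.
\item If it is zero, $L$ bounds no disks.
\end{itemize}
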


\begin{proof}
  Denote by $\pi_{L}$ the composition $L\to T^*\T^2\to \T^2$ and by $(\pi_{L})_*$ the map it induces on $H_1(\!~\cdot\!~;\R)$. \par

  Suppose that $\dim\Ker (\pi_{L})_*=2$. Note that the Lagrangian embedding mapping $\T^2$ to the product torus $T(r,r)\subset (\R^4,\omega_0)$ extends to a symplectic embedding $\Psi$ of $(D_r^*S^1)^2$ into the ball $B^4(4r)$ of capacity $4r$. By the hypothesis on $(\pi_{L})_*$, this induces an isomorphism $\Psi_*:\pi_2((D_r^*S^1)^2,L)\to \pi_2(B^4(2r),\Psi(L))$ that preserves areas. By~\cite{CieMoh18}, $\Psi(L)$ bounds a Maslov-2 disk of area at most $\frac{4r}{3}$. Therefore, we have that $A^2_{\min}(L)\leqslant \frac{4r}{3}$. 

  Suppose now that $\dim\Ker (\pi_{L})_*=1$. If $L$ does not bound a Maslov-2 disk $u$ of positive area, then there is nothing to prove. Therefore, we may suppose that it does. Note that such an element must generate $\pi_2((D_r^*S^1)^2,L)\cong\Z$, since $u$ must be primitive by the Maslov condition. Furthermore, the symplectic embedding sending $(D_r^*S^1)^2$ to either $D_r^*S^1\times B^2(2r)$ or $B^2(2r)\times D_r^*S^1$ (which extend the maps $\T^2\to S^1\times T(r)$ or $\T^2\to T(r)\times S^1$, respectively) induces an isomorphism on relative $\pi_2$~---~this is the argument in the proof of Conjecture~C in~\cite{ChaLec24}. Therefore, by the Chekanov bound on displacement energy~\cite{Che98}, the area of the smallest $J$-disk with boundary on $L$ must have area at most $r$. By the above discussion, this disk is some positive multiple of $u$. Therefore, we have that $A^2_{\min}(L)\leqslant r$. \par

  Suppose finally that $\dim\Ker (\pi_{L'})_*=0$. Then, $L$ does not bound any disk by the characterization lemma~\cite[Lemma 9]{ChaLec24}, i.e. $A^2_{\min}(L)=0$.
\end{proof}

Let us now prove the Hausdorff-local path connectedness in \cref{thm:B_Hausdorff}.

\begin{corollary} \label{cor:local-connected_2-tori}
	Every Lagrangian torus $L$ in $\C^2$ has a system of neighbourhoods $\{U_n\}$ with the following property. There is $n_0 \in \N$ such that, for all $n \geqslant n_0$, if $L'$ is Hamiltonian isotopic to $L$ in $\C^2$ and $L'\subseteq U_n$, then $L'$ is Hamiltonian isotopic to $L$ in $U_n$. In particular, $\LHam(L)$ is locally path connected in the Hausdorff topology.
\end{corollary}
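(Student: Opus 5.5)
We will take $U_n$ to be shrinking Weinstein neighbourhoods of $L$: fix a symplectic embedding $\Psi\colon D^*_{r_0}\T^2\to\C^2$ with $\Psi(0_{\T^2})=L$ (using a flat metric adapted to a basis of $H_1(L)$), and set $U_n=\Psi(D^*_{r_n}\T^2)$ with $r_n\to 0$. Here $D^*_r\T^2$ is identified with $(D^*_rS^1)^2$, so that \cref{thm:nearby-2-tori} applies inside $U_n$. Let $L'\subseteq U_n$ be Hamiltonian isotopic to $L$ in $\C^2$. The aim is to show that $L'$ is the image, under a Hamiltonian isotopy supported in $U_n$, of the zero section. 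This breaks into two steps.

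\emph{Step 1: topological and flux control.} First I would show that $\pi_{L'}\colon L'\to \T^2$ induces an isomorphism on $H_1$. The idea is that $L'$ is Hamiltonian isotopic to $L$ in $\C^2$, so it has the same area and Maslov classes on $\pi_2(\C^2,\cdot)\cong H_1$, transported via the isotopy. If $\dim\Ker(\pi_{L'})_*\geqslant 1$, the argument in the proof of \cref{thm:nearby-2-tori} gives $A^2_{\min}(L')\leqslant 4r_n/3$. On the other hand, $A^2_{\min}(L')=A^2_{\min}(L)>0$, because $L$ bounds a Maslov-2 disk of positive area: $L$ is not exact, by Gromov, and Maslov-2 classes exist for any torus in $\C^2$ by \cite{CieMoh18}. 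Hence, for $n\geqslant n_0$ with $4r_n/3<A^2_{\min}(L)$, we get a contradiction, and $(\pi_{L'})_*$ is injective. A more careful argument is needed in the case where $(\pi_{L'})_*$ is injective but $L'$ is only homologically a section. There I would invoke the nearby Lagrangian result in $T^*\T^2$ of \cite{GooIvrRiz16}: a Lagrangian torus in $T^*\T^2$ homologous to the zero section is Hamiltonian isotopic, inside $D^*_{r_n}\T^2$ with compact support, to the graph of a closed $1$-form $\sigma$. This uses \cref{lem:T^*L_from-gen-to-exact} together with the fact that the isotopy can be confined to the disk bundle by rescaling in the fibres.

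\emph{Step 2: killing the flux.} It remains to show that $[\sigma]=0$. Concatenate the Hamiltonian isotopy in $\C^2$ from $L$ to $L'$, the isotopy inside $U_n$ from $L'$ to $\Psi(\mathrm{graph}\,\sigma)$, and the linear path back to $L$. This gives a loop $\Lambda\in\pi_1(\cl,L)$ with $\Flux\Lambda=\pm[\sigma]$, where $|[\sigma]|\lesssim r_n$. By \cref{thm:B}, $\fl(L)$ is discrete: it consists of vectors $x((-1)^\delta-1,k)$ with $x$ fixed. So for $n$ large the only small element is $0$, and therefore $[\sigma]=0$. Then $\sigma$ is exact, and the linear path $t\mapsto\Psi(\mathrm{graph}\,t\sigma)$ is Hamiltonian inside $U_n$.

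Combined with Step 1, this shows that $L'$ is Hamiltonian isotopic to $L$ within $U_n$. Since the $U_n$ form a basis of Hausdorff neighbourhoods of $L$, this gives local path connectedness.

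The main obstacle is Step 1. There I need to make sure that Hausdorff-closeness alone, through the bound in \cref{thm:nearby-2-tori}, forces $L'$ to be homologically the zero section. I also need to check that the Goodman--Ivrii--Rizell classification can be applied inside the bounded disk bundle rather than in all of $T^*\T^2$. If only $\dim\Ker(\pi_{L'})_*=1$ can be ruled out by the Chekanov displacement bound in the factor $D^*_rS^1\times B^2(2r)$, I would combine it with the $\dim 2$ case as in the proof of \cref{thm:nearby-2-tori}. The case $\dim\Ker=0$ then gives a Lagrangian with no disks, contradicting $A^2_{\min}(L)>0$.
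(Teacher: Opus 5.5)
\textbf{Gap in Step~1: you never show that $L'$ is homologous to the zero section.} Your overall route is the paper's: shrinking Weinstein neighbourhoods, the bound of \cref{thm:nearby-2-tori} against the Hamiltonian invariant $A^2_{\min}(\C^2,L)>0$, reduction to the graph of a closed $1$-form, and discreteness of $\fl(L)$ from \cref{thm:B}. Step~2 is fine.

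The trouble is that you conflate two invariants. One is the local $A^2_{\min}(U_n,L')$, which is set to $0$ when $L'$ bounds no Maslov-2 disk of positive area inside $U_n$. The other is the global $A^2_{\min}(\C^2,L')=A^2_{\min}(\C^2,L)$. The bound $\leqslant 4r_n/3$ contradicts the global positivity only if $L'$ actually bounds a Maslov-2 disk of positive area \emph{inside} $U_n$.

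\begin{itemize}
\item For $\dim\Ker(\pi_{L'})_*=2$, \cite{CieMoh18} provides such a disk, so this case is fine.
\item For $\dim\Ker(\pi_{L'})_*=1$, the proof of \cref{thm:nearby-2-tori} says explicitly that there is nothing to prove if no such disk exists. The generator of $\pi_2(U_n,L')\cong\Z$ need not a priori be a Maslov-2 class of positive area, and then neither the Chekanov bound nor area bookkeeping gives a contradiction.
\item Even $\dim\Ker=0$ (over $\R$) only gives a projection of nonzero degree, not degree $\pm1$. Degree $\pm1$ is what you need to invoke the nearby Lagrangian theorem.
\end{itemize}

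Your closing sentence is also wrong. A torus in $U_n$ with $\dim\Ker=0$ and no disks in $U_n$ does not contradict $A^2_{\min}(\C^2,L)>0$, because disks in $\C^2$ need not lie in $U_n$. The zero section $L$ itself is such a torus, and this is exactly the situation you are trying to reach.

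The missing ingredient is \cite[Theorem~4.1.1]{AudLalPol94}: a Lagrangian torus in $T^*\T^2$ bounding no Maslov-2 disk of positive area is homologous to the zero section. The paper argues as follows, with no case analysis on $\dim\Ker$. By \cref{thm:nearby-2-tori} together with $A^2_{\min}(\C^2,L')=A^2_{\min}(\C^2,L)>2/n_0$, the torus $\Psi^{-1}(L')$ bounds no Maslov-2 disk of positive area in $D^*_{1/n}\T^2$. That result then gives that it is homologous to the zero section.

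\textbf{Secondary issue: confining the isotopy.} You need the Hamiltonian isotopy to the graph of $\sigma$ to stay inside $D^*_{r_n}\T^2$, for two reasons. First, the isotopy must stay in $U_n$. Second, your bound $|[\sigma]|\lesssim r_n$ comes from the graph of $\sigma$ lying in $D^*_{r_n}\T^2$. Without that, the Liouville class of a torus in $D^*_{r_n}\T^2$ is not small unless you invoke a separate rigidity result.

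Fibre rescaling does not provide this confinement on its own. It is only conformally symplectic and it moves $L'$ itself. To repair it with a compensating fibre translation, you would already need to know that the Liouville class lies in the disk. The paper obtains the confined statement directly from \cite[Theorem~B~(1)]{Riz19}.
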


\begin{proof}
  Let $\Psi \colon D_{r_0}^*\T^2 \hookrightarrow \C^2$ be a Weinstein neighbourhood of $L$. We set $U_n = \Psi(D_{1/n}^*\T^2)$. Although this sequence may not be defined for a finite set of indices, it makes sense for large enough $n \in \N$. By \cite{CieMoh18}, $L$ bounds a Maslov-$2$ disk in $\C^2$ of positive area. Since Maslov-0 disks form a subspace of $\pi_2(\C^2,L)$ of rank 1, this means that the minimal positive area $A^2_{\min}(\C^2,L)$ of Maslov-2 disks in $\C^2$ with boundary on $L$ is positive. Now let $n_0 \in \N$ be large enough so that $\frac{2}{n_0} < A^2_{\min}(\C^2,L)$. Take $n \geqslant n_0$, and suppose that $L' \subset U_n$ is Hamiltonian isotopic to $L$ in $\C^2$. We need to prove that $L'$ is Hamiltonian isotopic to $L$ in $U_n$. Since $L' \subset U_n$, we have $\Psi^{-1}(L') \subset D_{1/n}^* \T^2$, and, by \cref{thm:nearby-2-tori}, we find that
  \begin{equation}
    \label{eq:A_min_estimates}
    A^2_{\min}(D_{1/n}^* \T^2, \Psi^{-1}(L'))
    \leqslant \frac{2}{n}
    \leqslant \frac{2}{n_0}
    < A^2_{\min}(\C^2, L).
  \end{equation}
  Therefore, $\Psi^{-1}(L')$ does not bound any Maslov-$2$ disks of positive area in $D_{1/n}^* \T^2$. Otherwise, $L'$ would bound a Malsov-$2$ disk of positive area at most $ \frac{2}{n_0}$ in $\C^2$, which \eqref{eq:A_min_estimates} does not allow for. Indeed, since $L'$ is Hamiltonian isotopic to $L$, we have $A^2_{\min}(\C^2, L') = A^2_{\min}(\C^2, L)$. But since there are no Maslov-$2$ disks of positive area, the third point of Theorem~4.1.1 of~\cite{AudLalPol94} is violated. By the first point of this theorem, we find that $\Psi^{-1}(L')$ is homologous to the zero section in $D_{1/n}^* \T^2$. \par

  This allows us to apply \cite[Theorem B (1)]{Riz19} to find that $\Psi^{-1}(L')$ is Hamiltonian isotopic inside $D_{1/n}^* \T^2$ to the graph of a closed 1-form. Since cohomologous 1-forms have Hamiltonian isotopic graphs, $\Psi^{-1}(L')$ is Hamiltonian isotopic to some $\T^2\times\{z\}$ in $D_{1/n}^* \T^2=\T^2\times B^2(1/n)$. In particular, $L'$ is Hamiltonian isotopic in $U_n$ to a Lagrangian whose $C^1$ distance (in the appropriate metric) to $L$ is at most $\frac{1}{n}$. The result then follows from discreteness of the flux (\cref{thm:B}) and the usual result on $C^1$-local path connectedness (\cref{prop:Flux-top}).
\end{proof}

We can now finish the proof of Theorem~\ref{thm:B_Hausdorff}, i.e.\ prove Hausdorff-closedness of $\LHam(L)$ in $\cl(L)$.

\begin{proof}[Proof of~\cref{thm:B_Hausdorff}]
  Let $\{L_i\}$ be a sequence in $\LHam(L)$ Hausdorff-converging to some Lagrangian torus $K$. For $i$ large, $L_i$ is in a Weinstein neighbourhood of $K$ of size $r_i>0$, and $r_i\to 0$. Therefore, for $i$ large, we have that $A^2_{\min}(L_i,D^*_{r_i}\T^2)\leqslant 2r_i< A^2_{\min}(L_i,\C^2)=A^2_{\min}(L,\C^2)$, which means that $L_i$ does not bound any Maslov-2 disk in the Weinstein neighbourhood of $K$. Just as in the proof of \cref{cor:local-connected_2-tori}, we can thus assume that $L_i\to K$ in the $C^1$ topology. The result then follows from \cref{thm:B} and \cref{prop:Flux-top}.
\end{proof}

\begin{remark}
	Given $\tau>0$, a similar argument shows that the subspace of $\cl$ formed by Lagrangians $L$ with $A^2_{\min}(L)\geqslant \tau$ is Hausdorff-locally path connected and closed in $\cl$. \\
	However, we cannot expect the whole space $\cl$ to have those properties. Indeed, take any submanifold $N$ and a sequence of smooth curves $\{c_k:S^1\to N\}$ $C^0$-converging to a space-filling curve of $N$. By~\cite[Proposition~A1.4]{LeOno1995}, for each $k$, there is a Hamiltonian isotopy $\phi_k$ of $\C^2$ sending $S^1(|\lambda_0(c_k)|)\times\{0\}$ to $c_k(S^1)$. In particular, if $\{\epsilon_k\}\subseteq\R_{>0}$ converges to zero fast enough, $\phi_k$ sends $S^1(|\lambda_0(c_k)|)\times S^1(\epsilon_k)$ to a Lagrangian torus $T_k$ such that $T_k\to N$ in the Hausdorff metric. However, $T_k$ is not topologically isotopic to $N$ in any small neighbourhood: the loop corresponding to the $S^1(\epsilon_k)$ factor is contractible in such a neighbourhood.
\end{remark}

\section{Monodromy and flux in $\CP^n$}
\commentintoc{We compute the monodromy group of the Clifford torus in complex projective spaces. The case $n=2$ is done in \S\,\ref{sec:case-cp2}, where we prove \cref{thm:Cliff_CP2}, while the case $n\geqslant 3$ is proven in \S\,\ref{ssec:mon_CPn}. \cref{thm:realizability_intro} then follows directly from these results. The shape invariant is computed in \S\,\ref{ssec:fluxCPn}, when $n\geqslant 3$, and in \S\,\ref{ssec:fluxcp2}, when $n=2$. This latter section is also where we prove \cref{thm:possible-fluxes_CP2_intro}.}

We now turn to complex projective spaces, where we are able to determine the monodromy group of the Clifford torus explicitly as well as (most of) its shape invariant. The computation of the monodromy group requires different techniques when $n=2$, which we explain in \S\,\ref{sec:case-cp2}, where we prove \cref{thm:Cliff_CP2}. The case $n\geqslant 3$ is proven in \S\,\ref{ssec:mon_CPn}. \cref{thm:realizability_intro} then follows directly from these results. The  shape invariant is computed in \S\,\ref{ssec:fluxCPn}, when $n\geqslant 3$, and in \S\,\ref{ssec:fluxcp2}, when $n=2$. This latter section is also where we prove \cref{thm:possible-fluxes_CP2_intro}.

\subsection{Lagrangian monodromy in $\CP^2$}
\label{sec:case-cp2}
In this section, we prove Theorem \ref{thm:Cliff_CP2} from the introduction.

\medskip
First, we claim that we can restrict our attention to the Clifford torus $\tcl$. Indeed, because any Lagrangian torus in $\CP^2$ is Lagrangian isotopic to $\tcl$, by the change-of-basepoint formula (\ref{eq:fm_base_point_change}), any two Lagrangian tori have isomorphic homological monodromy.

\medskip
It is easy to see that the matrices 
\begin{equation}
	\label{eq:monodromy_gen_torus_in_CP2}
		A = \begin{pmatrix}
			0 & 1 \\
			1 & 0
		\end{pmatrix}\qquad
		M =   \begin{pmatrix}
			2 & 1 \\
			-1 & 0
		\end{pmatrix}\qquad
		R =   \begin{pmatrix}
			0 & -1 \\
			1 & -1
		\end{pmatrix}
\end{equation}
belong to $\m M(\tcl)$.
Indeed, $A$ and $M$ are induced by the generators $\cm(\Sigma)$ and $\cm(\Xi)$ of $\Z_2\ltimes\Z$ from Sections \ref{sec:almost-toric-fibr} and \ref{sec:flux-monodr-morph} in $\C^2$, while $R$ comes from a rotation of the moment polytope of $\CP^2$.
Note that $A$ and $R$ generate all coordinate permutations in homogeneous coordinates.

\begin{remark}
  This shows that \cref{cor:charac_monotone-tori_C2}, which characterizes monotone Lagrangian tori in $\C^{2}$ in terms of their flux-monodromy group, fails in other spaces. The Hamiltonian isotopy of $\CP^{2}$ which swaps homogeneous coordinates induces an element $\Lambda \in \pi_1(\cl,L)$ whose flux-monodromy is $(0,A)$, for any non-monotone toric fibre of the form $L = T(a,a) \subset \CP^2$.
\end{remark}

We denote the mod-3 reduction by $\rho : \GL(2;\Z) \to \GL(2;\Z/3\Z)$, and write $\1$ for the vector $(1,1)$. Here is the algebraic statement that is central to the proof of Theorem \ref{thm:Cliff_CP2} for $\CP^{2}$.

\begin{proposition} \label{prop:monodromy_CP2}
	The monodromy group $\m M(\tcl)$ is isomorphic to a subgroup of $\GL(2;\Z)$ that can be described equivalently as
	\begin{enumerate}
		\item the subgroup $G_0 = \langle A,R,M \rangle$ generated by $A$, $R$, and $M$;
		\item the congruence subgroup $G_1 = \rho^{-1}(F)$ where $F=\langle \rho(A),\rho(M) \rangle$ ;
		\item the congruence subgroup $G_2 = \rho^{-1}(\mathrm{Stab}(\1)) = \{ P \in \GL(2;\Z) \,|\, \1 P = \1 \;\mathrm{mod}\; 3 \}$. 
	\end{enumerate}
\end{proposition}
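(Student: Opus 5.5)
The plan is to prove the chain of inclusions
\begin{equation*}
G_0 \subseteq \cm^\Z(\tcl) \subseteq G_2 \subseteq G_0,
\end{equation*}
together with the separate algebraic identity $G_1 = G_2$. Here $\cm^\Z(\tcl)$ denotes the homological monodromy group, written as matrices acting on row vectors, as in the proof of \cref{prop:image-FM_clifford-C2}.

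\textbf{Step 1: $G_0 \subseteq \cm^\Z(\tcl)$.} This is the observation made just before the statement. The matrices $A$ and $M$ are the monodromies of the loops $\Sigma$ and $\Xi$, transplanted into a Darboux ball of $\CP^2$ containing $\tcl$. The matrix $R$ is the monodromy of the Hamiltonian isotopy cyclically permuting homogeneous coordinates, which rotates the moment polytope. It remains only to check that all three are written in a common basis.

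\textbf{Step 2: $\cm^\Z(\tcl) \subseteq G_2$.} I would use preservation of the Maslov class, in the spirit of \cref{prop:image-FM_clifford-C2}. The boundary map $\pp \colon H_2(\CP^2,\tcl) \to H_1(\tcl)$ is surjective, and its kernel is the image of the line class, which has Maslov index $6$. Hence $m_{\tcl}$ descends to a well-defined class $\overline m \in H^1(\tcl;\Z/6)$, equal to $2\cdot\1$ in the standard basis. Every loop $\Lambda$ acts by some $\Phi^\Lambda$ on $H_2(\CP^2,\tcl)$ that fixes the line class and preserves $m_{\tcl}$. Therefore $\cm(\Lambda)$ preserves $\overline m$, that is, $2\cdot\1\, P \equiv 2\cdot\1 \pmod 6$, which is equivalent to $\1\, P \equiv \1 \pmod 3$.

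\textbf{Step 3: $G_1 = G_2$.} This is finite group theory in $\GL(2;\Z/3\Z)$, a group of order $48$. First, $\rho$ is surjective on $\GL(2;\Z)$, because every determinant in $(\Z/3)^\times=\{\pm1\}$ is realized. The group $\GL(2;\Z/3\Z)$ acts transitively on the $8$ nonzero row vectors, so $\mathrm{Stab}(\1)$ has order $6$; this also gives the index $8$ claimed in \cref{thm:Cliff_CP2}. One then checks directly that $\1 A = \1$ and $\1 M = \1$, and that $\1 R = (1,-2) \equiv \1$. Next, I would show that $\rho(A)$ and $\rho(M)$ generate a group of order $6$: $\rho(A)$ is an involution, $\rho(M)$ has order $3$, and they do not commute. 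Hence $F = \mathrm{Stab}(\1)$, and taking preimages gives $G_1 = G_2$.

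\textbf{Step 4: $G_2 \subseteq G_0$.} I expect this to be the main obstacle. Steps 1 and 2 give $\rho(G_0) = F$, so it suffices to prove $\ker\rho = \Gamma(3) \subseteq G_0$. Since $\rho(-I) = -I$ and $-I$ does not fix $\1$, we have $\Gamma(3) \subseteq \SL(2;\Z)$. Moreover $\Gamma(3)$ is torsion-free and free, generated by finitely many parabolic elements: for instance $T^3 = \begin{pmatrix} 1&3\\0&1\end{pmatrix}$, its transpose $\begin{pmatrix}1&0\\3&1\end{pmatrix}$, and their conjugates under the order-$3$ elliptic element. I would first locate unipotent elements in $G_0$; for example $M$ is conjugate to a shear, and conjugates of $M$ by $R$ and $A$ give shears along other primitive directions. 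I would then write each generator of $\Gamma(3)$ as an explicit word in $A$, $R$, $M$, and verify it by direct matrix multiplication. If the bookkeeping gets messy, the fallback is a Euclidean-algorithm argument: given $P \in G_2$, left-multiply by elements of $G_0$ to reduce $|P_{11}|+|P_{12}|$ in the first row, while staying inside the coset determined by the mod-$3$ constraint. This terminates at one of finitely many matrices, each of which can be checked to lie in $G_0$.
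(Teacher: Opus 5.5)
Your Steps 1--3 match the paper. The paper counts $\mathrm{Stab}(\1)$ by listing admissible columns instead of using orbit--stabilizer, and it gets $G_0\subset\cm(\tcl)\subset G_2$ from Maslov preservation exactly as you do. The routes differ at Step 4.

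The paper proves $G_0=G_2$ by an index count. It passes to $K=\langle R,M\rangle$ and $H_0=\langle -I_2,R,M\rangle$, and projects along $\SL(2;\Z)\to\PSL(2;\Z)\to\PSL(2;\Z/3\Z)$. It then shows that $K$ maps onto a cyclic group of order $3$ and contains $\Gamma(3)$, and concludes $[\GL(2;\Z):G_0]=8=[\GL(2;\Z):G_2]$.

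You instead prove $G_2\subset G_0$ directly, from $\rho(G_0)=F=\rho(G_2)$ and $\ker\rho\subset G_0$. This is more economical. Since $\ker\rho\subset\SL(2;\Z)$ does not contain $-I_2$, you need neither $H_0$ nor the passage to $\PSL$. Surjectivity of $\rho$ enters only the index statement of \cref{thm:Cliff_CP2}, not the proposition; note that your determinant remark does not prove it, since one needs $\SL(2;\Z)\to\SL(2;\Z/3\Z)$ onto.

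The essential input is the same in both proofs: explicit generators of $\Gamma(3)$ written as words in $R$ and $M$. The paper takes them from Sage. Your parabolic generating set can be certified by hand: $T^3=\bigl(\begin{smallmatrix}1&3\\0&1\end{smallmatrix}\bigr)$, its transpose and $\bigl(\begin{smallmatrix}4&-3\\3&-2\end{smallmatrix}\bigr)$ are the side pairings of the ideal hexagon formed by the Farey triangle $(0,1,\infty)$ and its three neighbours, which is a fundamental domain for $\Gamma(3)$.

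You have, however, left the decisive verification undone, and the heuristic you suggest would not find it. The matrix $M$ is parabolic at the cusp $-1$, i.e.\ it fixes the column $(-1,1)^{\top}$. Every element of $G_0$ preserves the set of primitive columns $v$ with $v_1+v_2\equiv 0 \bmod 3$. Hence every conjugate of a power of $M$ fixes such a line, whereas the three generators fix $(1,0)^{\top}$, $(0,1)^{\top}$ and $(1,1)^{\top}$. In fact $\langle R,M\rangle$ is the free product $\langle R\rangle*\langle M\rangle\cong\Z/3*\Z$. So the normal closure of $M$ in $G_0$ is the kernel of the morphism $R\mapsto 1$, $M\mapsto 0$ to $\Z/3$, and this kernel misses $T^3$.

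What you need are products. First, $MR=T^{-3}$. Its $R$-conjugates are $R(MR)R^{-1}=RM=\bigl(\begin{smallmatrix}1&0\\3&1\end{smallmatrix}\bigr)$ and $R^{-1}(MR)R=R^2MR^2=\bigl(\begin{smallmatrix}4&-3\\3&-2\end{smallmatrix}\bigr)$. These are the paper's identities $X=(MR)^{-1}$ and $Z=R^2MR^2$. With them, $\ker\rho\subset\langle R,M\rangle$, your Step 4 is complete, and the Euclidean fallback is unnecessary.

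One small correction: $\ker\rho\subset\SL(2;\Z)$ holds because $\det\equiv 1\bmod 3$ forces $\det=1$. The fact that $-I_2$ does not fix $\1$ only shows $-I_2\notin G_2$.
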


We first show that Theorem \ref{thm:Cliff_CP2} immediately follows from the proposition.

\begin{proof}[Proof of Theorem~\ref{thm:Cliff_CP2}]
  Since $\CP^{2}$ is $H$-monotone, \cref{prop:few-cases_discreteness} ensures that the flux set $\fl(\tcl)$ vanishes, so that the monodromy and flux-monodromy groups of $\tcl$ are isomorphic.

    The first claim of~\cref{thm:Cliff_CP2} is simply \textit{(3)} from the proposition above, with the notation $H'$ from the introduction changed to $G_{2}$ for readability.

  We now compute its index by using description \textit{(2)}.
    The matrix $A$ has order 2, $M$ has order 3, and $AMA=M^{-1}$. Hence, the group generated by $A$ and $M$ is isomorphic to the dihedral group of order 6. Since $\rho(A)$ and $\rho(M)$ are non-trivial in $\GL(2;\Z/3\Z)$, the same holds for $F$.
    Since $G_1$ is the congruence subgroup $\rho^{-1}(F)$, its index in $\GL(2;\Z)$ is
  $$[\GL(2;\Z):G_1] = [\GL(2;\Z/3\Z):F].$$
  Notice that there are $48$ ordered pairs of non-proportional vectors in $(\Z/3\Z)^{2}$. Any matrix formed of such a pair has determinant $\pm 1$ mod $3$, and thus is invertible  in $\mathrm M_{2}(\Z/3\Z)$. This shows that the order of $\GL(2;\Z/3\Z)$ is 48 and thus that $[\GL(2;\Z):G_1] = 8$.

  The claim concerning the generators obviously follows from \textit{(1)}.
\end{proof}

The following two facts have been established in the proof above but are independent of Proposition \ref{prop:monodromy_CP2}: (i) $F$ has order $6$, and (ii) $G_{1}$ has index $8$ in $\GL(2;\Z)$.
They will be used in the proof of said proposition, which we now explain.

\begin{proof}[Proof of~\cref{prop:monodromy_CP2}]
  First, notice that the columns of any matrix in $\GL(2;\Z/3\Z)$ that stabilizes $\1$ are of the form
  $\big(\begin{smallmatrix} 0\\1 \end{smallmatrix}\big)$,
  $\big(\begin{smallmatrix} 1\\0 \end{smallmatrix}\big)$, or
  $\big(\begin{smallmatrix} 2\\2 \end{smallmatrix}\big)$.
  Since any choice of two different columns among these three yields an invertible matrix in $\GL(2;\Z/3\Z)$, $\mathrm{Stab}(\1)$ is a subgroup of $\GL(2;\Z/3\Z)$ of order 6.
  Because $F$ is of order 6 and $F \subset \mathrm{Stab}(\1)$, we deduce that these groups coincide, so that $G_1=G_2$ and consequently $[\GL(2;\Z):G_{2}]=8$.

  Second, since $A$, $R$, and $M$ belong to $\m M(L)$, and since any Lagrangian isotopy preserves the Maslov class, we have the inclusions $G_0 \subset \m M(L) \subset G_2 = G_1$. Thus, we only need to show that $G_0$ and $G_2$ have the same index in $\GL(2;\Z)$, \textit{i.e.}\ that $[\GL(2;\Z):G_0]=8$. We compute this index in two steps.

  \smallskip	
  \textit{Step 1.} Since $G_0 \subset G_2$, the matrix $-I_2$ is not in $G_0$. Let $H_0 = \langle -I_2,R,M \rangle \subset \SL(2;\Z)$ and $K=\langle R,M \rangle$. Since $A$ has order 2 and $AKA^{-1}\subset K$, $[G_{0}:K]=2$. For the same reason, $[H_{0}:K]=2$, and thus
  \begin{equation}
    \label{eq:compute_index_1}
    [\GL(2;\Z):G_0]=[\GL(2;\Z):H_0]=2 \cdot [\SL(2;\Z):H_0].
  \end{equation}

  \textit{Step 2.} The following composition of surjective morphisms preserves the index of $H_0$
  \begin{equation*}
    \begin{tikzcd}
      \SL(2;\Z) \ar[r, twoheadrightarrow, "\pi"] & \PSL(2;\Z) \ar[r, twoheadrightarrow, "\rho"] & \PSL(2;\Z/3\Z).
    \end{tikzcd}
  \end{equation*}
  Indeed, $H_0$ obviously contains the kernel $\{\pm I_2\}$ of $\pi$, so that
  \begin{equation}
    \label{eq:compute_index_2}
    [\SL(2;\Z):H_0] = [\PSL(2;\Z):\pi(H_0)]
  \end{equation}
  with $\pi(H_0) = \pi(K) = \langle \pi(R), \pi(M) \rangle$.

  On the one hand, note that $MR = \big(\begin{smallmatrix} 1 & -3\\0 & 1 \end{smallmatrix}\big)$, so that $\rho(\pi(M)) = \rho(\pi(R))^{-1}$, and that $\rho(\pi(R))$ has order 3. Hence, $\rho \circ \pi (K)$ is the cyclic group of order 3, generated by the image of $R$.
  On the other hand, the kernel of $\rho$ is the principal congruence subgroup $\Gamma(3)$ of $\PSL(2;\Z)$. We make the following claim, whose proof we postpone to \cref{remark_generators_Gamma3} below.
  \begin{claim}
    \label{claim:monodromy_gen_torus_in_CP2}
    $\Gamma(3)$ is generated by the following elements of $\PSL(2;\Z)$:
    \begin{equation}
      \label{eq:set_generators_Gamma3}
      X = \begin{pmatrix}
            1 & 3 \\
            0 & 1
          \end{pmatrix},\quad
          Y =   \begin{pmatrix}
                  -8 & 3 \\
                  -3 & 1
                \end{pmatrix}, \quad \mbox{and }
                Z =   \begin{pmatrix}
                        4 & -3 \\
                        3 & -2
                      \end{pmatrix}.
                    \end{equation}
                  \end{claim}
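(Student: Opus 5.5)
The plan is to compare $\langle X,Y,Z\rangle$ with a standard free basis of $\Gamma(3)$ made of parabolic elements.

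\emph{Step 1: membership.} Check that $X,Y,Z$ lie in $\Gamma(3)$. Each matrix is congruent to $I_2$ modulo $3$: for example $Y\equiv\big(\begin{smallmatrix}1&0\\0&1\end{smallmatrix}\big)$ and $Z\equiv\big(\begin{smallmatrix}1&0\\0&1\end{smallmatrix}\big)$. So $\langle X,Y,Z\rangle\subset\Gamma(3)$.

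\emph{Step 2: rewrite the generators as parabolics.} Let $P_\infty=X=T^3$. Compute $X^{-1}Y$; it should come out as
\[
P_0=\begin{pmatrix}1&0\\-3&1\end{pmatrix},
\]
the parabolic element of width $3$ fixing the cusp $0$, so that $Y=XP_0$. Next, $Z$ has trace $2$ and $Z-I_2=\big(\begin{smallmatrix}3&-3\\3&-3\end{smallmatrix}\big)$. Hence $Z$ is parabolic fixing the cusp $1$. It is therefore $P_1^{\pm1}$, where $P_1$ is the conjugate of $T^3$ by $\big(\begin{smallmatrix}1&0\\1&1\end{smallmatrix}\big)$. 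This is a direct check. Consequently
\[
\langle X,Y,Z\rangle=\langle P_\infty,P_0,P_1\rangle .
\]

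\emph{Step 3: the standard basis.} Use the classical structure of $\Gamma(3)\subset\PSL(2;\Z)$. It has index $|\PSL(2;\Z/3\Z)|=12$ and is torsion free, since it meets neither the order-$2$ nor the order-$3$ conjugacy classes. Its quotient $\Gamma(3)\backslash\mathbb H^*$ has genus $0$ and four cusps $\infty,0,1,-1$, each of width $3$. A fundamental domain is the union of $12$ translates of the standard triangle, namely the ideal quadrilateral with vertices $-1,0,1,\infty$ together with its Farey subdivisions. Its side pairings are exactly the three parabolics $P_\infty,P_0,P_1$. By Poincar\'e's polygon theorem, equivalently by Reidemeister--Schreier applied to $\PSL(2;\Z)=\Z_2*\Z_3$, $\Gamma(3)$ is free of rank $1+12/6=3$ on these three elements. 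The fourth cusp stabilizer is generated by their product.

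\emph{Main obstacle.} The delicate point is Step 3: identifying a specific fundamental domain and checking that its side pairings are precisely $P_\infty,P_0,P_1$, with the correct orientations and widths. If the geometric route becomes messy, I would fall back on an algebraic argument. $\PSL(2;\Z)/\langle\!\langle T^3\rangle\!\rangle$ is the $(2,3,3)$ triangle group $\cong A_4$, of order $12$, so $\Gamma(3)=\langle\!\langle T^3\rangle\!\rangle$. Then one verifies that the $\PSL(2;\Z)$-conjugates of $T^3$, taken over a transversal of $12$ cosets, are words in $P_\infty,P_0,P_1$. Together with Step 2, either route gives $\Gamma(3)=\langle X,Y,Z\rangle$.
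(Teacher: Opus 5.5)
\textbf{Gap in Step~3.} Your Steps~1 and~2 are correct. We have $X^{-1}Y=P_0=\bigl(\begin{smallmatrix}1&0\\-3&1\end{smallmatrix}\bigr)$ and $Z^{-1}=\bigl(\begin{smallmatrix}-2&3\\-3&4\end{smallmatrix}\bigr)=UT^3U^{-1}$ with $U=\bigl(\begin{smallmatrix}1&0\\1&1\end{smallmatrix}\bigr)$. Hence $K:=\langle X,Y,Z\rangle=\langle P_\infty,P_0,P_1\rangle\subseteq\Gamma(3)$.

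Step~3 fails as written. The ideal quadrilateral with vertices $-1,0,1,\infty$ is the classical fundamental domain of $\Gamma(2)$, not of $\Gamma(3)$:
\begin{itemize}
\item it has area $2\pi$, i.e.\ six copies of the modular triangle, not twelve;
\item it has width $2$ at $\infty$, since its vertical sides are paired by $z\mapsto z+2\notin\Gamma(3)$;
\item with four sides it cannot carry three side pairings.
\end{itemize}
So the key assertion ``its side pairings are exactly $P_\infty,P_0,P_1$'' is false for the domain you name. Nothing else in Step~3 ties $\Gamma(3)$ to these particular elements: a rank count via Reidemeister--Schreier does not single them out, since three elements of a free group of rank $3$ need not generate it.

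A domain that works is the ideal hexagon with vertices $\infty,-1,0,\tfrac12,1,2$. It is the union of the Farey triangles $(\infty,-1,0)$, $(\infty,0,1)$, $(\infty,1,2)$, $(0,\tfrac12,1)$, and has area $4\pi$. Its sides are paired by $X\colon[-1,\infty]\to[2,\infty]$, $P_0^{-1}\colon[-1,0]\to[\tfrac12,0]$ and $P_1\colon[2,1]\to[\tfrac12,1]$. The vertices $\infty,0,1$ are one-element cycles. The set $\{-1,2,\tfrac12\}$ is a cycle with transformation $P_0P_1X=\bigl(\begin{smallmatrix}-2&-3\\3&4\end{smallmatrix}\bigr)$, which is parabolic and fixes $-1$. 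Poincar\'e's theorem then shows that $K$ is free on $X,P_0,P_1$ with covolume $4\pi=12\cdot\tfrac{\pi}{3}$. This is the covolume of $\Gamma(3)$, so $K=\Gamma(3)$.

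If you translate the hexagon to the vertices $\infty,0,\tfrac12,1,2,3$, the side pairings are literally $X,Y,Z$. This is the kind of special polygon encoded by Kulkarni's Farey symbols. The paper gives no hand argument at all, only the Sage output of that algorithm, so a corrected Step~3 gives a genuinely self-contained proof.

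\textbf{Gap in the fallback.} Knowing $rT^3r^{-1}\in K$ for $r$ in a transversal of $\Gamma(3)$ does not directly control $gT^3g^{-1}=h\,(rT^3r^{-1})\,h^{-1}$ for $g=hr$ with $h\in\Gamma(3)$. You would need to know already that $h\in K$. It only shows that the normal closure of $K$ \emph{in} $\Gamma(3)$ is $\Gamma(3)$, which does not imply $K=\Gamma(3)$.

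The right criterion is that $K$ is normal in $\PSL(2;\Z)$. This is a finite check on $S=\bigl(\begin{smallmatrix}0&-1\\1&0\end{smallmatrix}\bigr)$ and $T=\bigl(\begin{smallmatrix}1&1\\0&1\end{smallmatrix}\bigr)$:
\begin{itemize}
\item $SXS^{-1}=P_0$ and $SP_0S^{-1}=X$;
\item $SP_1S^{-1}=T^{-1}P_0T=(P_0P_1X)^{-1}$;
\item $TXT^{-1}=X$ and $TP_0T^{-1}=P_1$;
\item $T^{-1}P_1T=P_0$ and $TP_1T^{-1}=X(P_0P_1X)^{-1}X^{-1}$.
\end{itemize}
With normality in hand, your observation that $\PSL(2;\Z)/\langle\!\langle T^3\rangle\!\rangle$ is the $(2,3,3)$ triangle group of order $12$ gives $\Gamma(3)=\langle\!\langle T^3\rangle\!\rangle\subseteq K$. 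This route is purely algebraic and avoids Poincar\'e's theorem altogether.
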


                  It is easy to check that
                  \begin{equation*}
                    X = (MR)^{-1}, \quad Y = XRXR^{-1}= (RM^2R)^{-1}, \quad \mbox{and } Z = R^2X^{-1}R = R^2MR^2.
                  \end{equation*}
                  Hence, $\ker \rho$ is a subgroup of $\pi(K)$, and
                  \begin{equation*}
                    [\PSL(2;\Z):\pi(H_0)] = [\PSL(2;\Z/3\Z):\rho\circ\pi(K)] = \frac{12}{3} = 4,
                  \end{equation*}
                  which concludes the proof together with \eqref{eq:compute_index_1} and \eqref{eq:compute_index_2}.
                \end{proof}

\begin{remark}
  \label{remark_generators_Gamma3}
  In \cite{kulkarni}, Kulkarni establishes two procedures, which respectively
  \begin{enumerate}
    \item associate a Farey symbol to any principal congruence subgroup $\Gamma(N)$ \cite[\S 12-13]{kulkarni},
    \item determine an independent set of generators for any subgroup given by a Farey symbol \cite[Theorem 6.1]{kulkarni}.
    \end{enumerate}
    Both procedures are implemented in Sage by the following two commands:
    \begin{lstlisting}[numbers=none,numberstyle=\tiny,numbersep=0pt]
    sage: f = FareySymbol(Gamma(3)); f
    FareySymbol(Congruence Subgroup Gamma(3))
    sage: f.generators()
    [
    [1 3]  [-8  3]  [4 -3]
    [0 1], [-3  1], [3 -2]
    ]
  \end{lstlisting}
  These define $\tt f$ as the principal congruence subgroup $\Gamma(3)$ and give the set of generators \eqref{eq:set_generators_Gamma3}.
\end{remark}

        \begin{remark}
          The main steps of the proof of \cref{prop:monodromy_CP2} above were suggested by Yves Benoist, except for the last one. Indeed, in order to compute the index of $H_0$ in $\SL(2;\Z)$ (\textit{Step 2} above), he suggested using the action of $\PSL(2;\Z)$ on the Poincaré (hyperbolic) half-plane $\mathbb H = \{z\in \C \,|\, \im(z) > 0\}$ by Möbius transformations:
          \begin{equation}
            \label{eq:3}
            A = \begin{pmatrix}
                  a & b\\ c & d
		\end{pmatrix} \mapsto \left[ h_A : z \mapsto \frac{az+b}{cz+d} \right].
              \end{equation}
              Because $\pi(H_0)$ acts by isometries, the index $[\SL(2;\Z):H_0]$ is given by the ratio of the hyperbolic area of the respective fundamental domains. It is well-known that a fundamental domain of the action of $\PSL(2;\Z)$ is the hyperbolic triangle of vertices $x_1 = e^{i \pi/3}$, $\infty$, $x_2 = e^{2i \pi/3}$, whose area is $\tfrac{\pi}{3}$.

              Because the Möbius transformation $h_R$ associated to $R$ maps the geodesic joining $x_1$ to $\infty$ to that joining $0$ to $x_1$, and $h_M$ maps the geodesic joining $-1$ to $0$ to that joining $-1$ to $\infty$, the geodesic polygon of vertices $\infty$, $-1$, $0$, and $x_1$ is a fundamental domain of the action of $\pi(H_0)$. The Gauss--Bonnet formula then gives that its area is $\tfrac{4\pi}{3}$, so that $[\SL(2;\Z):H_0]=4$.
            \end{remark}

\subsection{Monodromy in $\CP^n$}
\label{ssec:mon_CPn}
Recall from \S\,\ref{ssec:mtofm} the definition of the \emph{homological Lagrangian monodromy} group
\begin{align*}
	{\cm}^\Z(L) 
	&=
	\{ (\phi^\Lambda)_{*} \in \Aut(H_1(L;\Z)) \sth \Lambda \in \pi_1(\cl,L) \}
\intertext{and the \emph{extended homological Lagrangian monodromy} group}
	\widehat{\cm}^\Z(L) 
	&=
	\{ \Phi^\Lambda \in \Aut(H_2(X,L;\Z)) \sth \Lambda \in \pi_1(\cl,L) \}.
·\end{align*}
By $\fmh^{\Z}(L)$ we denote the corresponding subgroup generated by Hamiltonian isotopies, which we call the \emph{extended (homological) Hamiltonian monodromy group}.

\begin{remark}
   \label{rem:MCG-vs-Aut}
 Since $\del:H_2(X,L;\Z)\to H_1(L;\Z)$ is surjective in this context, the extended homological monodromy group $\fml^{\Z}(L)$ fully determines the homological monodromy group $\cm^{\Z}(L)$. 
 How much this determines the monodromy group $\cm(L) \subset \MCG(L)$ depends on $n$. Indeed, it is known that $\MCG(\T^n)$ is isomorphic to $\Aut H_1(\T^n;\Z)$ when $n=3$~\cite{Wal68}, but not when $n\geqslant 5$~\cite{Hat78}~---~the case $n=4$ remains open. Therefore, when $n\geqslant 4$, our computations may not fully determine $\cm(L)$.
 \end{remark}

 Since we will only consider \emph{homological} monodromies below, we drop the superscript $\Z$ from the notation throughout this subsection. \qedhere

Any element $\Psi \in \fml$ preserves the Maslov class and acts by the identity on ambient elements, i.e.\ on the image $J = \im (j \colon H_2(X) \rightarrow H_2(X,L))$. \cref{q:realizability} asks whether $\fml$ contains precisely \emph{all} such elements. The goal of this section is to answer the question positively for tori that are Lagrangian isotopic to the Clifford torus in $\CP^n$. 

Moreover, we determine the corresponding homological Lagrangian monodromy group. Let $\1 = (1,\ldots,1)$, and define the following subgroups of $\GL(n;\Z)$:
\begin{align}
	\label{eq:Hsubgroup}
	H &= \{A \in \GL(n;\Z) \sth \1 A = \1 \}, \\
	\label{eq:Hprimesubgroup}
	H'&= \{A \in \GL(n;\Z) \sth \1 A \equiv \1  \mod n+1\}.
\end{align}
We obviously have $H \subset H'$. The group $H$ is isomorphic to $\Z^{n-1} \rtimes \GL(n-1;\Z)$. The group $H'$ is the preimage of the mod-$(n+1)$ reduction of $H$ and has finite index in $\GL(n;\Z)$.

\begin{theorem}
	\label{thm:realizability}
	Let $n \geqslant 3$. The answer to \cref{q:realizability} is positive for
	\begin{enumerate}
		\item any Lagrangian torus $L \subset \C^n$ that is Lagrangian isotopic to a product torus $T_0$;
		\item any Lagrangian torus $L \subset \CP^n$ that is Lagrangian isotopic to a toric fibre. 
	\end{enumerate}
	More precisely, in the natural bases induced by $T_0 \subset \C^n$, and $T_{\rm Cl} \subset \CP^n$, the corresponding (non-extended) homological Lagrangian monodromy groups are given by
	\begin{equation*}
		\cm(T_0) = H, \quad
		\cm(T_{\rm Cl}) = H'.
	\end{equation*}
\end{theorem}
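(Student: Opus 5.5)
Both statements split into an upper bound and a lower bound. For the upper bound, the only input is the preservation of the Maslov class together with the triviality of the action on ambient classes; the real work is the lower bound, where we construct enough Lagrangian loops. By the change-of-basepoint formula of \cref{prop:fm_basic}, and since $\Phi^{\Pi}$ conjugates the relevant groups, it suffices to treat $L=T_0$ a product torus (which we may take monotone, $T_0=T(1,\ldots,1)$) and $L=T_{\rm Cl}$. In $\C^n$ we have $H_2(\C^n,T_0)\cong H_1(T_0)=\Z^n$ and $m_{T_0}=2\cdot\1$. Any admissible $\Psi$ is therefore just a matrix preserving $\1$, so the condition is exactly $\Psi\in H$ (acting by right multiplication on row vectors, as in the paper). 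In $\CP^n$, $H_2(\CP^n,T_{\rm Cl})\cong\Z^{n+1}$ is spanned by the $n+1$ Maslov-2 disk classes $D_0,\ldots,D_n$ with $\sum D_i=[\CP^1]$ and $\sum\partial D_i=0$. Preserving $m$ and fixing the line class means $\Psi$ permutes nothing a priori but satisfies $m(\Psi D_i)=2$ and $\Psi(\sum D_i)=\sum D_i$. Writing $\partial D_i$ in terms of $e_1,\ldots,e_n$ with $D_0\mapsto -\1$, a direct computation shows that the induced matrices on $H_1$ are exactly those of $H'$. Indeed, $\Psi D_i=D_{0}+(\text{lift of }v_i)$ forces the mod-$(n+1)$ condition. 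Conversely, each $A\in H'$ lifts uniquely to an admissible $\Psi$, because $\partial$ has kernel spanned by the ambient line class, which is fixed. So both the extended and non-extended statements reduce to showing $H\subset\cm(T_0)$ and $H'\subset\cm(T_{\rm Cl})$.

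For $\C^n$ I would follow Chekanov \cite[Proposition 5.1 (a)]{Che96}, but realize the generators through almost toric geometry as in \cref{prop:C2}. The group $H\cong\Z^{n-1}\rtimes\GL(n-1;\Z)$ is generated by two kinds of elements: coordinate permutations, realized by Hamiltonian $\mathrm{U}(n)$-paths as the loop $\Sigma$, and elementary shears $e_i\mapsto e_i+(e_j-e_k)$ fixing $\1$. The shears come from the $\C^2$ loop $\Xi$ applied in a pair of coordinates $(z_j,z_k)$ times the identity in the others. In the basis $f_1=e_j-e_k$, \cref{prop:C2} gives the transvection adding $f_1$ to $e_k$, and conjugating by permutations produces all transvections $e_i\mapsto e_i+(e_j-e_k)$. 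I then check the algebraic fact that permutations together with these transvections generate $H$. This is elementary: $H$ stabilizes the sublattice $\1^\perp$-complement, and one uses that $\SL(n-1;\Z)$ is generated by elementary matrices plus a sign.

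For $\CP^n$, the elements $\cm(T_{\rm Cl})\supseteq H$ arise by performing the $\C^n$ loops in an affine chart containing a Darboux ball around a small product torus and transporting back to $T_{\rm Cl}$ along the obvious toric path. By \cref{prop:fm_basic} the resulting monodromy is conjugate (by the identity in toric bases) to the $\C^n$ monodromy, though I must check that the chart's basis matches, since the extra vertex direction changes $\1$ only by the relabeling. The cyclic rotation $R$ of the simplex then gives an element not in $H$. The remaining step, and the main obstacle, is the purely algebraic claim that $\langle H,R\rangle=H'$, i.e.\ that these generate the full congruence-type subgroup for $n\geq3$. My plan is to let $H'$ act on the finite set $(\Z/(n+1))^n$, which reduces the claim to showing that $\langle H,R\rangle$ contains the principal congruence subgroup $\Gamma(n+1)$ of $\SL(n;\Z)$ and surjects onto $\rho(H')$. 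For $n\geq3$, the congruence subgroup property (Bass--Milnor--Serre, or Mennicke/Tits) says that $\Gamma(n+1)$ is the normal closure in $\SL(n;\Z)$ of the $(n+1)$-th powers of elementary matrices; indeed the subgroup generated by elementary $(n+1)$-multiples is already normal and of finite index. So it suffices to show that $\langle H,R\rangle$ contains all $E_{ij}(n+1)$ and is normalized appropriately. Containment follows since products like $MR$ in the $n=2$ case give $E(n+1)$-type matrices, and conjugating by $H$ spreads them. Surjectivity mod $n+1$ is checked by noting that $\rho(H')=\mathrm{Stab}(\1)$ is generated by $\rho(H)$ and $\rho(R)$ via an orbit-counting argument. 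This is where the acknowledged help from Machado presumably enters, and it is the step I expect to require the most care.
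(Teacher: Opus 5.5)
The reduction to $T_0=T(1,\ldots,1)$ and $T_{\rm Cl}$ is fine, and so is your upper bound ($H$, resp.\ $H'$, from the Maslov class and the ambient classes). Your algebraic target for $\C^n$ is also the right one. Permutations together with the shear $S_1\colon e_1\mapsto e_1+e_2-e_3$, $e_i\mapsto e_i$ for $i\geqslant 2$, generate $H$; this is \cref{lem:gen}.

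\textbf{Part (1): the loops you propose cannot produce $S_1$.} Place the loop $\Xi$ of \cref{prop:C2} in the coordinates $(z_j,z_k)$ and multiply by fixed circles in the other factors. The resulting monodromy fixes $f_1=e_j-e_k$ and sends $e_k\mapsto e_j$, hence also $e_j\mapsto 2e_j-e_k$. In other words it is $x\mapsto x+(x_j+x_k)(e_j-e_k)$, the matrix $M$ of \S\ref{sec:case-cp2}. It is not a shear $e_i\mapsto e_i+(e_j-e_k)$ with $i\notin\{j,k\}$ and all other basis vectors fixed. Conjugating by permutations only relabels the indices.

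This is not a bookkeeping problem; there is an invariant that rules it out. By \cref{prop:image-FM_clifford-C2}, every monodromy of a loop based at the Clifford torus of $\C^2$ sends $e_j-e_k$ to $\pm(e_j-e_k)$. Hence every such product loop fixes $e_1+\cdots+e_n \bmod 2$ in $H_1(T_0;\Z/2)$, and so does every permutation. But $S_1(e_1+\cdots+e_n)=e_1+\cdots+e_n+e_2-e_3$. So for every $n\geqslant 3$ your loops generate a proper subgroup of $H$, and part (1) is not proved.

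\textbf{What is missing is a genuinely $n$-dimensional loop.} The paper uses the fully twisted Chekanov torus $L_1=\Theta_0^{n-1}(S^1(1))$. It has an explicit Lagrangian isotopy $\Pi$ from $T_0$ to $L_1$ with $\Phi^\Pi(e_n)=\xi$ and $\Phi^\Pi(e_i)=\zeta_i+\xi$ for $i<n$. By Chekanov \cite[Theorem 4.5]{Che96}, every Maslov-preserving automorphism fixing the distinguished class $\xi$ is realized by a Hamiltonian diffeomorphism preserving $L_1$. Since $S_1$ preserves the Maslov class and fixes $e_n$, transporting along $\Pi$ puts $S_1$ in $\fml(T_0)$.

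\textbf{Part (2).} It inherits the gap, because it starts from $H\subseteq\cm(T_{\rm Cl})$. Your step $\langle H,R\rangle=H'$ is also not supported as written. The subgroup generated by the $E_{ij}(n+1)$ is not normal in $\SL(n;\Z)$: conjugating $I+(n+1)E_{12}$ by $I+E_{21}$ produces a nonzero diagonal part modulo $(n+1)^2$. Only the normal closure in $\SL(n;\Z)$ equals $\Gamma(n+1)$, and conjugating by elements of $H$ does not give you that.

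None of this machinery is needed. \cref{lem:gen} shows, by a Gauss-type reduction, that $H'$ is generated by permutations, $S_1$, and $T=I+(n+1)E_{21}$. The homogeneous-coordinate transposition swapping $e_1$ and $-(e_1+\cdots+e_n)$ gives an element $Q\in H'$ with $\1\, QT=\1$. Hence $QT\in H$, so $T$ is realized, and this closes the argument once $S_1$ has been obtained as above.
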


For the case of $T_0 \subset \C^n$, this result goes back to Chekanov~\cite[Proposition 5.1 (a)]{Che96}. We reprove it for the reader's convenience and because our proof of \emph{(2)} will be based on it. In both cases, the strategy of the proof is as follows. Let $L_0,L_1 \subset X$ be Lagrangian isotopic Lagrangians. Then,
\begin{equation}
	\label{eq:subg_L0_L1}
	\fmh(L_1) \subset \fml(L_1) \quad\text{and}\quad
	\Phi^{\Pi} \fmh(L_0) (\Phi^{\Pi})^{-1} \subset \fml(L_1),
\end{equation}
where $\Pi = \{L_t\}_{t \in [0,1]}$ is a Lagrangian isotopy from $L_0$ to $L_1$ and $\Phi^{\Pi} \colon H_2(X,L_0) \rightarrow H_2(X,L_1)$ denotes the isomorphism it induces. In both cases appearing in \cref{thm:realizability}, there is a convenient choice of $L_0$ and $L_1$ such that the union of the two subgroups \eqref{eq:subg_L0_L1} generates all elements preserving the Maslov class and acting by the identity on $J$.  \smallskip

Before proving \cref{thm:realizability}, we first need some algebraic preparations. These are surely known to experts on discrete groups, but we were unable to find a suitable reference. 

\begin{lemma}
	\label{lem:gen}
	Let $n \geqslant 3$. The group $H'$ is generated by the set of all permutation matrices, together with the matrices
	\begin{equation*}
		S_1 = 
		\begin{pmatrix}
			1 & 0 & 0 & \dots & 0 \\
			1 & 1 & 0 & \dots & 0 \\
			-1 & 0 & 1 & \dots & 0 \\
			\vdots & \vdots & \vdots & \ddots & \vdots \\
			0 & 0 & 0 & \dots & 1 
		\end{pmatrix}, \quad
		T = 
		\begin{pmatrix}
			1 & 0 & 0 & \dots & 0 \\
			n+1 & 1 & 0 & \dots & 0 \\
			0 & 0 & 1 & \dots & 0 \\
			\vdots & \vdots & \vdots & \ddots & \vdots \\
			0 & 0 & 0 & \dots & 1 
		\end{pmatrix}.
	\end{equation*}
	Similarly, the group $H$ is generated by simply adding $S_{1}$ to the set of all permutations.
\end{lemma}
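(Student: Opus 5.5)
We prove the statement for $H$ first and then deduce the statement for $H'$ from it. Throughout, $\1 A$ is the row vector of column sums of $A$. Thus $H$ consists of the matrices all of whose column sums equal $1$, and $H'$ of those whose column sums are all $\equiv 1 \mod n+1$. The permutation matrices and $S_1$ visibly lie in $H$, and $T$ lies in $H'$, so the inclusions $\supseteq$ are clear. It remains to show that the proposed generators suffice.

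\emph{The group $H$.} Let $G=\langle \text{permutations}, S_1\rangle$. Left multiplication by $S_1$ performs the row operation $\mathrm{row}_2 \mathrel{+}= \mathrm{row}_1$, $\mathrm{row}_3 \mathrel{-}= \mathrm{row}_1$. Conjugating $S_1$ by permutations, $G$ contains every elementary matrix $E_{i;j,k}$ that adds row $i$ to row $j$ and subtracts it from row $k$, for pairwise distinct $i,j,k$. This is the only place where $n\geqslant 3$ is used. These operations preserve column sums.

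Let $A\in H$. We reduce the first column $v$, which satisfies $\sum v_i=1$, to $e_1$ by a Euclidean algorithm. The step is: if $v_i$ and $v_j$ are both nonzero, use $E_{i;j,k}^{\pm1}$, which changes $v_j$ by $\pm v_i$ and $v_k$ by $\mp v_i$. We run this on a pair of entries, using a third slot $k$ as a buffer to keep the sum fixed, and use permutations to move entries around. Since $v$ is primitive, this brings $v$ to $e_1$; the bookkeeping of the buffer is the routine part.

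Once the first column is $e_1$, we use $E_{1;j,k}$, which add row $1$ to row $j$ and subtract it from row $k$, to clear the first row entries in positions $j\geqslant 2$. Here we use that the column sums of $A$ are $1$, so each column $j\geqslant 2$ has entries outside row $1$ summing to $1-a_{1j}$. An equivalent, cleaner route is to note that $H\cong \Z^{n-1}\rtimes \GL(n-1;\Z)$ via the action on the invariant sublattice $\{\sum x_i=0\}$, which is the $A_{n-1}$ root lattice. The elements $E_{i;j,k}$ together with $S_n$ generate both its automorphisms fixing the quotient $\Z$ and the translation part. By induction on the lower-right block, $A$ is reduced to the identity.

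\emph{The group $H'$.} Let $G'=\langle G,T\rangle\subseteq H'$. The map $A\mapsto \1 A$ sends $H'$ into the set $U$ of primitive integer row vectors $u$ with $u\equiv \1 \mod n+1$. The group $H'$ acts on $U$ on the right by $u\mapsto uB$, and the stabilizer of $\1$ in $H'$ is exactly $H\subseteq G'$. Hence it suffices to prove that $G'$ acts transitively on $U$. Then for $A\in H'$ we pick $B\in G'$ with $\1 AB=\1$, so that $AB\in H$ and $A\in G'$.

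Transitivity is the main obstacle, and again we argue by a Euclidean reduction. Right multiplication by $S_1$ gives $u\mapsto(u_1+u_2-u_3,u_2,u_3,\dots)$. Right multiplication by $T$ gives $u\mapsto(u_1+(n+1)u_2,u_2,\dots)$. Together with their permutation conjugates, these let us add $u_j-u_k$, or $(n+1)u_j$, to any coordinate $u_i$.

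Write $u=\1+(n+1)w$. The moves of the first kind act on $w$ by adding differences $w_j-w_k$ to $w_i$. The moves of the second kind add $u_j=1+(n+1)w_j$ to $w_i$. We therefore first try to use the second kind of move to make the reduced vector $w$ have a common structure, for instance making $\gcd$-type reductions to bring all but one $w_i$ to $0$. We then kill the last coordinate using a difference move against a zero entry, combined with primitivity of $u$.

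If the bookkeeping becomes messy, the fallback is to compare indices. We would show that the reduction map $G'\to\GL(n;\Z/(n+1))$ surjects onto the stabilizer of $\1$ intersected with the image of $\GL(n;\Z)$, which is generated by the images of permutations and $S_1$. We would then show that $G'$ contains the principal congruence subgroup of level $n+1$. The latter holds because that subgroup is the normal closure of the elementary matrices $I+(n+1)E_{ij}$ for $n\geqslant 3$ (a Mennicke/Bass--Milnor--Serre-type fact), and these are conjugates of $T$ by permutations and by products with elements of $G$.
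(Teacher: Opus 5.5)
\textbf{The group $H$.} Your reduction of the first column is sound and is essentially the paper's Step~1: you run Euclid with a third slot as buffer, and the buffer is forced to $0$ at the end by the column sum.

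The step after it fails. Left multiplication by $E_{1;j,k}$ adds $\pm$row~$1$ to rows $j$ and $k$. So it never changes row~$1$, and it destroys the column $e_1$.

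More fundamentally, once the first column is $e_1$ the matrix is $\left(\begin{smallmatrix}1&c\\0&D\end{smallmatrix}\right)$. Here $c=\1-\1 D$ is forced, and $D$ is an arbitrary element of $\GL(n-1;\Z)$ whose column sums need not be $1$. So there is no independent first row to clear, and no induction inside the family of groups $H$ is available. Your ``cleaner route'' via $\Z^{n-1}\rtimes\GL(n-1;\Z)$ simply restates what is to be proved.

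The missing idea is the paper's homomorphism $\chi\colon\GL(n-1;\Z)\to H$, $D\mapsto\left(\begin{smallmatrix}1&\1-\1 D\\0&D\end{smallmatrix}\right)$. It sends permutations to permutations, and it sends an elementary shear to one of your $E_{l;i,1}$, which is a permutation conjugate of $S_1$. Since permutations and one shear generate $\GL(n-1;\Z)$ (this needs $n-1\geqslant 2$), you get $\chi(D)\in G$. Concretely, the moves $E_{i;j,1}$ with $i,j\geqslant 2$ fix the column $e_1$ and act on $D$ as elementary row operations.

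\textbf{The group $H'$.} Your reduction to transitivity of $G'$ on $U$ is valid. But $U=\1\cdot H'$ and the stabilizer of $\1$ in $H'$ is $H$, so this transitivity is equivalent to the lemma itself, and you only sketch a plan for it.

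The plan's last step is wrong as stated. If $w=(w_1,0,\dots,0)$, a difference move on slot~$1$ adds $w_j-w_k=0$. What works is a $T$-move, which adds $u_j=1$. The real crux, producing one coordinate equal to $1$, is not addressed.

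It can be done with $T$-moves alone:
\begin{enumerate}
\item By the Chinese remainder theorem, pick $t$ with $\gcd(u_2+t(n+1)u_1,u_3,\dots,u_n)=1$. This is possible because primes dividing $u_3\equiv 1$ do not divide $n+1$, and primitivity of $u$ handles primes dividing $u_1$.
\item Then $u_1$ can be shifted by any multiple of $n+1$, hence to $1$.
\item Afterwards every other coordinate can be shifted to $1$.
\end{enumerate}

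Your fallback does not close the gap. Mennicke/Bass--Milnor--Serre give the level-$(n+1)$ congruence subgroup as the normal closure of the $I+(n+1)E_{ij}$ in $\SL(n;\Z)$. But $G'$ is not normal in $\SL(n;\Z)$, so containing these matrices and their $G'$-conjugates does not give the congruence subgroup.

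The paper sidesteps transitivity entirely. It runs the same left-multiplication elimination on the first column of $B\in H'$, reaching the column $(1,a,0,\dots,0)$ with $a\equiv 0 \bmod n+1$, and removes $a$ with one power of $T$. After the $\chi$-reduction of the block, it clears the leftover first row, whose entries are $\equiv 0\bmod n+1$, with permutation conjugates of $T$.
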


The proof works by a modification of the Gau{\ss} algorithm. For the full group $\GL(n;\Z)$, this algorithm shows that a set of generators is given by all permutations and an elementary shear, i.e.\ a matrix differing from the identity by adding a $1$ somewhere off the diagonal. It works by tranforming a given matrix in the group into an upper triangular one by taking repeated linear combinations of rows and permuting rows and columns. This corresponds to multiplying by permutation matrices and the elementary shear and thus proves that these generate all of $\GL(n;\Z)$.  \smallskip

\begin{proof}[Proof of Lemma \ref{lem:gen}]
  Let $K \subset H$ be the subgroup generated by permutations and $S_1$. Similarly, let $K' \subset H'$ be the subgroup generated by permutations, $S_1$, and $T$. Our goal is to show that $K = H$, and $K' = H'$ by proving that any element in $H$ and $H'$, respectively, can be reduced to the identity $I \in \GL(n;\Z)$ through multiplication by elements in $K$ and $K'$, respectively. Let us denote by $E_{i,j}$ the elementary matrix whose only non-zero entry is a 1 at the intersection of the $i$-th row and the $j$-th column. Now let $l,i,j \in \{1,\ldots,n\}$ be pairwise distinct indices. Throughout the proof, we will denote 
  \begin{equation*}
    S_{l,i,j} = I + E_{il} - E_{jl}.
  \end{equation*}
  These matrices are obviously in $K,K'$ and satisfy $S_{l,i,j}^k = I + kE_{il} - kE_{jl}$. For any $(n \times n)$-matrix, we use the following notation 
  \begin{equation*}
    B = 
    \begin{pmatrix}
      R_1 \\
      \vdots \\
      R_n
    \end{pmatrix}
    = (C_1 \dots C_n)
  \end{equation*}
  for its rows and columns. Left-multiplication by $S^k_{l,i,j}$ yields the new rows $R_i'$ given by
  \begin{equation}
    \label{eq:rowtrafo}
    R_i'
    = 
    R_i + k R_l,\quad
    R_j'
    = 
    R_j - k R_l, \quad
    \text{and} \quad
    R_h' = R_h \text{ for all } h \notin \{i,j\}.
  \end{equation}
  Now let $B\in H'\supset H$·. We use the transformation on rows to show the following.
  
\medskip
  {\bf Step 1.} We can bring $B$ to the form 
  \begin{equation*}
    B' = 
    \begin{pmatrix}
      1 & c \\
      0 & D
    \end{pmatrix},
  \end{equation*}
  for $c \in \Z^{n-1}$ and $D \in \GL(n-1;\Z)$, by multiplying $B$ with elements in
  \begin{enumerate}
  \item $K$ if $B \in H$;
  \item $K'$ if $B \in H'$. 
  \end{enumerate}
  To that end, denote (the transpose of) the first column of $B = B^{(0)}$ by $(a^{(0)}_1,\ldots,a^{(0)}_n) \in \Z^n$. Up to applying transformations of the form \eqref{eq:rowtrafo} and permutations, we can assume that $a^{(0)}_1 \in \Z_{> 0}$. The idea is to subsequently multiply $B$ on the left by elements of the form $S^k_{l,i,j}$ such that the first entry in the first column is positive and strictly decreasing with each step of the algorithm. This means that it will be equal to $1$ after a finite number of steps.

  Here is the inductive algorithm. Let $B^{(m)}$ be a matrix with (the transpose of) the first column equal to $(a^{(m)}_1, \ldots, a^{(m)}_n) \in \Z^n$ with $a^{(m)}_1 \in \Z_{>0}$. If $a^{(m)}_1 = 1$, then terminate the algorithm. If not, then note that multiplying $B^{(m)}$ on the left by $S^{k}_{1,i,j}$ followed by a permutation of the $j$-th and first columns yield a matrix $B^{(m+1)}$ with first row $a^{(m+1)}$ given by
  \begin{align*}
    a_1^{(m+1)} &= a_j^{(m)} - k a_1^{(m)} \\
    a_i^{(m+1)} &= a_i^{(m)} + k a_1^{(m)} \\
    a_j^{(m+1)} &= a_1^{(m)} \\
    a_h^{(m+1)} &= a_h^{(m)} \quad \text{for all } h \notin \{1,i,j\}.
  \end{align*}
  We claim that there exist $k = k_m$ and $j = j_m$ in $\Z$ for which
  \begin{equation}
    \label{eq:a_1_range}
    a_1^{(m+1)} \in \{1,\ldots,a_1^{(m)} - 1\}.
  \end{equation}
  To that end, first note that, for any $j \in \{2,\ldots,n\}$, Euclid's division lemma provides an integer $k \in \Z$ such that $a_j^{(m)} - k a_1^{(m)} \in \{0,\ldots,a_1^{(m)} - 1\}$. This means that we need to show that there is one $j \in \{2,\ldots,n\}$ such that this number is different from $0$. Assume, by contradiction, that this is not the case. Then, for any $j \in \{2,\ldots,n\}$ there exists $k_j$ such that $a^{(m)}_j = k_j a_1^{(m)}$. But then we can write 
  \begin{equation*}
    (a_1^{(m)}, \ldots, a_n^{(m)}) = a_1^{(m)} (1, k_2, \ldots, k_n),
  \end{equation*}
  which means that the determinant $\det B^{(m)} = \pm\det B$ is divisible by $a_1^{(m)}$. Since the matrices are in $\GL(n;\Z)$ and $a_1^{(m)} >0 $ by hypothesis, we have $a_1^{(m)}=1$, contradicting the induction hypothesis. We have proven the existence of $k,j$ for which \eqref{eq:a_1_range} holds. Take any $i \notin \{k,j\}$ and multiply on the left by $S^k_{1,i,j}$. This proves that the sequence $(a^{(m)}_1)_{m \in \N}$ is positive and strictly decreasing, allowing us to conclude that there exists $m_* \in \N$ such that $a^{(m_*)}_1 = 1$ and the algorithm terminates. 

  By multiplying $B^{(m_*)}$ on the left by $S^{a^{(m_*)}_n}_{1,2,n}$, we can bring its first column to the form $(1,a_2',\ldots,a_{n-1}',0)$ for some $a_2',\ldots,a_{n-1}' \in \Z$. Further left-multiplication by matrices of the form $S^{a}_{1,2,j}$, for $j$ from 3 to $n-1$ and appropriate integers $a$, brings it to the form $(1,a_2'', 0,\ldots,0)$. 

  Note that up until now, we have only multiplied $B$ by elements in the subgroup $K \subset H$. In the case we have started with $B \in H$, then the matrix we end up with belongs to $H$, too. In that case, we find that the first column $(1,a''_2,0,\ldots,0)$ actually satisfies $a_2''=0$, since we have $1 + a_2'' = 1$ by definition of $H$. This finishes step 1 for the case $B \in H$. In the case $B \in H'$, we find that $a_2'' = (n+1) k'$ for some $k' \in \Z$. Applying $T^{-k'}$ then proves the claim of step 1 for the case $B \in H'$.

  \medskip
  {\bf Step 2.} We consider the following subgroup:
  \begin{equation}
    \label{eq:subgroupG}
    G 
    =
    \left\{ \left.
        \begin{pmatrix}
          1 & c \\
          0 & D
        \end{pmatrix}
      \right\vert 
      \begin{aligned}
        & c = (c_2,\ldots,c_n) \in \Z^{n-1}, 
          D \in \GL(n-1; \Z) \\
        & \forall j \in \{2,\ldots,n\},\;c_j + \textstyle\sum_i D_{ij} = 1
      \end{aligned}
    \right\}
    \subset H.
  \end{equation}
  Let $G' \subset H'$ be the corresponding subgroup, where the latter condition on $G$ is replaced by $c_i + \textstyle\sum_j D_{ji} = 1 \mod n+1$. In the first step, we have shown that, up to applying elements in $K$ or $K'$, we can assume that $B \in G$ or $B \in G'$, respectively. Therefore, to prove the lemma, it suffices to show that any element in $G$ is contained in $H$ and that any element in $G'$ is contained in $H'$. We start with the case of $G$. Note that $G \cong \GL(n-1;\Z)$ via the natural isomorphism 
  \begin{equation*}
    \chi \colon \GL(n-1;\Z) \rightarrow G, \quad
    \chi(D) =
    \begin{pmatrix}
      1 & c \\
      0 & D
    \end{pmatrix},\quad
    c_i = 1 - \textstyle \sum_j D_{ji}.
  \end{equation*}
  Now let $\chi(D_0)$ be the matrix obtained from $B \in H$ after the first step. Then, we know that $D_0$ can be written as a product $D_0 = F_1 \cdot \ldots \cdot F_N$ of permutations $\Pi_{i,j}$ and one elementary shear $\Sigma$~---~this is just a consequence of the ordinary Gau{\ss} algorithm proving that permutations and one shear generate all of $\GL(n-1;\Z)$. Now note that both $\chi(\Pi_{i,j})$ and $\chi(\Sigma)$ are in $K$; indeed, the image of a permutation in $\GL(n-1;\Z)$ is a permutation in $\GL(n;\Z)$, and the image of the shear $\Sigma$ is, up to further coordinate permutations, equal to $S_1$. Therefore, the decomposition 
  \begin{equation*}
    \chi(D_0) = \chi(F_1) \cdot \ldots \cdot \chi(F_N), \quad
    F_l \in \{\Pi_{1,2}, \ldots, \Pi_{n-1,n},\Sigma\}
  \end{equation*}
  proves that $\chi(D_0) \in K$. 

  The case of $G'$ is similar, with one small difference: Since the equation $c_i + \textstyle\sum_j D_{ji} = 1$ only holds modulo $n+1$ in this case, we cannot assume that the result $B'$ of the first step lies in the image of the map $\chi$. However, we can still decompose its lower right part $D_0 \in \GL(n-1;\Z)$ as $D_0 = F_1 \cdot \ldots \cdot F_N$ as above. By a simple computation, we find that $\chi(F_N)^{-1} \cdot \ldots \cdot \chi(F_1)^{-1} \cdot B'$ is, in the notation of \eqref{eq:subgroupG}, of the form $D = I_{n-1}$ (the unit matrix in $\GL(n-1;\Z)$) and $c_i \equiv 0 \mod n+1$. This latter matrix can be written as a product of permutations and powers of $T$.
\end{proof}

\begin{proof}[Proof of Theorem~\ref{thm:realizability}]
  We start by proving \emph{(1)}. Let $L_0 = T(1,\ldots,1) \subset \C^n$ be the monotone product torus. Since $\C^n$ is contractible, we have that $H_2(\C^n,L) \cong H_1(L)$. Let $e_1,\ldots,e_n \in H_1(L_0)$ be the standard basis generated by elements of the form $e_i = [\{*\} \times \ldots \times S^1 \times \ldots \times \{*\} ]$ on product tori. All coordinate permutations on the $e_i$ are contained in $\fmh(L_0)$, since coordinate permutations on $\C^n$ can be realized by Hamiltonian diffeomorphisms and map $L_0$ to itself \footnote{Although we will not need this, $\fmh(L_0)$ is actually \emph{equal} to the set of these permutations. This follows from \cite[Theorem 4.5]{Che96} and the fact that the $e_i$ are \emph{distinguished classes} in Chekanov's language. }. 

  Here we have $H_2(\C^n) = 0$, and thus \cref{q:realizability} asks whether, in the basis $\{e_i\}$, the group of Lagrangian monodromies is equal to $H$ as defined in \eqref{eq:Hsubgroup}. By Lemma~\ref{lem:gen}, it is sufficient to show that $S_1$ is in $\fml(L_0)$. To that end, let $L_1$ be the \emph{fully twisted} Chekanov torus. In Chekanov's notation in \cite{Che96}, it corresponds to $L_1 = T^n_1(1) = \Theta_0^{n-1}(S^1(1))$, where $\Theta_0^{n-1}$ denotes the $(n-1)$-fold twisting operation. However, we will follow the exposition in \cite[Section 5]{Bre20}, where $L_1$ is constructed as the lift of a curve from a symplectic $T^{n-1}$-quotient. This yields the same torus $L_1$ as defined by Chekanov, but it allows us to understand the Lagrangian isotopy to $L_0$. As in the proof of \cite[Proposition 5.5]{Bre20}, we pick a basis $\xi,\zeta_2,\ldots,\zeta_n \in H_1(L_1)$, where $\xi$ is a lift of the class of the circle in the two-dimensional symplectic quotient and $\zeta_2,\ldots,\zeta_n$ are in the orbit of the $T^{n-1}$-action by which we reduce. Perturbing the reduced space as in the proof of \cite[Proposition 5.5]{Bre20} yields a Lagrangian isotopy $\Pi = \{L_t\}_{t \in [0,1]}$ for which
  \begin{equation}
    \label{eq:basechange}
    \Phi^{\Pi}(e_n) = \xi, \quad
    \Phi^{\Pi}(e_i) = \zeta_i + \xi \quad
    \text{for } i \in \{1,\ldots,n-1\}.
  \end{equation}
  The distinguished class is $\{\xi\}$. Any monodromy matrix preserving the Maslov class (by monotonicity, the area class is then automatically preserved) and the distinguished class is in $\fmh(L_1)$ by \cite[Theorem 4.5]{Che96}. In particular, since 
  \begin{equation*}
    \1 S_1 = \1, \;\mbox{ and }\;
    S_1 e_n = e_n,  
  \end{equation*}
  we find that $S_1 \in\Phi^{\Pi} \fmh(L_1) (\Phi^{\Pi})^{-1} \subset \fml(L_0)$, which, using Lemma~\ref{lem:gen}, proves \emph{(1)}.

  \medskip
  Let us now move to the case of $\CP^n$. Let $L = T_{\rm Cl}$ be the monotone Clifford torus. It coincides with the image of a product torus $T(\frac{1}{n+1}, \ldots, \frac{1}{n+1}) \subset \C^n$ under a symplectic ball embedding $\varphi \colon B^{2n}(1) \hookrightarrow \CP^n$. Through this embedding, we find a natural basis $\varphi_* e_i \in H_1(L)$. Capping these off by disks contained in the image $\varphi$ (where these disks are homotopically unique), we obtain elements in $H_2(\CP^n,L)$. By abuse of notation, we will call these $e_1,\ldots,e_n$, too. Together with the ambient spherical element $S=j([\CP^1]) \in H_2(\CP^n,L)$, which generates~$J = \im (j \colon H_2(\CP^n) \hookrightarrow H_2(\CP^n,L))$, we obtain a basis
  \begin{equation}
    \label{eq:basis_relh2}
    S,e_1,\ldots,e_n \in H_2(\CP^n,L).
  \end{equation}
  This can be seen by extracting a short exact sequence from the long exact sequence for relative homology and using the above capping to obtain a splitting of the relative second homology. \cref{q:realizability} asks whether the group of Lagrangian monodromies is given by 
  \begin{equation*}
    \wh G = \{ \phi \in \Aut H_2(\CP^n,L) \sth \phi^* m_{L} = m_{L}, \; \phi(S) = S \}.  
  \end{equation*}
  In the dual basis to \eqref{eq:basis_relh2}, the Maslov class is $m_{L} = (2(n+1),2,\ldots,2)$, and we find
  \begin{align*}
    \wh G &= \left\{ \left.
	\begin{pmatrix}
          1 & a \\
          0 & B
	\end{pmatrix}
	\right\vert
	(n+1)a=\1-\1 B
	\right\} \\
      &= \left\{ \left.
	\begin{pmatrix}
          1 & a \\
          0 & B
	\end{pmatrix}
	\right\vert
	\forall j\in \{1,\ldots,n\},\; (n+1)a_j+\sum_i b_{ij}=1
	\right\},
  \end{align*}
  where $a = (a_1\;\ldots\; a_n) \in \Z^n$ and $B = (b_{ij})_{i,j} \in \GL(n;\Z)$. Note that this implies that $B \in H'$. Hence, we obtain that $\wh G$ is isomorphic to $H'$ as defined in \eqref{eq:Hprimesubgroup}. As an isomorphism, we choose the obvious inclusion~$H' \hookrightarrow \wh G$. Now note that every Lagrangian loop in $\C^n$ with endpoints on $L_0$ can be included into $\CP^n$ by scaling appropriately and then composing with $\varphi$. Since $L$ is Lagrangian isotopic to the image of $L_0$ under $\varphi$, we find that 
  \begin{equation}
    \fml(L_0 \subset \C^n) \hookrightarrow \fml(L \subset \CP^n). 
  \end{equation}
  By our choice of basis, this corresponds to the natural inclusion $H \subset H' \hookrightarrow \wh G$. In other words, we can generate all of $H \subset \wh G$ just by Lagrangian isotopies in the image of the ball embedding. By Lemma~\ref{lem:gen}, it is thus enough to show that $T \in \GL(n+1;\Z)$ can be realized by a Lagrangian loop based at $L$. To that end, recall that permutations of homogeneous coordinates of $\CP^n$ can be realized by Hamiltonian (and therefore also Lagrangian) isotopies. In our choice of basis, this yields all permutations of the classes
  \begin{equation*}
    S - e_1 - \ldots - e_n, e_1, \ldots, e_n \in H_2(\CP^n,L). 
  \end{equation*}
  The matrix of the elementary permutation exchanging the first two elements is given by 
  \begin{equation*}
    \begin{pmatrix}
      1 & 1 & 0 & \dots & 0 \\
      0 & -1 & 0 & \dots & 0 \\
      0 & -1 & 1 & \dots & 0 \\
      \vdots & \vdots & \vdots & \ddots & \vdots \\
      0 & -1 & 0 & \dots & 1 
    \end{pmatrix}
    \in \wh G \subset \GL(n+1;\Z)
  \end{equation*}
  Denote the lower-right $n\times n$ submatrix of this matrix by $Q \in \GL(n;\Z)$. We have $Q \in H'$ as expected. Now, note that $\1 QT=\1$, i.e.\ $QT$ is in the subgroup $H $. Since all elements of that subgroup are realizable as monodromies (even in $\C^n$), so is $T$, which concludes the proof.
\end{proof}

\subsection{Flux in $\CP^n$}
\label{ssec:fluxCPn}

Understanding the homological monodromy group is sometimes enough to understand $\Gamma^\Z(L)$ and the flux set $\fl(L)$. Indeed, by \cref{prop:flux-monodromy_monotone}, we find that $\Gamma^\Z(T_{\rm Cl}) \cong \cm^\Z(T_{\rm Cl})$ in $\CP^n$. As for the flux set, we find the following.

\begin{corollary} \label{cor:description_fl}
	Let $L \subset \CP^n$ be the endpoint of a Lagrangian isotopy that starts at $T_{\rm Cl}$ and has flux $\eta \in H^1(T_{\rm Cl};\R)$. In the basis induced by the natural basis on $T_{\rm Cl}$, we have that
	\begin{equation*}
		\fl(L) = \{ \eta(A - id) \sth A \in H'\}.
	\end{equation*}
\end{corollary}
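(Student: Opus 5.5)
The plan is to reduce everything to loops based at $T_{\rm Cl}$ and then transport them to $L$ with the change-of-basepoint formula of \cref{prop:fm_basic}. This is the same argument as in the first part of the proof of \cref{thm:B}.

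\emph{Setup.} Let $\Pi$ be the given Lagrangian isotopy from $T_{\rm Cl}$ to $L$, with $\Flux\Pi=\eta$. The map $\Lambda\mapsto \Pi^-*\Lambda*\Pi$ is a bijection $\pi_1(\cl,T_{\rm Cl})\to\pi_1(\cl,L)$, with inverse $\Lambda'\mapsto \Pi*\Lambda'*\Pi^-$. Hence every element of $\fl(L)$ is the flux of some $\Pi^-*\Lambda*\Pi$ with $\Lambda$ based at $T_{\rm Cl}$, and conversely every such flux lies in $\fl(L)$. On $H_1(L)$ we use the basis $(\phi^\Pi)_*$ of the natural basis of $T_{\rm Cl}$, with the dual basis on $H^1(L;\R)$. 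In these bases the pullback $(\phi^\Pi)^{-*}$ is just the identity on coordinates.

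\emph{Computation.} By \eqref{eq:fm_base_point_change},
\[
\Flux(\Pi^-*\Lambda*\Pi)=\cm(\Lambda)^*\eta+\Flux\Lambda-\eta .
\]
We now use two facts about $T_{\rm Cl}$.

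First, $\CP^n$ is $H$-monotone and $T_{\rm Cl}$ is $H$-monotone (in fact $H_1(\CP^n)=0$ also suffices). So \cref{cor:flux_monotone}, or equivalently \cref{prop:flux-monodromy_monotone}, gives $\Flux\Lambda=0$ for every loop $\Lambda$ based at $T_{\rm Cl}$.

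Second, by \cref{thm:realizability} for $n\geqslant3$ and \cref{thm:Cliff_CP2} for $n=2$, the homological monodromies $\cm^\Z(\Lambda)$ range over exactly $H'$ as $\Lambda$ ranges over $\pi_1(\cl,T_{\rm Cl})$. For $n=1$ the argument is similar: $H'$ is then $\{\pm1\}\cap\{a\equiv1 \bmod 2\}$, which is realized by the rotation.

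Writing $\eta$ as a row vector and letting $A=\cm^\Z(\Lambda)$ act by pullback, which in row convention is right multiplication, we obtain $\cm(\Lambda)^*\eta=\eta A$. Therefore the flux is $\eta(A-\mathrm{id})$. Letting $\Lambda$ vary, $A$ runs over all of $H'$, and this gives the equality of sets claimed in \cref{cor:description_fl}.

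\emph{Main obstacle.} The only delicate point is bookkeeping of conventions. One must check that the pullback by the monodromy, in the chosen row/column convention, matches the convention in which $H'$ is defined by $\1A\equiv\1$. That definition comes from the Maslov class $m=2\cdot\1$ being preserved under pullback, $m A=m$, so pullback corresponds to right multiplication on row vectors, consistent with the formula above. One must also check that $(\phi^\Pi)^{-*}$ becomes trivial in the transported basis. Since both $H'$ and its image under transpose-inverse conventions are groups, any mismatch at worst replaces $A$ by $A^{-1}$, which still ranges over $H'$. So the stated set is obtained in any case.
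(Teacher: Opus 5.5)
Your proposal is correct and is essentially the paper's proof: conjugate by $\Pi$, apply the change-of-basepoint formula, use that loops based at the monotone $T_{\rm Cl}$ have zero flux (\cref{cor:flux_monotone}), and let the monodromy range over $H'$. Your explicit appeal to \cref{thm:Cliff_CP2} covers $n=2$, which the paper's citation of \cref{thm:realizability} (stated only for $n\geqslant 3$) leaves implicit. Your Maslov-class check of the row-vector convention already settles the bookkeeping, so drop the fallback claim that a mismatch would at worst replace $A$ by $A^{-1}$: it is not quite right, since a transpose mismatch would not preserve $H'$.
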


\begin{proof}
  Let $\Pi$ be a path with $\Pi_0 = T_{\rm Cl}$ and $\Flux \Pi = \eta$. Loops in $\pi_1(\cl,L)$ are in bijection with loops in $\pi_1(\cl,T_{\rm Cl})$ by conjugating with $\Pi$. Write $\Lambda = \Pi^{-} \circ \Lambda_0 \circ \Pi$ for $\Lambda_0 \in \pi_1(\cl,T_{\rm Cl})$. Recall that $\Flux \Lambda_0 = 0$, since $T_{\rm Cl} \subset \CP^n$ is monotone, see \cref{cor:flux_monotone}. By the change of base point formula in \cref{prop:fm_basic}, we find that
  \begin{equation*}
    \Flux(\Lambda) = (\phi^\Pi)^{-*}(\cm(\Lambda_0)^*\eta - \eta). 
  \end{equation*}
  Since $\cm(\Lambda_0) \in H'$ in the natural basis, by \cref{thm:realizability}, the claim follows.
\end{proof}

Let us now turn to the flux of Lagrangian \emph{paths}. In other words, we now try to determine \emph{the shape invariant}
\begin{equation*}
	\Sh(T_{\rm Cl}) = \{\Flux \{\Pi\} \in H^1(T_{\rm Cl};\R) \sth \Pi_0 = T_{\rm Cl} \subset \CP^n \}
\end{equation*}
for the Clifford torus in $\CP^n$. 

\begin{corollary} \label{cor:shape_irrational_n3}
	Let $n \geqslant 3$. Identifying $H^1(T_{\rm Cl};\R) = \R^n$ via the standard basis, the set 
	\begin{equation}
		\label{eq:subset_of_fluxset}
		\R^n \setminus (\R \cdot \Z^n)
		=
		\R^n \setminus \{r y \in \R^n \sth r \in \R, y \in \Z^n \}
	\end{equation}		
	is contained in $\Sh(T_{\rm Cl})$.
\end{corollary}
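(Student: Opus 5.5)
The plan is to combine two ingredients: an open set of fluxes realized by toric fibres, and the action of the monodromy group $H'$ on $\Sh(T_{\rm Cl})$ coming from \cref{thm:realizability}. Orbits of this action on non-rational directions are dense, so every such direction reaches the open set.

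\emph{Step 1 (a neighbourhood of $0$).} Let $\mu\colon\CP^n\to\Delta$ be the standard moment map, with $T_{\rm Cl}=\mu^{-1}(p_0)$ and $p_0$ the barycenter. Lifting a segment from $p_0$ to an interior point $p$ gives a Lagrangian path. Since $\mu$ is already in action coordinates, \eqref{eq:flux_lifts} and \cref{prop:action_I} show its flux is $p-p_0$ in the standard basis. Hence $U:=\operatorname{int}(\Delta)-p_0$, an open neighbourhood of $0\in\R^n$, is contained in $\Sh(T_{\rm Cl})$.

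\emph{Step 2 ($H'$-invariance).} Let $\Pi$ be a path starting at $T_{\rm Cl}$ and let $\Lambda_0\in\pi_1(\cl,T_{\rm Cl})$. The concatenation $\Lambda_0*\Pi$ is again a path starting at $T_{\rm Cl}$. By \eqref{eq:concatenation} and \cref{cor:flux_monotone}, which gives $\Flux\Lambda_0=0$, we get
\begin{equation*}
\Flux(\Lambda_0*\Pi)=\cm(\Lambda_0)^*\Flux\Pi .
\end{equation*}
By \cref{thm:realizability}, $\cm(\Lambda_0)$ ranges over all of $H'$ (here $n\geqslant3$ is used). So $\Sh(T_{\rm Cl})$ is invariant under the right action $\eta\mapsto\eta A$, $A\in H'$, in the row-vector convention of the paper. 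Consequently
\begin{equation*}
\bigcup_{A\in H'}U\cdot A\subset\Sh(T_{\rm Cl}).
\end{equation*}

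\emph{Step 3 (density).} Let $\eta\notin\R\cdot\Z^n$. It suffices to find $A\in H'$ with $\eta A\in U$, since then $\eta=(\eta A)A^{-1}\in U\cdot A^{-1}\subset\Sh(T_{\rm Cl})$. The group $\Gamma:=H'\cap\SL(n;\Z)$ has finite index in $\SL(n;\Z)$, because $H'$ is a congruence subgroup. Hence $\Gamma$ is a lattice in $\SL(n;\R)$.

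I would invoke the classical orbit-closure result for lattices in $\SL(n;\R)$ acting linearly on $\R^n$, due to Dani--Raghavan and also a consequence of Ratner's theorem. For $n\geqslant2$, the orbit of $v\neq0$ under such a lattice is dense in $\R^n$ exactly when $v$ is not a multiple of a rational vector. The statement is usually phrased for column vectors, but applying it to the transposed lattice $\Gamma^T$ handles the row action. In particular $\eta\Gamma$ meets the open set $U\ni0$.

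The main obstacle is Step 3, namely getting a clean citation that applies to an arbitrary finite-index subgroup rather than to $\SL(n;\Z)$ itself. If needed, I would argue directly instead. Density of $\eta\,\SL(n;\Z)$ combined with finite index gives that $\eta\Gamma$ has closure containing an open set. Then I would use the invariance of the orbit closure under the unipotent elements of $\Gamma$ to spread this density to all of $\R^n$.
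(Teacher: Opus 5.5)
Your proof is correct. Steps~1 and~2 are exactly the first half of the paper's proof. The paper also shows $\{\eta A \mid A\in H',\ \eta\in\Delta_{\CP^n}\}\subset\Sh(T_{\rm Cl})$, using concatenations $\Lambda*\Pi'$ with $\Flux\Lambda=0$ together with \cref{thm:realizability}.

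You diverge from the paper in Step~3, which you argue differently. The paper proves the needed density by hand, in three moves:
\begin{enumerate}
\item It picks two coordinates, say $x_1,x_2$, with $kx_1+lx_2\neq0$ for all $(k,l)\neq(0,0)$.
\item Using the generator $T$ and permutations from \cref{lem:gen}, it adds arbitrary elements of the dense group $(n+1)(\Z x_1+\Z x_2)\subset\R$ to a third coordinate $x_n$. This is exactly where $n\geqslant3$ enters. It makes $|x_n|$ tiny.
\item It subtracts integer multiples of $(n+1)x_n$ from the other coordinates to land in $B^n(\varepsilon)\subset\Delta_{\CP^n}$.
\end{enumerate}
That argument is elementary and self-contained.

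Your route through Dani--Raghavan/Ratner for the finite-index subgroup $H'\cap\SL(n;\Z)$ buys more. It is uniform in $n$: with \cref{thm:Cliff_CP2} in place of \cref{thm:realizability} it also covers $n=2$, essentially the Hedlund-plus-finite-cover argument the paper uses for \cref{prop:flux-set_CP^2}. It also shows that the whole $H'$-orbit of every irrational direction is dense. The price is deep homogeneous-dynamics input.

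Two minor remarks. First, the result you cite should be stated for subgroups of finite index in $\SL(n;\Z)$, since ``rational vector'' has no meaning for an arbitrary lattice in $\SL(n;\R)$. Second, the last step of your fallback (``spread by unipotents'') is vague as written. A clean way to close it is Moore's ergodicity theorem. The lattice $\Gamma$ acts ergodically on $\R^n$ with respect to Lebesgue measure. The closed $\Gamma$-invariant set $\overline{\eta\Gamma}$ has nonempty interior by your finite-index argument, so it is conull, and a closed conull set is all of $\R^n$.
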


\begin{proof}
  We only consider Lagrangian isotopies of the form $\Pi = \Lambda * \Pi'$, where $\Lambda \in \pi_1(\cl,T_{\rm Cl})$ and $\Pi'$ is an isotopy among toric fibres. Then we have 
  \begin{equation*}
    \Flux \Pi = \cm(\Lambda)^* \Flux \Pi'.
  \end{equation*}
  Indeed, by \cref{cor:flux_monotone}, the flux of $\Lambda$ vanishes. Note that
  \begin{enumerate}
  \item for any $\eta$ in the moment polytope $\Delta_{\CP^n}$, there is an isotopy $\Pi'$ with $\Flux \Pi' = \eta$; see also \cref{prop:action_I};
  \item for any $A \in H'$, there is $\Lambda \in \pi_1(\cl,T_{\rm Cl})$ with $\cm^\Z(\Lambda) = A$ in the standard basis.
  \end{enumerate}
  Therefore, we find that 
  \begin{equation}
    \label{eq:flux_toric_isotopies}
    \{\eta A \in \R^n \sth A \in H', \eta \in \Delta_{\CP^n}\} \subset \Sh(T_{\rm Cl}).
  \end{equation}
  Let us prove that this set contains $\R^n \setminus (\R \cdot \Z^n)$. To that end, let us first point out that $x = (x_1,\ldots,x_n)$ belongs to $\R \cdot \Z^n$ if and only if, for all pairs $i\neq j \in \{1,\ldots,n\}$, there exist $k,l \in \Z$ such that $k x_i + l x_j = 0$. Let $\varepsilon > 0$. We show that we can map $x \in \R^n \setminus(\R\cdot\Z^n)$ into $B^n(\varepsilon)\subseteq \Delta_{\CP^n}$ using right-multiplication by elements in $H'$. By the above discussion, there exists a pair $i,j$ for which $k x_i + l x_j \neq 0$ for all $k,l \in \Z$. Assume, without loss of generality, that $i=1$ and $j=2$. By a standard division algorithm argument, this implies that there is a sequence $(k_r,l_r) \in \Z^2$ such that $k_r x_1 + l_r x_2 \rightarrow 0$ for $r \rightarrow \infty$. By Lemma~\ref{lem:gen}, we can map $x$ to any element of the form 
  \begin{equation*}
    (x_1,x_2,\ldots,x_{n-1},\widehat{x}_n = x_n + c(n+1)(k x_1 + l x_2))
  \end{equation*}
  for any $c,k,l \in \Z$. Note that here we use that $n \geqslant 3$. By the above, we can make the term $k x_1 + l x_2$ arbitrarily small, so that we can make $\widehat{x}_n$ arbitrarily small. Let us therefore assume that $\vert \widehat{x}_n \vert < \frac{\varepsilon}{\sqrt{n}(n+1)}$. Again by Lemma~\ref{lem:gen}, we can map $(x_1,\ldots,x_{n-1},\widehat{x}_n)$ to a vector having components $\widehat{x}_i = x_i - s_i(n+1)\widehat{x}_n$ for $i \in \{1,\ldots,n-1\}$. For a suitable choice of $s_i$, we therefore have $\vert \widehat{x}_i \vert < \frac{\varepsilon}{\sqrt{n}}$ for all $i$, and thus $\widehat{x} \in B^n(\varepsilon)$, proving the claim. 
\end{proof}

\begin{remark}
	The set in \eqref{eq:flux_toric_isotopies} is strictly larger than the one in \eqref{eq:subset_of_fluxset}. In particular, the former contains $\Delta_{\CP^n}$. Trying to determine \eqref{eq:flux_toric_isotopies} runs into the question of trying to understand the orbits of $H'$, which is tricky for $n \geqslant 3$, as we will see below. 
\end{remark}

Let us now turn to obstructions. Based on \cite{CieMoh18}, we find that some values in $H^1(T_{\rm Cl}; \R)$ cannot be realized as the flux of some Lagrangian isotopy. For simplicity, we again identify $H^1(T_{\rm Cl};\R) = \R^n$ via the standard basis of $H_1(T_{\rm Cl})$. Under this identification, the Maslov class induces a class $m=2 \1 = (2,\ldots,2) \in H^1(T_{\rm Cl};\Z) = \Z^n$, which must be preserved modulo $2(n+1)$ by Lagrangian isotopies. Let 
\begin{equation}
	\label{eq:Lambda_complement_lattice}
	\Lambda = \{ \lambda \in H^1(T_{\rm Cl};\Z) \sth \lambda \cdot \xi \neq 0 \text{ for all } \xi \in H_1(T_{\rm Cl};\Z) \text{ with } \1 \cdot \xi \equiv 1· \mod (n+1)\}.
\end{equation}
In other words, $\Lambda$ is the set-theoretic complement in $H^1(T_{\rm Cl};\Z)$ of the orthogonal complement of the affine lattice $\{\xi \in H_1(T_{\rm Cl}) \sth \1 \cdot \xi \equiv 1 \mod (n+1)\}$. 

\begin{proposition}
	\label{prop:open_flux_obstructions}
	The set 
	\begin{equation*}
		B = \{\eta = r \lambda \in H^1(T_{\rm Cl};\R) \sth \lambda \in \Lambda, r \geqslant 1\},
	\end{equation*}
	where $\Lambda$ is defined as in \eqref{eq:Lambda_complement_lattice}, is not in $\Sh(T_{\rm Cl})$. In particular, no $\eta = r \lambda$ where $r\geqslant 0$, and where $\lambda \equiv 1 \mod p$ for some $p$ dividing $n+1$ is in $\Sh(T_{\rm Cl})$.
\end{proposition}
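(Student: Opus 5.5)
The plan is to argue by contradiction, combining the Maslov-2 disk bound of Cieliebak--Mohnke \cite[Theorem 1.1]{CieMoh18} with the area identity \eqref{eq:areaformula}. Normalize $\omega$ as in Figure~\ref{fig:possible-fluxes_CP2}: the line $S=j([\CP^1])$ has area $n+1$, and the capped classes $e_1,\ldots,e_n \in H_2(\CP^n,T_{\rm Cl})$ from the proof of \cref{thm:realizability} each have area $1$. In this normalization, \cite{CieMoh18} says that every Lagrangian torus $L\subset\CP^n$ bounds a class of Maslov index $2$ whose symplectic area lies in $(0,1]$. So suppose $\Pi$ is a Lagrangian isotopy from $T_{\rm Cl}$ to some $L$ with $\Flux\Pi = \eta = r\lambda$, where $\lambda\in\Lambda$ and $r\geqslant 1$. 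The goal is to show that no Maslov-$2$ class on $L$ has area in $(0,1]$.

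First, identify the Maslov-$2$ classes. Since $\Phi^\Pi \colon H_2(\CP^n,T_{\rm Cl})\to H_2(\CP^n,L)$ is an isomorphism preserving the Maslov class, every Maslov-$2$ class on $L$ has the form $\Phi^\Pi b$, where $b = kS+\sum_i \xi_i e_i$ has Maslov index $2$. In the basis \eqref{eq:basis_relh2} the Maslov class is $m=(2(n+1),2,\ldots,2)$, so the condition reads $(n+1)k + \1\cdot\xi = 1$. Here $\xi=\pp b\in H_1(T_{\rm Cl};\Z)$, and hence $\1\cdot\xi\equiv 1 \mod (n+1)$. Since $T_{\rm Cl}$ is monotone, $\omega(b) = \tfrac{1}{2}m(b)=1$.

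Next, compute the areas on $L$. The area identity \eqref{eq:areaformula} gives
\begin{equation*}
\omega(\Phi^\Pi b) = \omega(b) + \eta(\pp b) = 1 + r\,\lambda(\xi).
\end{equation*}
Because $\lambda\in\Lambda$ and $\1\cdot\xi\equiv 1 \mod (n+1)$, the integer $\lambda(\xi)$ is nonzero. If $\lambda(\xi)\geqslant 1$, the area is at least $1+r>1$. If $\lambda(\xi)\leqslant -1$, the area is at most $1-r\leqslant 0$ because $r\geqslant 1$. In either case no Maslov-$2$ class on $L$ has area in $(0,1]$, which contradicts \cite{CieMoh18}. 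Hence $B\cap \Sh(T_{\rm Cl})=\emptyset$.

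For the ``in particular'' statement, it suffices to check that such $\lambda$ lie in $\Lambda$. Suppose every component of $\lambda$ is $\equiv 1 \mod p$, where $p\mid n+1$. Then $\lambda(\xi)\equiv \1\cdot\xi\equiv 1 \mod p$ for every $\xi$ with $\1\cdot\xi\equiv 1 \mod (n+1)$, so $\lambda(\xi)\neq 0$ and $\lambda\in\Lambda$. The ray $\{r\lambda \sth r\geqslant 1\}$ is then obstructed by the first part. I expect the main obstacle to be bookkeeping rather than anything conceptual. The normalization of areas must match the one in \cite{CieMoh18}. One must also check that the range of $r$ in the ``in particular'' statement is really covered: for $r<1$ the estimate above gives nothing, so that clause has to be read either with $r\geqslant 1$ or after rescaling $\lambda$ to a suitable multiple that still satisfies the congruence. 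Finally, one must confirm that \cite{CieMoh18} applies to every Lagrangian torus in $\CP^n$, not only to monotone ones; I believe it does, since the bound there comes from neck-stretching and needs no monotonicity assumption.
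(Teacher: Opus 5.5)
Your proposal is correct and follows the paper's proof essentially step by step. The paper uses the same ingredients: the Cieliebak--Mohnke Maslov-$2$ class on $L$ of area in $(0,1]$, pulled back by $\Phi^\Pi$ to a Maslov-$2$ class on the monotone $T_{\rm Cl}$ of area $1$ whose boundary $\xi$ satisfies $\1\cdot\xi\equiv 1 \bmod (n+1)$. The area identity then gives area $1+r\lambda(\xi)$, and $\lambda\in\Lambda$ forces $\lambda(\xi)\neq 0$. The ``in particular'' part is handled by the same congruence $\lambda\cdot\xi\equiv\1\cdot\xi\equiv 1 \bmod p$.

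You are right that the ``in particular'' clause can only hold for $r\geqslant 1$. That is all the argument gives, and for $r=0$ one has $\eta=0\in\Sh(T_{\rm Cl})$. Your suggested rescaling of $\lambda$ cannot rescue $r<1$, so just read the clause with $r\geqslant 1$.
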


\begin{proof}
  Let $\Pi$ be a Lagrangian isotopy in $\CP^n$ with starting point $T_{\rm Cl}$ and endpoint $L$. Suppose that $\Pi$ has symplectic flux $\eta = r\lambda$ for $r \geqslant 1$ and\footnote{Note that we have shown that all elements that are not in $\R \cdot H^1(T_{\rm Cl};\Z) \subset H^1(T_{\rm Cl};\R)$ \emph{are} actually contained in $\Sh(T_{\rm Cl})$. Therefore, supposing that $\eta = r\lambda$ with $\lambda$ in the lattice $H^1(T_{\rm Cl};\Z)$ is not that much of a restriction.} $\lambda \in H^1(T_{\rm Cl};\Z)$. We will show that $\lambda$ must satisfy
  \begin{equation}
    \label{eq:lambda_conditions}
    \lambda \cdot \xi = 0 \text{ for some } \xi \in H_1(T_{\rm Cl};\Z) \text{ with } \1 \cdot \xi = 1 \mod (n+1);
  \end{equation}	
  c.f.\ \eqref{eq:Lambda_complement_lattice}. By \cite[Theorem 1.1]{CieMoh18}, the endpoint $L$ of the Lagrangian isotopy bounds a topological disk whose class $\beta \in H_2(\CP^n,L)$  
  \begin{enumerate}
  \item has Maslov index $m(\beta) = 2$, and 
  \item has area $\langle [\omega]_L , \beta \rangle \in (0,1]$ (in our normalization of the symplectic form). 
  \end{enumerate}
  Let $\beta_0 \in H_2(\CP^n,T_{\rm Cl})$ be the class such that (in the notation of \S\ref{ssec:basic}) $\Phi^\Pi \beta_0 = \beta$, meaning that $\beta_0$ is pushed to $\beta$ by the Lagrangian isotopy $\Pi$. Since Lagrangian isotopies preserve the Maslov index, we have $m(\beta_0) = 2$. Let us now compute its area $\langle [\omega]_L, \beta \rangle$:
  \begin{align*}
    \langle [\omega]_L, \beta \rangle
    &= \langle [\omega]_{T_{\rm Cl}} , \beta_0 \rangle + \Flux\Pi \cdot \pp \beta_0\\
    &= 1 + \Flux \Pi \cdot \pp \beta_0.
  \end{align*}
  In the first line, we have used the area formula (\ref{eq:areaformula}). In the second line, we have used the fact that $[\omega]_{T_{\rm Cl}} = \frac{m}{2}$, i.e.\ $T_{\rm Cl}$ is monotone. Now setting $\xi = \pp \beta_0$, we can prove \eqref{eq:lambda_conditions}. Indeed, since $m(\beta_0) = 2$, we find that $\1 \cdot \xi = 1 \mod (n+1)$. By point (2) and the area computation above, we find $\Flux \Pi \cdot \xi \in (-1,0]$. Since we have supposed that $\Flux \Pi = r \lambda$ for an integral class $\lambda \in H^1(T_{\rm Cl}; \Z)$ and $r \geqslant 1$, we deduce that $\lambda \cdot \xi \in (-\frac{1}{r},0]$. Since $\lambda \cdot \xi \in \Z$, we deduce that it vanishes, as claimed in \eqref{eq:lambda_conditions}. This proves that $B$ is in the complement of $\Sh(T_{\rm Cl})$.

  For the second claim, let $\lambda \equiv 1 \mod p$ for some $p$ dividing $n+1$. We show that $\lambda \in \Lambda$ as defined in \eqref{eq:Lambda_complement_lattice}. By the first part of the proof, this implies the claim. Let $\xi \in H_1(T_{\rm Cl})$ with $\1 \cdot \xi \equiv 1 \mod (n+1)$. Then we find 
  \begin{equation*}
    \lambda \cdot \xi \equiv \1 \cdot \xi \equiv 1 \mod p,
  \end{equation*}
  and therefore $\lambda \cdot \xi \neq 0$. 
\end{proof}

\begin{example}
	\label{ex:flux_obstructions}
	Every vector $r \1 \in H^1(T_{\rm Cl}; \Z)$ for $\vert r \vert \geqslant 1$ is contained in $B$.	This follows directly from the fact that $\pm \1$ are in $\Lambda$, which can either be seen by hand or by using the second claim in Proposition~\ref{prop:open_flux_obstructions}. 
	Actually, we can do slightly better in this particular case. Namely, we find that $r\1 \in B$ for all $r \in \R \setminus (-1,\frac{1}{n})$, meaning that, additionally to the above, the segment $r \in [\frac{1}{n},1)$ is obstructed, too. Indeed, taking $\Flux\Pi = r \1$ as in the proof of Proposition~\ref{prop:open_flux_obstructions} (with $\lambda = \1$), we find that $\1 \cdot \xi \in (-\frac{1}{r},0]$, meaning that $\1 \cdot \xi \in \{-(n-1), \ldots, -1,0\}$, since it is an integer. Hence there is no such $\xi$ with $\1 \cdot \xi \equiv 1 \mod (n+1)$. In particular, we deduce that the obvious Lagrangian isotopies which can be realized in the moment polytope $\Delta_{\CP^n}$ are sharp on the diagonal $\R \1 = \{(r,\ldots,r) \sth r \in \R \} \subset H^1(T_{\rm Cl} ; \R)$:
\begin{equation*}
	\Sh (T_{\rm Cl}) \cap \R \1 = \Delta_{\CP^n} \cap \R \1. \qedhere
\end{equation*}
\end{example}

We suspect that the second part of Proposition \ref{prop:open_flux_obstructions} captures all directions which can be obstructed by the above method, but were unable to prove it; see also \cref{q:cpnflux}.

\subsection{Flux in $\CP^2$}
\label{ssec:fluxcp2}
As stated in Theorem~\ref{thm:B}, the flux set of any Lagrangian torus in $\C^2$ is discrete. We now show that it is very much not the case when we move to $\CP^2$.
	
\begin{proposition} \label{prop:flux-set_CP^2}
	The flux set of a Lagrangian torus $L$ in $\CP^2$ is dense if and only if there is a Lagrangian isotopy $\Pi$ from $\tcl$ to $L$ such that $\Flux \Pi = (x,y)$ satisfies $\frac{x}{y} \notin \Q$. Moreover, whenever $\fl(L)$ is not dense, it is discrete.
\end{proposition}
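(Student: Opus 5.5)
By \cref{cor:description_fl} with $n=2$, and by \cref{thm:Cliff_CP2} identifying the monodromy group with $H'=G_2$, I would reduce the statement to a question about one orbit. Fix a Lagrangian isotopy $\Pi$ from $\tcl$ to $L$, which exists by \cite{GooIvrRiz16}, and set $\eta=\Flux\Pi=(x,y)$. Then
\[
\fl(L)=\{\eta A-\eta \sth A\in H'\},
\]
up to the linear isomorphism $(\phi^\Pi)^{-*}$, which affects neither density nor discreteness. So everything comes down to the orbit $\eta H'\subset\R^2$. I would also check that the condition on $x/y$ does not depend on the choice of $\Pi$. Two such isotopies differ by a loop at $\tcl$, which has zero flux by \cref{cor:flux_monotone}, so by \eqref{eq:concatenation} the two fluxes differ by the action of an element of $H'$. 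Since $H'\subset\GL(2;\Z)$ preserves the set of rational directions $\R\cdot\Z^2$, the condition is independent of $\Pi$ (reading $x/y\notin\Q$ as $\eta\notin\R\cdot\Z^2$; the case $y=0$, $x\neq 0$ counts as rational).

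\emph{Rational case.} Suppose $\eta=r v$ with $r\in\R$ and $v\in\Z^2$. Then $\eta A-\eta=r(vA-v)\in r\Z^2$ for every $A\in H'$, so $\fl(L)$ lies in a lattice (or is $\{0\}$). It is therefore discrete, and in particular not dense.

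\emph{Irrational case.} I would show the orbit $\eta H'$ is dense in $\R^2$. Since $H'$ has finite index in $\GL(2;\Z)$, it is a lattice in $\SL^\pm(2;\R)$. The linear action of $\SL(2;\R)$ on $\R^2\setminus\{0\}$ is transitive, and the stabilizer of a vector is a unipotent subgroup $U$. By Moore ergodicity or Hedlund's theorem, orbits of a lattice $\Gamma$ on $\SL(2;\R)/U\cong\R^2\setminus 0$ are dense unless the vector is fixed by a parabolic element of $\Gamma$. For finite-index $\Gamma\subset\SL(2;\Z)$, that happens exactly for vectors in rational directions. So for $\eta\notin\R\cdot\Z^2$, the set $\eta H'$ is dense, and hence $\fl(L)=\eta H'-\eta$ is dense.

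I could also avoid quoting Hedlund and argue as in \cref{cor:shape_irrational_n3}. Using the elements $X=(MR)^{-1}$ and its conjugates lying in $\Gamma(3)$, one can run the Euclidean algorithm on $(x,y)$: the orbit contains vectors whose second coordinate $kx+ly$ is arbitrarily small. Applying unipotent elements of the form $\begin{psmallmatrix}1&3m\\0&1\end{psmallmatrix}$, the first coordinate can then be shifted by $3m$ times a small number, giving near-density along a horizontal line. Combining this with the transposed unipotents gives density in a neighbourhood of every point.

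The main obstacle is this density step. Unlike the case $n\ge 3$ there is no spare coordinate, so the explicit approach must keep both coordinates under control simultaneously. That is why I would first rely on the dynamical statement (Hedlund, or Dani's classification of orbit closures for the linear action of lattices on $\R^2$) and only fall back on the explicit division argument if a self-contained proof is desired. Finally, the ``moreover'' clause follows from the rational case: whenever $\fl(L)$ is not dense, $\eta$ is in a rational direction, and then $\fl(L)$ is contained in a lattice and is discrete.
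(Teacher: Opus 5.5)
Your proof is correct and is essentially the paper's. It uses the same reduction, via the change-of-basepoint formula and the vanishing of flux at $\tcl$, to the orbit $\eta H'$, the same lattice argument in the rational case, and Hedlund's periodic-or-dense dichotomy for horocycle orbits in the irrational case. Note that Moore ergodicity by itself only gives density of almost every orbit, so it is Hedlund's theorem (or Dani's version for non-uniform lattices) that you actually need.

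The only real difference is how the periodic alternative is ruled out. You identify periodic orbits of the finite-index group $H'\cap\SL(2;\Z)$ with vectors fixed by parabolic elements, i.e.\ rational directions. The paper instead descends to $\SL(2;\Z)$ through the $8$-to-$1$ covering $H_{\R}/H'\to H_{\R}/H$, then uses irrational approximation and B\'ezout to show that $0$ accumulates on the $\SL(2;\Z)$-orbit of $(x,y)$. Your explicit Euclidean-algorithm fallback is only a sketch, but it is not needed.
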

	
Note that all $L$ with $\frac{x}{y}\notin\Q$ are irrational, i.e.\ $\omega(\pi_2(\CP^2,L))$ is dense in $\R$. However, there are irrational tori with $\frac{x}{y}\in\Q$, e.g.\ rational tori in the open ball $B^4$, seen as a subset of $\CP^2$. 

\begin{remark} \label{rem:flux-set_loc-connected_CP^2}
	By the classification of toric fibres in $\CP^2$ up to Hamiltonian isotopies \cite[Proposition~7.1]{SheTonVia19}, $\LHam(L)$ is $C^1$-locally path connected for all Lagrangian fibres $L=T_{(x,y)}\subseteq\CP^2$. Indeed, for any such $L$, there is a $C^1$-neighbourhood composed of Lagrangians $L'$ that are Hamiltonian isotopic (in that neighbourhood) to a toric fibre. But, by the classification, we can ensure that no toric fibre in a (potentially smaller) neighbourhood is Hamiltonian isotopic to $L$, hence local path connectedness. Note that this holds whether $\frac{x}{y}$ is rational or not. Therefore, there are some toric fibres for which $\fl(L)$ is dense, but where $\LHam(L)$ is locally path connected. This shows the stark contrast between isotopies with vanishing flux and Hamiltonian isotopies in the Lagrangian context.
\end{remark}
	
\begin{proof}[Proof of~\cref{prop:flux-set_CP^2}]
  We consider the group $\cm(L)$ of Lagrangian monodromies of $L$ and its intersection $H'=\langle R,M\rangle$ with $H=\SL(2;\Z)$. Note that when $\frac{x}{y}\in\Q$, then $\fl(L)$ is a subset of an affine lattice by the change-of-basepoint formula (\ref{eq:fm_base_point_change}), so that it is discrete. \par
  
  We thus suppose that $\frac{x}{y}\notin\Q$. Clearly, it suffices to prove the density of the subset $\fl_+(L)$ of fluxes associated with orientation-preserving Lagrangian loops. Recall that the flux of all loops based at $T_{\rm Cl}$ vanishes. Therefore, by the change-of-basepoint formula (\ref{eq:fm_base_point_change}), this set takes on the form
  \begin{align*}
    \fl_+(L)=\left\{(x,y)(A-id)\ \middle|\ A\in H'\right\}.
  \end{align*}
  Moreover, this set is dense if and only if the $H'$-orbit of $v=(x,y)$ is dense in $\R^2-\{0\}$. \par
  
  But note that $H_{\R}=\SL(2;\R)$ acts on $\R^2-\{0\}$ via right multiplication. Since every nonzero vector of $\R^2$ can be completed to a frame of oriented area 1, this action is transitive. Furthermore, the stabilizer at $(1,0)$ is the subgroup $T$ of lower unitriangular matrices. Therefore, we can identify $\R^2-\{0\}$ with the quotient under left-multiplication $T\backslash H_{\R}$, and it is easy to see that this identification is $H$-equivariant on the right. \par
  
  Note that the $H'$-orbit of $T\cdot g_v$ is dense if and only if for every $g$ there is a sequence $(A_k,B_k) \in H' \times T$ such that $B_kg_vA_k\to g$, which is itself equivalent to the $T$-orbit of $g_v\cdot H'$ being dense. But by the proof of Proposition~\ref{prop:monodromy_CP2}, we know that $H'$ has finite index in $H$. In particular, $H'$ defines a lattice in $H_{\R}$. By classical work of Hedlund~\cite{Hed36} on the horocylic flow, the $T$-orbits in $H_{\R}/H'$ are thus either periodic or dense. In fact, almost all orbits are dense. We are thus left with characterizing what precisely these dense orbits are. 
  
  From the proof of Proposition~\ref{prop:monodromy_CP2}, we know that $[H:H']=8$. Therefore, there is an 8-to-1 cover $H_{\R}/H'\to H_{\R}/H$ that is $T$-equivariant. But since covers are local homeomorphisms, this mean that, if $g'\cdot H$ is in the closure of $T(g\cdot H)$, then there are some lifts $\tilde{g}\cdot H'$ and $\tilde{g}'\cdot H'$ of $g\cdot H$ and $g'\cdot H$, respectively, such that $\tilde{g}'\cdot H'$ is in the closure of $T(\tilde{g}\cdot H')$. Moreover, if the original point was not in the orbit, then neither is its lift. From the above discusion, this means that the $H'$-orbit of $v$ is dense if and only if its $H$-orbit is. \par
  
  By irrational approximation, for every $\epsilon>0$, there are $a,b\in\Z$ such that $|ax+by|<\epsilon$. Without loss of generality, we may suppose that $\gcd(a,b)=1$. By Bézout's identity, there exist $c_0,d_0\in\Z$ such that $ad_0-bc_0=1$, and every other solution takes on the form $c=c_0+ka$ and $d=d_0+kb$ for some $k\in\Z$. In particular, we have that
  \begin{align*}
    (x\;\; y)\begin{pmatrix}
               a & c \\ b & d
             \end{pmatrix}=(ax+by\;\;\; cx+dy)=(ax+by\;\;\; c_0x+d_0y+k(ax+by)).
  \end{align*}
  On the one hand, the above matrix is in $H$ for any $k$ by construction. On the other hand, we can ensure that both coordinates on the ensuing vector are smaller than $\epsilon$ for an appropriate $k$, since $|ax+by|<\epsilon$. Therefore, 0 is not isolated in $H\cdot v$, and the orbit is dense.
\end{proof}
	
\begin{remark} \label{rem:flux-set_other}
	The proof mostly relies on the fact that $\cm (L)$ defines a sublattice in $\SL(2;\Z)$. For example, if \cref{qu:realizibility_ext-monodromy} has a positive answer in the monotone $S^2\times S^2$, then the same result will apply to Lagrangian tori in that symplectic manifold. Note that we already know that $\fl(L)$ is dense for some product tori in nonmonotone $S^2\times S^2$'s~\cite{BreKim23}.
\end{remark}

Let us now move to determining the values flux can take for Lagrangian isotopies that do not necessarily close up to loops. This is a continuation of the discussion from \S\ref{ssec:fluxCPn} for general $\CP^n$ to the case $n=2$. The difference is that, in that case, we can classify $H'$-orbits.

\begin{lemma}
	\label{lem:H_orbits_n=2}
	The action of $H'$ on the set of primitive vectors $\mathcal{P}^2$ of $\Z^2$ has three orbits:
	\begin{align*}
		\mathcal{O}_+ &\defi\left\{\lambda\in\mathcal{P}^2\ \middle|\ \lambda\equiv \1\!\!\mod 3 \right\}= \1\cdot H' \\
		\mathcal{O}_- &\defi\left\{\lambda\in\mathcal{P}^2\ \middle|\ \lambda\equiv -\1\!\!\mod 3 \right\}=-\1\cdot H' \\
		\mathcal{O}_R &\defi\mathcal{P}^2\setminus(\mathcal{O}_+\cup \mathcal{O}_-)=(1,0)\cdot H'
	\end{align*}
\end{lemma}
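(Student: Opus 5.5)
We work with $H'=\{h\in\GL(2;\Z)\sth \1 h\equiv \1 \mod 3\}$ acting on row vectors on the right, as in \cref{thm:Cliff_CP2}. The proof has two halves: the three sets $\mathcal{O}_+,\mathcal{O}_-,\mathcal{O}_R$ are $H'$-invariant, and $H'$ acts transitively on each of them.

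\emph{Invariance.} For $h\in H'$, let $\bar h$ be its reduction mod 3, so that $\bar h\in\mathrm{Stab}(\1)$. Then $\overline{\lambda h}=\bar\lambda\bar h$, and $\bar\lambda=\pm\1$ implies $\overline{\lambda h}=\pm\1$. Hence $\mathcal{O}_\pm$ are invariant. Since the action is by invertible matrices, primitive vectors go to primitive vectors, so the complement $\mathcal{O}_R$ is invariant as well.

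\emph{Transitivity.} Let $\lambda$ be primitive. By primitivity we can complete $\lambda$ to a matrix $g\in\GL(2;\Z)$ whose first row is $\lambda$, so that $\lambda=(1,0)g$. Two completions differ by left multiplication by elements of the stabilizer $U=\{\left(\begin{smallmatrix}1&0\\ *&\pm1\end{smallmatrix}\right)\}$ of $(1,0)$. We want some completion $g'$ of the form $g'=g_0h$, with $h\in H'$ and $g_0$ a fixed completion of the chosen representative ($\1$, $-\1$, or $(1,0)$). This is a statement about reductions mod 3. The reduction map $\GL(2;\Z)\to\GL(2;\Z/3)$ is surjective (since $\SL(2;\Z)\to\SL(2;\Z/3)$ is surjective and both groups contain determinant $-1$ elements), and $H'$ is the full preimage of $\mathrm{Stab}(\1)$. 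Therefore the $H'$-orbits on $\mathcal{P}^2$ correspond exactly to the double cosets $U\backslash\GL(2;\Z)/H'$. Using surjectivity again, these are the $\mathrm{Stab}(\1)$-orbits on $\bar U\backslash\GL(2;\Z/3)$. The latter quotient is identified with the nonzero vectors of $(\Z/3)^2$, via the first row.

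It therefore remains to compute the orbits of the order-6 group $\mathrm{Stab}(\1)$ on the 8 nonzero vectors of $(\Z/3)^2$. The vectors $\1$ and $-\1$ are fixed, since $\1\bar h=\1$ for every $\bar h\in\mathrm{Stab}(\1)$. For the remaining six vectors, we show they form a single orbit by explicit matrices. The reduction $\bar A$ swaps $(1,0)$ and $(0,1)$. The reduction $\bar R$ sends $(1,0)\mapsto(0,-1)=(0,2)$ and $(0,1)\mapsto(1,-1)=(1,2)$, and together with $-1\cdot$ symmetry this reaches all of $(1,0),(0,1),(2,0),(0,2),(1,2),(2,1)$. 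As a consistency check, the stabilizer of $(1,0)$ in $\mathrm{Stab}(\1)$ has order $6/6=1$.

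This yields exactly three orbits, containing $\1$, $-\1$, and $(1,0)$ respectively, which match $\mathcal{O}_+$, $\mathcal{O}_-$, and $\mathcal{O}_R$.

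The step requiring the most care is the identification of $H'$-orbits on $\mathcal{P}^2$ with double cosets, and its reduction mod 3. Concretely, we must justify that $\lambda,\mu$ in the same class mod 3 (up to $\bar U$) are $H'$-related. We argue as follows. Pick completions $g_\lambda,g_\mu$. We want $u\in U$ with $g_\lambda^{-1}ug_\mu\in H'$, i.e.\ $\bar u\in \bar g_\lambda\,\mathrm{Stab}(\1)\,\bar g_\mu^{-1}$. This is possible if and only if the reductions lie in the same double coset, and lifting $\bar u$ to $U$ is trivial because $U\to\bar U$ is surjective.
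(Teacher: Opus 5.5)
Your proof is correct, but it takes a different route from the paper. The paper works directly with integral bases. For $\lambda\in\mathcal{O}_\pm$ it takes $A\in\GL(2;\Z)$ with $\pm\1 A=\lambda$ and notes that $A\in H'$ automatically. For $\mathcal{O}_R$ it uses Bézout and a short mod-$3$ case analysis to show that every $v\in\mathcal{O}_R$ forms an integral basis together with some $\lambda\in\mathcal{O}_\pm$; the matrix $A$ with $(1,0)A=v$ and $\pm\1 A=\lambda$ then lies in $H'$.

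You instead identify $H'$-orbits on $\mathcal{P}^2$ with the double cosets $U\backslash\GL(2;\Z)/H'$. You then use that $H'$ is the full preimage of $\mathrm{Stab}(\1)$ to push the problem down to a finite computation: the orbits of the order-$6$ group $\mathrm{Stab}(\1)$ on the eight nonzero vectors of $(\Z/3\Z)^2$.

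The paper's proof is more elementary and exhibits the relevant element of $H'$ explicitly. Yours gives the cleaner criterion that two primitive vectors are $H'$-equivalent if and only if their reductions mod $3$ are $\mathrm{Stab}(\1)$-equivalent. It also generalizes. The key step, your last paragraph, only needs two facts: that $H'$ is the full preimage of its image, and that $U$ surjects onto the stabilizer of the first basis vector inside the image of the reduction map. It does not need surjectivity of $\GL(2;\Z)\to\GL(2;\Z/3\Z)$. So the same argument with modulus $n+1$ reduces the classification of $H'$-orbits on primitive vectors for $n\geqslant 3$ (cf.\ \cref{q:cpnflux}) to a finite computation for each fixed $n$.

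One small point: the appeal to the symmetry $v\mapsto -v$ is unnecessary, and on its own would not put $v$ and $-v$ in the same orbit. Applying $\bar A$ to $(0,2)$ and $(1,2)$ already produces $(2,0)$ and $(2,1)$.
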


\begin{proof}
  Since the action of $H'$ preserves the set $\mathcal{O}_+$, the orbit of $\1$ is included in it. Moreover, if $\lambda\in\mathcal{P}^2$, then there is $A\in \GL(2;\Z)$ such that $\1 A=\lambda$. But if $\lambda\equiv\1\mod 3$, i.e.\ if $\lambda\in\mathcal{O}_+$, then $A\in H'$, which proves equality between the sets. The proof that $\mathcal{O}_-=-\1\cdot H'$ is completely analogous. \par
  
  To conclude that $\mathcal{O}_R$ consists of one orbit, note that if $v\in\mathcal{P}^2$ is integrally transverse to some $\lambda\in\mathcal{O}_\pm$, then there is some $A\in\GL(2;\Z)$ such that $(1,0)A=v$ and $\pm\1 A= \lambda$. This is because $\GL(2;\Z)$ acts transitively on integral bases. In particular, $A\in H'$, so that $v\in (1,0)\cdot H'$. Therefore, we only need to show that every vector in $\mathcal{O}_R$ is integrally transverse to some vector in $\mathcal{O}_{\pm}$.
  
  Let $v=(v_1,v_2)\in\mathcal{O}_R$. By Bézout's identity, every integrally transverse vector to $v$ is of the form $(\mp q,\pm p)+kv$ for some $k\in\Z$, where $p$ and $q$ form a solution to $pv_1+qv_2=1$. We show that there is a vector of this form in $\mathcal{O}_\pm$.
  
  First of all, note that $v\notin\mathcal{O}_\pm$ implies that, up to permutations, either $v_2\equiv 0\mod 3$ or $v\equiv(1,-1)\mod 3$. In the first case, $pv_1\equiv 1\mod 3$, so that $p,v_1\equiv\pm 1\mod 3$. In the positive case, $(-q,p)+kv\equiv (-q+k,1)\mod 3$, which is in $\mathcal{O}_+$ for $k=q+1$. In the negative case, $(-q,p)+kv\equiv (-q-k,-1)\mod 3$, which is in $\mathcal{O}_-$ for the same $k$. In the second case, $p-q\equiv 1\mod 3$, so that $(q,-p)+kv\equiv (q+k,-1-q-k)\mod 3$, which is in $\mathcal{O}_+$ for $k=-q+1$.
\end{proof}

\begin{remark}
	The situation in higher dimensions is more complex. In general, there are more orbits than the three appearing in the case $n=2$. For example, in the case $n=3$, the vectors $(1,0,0)$ and $(1,-1,1)$ are neither in the orbits $\1 \cdot H', -\1 \cdot H'$ nor are they in the same orbit. Indeed, since $2$ divides $n+1 = 4$, their mod-2 reductions, which are invariants of the $H'$-orbits, differ. 
\end{remark}

We can now determine most of $\Sh(T_{\rm Cl})$ for the Clifford torus $T_{\rm Cl} \subset \CP^2$.\smallskip

\begin{proof}[Proof of \cref{thm:possible-fluxes_CP2_intro}]
  As we have seen in the proof of \cref{prop:flux-set_CP^2}, the $H'$-orbit of $\eta \in H^1(T_{\rm Cl};\R) = \R^2$ is dense if and only if $\eta \notin \R \cdot \Z^2$. Therefore, there is $h \in H'$ such that $\eta h^{-1} = \eta_0 = (x_0,y_0) \in \Delta_{\CP^2}$ in that case. Since $H'$ is the Lagrangian monodromy group of $T_{\rm Cl}$, we can pick a Lagrangian loop $\Lambda$ with $\cm(\Lambda) = h$. Furthermore, let $\Pi$ be a Lagrangian isotopy through toric fibres which starts at $T_{\rm Cl}$ and ends at $T_{(x_0,y_0)}$. Then, $\Flux \Pi = \eta_0$, and we find that
  \begin{equation*}
    \Flux(\Lambda * \Pi)
    = \Flux \Lambda + \cm(\Lambda)^* \Flux \Pi
    = 0 + \eta_0 h = \eta.
  \end{equation*}
  This proves the claim for any irrational $\eta$. \par

  Now let $\eta \in \R \cdot \Z^2$, and decompose it as $\eta = r \lambda$ as above. As above, we can act by the Lagrangian monodromy group $H'$ on $\lambda$ to send it
  \begin{enumerate}
  \item to $(1,1)$ in case $\lambda \in \mathcal{O}_+$;
  \item to $(-1,-1)$ in case $\lambda \in \mathcal{O}_-$;
  \item to $(0,1)$ in case $\lambda \in \mathcal{O}_R$.
  \end{enumerate}
  The obstructions in the cases $\lambda \in \mathcal{O}_{\pm}$ follow from \cref{prop:open_flux_obstructions} and \cref{ex:flux_obstructions}. Note that $(1,0)$, $(-1,-1) \in \pp \Delta_{\CP^2}$, whereas, in the $(1,1)$ direction, the critical value is $(\frac{1}{2},\frac{1}{2}) \in \pp \Delta_{\CP^2}$.
\end{proof}

\section{Questions and outlook}
\commentintoc{We close the paper with a series of open questions related to Lagrangians and their flux-monodromy morphism.}

\medskip
A given symplectic isotopy can be deformed, by a homotopy preserving its endpoints, to a Hamiltonian isotopy if and only if its \emph{symplectic} flux vanishes. This is obviously not true in the Lagrangian case, as there is an isotopy from the Clifford to the Chekanov torus in $\C^2$ which has vanishing Lagrangian flux. 

\begin{question}
	Under which conditions can a Lagrangian isotopy be deformed into a Hamiltonian one?
\end{question} 

Since we have mostly considered loops in this paper, it may be appropriate to ask which conditions on $\Lambda \in \pi_1(\cl)$ ensures that it is deformable into a loop in $\cl \Ham$. See \cref{prop:deform_to_ham} for a complete answer for tori in $\C^2$. A related question is:

\begin{question}
	Can every loop $\Lambda \in \pi_1(\cl)$ with $\FM(\Lambda) = 0$ be deformed to a loop in $\cl \Ham$?
\end{question}

Here are a few questions surrounding Lagrangian tori in $\C^2$. \cref{thm:B} shows that the flux-monodromy group $\Gamma(L)$ of any Lagrangian torus is generated by the images of the loops $\Xi$ and $\Sigma$ (after a change of basepoint) as defined in Section \ref{sec:C2}. Although we have studied $\pi_1$ of the space of immersed Lagrangian tori in \S\ref{ssec:immersed}, the embedded case is open.

\begin{question}
	Is $\pi_1(\cl,\tcl)$, for the Clifford torus $\tcl \subset \C^2$, generated by $\Xi$ and $\Sigma$, as defined in Section \ref{sec:C2}?
\end{question}

As we have seen, preserving the Maslov class is the only obstruction for a given element in $H^1(L;\R) \rtimes \MCG(L)$ to be realized in $\Gamma(L)$, for $L \subset \C^2$ a Lagrangian torus. For general $L \subset X$, the Maslov class is an element $m_L \in H^2(X,L;\Z)$. 

\begin{question} \label{qu:realizibility_ext-monodromy}
  Let $\Psi \in \Aut H_2(X,L)$ preserve the Maslov class, $\Psi^*m_L = m_L$, and the subgroup of ambient classes $j(H_2(X))$. Does there exist $\Lambda \in \pi_1(\cl,L)$ that has extended monodromy $\widehat{\cm}{(\Lambda)} = \Psi$?
\end{question}

We define \emph{the extended flux-monodromy group} $\widehat{\Gamma}(L)$ of $L$ as the image of the extended flux-monodromy morphism $\widehat{\FM}:\pi_1(\cl)\to H^1(L;\R)\rtimes \MCG(X,L); \Lambda\mapsto (\Flux(\Lambda),\widehat{\cm}(\Lambda))$, where $\MCG(X,L)\defi \pi_0(\mathrm{Diff}(X,L))$ and $\mathrm{Diff}(X,L)\defi\{\phi\in \mathrm{Diff}(X)| \phi(L)=L\}$.
\begin{question}
	Is $\widehat{\Gamma}(L)$ discrete?
\end{question}
If $\del:H_2(X,L;\R)\to H_1(L;\R)$ is surjective, then yes: Just like in Lemma~\ref{prop:flux-monodromy_monotone}, (\ref{eq:areaformula}) implies that $\widehat{\Gamma}(L)$ is topologically isomorphic to $\widehat{\cm}(L)$. At the other end of the spectrum, if $\del=0$, then every Lagrangian isotopy comes from a symplectic one (\cref{prop:path-lag}). Therefore, the question becomes one of the relative topology of $\Symp(X,L)$, the group of symplectomorphisms fixing $L$, in the full group $\Symp(L)$, which goes beyond the scope of this paper. \par

In \S\ref{ssec:fluxCPn}, we have discussed the set of elements in $H^1(T_{\rm Cl};\R)$ that can be realized as the flux of a Lagrangian isotopy starting at the Clifford torus $T_{\rm Cl}$. In the case $n=2$, discussed in \S\ref{ssec:fluxcp2}, we have found (\cref{thm:possible-fluxes_CP2_intro}) a complete answer up to the following values in $H^1(T_{\rm Cl};\R)$. 

\begin{question}
	\label{q:cp2flux}
	Let $\eta \in H^1(T_{\rm Cl}; \R)$ be of the form
	\begin{equation*}
		\eta = r\lambda \text{ with } \lambda \in \mathcal{O}_R = (1,0) \cdot H' \text{ and } r \geqslant 0,
	\end{equation*}		
	where we have used the standard basis on $H_1(T_{\rm Cl})$, and where $H'$ is defined in (\ref{eq:Hprimesubgroup}). Is there a Lagrangian isotopy $\Pi$ with $\Pi_0 = T_{\rm Cl}$ and $\Flux \Pi = \eta$? 
\end{question}

If the answer is yes, then $\Pi$ does not come from the method of construction we have used, i.e.\ it cannot be written as the concatenation of a loop $\Lambda \in \pi_1(\cl,T_{\rm Cl})$ and an isotopy through toric fibres. If the answer is no, this should come from additional information about the disk with boundary on $T_{\rm Cl}$ constructed in \cite{CieMoh18}, e.g.\ a constraint to which class in $H_1(T_{\rm Cl})$ its boundary can lie. Since all of the vectors $\lambda \in \mathcal{O}_R$ are in the same $H'$-orbit, it suffices to settle the question for one such $\lambda$.

In the case $n \geqslant 3$, we have a less complete picture: On the one hand, all irrational vectors $\R^n \setminus (\R \cdot \Z^n)$ can be realized as flux. On the other hand, values of the form $\eta = r \lambda$ for $r \geqslant 0$ and $\lambda \in \Lambda$ (where $\Lambda$ is defined in \eqref{eq:Lambda_complement_lattice}) are obstructed. As in the case $n = 2$, we do not know whether vectors of the form $\eta = r \lambda$ for $r \geqslant 1$ and $\lambda \in \Z^n \setminus \Lambda$ are obstructed, meaning that we can ask the same question as \cref{q:cp2flux}. However, in the case $n \geqslant 3$, we do even not know that the complement of $\Lambda$ forms a single $H'$-orbit. We suspect this to be true; see \cref{q:cpnflux}. We cannot completely settle the question for $\eta = r\lambda$ with $0 < r < 1$, either. Of course, for small enough $r$, the vector $\eta$ lies in the moment polytope $\Delta_{\CP^n}$ and can thus be realized as flux. This question depends on how close to the moment polytope $\Delta_{\CP^n}$ we can map $\lambda$ by an element in $H'$. In other words, this again boils down to understanding the orbit strucuture of the action of $H'$ on $\Z^n$. 

\begin{question}
\label{q:cpnflux} Consider the orbits of the action of $H'$ on primitive vectors in $\Z^n$, for $n \geqslant 3$. Does the direct generalization of \cref{lem:H_orbits_n=2} hold, i.e.\ is it true that
	\begin{enumerate}
		\item $\Lambda$ is given by all vectors $\lambda \in H_1(T_{\rm Cl})$ that satisfy $\lambda \equiv \1\!\mod p$ for some $p$ dividing $n+1$?
		\item the complement $\cp \setminus \Lambda$ of $\Lambda$ among primitive vectors is given by the orbit $(1,0,\ldots,0) \cdot H'$?
	\end{enumerate}
\end{question}

As we have seen in \cref{cor:flux_monotone}, $\Flux \Lambda = 0$ for any loop $\Lambda \in \pi_1(\cl,L)$ based at an $H$-monotone Lagrangian $L$. For product tori in $\C^n$ and the Clifford torus in $\CP^n$, we have seen that any element in $\Sh(L)$ can be realized by the concatenation of a loop with a straight path. 

\begin{question}
	\label{q:concatenation}
	Let $L \subset (X,\omega)$ be an $H$-monotone Lagrangian. Can any $\eta \in \Sh(L)$ be written as $\Flux(\Lambda * \Pi) = \Flux \Pi \circ \cm(\Lambda)$, where $\Lambda \in \pi_1(\cl,L)$ and $\Pi$ is a Lagrangian isotopy whose flux is contained in a line segment in $H^1(L,\R)$ for all times?
\end{question}

Recall that the set $\Sh^*(L) \subset H^1(L;\R)$ of all fluxes that can be reached by a straight line is called the \emph{star shape}~\cite{SheTonVia19}. It was also studied in \cite{EntGanMem18}. This question asks whether every flux is a star flux up to base change by a Lagrangian monodromy element. 

Finally, we have seen that $\LHam(L)$ is $C^1$-locally path connected when it is a Lagrangian torus in $\C^2$ (\cref{thm:B}) or a toric fibre in $\CP^2$ (\cref{rem:flux-set_loc-connected_CP^2})~---~despite the fact that the flux set is often dense in the latter case. Moreover, the examples constructed in \cite{BreKim23} where $\LHam(L)$ is not $C^1$-locally path connected are in \emph{non-monotone} $S^2\times S^2$. This thus begs the question:
\begin{question}
	\label{q:4-H-monotone}
	Let $L$ be a Lagrangian torus in a 4-dimensional $H$-monotone symplectic manifold $X$. Must $\LHam(L)$ be $C^1$-locally path connected?
\end{question}

Recall that we know this is false when $\dim X\geqslant 6$~\cite{ChaLec24}.

\pagebreak
\appendix
\section{Paths of Hamiltonian isotopic Lagrangians}
\label{sec:paths}
\commentintoc{We prove some basic results characterizing when Lagrangian paths are generated by Hamiltonian isotopies. These results are known to experts, but are scattered throughout the litterature or folkloric~---~we simply gather them into one easy-to-cite place.}

The first appendix concerns a result that is known to experts but hard to find in the literature.

\begin{proposition} \label{prop:path-lag}
    Let $\Pi:[0,1]\to \cl$ be a smooth path from $L_0$ to $L_1$. The following statements are equivalent:
    \begin{enumerate}[label=(\alph*)]
    	\item $\Flux(\{\Pi(\tau)\}_{0\leqslant \tau\leqslant t})=0$, $\forall t$;
    	\item $\Pi(t)\in\LHam(L)$, $\forall t$;
    	\item there is some $H:[0,1]\times X\to\R$ such that $\Pi(t)=\phi^t_H(L_0)$, $\forall t$. 
    \end{enumerate}
    Moreover, if we only have that $\omega(H_2(X,\Pi(t)))$ is constant for all $t$, then we still have that $\Pi(t)=\psi_t(L_0)$ for some $\{\psi_t\}\in\Symp_{c,0}(X)$.  
\end{proposition}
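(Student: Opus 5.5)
We prove the cycle of implications (c)$\Rightarrow$(b)$\Rightarrow$(a)$\Rightarrow$(c), and then adapt the argument of (a)$\Rightarrow$(c) to the symplectic statement. The implication (c)$\Rightarrow$(b) is immediate: each $\Pi(t)=\phi^t_H(L_0)$ lies in the Hamiltonian orbit. For (b)$\Rightarrow$(a) there is a subtlety, since (b) only says that each $\Pi(t)$ is individually Hamiltonian isotopic to $L_0$. I would argue as follows. Fix $t_0$, a Hamiltonian diffeomorphism $\phi$ with $\phi(L_0)=\Pi(t_0)$, and a Weinstein neighbourhood of $\Pi(t_0)$. For $t$ near $t_0$, $\Pi(t)$ is the graph of a closed form $\sigma_t$, and $\Flux(\{\Pi(\tau)\}_{t_0\le\tau\le t})=[\sigma_t]$. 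By (b), $\Pi(t)$ is also Hamiltonian isotopic to $\Pi(t_0)$, so $[\sigma_t]$ lies in the flux set $\fl(\Pi(t_0))$. This alone does not force $[\sigma_t]=0$, since the flux set may accumulate at $0$. So the honest reading of (b) must be that $t\mapsto\Pi(t)$ is a path \emph{inside} $\LHam(L_0)$ with its natural (Hamiltonian) topology, i.e.\ locally lifted by Hamiltonian isotopies. Under that reading each local segment is generated by a Hamiltonian isotopy, so its flux vanishes. Additivity of flux under concatenation, formula~\eqref{eq:concatenation}, then propagates vanishing to all $t$. I would state this interpretation explicitly in the proof.

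The substance is (a)$\Rightarrow$(c). Choose a smooth parametrization $\iota_t$ of $\Pi$ and let $v_t$ be a vector field along $\iota_t$ with $v_t=\partial_t\iota_t$. The $1$-form $\alpha_t=\iota_t^*(\omega(v_t,\cdot))$ on $L$ is closed because $\iota_t$ is a Lagrangian embedding (Cartan's formula). Its class is exactly $\frac{d}{dt}$ of the flux of the path up to time $t$, pulled back to $L$ via $\iota_t$. By (a) this derivative vanishes, so $\alpha_t=dh_t$ for a smooth family $h_t\colon L\to\R$, obtained by fixing basepoint normalization so that $h_t$ depends smoothly on $t$. Extend $h_t\circ\iota_t^{-1}$ from $\Pi(t)$ to a compactly supported $H_t$ on $X$ whose Hamiltonian vector field along $\Pi(t)$ agrees with $v_t$ modulo $T\Pi(t)$. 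Concretely, I would work in a Weinstein neighbourhood of $\Pi(t)$, where the normal part of $X_{H_t}$ is determined by $dH_t|_{\Pi(t)}$, and choose $H_t$ to be linear in the fibre directions with the prescribed restriction. Then $\phi^t_H(L_0)$ and $\Pi(t)$ solve the same ODE in $\cl$ (only normal components matter for unparametrized submanifolds), so they coincide. The main obstacle is making the extension smooth in $t$ globally. I would handle it with a time-dependent family of Weinstein neighbourhoods, built from a smooth ambient isotopy $f_t$ extending $\iota_t$, together with a cutoff.

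For the final claim, if $\omega(H_2(X,\Pi(t)))$ is constant, the area identity~\eqref{eq:areaformula} gives $\partial^*\Flux=0$, so the flux classes lie in the image of $\iota_t^*\colon H^1(X;\R)\to H^1(\Pi(t);\R)$. I would therefore pick a smooth family of closed $1$-forms $\beta_t$ on $X$ with $\iota_t^*\beta_t$ cohomologous to $\alpha_t$. Then $\alpha_t-\iota_t^*\beta_t=dh_t$, and I would take the symplectic vector field dual to $\beta_t+dH_t$, with $H_t$ constructed as above. Its flow $\psi_t$ is a symplectic isotopy carrying $L_0$ to $\Pi(t)$, made compactly supported by cutting off $\beta_t$ away from a compact set containing all $\Pi(t)$, which preserves closedness if we use primitives on a neighbourhood complement. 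The only new difficulty is choosing $\beta_t$ smoothly in $t$, which follows by fixing a basis of $H^1(X;\R)$ represented by closed forms and taking coefficients depending smoothly on $t$.
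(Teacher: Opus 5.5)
Your proof has a genuine gap in (b)$\Rightarrow$(a). You correctly observe that $[\sigma_t]\in\fl(\Pi(t_0))$. You then replace (b) by the hypothesis that $\Pi$ is locally generated by Hamiltonian isotopies, which is essentially (c) and makes the implication nearly tautological. The statement as written needs no such reinterpretation.

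The missing fact is that a flux set is always \emph{countable}, even though it may accumulate at $0$ (it is even dense for some tori in $\CP^2$). For a loop $\Lambda$ at $L'$ and $\xi\in H_1(L';\Z)$, the cylinder $C^\Lambda_\xi$ has both boundary components on $L'$. Hence $\Flux(\Lambda)\cdot\xi\in\omega(H_2(X,L';\Z))$, which is a countable subgroup of $\R$, and $H_1(L';\Z)$ is finitely generated.

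With this, your own local observation finishes the argument. The map $t\mapsto[\sigma_t]$ is continuous, vanishes at $t_0$, and takes values in a countable set. By the intermediate value theorem (coordinatewise) it vanishes near $t_0$. So the flux of $\{\Pi(\tau)\}_{0\leqslant\tau\leqslant t}$ is locally constant in $t$ by additivity, hence identically zero. This is exactly the paper's argument: it caps the cylinder $C_t$ swept from $\Pi(t_0)$ to $\Pi(t)$ with the zero-area cylinder of a Hamiltonian isotopy back to $\Pi(t_0)$. Thus $\omega(C_t)\in\omega(H_2(X,\Pi(t_0)))$, a totally disconnected set, while $\omega(C_t)$ is continuous in $t$.

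Your (a)$\Rightarrow$(c) is fine and matches the paper in substance. The paper gets exactness of $\alpha_t$ from local primitives in Weinstein charts and cites Akveld--Salamon's Lemma~2.3 to build $H$. Your identification of $[\alpha_t]$ with the $t$-derivative of the flux is a slightly more direct route to the same exactness.

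The last claim has two problems.
\begin{enumerate}
\item The same countability point is needed there. Constancy of the \emph{set} $\omega(H_2(X,\Pi(t)))$ does not by itself give $\partial^*\Flux=0$ through the area identity, since two homomorphisms can have the same image. You need that, for fixed $A\in H_2(X,L_0)$, the area of its transport to $H_2(X,\Pi(t))$ is continuous in $t$ with values in a fixed countable group, hence constant.
\item Cutting off $\beta_t$ to make it compactly supported keeps it closed only where it is exact, i.e.\ only if $[\beta_t]$ comes from $H^1_c(X;\R)$. For closed $X$ nothing needs cutting off, but for open $X$ this can fail. Moving the zero section of $T^*S^1=S^1\times\R$ to $S^1\times\{1\}$ satisfies the hypothesis, since $H_2(T^*S^1,\Pi(t))=0$. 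Yet an area count shows that no compactly supported symplectic isotopy does it. The paper's appeal to compactly supported cohomology at this step has the same issue.
\end{enumerate}
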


By a \emph{smooth path} of Lagrangians, we mean that there is a smooth map $F:[0,1]\times L\to M$ such that $f_t=F(t,\cdot)$ parametrizes $\Pi(t)$. By the Whitney approximation theorem, every path in $\cl$ is homotopic relative to its endpoints to such a path.

To prove the statement, we need the following technical lemma.

\begin{lemma} \label{lem:local-to-global_exact}
    Suppose that $F:[0,1]\times L\to X$ has the property that, for every $t_0\in [0,1]$, there is some $\epsilon>0$ and some open $U$ of $X$ containing $f_t(L)$ with $\omega|_U=d\lambda$ and such that $f_t^*\lambda=dh_t$ for all $t\in (t_0-\epsilon,t_0+\epsilon)$. Then, there exists a Hamiltonian $H$ on $X$ such that $f_t(L)=\phi^t_H(L)$ for all $t\in [0,1]$.
\end{lemma}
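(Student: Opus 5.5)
The plan is to build the Hamiltonian locally in time, then patch the pieces together. The key local fact is that an exact Lagrangian isotopy is generated by a Hamiltonian that can be written down explicitly.

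\emph{Local step.} Fix $t_0$, together with $\epsilon$, $U$, $\lambda$ and $h_t$ as in the hypothesis. Since $h_t$ is only determined up to a constant, we first arrange for it to depend smoothly on $t$. To do this, fix a basepoint $x_0\in L$ and define $h_t(x)=\int_{\gamma_x} f_t^*\lambda$ for any path $\gamma_x$ from $x_0$ to $x$. This is well defined because $f_t^*\lambda$ is exact, and it is smooth in $(t,x)$. Let $V_t = \partial_t f_t$, a section of $f_t^*TX$. Cartan's formula applied to the family $F$ gives
\[
\partial_t (f_t^*\lambda) = d\big(\lambda(V_t)\circ f_t\big) + f_t^*(\iota_{V_t}\omega).
\]
Here we use $d\lambda=\omega$, and the identity is understood on $[0,1]\times L$, restricted to $L$-directions. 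The left-hand side equals $d(\partial_t h_t)$. It follows that the $1$-form $\alpha_t := f_t^*(\iota_{V_t}\omega)$ on $L$ is exact, namely $\alpha_t=dk_t$ with $k_t=\partial_t h_t-\lambda(V_t)\circ f_t$, and $k_t$ is smooth in $t$.

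\emph{Extension.} Next we want a function $H_t$ on $X$, supported near $f_t(L)$, such that $H_t\circ f_t = -k_t$ (the sign is fixed by the convention $\iota_{X_H}\omega=-dH$) and such that $X_{H_t}$ agrees with $V_t$ modulo $T f_t(L)$ along $f_t(L)$. Only the normal component of $V_t$ affects the image submanifolds. On a Lagrangian submanifold, $\omega$ identifies the normal bundle with $T^*L$, and under this identification the normal part of $V_t$ corresponds exactly to $\alpha_t$. So it suffices for $dH_t|_{T f_t(L)}$ to equal $-\alpha_t$ pulled forward, and this holds because $H_t\circ f_t=-k_t$. To build $H_t$ smoothly in $t$, work in a tubular neighbourhood of the moving Lagrangian: pull $k_t$ back by a smooth family of projections and multiply by a cutoff, extending by zero. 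The flow of $H_t$ then maps $f_{t_0}(L)$ onto $f_t(L)$, at least as unparametrized submanifolds, for $t$ near $t_0$. To justify this, write $g_t=\phi^{t}_{H}\circ(\phi^{t_0}_H)^{-1}$ and compare $g_t(f_{t_0}(L))$ with $f_t(L)$. Both are solutions of the same first-order evolution for submanifolds, whose normal velocity is determined by $dH_t$ restricted to the submanifold, with the same initial condition, so they coincide.

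\emph{Patching.} By compactness of $[0,1]$, choose $0=t_0<t_1<\dots<t_N=1$ such that each interval $[t_{i},t_{i+1}]$ lies in one of the neighbourhoods $(t_0'-\epsilon,t_0'+\epsilon)$ from the hypothesis. The local construction gives Hamiltonians $H^{(i)}$ on these intervals. Reparametrize time so that each piece is constant near its endpoints, for instance by composing with a smooth step function. The concatenated Hamiltonian is then smooth in $t$, and its flow satisfies $\phi^t_H(L)=f_t(L)$ for all $t$, by induction on $i$.

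The main obstacle is the local step. It requires choosing the primitives $h_t$ smoothly in $t$, and checking that the normal part of the velocity is really $dk_t$ for a $k_t$ that is globally defined on $L$. The patching step is routine once $H_t$ vanishes near the breakpoints.
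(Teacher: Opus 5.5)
Your key computation is the same as the paper's. The paper also sets $\alpha_t=f_t^*(\iota_{v_t}\omega)$ and derives $\alpha_t=d\bigl(\tfrac{d}{dt}h_t-\lambda(v_t)\bigr)$ on each time window. It then simply cites \cite[Lemma~2.3]{AkvSal01}: a Lagrangian isotopy whose forms $\alpha_t$ are all exact is Hamiltonian. Your normalization $h_t(x)=\int_{\gamma_x}f_t^*\lambda$, which makes $\partial_t h_t$ meaningful, covers a point the paper passes over.

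Everything after that is a hand-made proof of the cited lemma, and as written your patching step does not give the statement. Reparametrizing a piece by a step function $\beta$ replaces $H_t$ by $\beta'(t)H_{\beta(t)}$. Its flow sweeps out $f_{\beta(t)}(L)$ rather than $f_t(L)$. Moreover, $H_t$ cannot vanish near a breakpoint unless the isotopy is stationary there. So you end up with either a Hamiltonian that is only piecewise smooth in $t$, or the wrong parametrization.

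The patching is in fact unnecessary. Exactness of $\alpha_t$ is a pointwise-in-$t$ statement, so once you have it for every $t$, the normalized primitive $k_t(x)=\int_{\gamma_x}\alpha_t$ is smooth on all of $[0,1]\times L$. You can then extend it in one go, for example by transporting a cut-off extension near $f_0(L)$ with an ambient isotopy $\Psi_t$ satisfying $\Psi_t\circ f_0=f_t$. Alternatively, glue local pieces on overlapping windows with a partition of unity in $t$. This is legitimate because the condition $d(H_t\circ f_t)=-\alpha_t$ is affine in $H_t$.

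Your uniqueness justification is also too quick. In a Weinstein chart, the ``same first-order evolution'' is a Hamilton--Jacobi equation, which is a PDE, so uniqueness does not follow from that phrasing alone. Here is a clean argument. With $g_t=\phi^t_H$, the family $\tilde f_t=g_t^{-1}\circ f_t$ has velocity $d(g_t^{-1})\bigl(V_t-X_{H_t}\circ f_t\bigr)=d\tilde f_t(Y_t)$. Here $Y_t$ is the smooth vector field on $L$ with $df_t(Y_t)=V_t-X_{H_t}\circ f_t$; it exists because this difference is $\omega$-orthogonal to $Tf_t(L)$, hence lies in $(Tf_t(L))^{\omega}=Tf_t(L)$. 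Composing $\tilde f_t$ with the flow of $-Y_t$ on the compact manifold $L$ gives a constant family. Hence $\tilde f_t(L)=f_0(L)$, that is, $\phi^t_H(f_0(L))=f_t(L)$.
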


\begin{proof}
  Consider the closed time-dependent 1-form $\alpha_t:=f_t^*(\iota_{v_t}\omega)$ on $L$, where $v_t:=\frac{df_t}{dt}$. By~\cite[Lemma~2.3]{AkvSal01}, it suffices to prove that $\alpha_t$ is exact for all $t\in [0,1]$. For some $t_0\in [0,1]$, take $\epsilon$, $U$, and $\lambda$ as above. Then, for every $t\in (t_0-\epsilon,t_0+\epsilon)$, a direct computation gives that
  \begin{align} \label{eq:alpha-primitive}
    \alpha_t=d\left(\frac{d}{dt}h_t-\lambda(v_t)\right),
  \end{align}
  where $f_t^*\lambda=dh_t$ as above. This proves the statement.
\end{proof}

\begin{proof}[Proof of Proposition~\ref{prop:path-lag}]
  It is trivial that \textit{(c)} implies \textit{(a)} and \textit{(b)}. Before proving the other implications, we make the following general observation. Let $F:[0,1]\times L\to X$ parametrize the path $\Pi$. For $t_0\in [0,1]$, we fix a Weinstein neighbourhood $U$ of $f_t(L)$. Then, by the smoothness of $F$, there is some $\epsilon>0$ such that $f_t(L)$ can be seen as the graph of a closed 1-form $\sigma_t$ of $L$ through that Weinstein neighborhood for all $t\in (t_0-\epsilon,t_0+\epsilon)$. \par

  Take a loop $\ell:S^1\to L$. Note that $\sigma_t(\ell)$ is precisely the area of the cylinder $C_t=\cup_{s\in [t_0,t]} \sigma_s(\ell)=\cup_{s\in [t_0,t]} f_s(\ell)$. Furthermore, the area of $C_t$ is continuous in $t$:
  \begin{align*}
    |\omega(C_t)-\omega(C_s)|=|\sigma_t(\ell)-\sigma_s(\ell)|\leqslant ||\sigma_t-\sigma_s||\ \mathrm{length}(\ell),
  \end{align*}
  where the norm and the length are computed using the same Riemannian metric on $L$. 
  
  Suppose now that we are in \textit{(a)}. Then, for $t\geqslant t_0$, we have that
  \begin{align*}
    \sigma_t(\ell)=\omega(C_t)=\Flux(\{\gamma(\tau)\}_{0\leqslant \tau\leqslant t})(\ell)-\Flux(\{\gamma(\tau)\}_{0\leqslant \tau\leqslant t_0})(\ell)=0.
  \end{align*}
  Likewise, if $t\leqslant t_0$, we have that
  \begin{align*}
    \sigma_t(\ell)=\omega(C_t)=-\left(\Flux(\{\gamma(\tau)\}_{0\leqslant \tau\leqslant t})(\ell)-\Flux(\{\gamma(\tau)\}_{0\leqslant \tau\leqslant t_0})(\ell)\right)=0.
  \end{align*}
  Going through all possible loops $\ell$, we conclude that $\sigma_t$ is exact for all $t\in (t_0-\epsilon,t_0+\epsilon)$. We then get \textit{(c)} from Lemma~\ref{lem:local-to-global_exact}.
  
  Suppose that we are in \textit{(b)}, i.e.\ $f_t(L)\in\LHam$. Then, there is some Hamiltonian isotopy $\{\phi^{t,t_0}_s\}_{s\in [0,1]}$ such that $\phi^{t,t_0}_1(f_t(L))=f_{t_0}(L)$. Therefore, $C'_t:=\cup_{s\in[0,1]}\phi^{t,t_0}_s(f_t(\ell))$ is a cylinder with vanishing area and one boundary agreeing with $C_t$, so that
  \begin{align*}
    \omega(C_t)=\omega(C_t\sharp C'_t)\in\omega(H_2(M,f_{t_0}(L))).
  \end{align*}
  But $\omega(H_2(M,L))$ is totally disconnected since $H_2(X,f_{t_0}(L))$ is finitely generated. Therefore, $\omega(C_t)$ must be constant in $t$, so that $\sigma_t(\ell)= 0$. We again get \textit{(c)} from Lemma~\ref{lem:local-to-global_exact}. \par

  Finally, we deal with the case when we only know that $\omega(H_2(X,f_t(L)))$ is constant. Suppose there is some $A\in H_2(X,f_{t_0}(L))$ such that $\del A=\ell$. Then, we again have that
  \begin{align*}
    \omega(C_t)=\omega(A\sharp C_t)-\omega(A)\in\omega(H_2(M,f_{t_0}(L))),
  \end{align*}
  so that $\sigma_t(\ell)= 0$ as in the above paragraph. But (\ref{eq:alpha-primitive}) becomes $\alpha_t=\frac{d}{dt}\sigma_t-d(\lambda(v_t))$ in the non-exact case. Therefore, $\alpha_t$ must vanish on the image of the boundary map $H_2(X,f_t(L))\to H_1(f_t(L))=H_1(L)$. By the long exact sequence in cohomology (with compact coefficients), this is equivalent to the existence of a compactly supported time-dependent closed 1-form $\theta_t$ on $X$ such that $[\alpha_t]=f_t^*[\theta_t]$. But then, there is a function $K:[0,1]\times L\to \R$ such that $\alpha_t=f_t^*\theta_t+dk_t$. It then suffices to pick a compactly supported function $\widetilde{K}:[0,1]\times X\to\R$ such that $f_t^*\widetilde{k}_t=k_t$ and to let $\{\psi_t\}$ be the symplectic isotopy generated by $\theta_t+d\widetilde{k}_t$.
\end{proof}

\begin{remark} \label{rem:surj_boundary-map}
    The proof of Proposition~\ref{prop:path-lag} implies that, whenever $H_1(L_0;\R)\to H_1(X;\R)$ vanishes, any Lagrangian isotopy in $\LSymp$ is a Hamiltonian isotopy. This had already appeared under the additional hypothesis that $\omega(H_2(X,L))$ be discrete in~\cite{ChaLec24}. \\
    At the other end of the spectrum, it also implies that every Lagrangian isotopy is generated by a symplectic one whenever $H_1(L_0;\R)\to H_1(X;\R)$ is injective.
  \end{remark}

\section{Lagrangian homotopies with vanishing flux}
\label{app:Lag_homotopies}
\commentintoc{We study when Lagrangian isotopies with vanishing flux are homotopic to exact \emph{homotopies}, an immersed variant of Hamiltonian isotopies. We give a complete answer by making use of $h$-principles.}

A symplectic isotopy with vanishing \emph{symplectic} flux is homotopic (relative to its endpoints) to a Hamiltonian isotopy. This classical fact is due to Banyaga \cite{Ban97}; see, for example, \cite[Theorem 10.2.5]{McDSal17}. The corresponding statement is false for Lagrangian isotopies: there is a Lagrangian isotopy between the Clifford and the Chekanov torus in $\C^2$ with vanishing flux. In this appendix, we discuss a Lagrangian analogue of this fact in the setting of Lagrangian \emph{immersions}. More precisely, we prove the following.

\begin{theorem} \label{thm:flux-0-to-exact}
	Let $\{f_t:Q\looparrowright X\}_{t\in [0,1]}$ be a Lagrangian homotopy. The total flux $\Flux(\{f_t\})$ vanishes if and only if $\{f_t\}$ is Lagrangian homotopic with fixed endpoints to an exact homotopy.
\end{theorem}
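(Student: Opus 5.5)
The easy direction comes first. An exact homotopy (one whose flux form $\alpha_t=f_t^*(\iota_{v_t}\omega)$ is exact for every $t$) has vanishing flux at all times, and in particular vanishing total flux. The total flux $\Flux(\{f_t\})\cdot\xi$ is the $\omega$-area of the cylinder swept out by a representative of $\xi$. It equals $\int_0^1[\alpha_t]\,dt$ evaluated on $\xi$, as in the proof of Lemma~\ref{lem:local-to-global_exact}. A Lagrangian homotopy with fixed endpoints changes these cylinders only by a homotopy rel boundary, and since $\omega$ is closed, the areas, and hence the total flux, are unchanged. So if $\{f_t\}$ is homotopic rel endpoints to an exact homotopy, its total flux vanishes.

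For the converse, I would reduce to a statement about paths of cohomology classes using the Gromov--Lees $h$-principle, now in its parametric and relative form. The key observation is that the infinitesimal flux $[\alpha_t]\in H^1(Q;\R)$ can be prescribed freely within the space of Lagrangian immersions. Given any smooth family of closed $1$-forms $\beta_t$ on $Q$, I would build a new homotopy $g_t=\exp\circ(\text{graph of }\beta_t\text{ in a Weinstein neighbourhood of the immersion }f_t)$. Concretely, I would use a smoothly varying family of Weinstein immersions $\Psi_t\colon D^*Q\looparrowright X$ extending $f_t$; these exist because $[0,1]$ is contractible. I would set $g_t=\Psi_t\circ \mathrm{graph}(s\,\beta_t)$ with $\beta_0=\beta_1=0$. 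The deformation parameter $s\in[0,1]$ then gives a Lagrangian homotopy with fixed endpoints from $\{f_t\}$ to $\{g_t\}$. A direct computation shows $[\alpha^{g}_t]=[\alpha^f_t]+\tfrac{d}{dt}[\beta_t]$.

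The main step is therefore to pick $[\beta_t]$ with $[\beta_0]=[\beta_1]=0$ and $\tfrac{d}{dt}[\beta_t]=-[\alpha^f_t]$. This requires $\int_0^1[\alpha^f_t]\,dt=0$, which is precisely the vanishing of the total flux. I would take $[\beta_t]=-\int_0^t[\alpha^f_s]\,ds$, represented by harmonic forms so that the dependence on $t$ is smooth. The new homotopy then has $[\alpha^g_t]=0$ for all $t$, i.e.\ it is exact.

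The main obstacle is that the graph of $\beta_t$ may leave the Weinstein neighbourhood when $\beta_t$ is not small. I would handle this by the $h$-principle. Lagrangian immersions satisfy a parametric $h$-principle, and more precisely the map recording the Lagrangian frame data together with the cohomological ``flux class'' is a weak equivalence onto its formal counterpart. In that formal space the correction by $\beta_t$ imposes no size constraint. So rather than literally taking graphs, I would apply the relative parametric $h$-principle (rel $t\in\{0,1\}$) to realize the family whose flux classes have been shifted by $\tfrac{d}{dt}[\beta_t]$. Alternatively, I would first subdivide $[0,1]$ finely and apply the graph construction piecewise with small $\beta$, transporting the accumulated class forward. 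The first route is the one I would write, verifying carefully that the $h$-principle applies parametrically with this prescribed-flux condition.
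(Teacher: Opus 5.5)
\textbf{There is a genuine gap: the step you flag as the main obstacle is never closed.} Your easy direction is fine. Your graph construction is in fact the reduction the paper uses. With a uniform family of Weinstein immersions $\psi_t\colon D_r^*Q\looparrowright X$ extending $f_t$, and $\sigma_t=\int_0^t\alpha^f_s\,ds$, your $g_t=\psi_t\circ\mathrm{graph}(-\sigma_t)$ is exactly a Lagrangian immersion of $Q$ into $T^*Q$ that is exact for the primitive $\lambda_t=\lambda_0+\pi^*\sigma_t$ of $d\lambda_0$. Any such family, rel endpoints, makes $\psi_t\circ g_t$ an exact homotopy.

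The problem is size. Vanishing of the \emph{total} flux says nothing about the intermediate classes $[\sigma_t]$. Once they exceed what fits in $D_r^*Q$, no graph exists. Subdividing does not help either. At each time $t$ the corrected immersion must carry the whole accumulated class $-[\sigma_t]$ relative to $f_t$, not just the increment over a short subinterval. A graph in a Weinstein chart of $f_t$ can only carry a class bounded by $r$ times loop lengths, and there is nothing along which to ``transport'' a large correction, since the charts only exist near $f_t$.

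Your first route does not close this gap. It invokes an $h$-principle ``with prescribed flux class'' that is not a standard result in the form you state, and as stated it is essentially the theorem itself. Exactness of a homotopy is a condition on the $t$-derivative, which a parametric $h$-principle does not see directly. Moreover, the size constraint lives on the genuine solutions, not on the formal side.

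The missing idea is to make the condition pointwise in $t$ and then quote an actual $h$-principle. Keep your reduction, but look for immersions $g_t\colon Q\looparrowright D_r^*Q$ that are $\lambda_t$-exact for every $t$ and homotopic rel endpoints to the constant zero-section family $\iota$. The zero section is $\lambda_0$- and $\lambda_1$-exact because $[\sigma_1]=\Flux(\{f_t\})=0$. A $\lambda_t$-exact Lagrangian immersion is the projection of a Legendrian immersion into $T^*Q\times\R$ for the contact form $dz-\lambda_t$. Hence the fibred, relative Gromov--Duchamp $h$-principle for Legendrian immersions produces such a family from the obvious formal one. Crucially, its $C^0$-dense version keeps $g_t$ $C^0$-close to the zero section, hence inside $D_r^*Q$, even though $[g_t^*\lambda_0]=-[\sigma_t]$ may be large.

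This is exactly what your proposal lacks. Immersed, non-graphical Lagrangians can carry arbitrarily large Liouville class while staying $C^0$-small (think of many small kinks on a curve in $T^*S^1$); graphs cannot. With such $g_t$, the square-homotopy argument gives $\Flux(\{\psi_s\circ g_s\}_{s\leqslant t})=[\sigma_t]+[g_t^*\lambda_0]=0$ for every $t$, so the new homotopy is exact.
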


We recall that a \emph{Lagrangian homotopy} is a (smooth) 1-parameter family of Lagrangian immersions. A Lagrangian homotopy $\{f_t\}$ is \emph{exact} if the associated map $F:[0,1]\times Q\to X; (t,x)\mapsto f_t(x)$ has the property that $F^*\omega=dt\wedge dh_t$ for some smooth family $\{h_t:Q\to\R\}$. When all $f_t$'s are embeddings, exactness is equivalent to being Hamiltonian; see Lemma~\ref{lem:local-to-global_exact}. \par

The proof of Theorem~\ref{thm:flux-0-to-exact} relies on a clever use of the fibred version of the Gromov--Duchamp $h$-principle for Legendrian immersions~\cite{Gromov1970,Duc1984}. We thank Dominique Rathel-Fournier for pointing out that ingenious trick to us. \par

\begin{proof}
	Note that, for a Lagrangian homotopy $F=\{f_t\}$, we have that $F^*\omega=dt\wedge\beta_t$ for some time-dependent closed 1-form $\beta_t$ on $Q$. Then, a direct computation gives that (\ref{eq:fluxmap}) is equivalent to
	\begin{align} \label{eq:flux_1-form}
		\Flux(\{f_t\})=\int_0^1[\beta_t]dt.
	\end{align}
	In particular, if we set $\sigma_t:=\int_0^t\beta_sds$, then $\Flux(\{f_t\})=[\sigma_1]$. \par
	
	From this formalism, we directly see that exact homotopies have vanishing flux. Since the flux is invariant under homotopies, this shows one side of the implication. \par
	
	Suppose therefore that $F=\{f_t\}$ has vanishing flux, i.e.\ that $\sigma_1$ is exact. There exist $r>0$ and $\Psi=\{\psi_t\}:[0,1]\times D_r^*Q\to X$ such that $\psi_t^*\omega=\omega_0$ and $\psi_t\circ\iota=f_t$, where $\omega_0=d\lambda_0$ is the standard symplectic form on $T^*Q$ and $\iota:Q\hookrightarrow T^*Q$ is the inclusion of the 0-section. That $r>0$ may be chosen uniformly follows directly from the parametric version of Moser isotopy, which follows from the usual proof; see \cite[Lemma~3.2.1]{McDSal17}, for example. See also \cite{Yam25} for more explicit bounds on $r$ in terms of curvature in the K\"ahler setting~---~their approach should also give explicit, though uglier, bounds in the non-K\"ahler setting. \par
	
	Consider the 1-form $\lambda_t:=\lambda_0+\pi^*\sigma_t$ on $T^*Q$, where $\pi:T^*Q\to Q$ is the canonical projection. As noted above, the fact that $f_t$ is Lagrangian ensures that $\sigma_t$ is closed. Therefore, $\lambda_t$ is a primitive of $\omega_0$ for all $t$. Moreover,
	\begin{align*}
		\iota^*\lambda_1=\iota^*\lambda_0+\iota^*\pi^*\sigma_1=\sigma_1,
	\end{align*}
	so that $\iota$ is exact for both $\lambda_0$ and $\lambda_1$, since $[\sigma_1]=\Flux({f_t})=0$ by hypothesis. \par
	
	But an immersion $g:Q\looparrowright T^*Q$ is Lagrangian and $\lambda_t$-exact if and only if it admits a lift $\tilde{g}:Q\looparrowright T^*Q\times\R$ that is Legendrian for the contact form $\alpha_t:=e^x\lambda_t$, where $x$ denotes the coordinate in the $\R$ factor. Therefore, $\iota$ admits an $\alpha_i$-Legendrian lift $\tilde{g}_i$ to $T^*Q\times\R$ for $i=0,1$. Moreover, these lifts extend to a 1-parameter family of isotropic monomorphisms $\{\tilde{G}_t:TQ\to T(T^*Q\times \R)\}$, where $\tilde{G}_t$ is isotropic for the contact structure $\ker\alpha_t$. This is because the obvious projection $\ker\alpha_t\to TT^*Q$ is an isomorphism of vector bundles. By the fibred version of the $h$-principle for Legendrian immersions~---~see Theorem~24.1.3 in \cite{EliMis02} and the remark thereafter~---~this 1-parameter family is homotopic relative its endpoints to one coming from a homotopy $\{\tilde{g}_t:Q\looparrowright T^*Q\times\R\}$ where $\tilde{g}_t$ is $\alpha_t$-Legendrian. Projecting back onto $T^*Q$, this gives a homotopy relative to endpoints from the constant Lagrangian homotopy $\{\iota_t=\iota\}$ to a Lagrangian homotopy $\{g_t:Q\looparrowright T^*Q\}$ such that $g_t$ is $\lambda_t$-exact. Note that the $h$-principle for Legendrian immersions also holds in its $C^0$-dense version, so that we may assume that $g_t$ is $C^0$-close to $\iota$. In particular, we may assume it has image in $D^*_rL$. \par
	
	But then, $F'=\{f'_t:=\psi_t\circ g_t\}$ is a Lagrangian homotopy that is homotopic relative to its endpoints to $F$. Furthermore, $\{f'_s\}_{s\leq t}$ is homotopic to the concatenation $\{\psi_s\circ g_0\}_{s\leq t}\#\{\psi_t\circ g_s\}_{s\leq t}$; this is precisely the core of the proof of Claim~\ref{claim:path_homotopy_pcl}. Therefore, we have that
	\begin{align*}
		\Flux(\{f'_s\}_{s\leq t})
		&= \Flux(\{\psi_s\circ g_0\}_{s\leq t})+\Flux(\{\psi_t\circ g_s\}_{s\leq t}) \\
		&= \Flux(\{f_s\}_{s\leq t})+\Flux(\{g_s\}_{s\leq t}) \\
		&= [\sigma_t]+[g_t^*\lambda_0]-[g_0^*\lambda_0] \\
		&= [\sigma_t]+[g_t^*\lambda_t]-[g_t^*\pi^*\sigma_t] \\
		&= [\sigma_t]-[\sigma_t] \\
		&= 0.
	\end{align*}
	Here, the third line follows from Stokes' identity, while the fifth uses the fact that $\pi\circ g_t$ is homotopic to $\pi\circ g_0=id_L$. Therefore, $\{f'_t\}$ is an exact homotopy, which concludes the proof.
\end{proof}

\begin{remark} \label{rem:previous_h-principle}
	In a previous version of this appendix, we proved Theorem~\ref{thm:flux-0-to-exact} for \emph{$H$-rational} Lagrangian immersions, that is, those $f:Q\looparrowright X$ whose cone $\mathrm{Cone}(f)$ respects $\omega(H_\bullet(\mathrm{Cone}(f)))=\tau\Z$ for some $\tau\geq 0$. Our proof relied on an extension of Evans and K{\c{e}}dra's $h$-principle for monotone immersions in $\C^n$~\cite[Theorem~D]{EvaKed15}, which is covered by the above proof.
	
	Nonetheless, the central equation of that proof gives a nice interpretation of the Lagrangian flux, which could be of independent interest. Essentially, if $\{f_t\}$ is a Lagrangian homotopy and $E\to X$ is the prequantization bundle of $X$ with curvature form $-2\pi\omega$, $f_t^*E$ is flat for all $t$. In particular, the isomorphism class of $f_t^*E$ as a flat bundle is completely determined by its holonomy representation $\rho_t:H_1(Q;\Z)\to\mathrm{U}(1)$. The Lagrangian flux measures then precisely the change in holonomy:
	\begin{align} \label{eq:holomomy-rep_flux}
		\rho_t=\rho_0\cdot\left(\exp\circ \Flux(\{g_\tau\}_{0\leqslant \tau\leqslant t})\right).
	\end{align}
	This is a direct application of the Gauss--Bonnet theorem (see \cite[Lemma~7]{Ver20}), applied to the cylinders swept by $\{g_t\}$, whose area is precisely what the flux measures.
\end{remark}

\newpage
\bibliographystyle{alpha}
\bibliography{mybibfile}

\end{document}